\documentclass[11pt, a4paper, oneside]{amsart}

\usepackage[english]{babel}
\usepackage{amsmath, amsthm, amsfonts, mathrsfs, amssymb}
\usepackage{mathtools}
\mathtoolsset{centercolon}
\usepackage{booktabs}
\usepackage[shortlabels]{enumitem}
\setlist[itemize]{leftmargin=20pt}
\usepackage[colorlinks, citecolor = blue]{hyperref}
\usepackage[hmargin=3cm,vmargin=3cm]{geometry}
\usepackage[color=green!40]{todonotes}
\usepackage{comment}

\usepackage{color}
\usepackage{graphicx}
\usepackage{tikz-cd}
\usetikzlibrary{arrows}

\makeatletter
\DeclareFontFamily{OMX}{MnSymbolE}{}
\DeclareSymbolFont{MnLargeSymbols}{OMX}{MnSymbolE}{m}{n}
\SetSymbolFont{MnLargeSymbols}{bold}{OMX}{MnSymbolE}{b}{n}
\DeclareFontShape{OMX}{MnSymbolE}{m}{n}{
    <-6>  MnSymbolE5
   <6-7>  MnSymbolE6
   <7-8>  MnSymbolE7
   <8-9>  MnSymbolE8
   <9-10> MnSymbolE9
  <10-12> MnSymbolE10
  <12->   MnSymbolE12
}{}
\DeclareFontShape{OMX}{MnSymbolE}{b}{n}{
    <-6>  MnSymbolE-Bold5
   <6-7>  MnSymbolE-Bold6
   <7-8>  MnSymbolE-Bold7
   <8-9>  MnSymbolE-Bold8
   <9-10> MnSymbolE-Bold9
  <10-12> MnSymbolE-Bold10
  <12->   MnSymbolE-Bold12
}{}

\let\llangle\@undefined
\let\rrangle\@undefined
\DeclareMathDelimiter{\llangle}{\mathopen}%
                     {MnLargeSymbols}{'164}{MnLargeSymbols}{'164}
\DeclareMathDelimiter{\rrangle}{\mathclose}%
                     {MnLargeSymbols}{'171}{MnLargeSymbols}{'171}
\makeatother

\newcommand{\F}{\ensuremath{\mathbf{F}}}

\newcommand{\R}{\ensuremath{\mathbf{R}}}
\newcommand{\C}{\ensuremath{\mathbf{C}}}

\newcommand{\mb}{\mathbf}
\newcommand{\mc}{\mathcal}





\DeclareMathOperator{\supp}{supp}
\DeclareMathOperator{\ind}{\mathbf{1}}


\renewcommand{\emptyset}{\varnothing}
\def\avint_#1{\mathchoice{\mathop{\kern 0.2em\vrule width 0.6em height 0.69678ex depth -0.58065ex \kern -0.8em \intop}\nolimits_{\kern -0.4em#1}}{\mathop{\kern 0.1em\vrule width 0.5em height 0.69678ex depth -0.60387ex \kern -0.6em \intop}\nolimits_{#1}} {\mathop{\kern 0.1em\vrule width 0.5em height 0.69678ex depth -0.60387ex \kern -0.6em \intop}\nolimits_{#1}} {\mathop{\kern 0.1em\vrule width 0.5em height 0.69678ex depth -0.60387ex \kern -0.6em \intop}\nolimits_{#1}}}

\newtheorem{TheoremLetter}{Theorem}
{}
\newtheorem{theorem}{Theorem}
\newtheorem{corollary}[theorem]{Corollary}
\newtheorem{lemma}[theorem]{Lemma}
\newtheorem{proposition}[theorem]{Proposition}
\newtheorem*{proposition*}{Proposition}

{}

{}
\newtheorem{CorollaryLetter}[TheoremLetter]{Corollary}
{}

\theoremstyle{remark}
\newtheorem{remark}[theorem]{Remark}

\theoremstyle{definition}
\newtheorem{definition}[theorem]{Definition}
\newtheorem*{definition*}{Definition}

\numberwithin{theorem}{section}
\numberwithin{equation}{section}
\title{A lattice approach to matrix weights}
\author{Zoe Nieraeth}
\thanks{Z. N. is supported by the grant Juan de la Cierva formación 2021 FJC2021-046837-I, the Basque Government through the BERC 2022-2025 program, by the Spanish State Research Agency project PID2020-113156GB-I00/AEI/10.13039/501100011033 and through BCAM Severo Ochoa excellence accreditation SEV-2023-2026.}
\address{Zoe Nieraeth (she/her)\hfill\break\indent BCAM\textendash  Basque Center for Applied Mathematics, Bilbao, Spain}
\email{zoe.nieraeth@gmail.com}
\allowdisplaybreaks

\begin{document}
\begin{abstract}
In this paper we recontextualize the theory of matrix weights within the setting of Banach lattices. We define an intrinsic notion of directional Banach function spaces, generalizing matrix weighted Lebesgue spaces. Moreover, we prove an extrapolation theorem for these spaces based on the boundedness of the convex-set valued maximal operator. We also provide bounds and equivalences related to the convex body sparse operator. Furthermore, we introduce a weak-type analogue of directional Banach function spaces. In particular, we show that the weak-type boundedness of the set valued maximal operator on matrix weighted Lebesgue spaces is equivalent to the matrix Muckenhoupt condition, with equivalent constants. 

Finally, we apply our main results to matrix-weighted variable Lebesgue and Morrey spaces, obtaining new extrapolation results and characterizations extending the known ones of the scalar setting.
\end{abstract}

\keywords{Banach function space, Hardy-Littlewood maximal operator, Muckenhoupt weights, Calder\'on-Zygmund operators, Convex bodies, Convex-set valued mappings, Matrix weights}

\subjclass[2020]{Primary: 42B25; Secondary: 42B35, 46E30}


\maketitle

\subsection*{Notation}
Throughout this work, $d,n\geq 1$ are fixed integers denoting dimension. We let $\F$ denote either the field $\R$ or the field $\C$. For $u,v\in\F^n$ we write $u=(u_1,\ldots,u_n)$, $v=(v_1,\ldots,v_n)$, and we define the standard scalar product and norm
\[
u\cdot v:=\sum_{k=1}^n u_k\overline{v_k},\quad |u|:=|u\cdot u|^{\frac{1}{2}}.
\]
We write $A\lesssim B$ to mean that there is some constant $C$ for which $A\leq CB$. If the constant $C$ depends on parameters $\alpha_1,\alpha_2,\ldots$, then we write $A\lesssim_{\alpha_1,\alpha_2,\ldots} B$. Moreover, $A\gtrsim B$ and $A\gtrsim_{\alpha_1,\alpha_2,\ldots}B$ are defined similarly. We write $A\eqsim B$ when both $A\lesssim B$ and $A\gtrsim B$, and similarly with parameter subscripts.

\section{Introduction}

\subsection{Matrix weighted Lebesgue spaces} Given a Calder\'on-Zygmund operator $T$ and a space of functions $X$ modeling the initial data of a given partial differential equation, the question whether $T$ maps $X$ to itself is fundamental to the theory of singular integrals. This question has been fully answered when $X$ is a weighted Lebesgue space $L^p_w(\R^d)$. Here, $w$ is an a.e. positive measurable function on $\R^d$, and we say that $f\in L^p_w(\R^d)$ if $f$ is a measurable function on $\R^d$ satisfying
\[
\int_{\R^d}\!|wf|^p\,\mathrm{d}x<\infty.
\]
Then we have $T:L^p_w(\R^d)\to L^p_w(\R^d)$ precisely when $1<p<
\infty$, and $w$ satisfies the Muckenhoupt $A_p$ condition, i.e.,
\[
[w]_p:=\sup_Q\Big(\frac{1}{|Q|}\int_Q\!w^p\,\mathrm{d}x\Big)^{\frac{1}{p}}\Big(\frac{1}{|Q|}\int_Q\!w^{-p'}\,\mathrm{d}x\Big)^{\frac{1}{p'}}<\infty,
\]
see, e.g., \cite{Gr14a}. Here the supremum is taken over all cubes $Q$ in $\R^d$, and $p'$ is the H\"older conjugate of $p$.  Note that the way we have defined $[w]_p$ differs from the more common definition of the Muckenhoupt characteristic given by
\[
[w]_{A_p}:=\sup_Q\Big(\frac{1}{|Q|}\int_Q\!w\,\mathrm{d}x\Big)\Big(\frac{1}{|Q|}\int_Q\!w^{1-p'}\,\mathrm{d}x\Big)^{p-1}=[w^{\frac{1}{p}}]_p^p.
\]
This is due to the fact that we have introduced our weights using the multiplier approach instead of the change of measure approach. For a further elaboration on this choice, we refer the reader to \cite{LN23b}.

In the `90s, this theory has been extended to the setting of so-called matrix weights. We let $\F$ denote either the field $\R$ or $\C$. A matrix valued mapping $W:\R^d\to\F^{n\times n}$ is called a matrix weight if it is Hermitian and positive definite a.e. For an exponent $0<p\leq\infty$, we define $L^p_W(\R^d;\F^n)$ as the space of measurable functions $f:\R^d\to\F^n$ for which $|Wf|\in L^p(\R^d)$, i.e., for which
\[
\|f\|_{L^p_W(\R^d;\F^n)}:=\Big(\int_{\R^d}\!|W(x)f(x)|^p\,\mathrm{d}x\Big)^{\frac{1}{p}}<\infty,
\]
where the integral is replaced by an essential supremum when $p=\infty$. Rather than using the more conventional definition in terms of the matrix $W(x)^{\frac{1}{p}}$, we have opted to normalize it this way as a natural extension of the multiplier notation in the scalar-valued case; see also \cite{BC23} for an elaboration on this choice. For a linear operator $T$ acting on functions taking values in $\F$ and a measurable function $f=(f_1,\ldots,f_n):\R^d\to\F^n$, we define 
\[
\widetilde{T}f(x):=(Tf_1(x),\ldots,Tf_n(x)).
\]
When $T$ is a Calder\'on-Zygmund operator, finding an alternative to the Muckenhoupt condition for matrix weights was initiated by Nazarov, Treil, and Volberg in the `90s, see \cite{NT96, Tr89, TV97a, TV97b, Vo97}. In these works, they studied the boundedness of $\widetilde{H}$ on $L^p_W(\R;\C^n)$, where $H$ is the Hilbert transform. The theory was extended to general Calder\'on-Zygmund operators by Christ and Goldberg \cite{CG01}. Though usually written in a more involved way using function norms, the matrix $A_p$ condition can be formulated as $W^{-1}\in L^{p'}_{\text{loc}}(\R^d;\F^{n\times n})$, and there is a constant $C>0$ such that for every cube $Q$ and for every $u\in\F^n$ there is a non-zero $v\in\F^n$ such that
\begin{equation}\label{eq:matrixapintro1}
\Big(\frac{1}{|Q|}\int_Q\!|W(x)u|^p\,\mathrm{d}x\Big)^{\frac{1}{p}}\Big(\frac{1}{|Q|}\int_Q\!|W(x)^{-1}v|^{p'}\,\mathrm{d}x\Big)^{\frac{1}{p'}}\leq C|u\cdot v|,
\end{equation}
where $u\cdot v=\sum_{k=1}^nu_k\overline{v_k}$ is the standard scalar product in $\F^n$. We denote the optimal constant $C$ by $[W]_p$. 

In \cite{NPTV17} it was shown that Calder\'on-Zygmund operators satisfy a so-called convex body domination. More precisely, if $T$ is a Calder\'on-Zygmund operator, then there is a constant $C_T>0$ such that for every bounded function $f$ with bounded support there is a \emph{sparse} collection of cubes $\mc{S}$ for which for a.e. $x\in\R^d$ we have
\[
\widetilde{T}f(x)\in C_T\sum_{Q\in\mc{S}}\llangle f\rrangle_Q\ind_Q(x),
\]
where
\begin{equation}\label{eq:convexsetaverageintro}
\llangle f\rrangle_Q:=\Big\{\frac{1}{|Q|}\int_Q\!hf\,\mathrm{d}x: \|h\|_{L^\infty(\R^d)}\leq 1\Big\},
\end{equation}
where scalar multiples of a set are considered pointwise, and where the sum of the sets is interpreted as a Minkowski sum. Using this, they showed the bound
\begin{equation}\label{eq:a2matrixtheoremintro}
\|\widetilde{T}f\|_{L^2_W(\R^d;\F^n)}\lesssim [W]_2^3\|f\|_{L^2_W(\R^d;\F^n)}.
\end{equation}
for the Hilbert transform $T=H$, which was later generalized to all Calder\'on-Zygmnd operators by Culiuc, Di Plinio, and Ou in \cite{CDO18b}. When $n>1$, the third power dependence of $[W]_2$ was very recently shown to be sharp for the Hilbert transform by Domelevo, Petermichl, Treil, and Volberg in \cite{DPTV24}. This is in stark contrast with the $n=1$ case, where sparse domination yields the sharp \emph{square} dependence of $[w]_2$:
\begin{equation}\label{eq:a2theoremintro}
\|Tf\|_{L^p_w(\R^d)}\lesssim[w]_2^2\|f\|_{L^p_w(\R^d)},
\end{equation}
see, e.g., \cite{Le13b}. Furthermore, one can apply the sharp version of the Rubio de Francia extrapolation theorem of \cite{DGPP05} to \eqref{eq:a2theoremintro} to obtain the sharp bound
\[
\|Tf\|_{L^p_w(\R^d)}\lesssim[w]_p^{\max\{p,p'\}}\|f\|_{L^p_w(\R^d)},
\]
valid for every $1<p<\infty$. This extrapolation theorem was very recently extended to the matrix weight setting $n>1$ by Bownik and Cruz-Uribe in \cite{BC23}. Their quantitative bounds match the one of the scalar-valued case $n=1$. Due to the failure of the bound \eqref{eq:a2theoremintro} when $n>2$, applying their result to \eqref{eq:a2matrixtheoremintro} yields
\[
\|\widetilde{T}f\|_{L^p_W(\R^d;\F^n)}\lesssim [W]_p^{\frac{3}{2}\max\{p,p'\}}\|f\|_{L^p_W(\R^d;\F^n)},
\]
which does not recover the known bound
\[
\|\widetilde{T}f\|_{L^p_W(\R^d;\F^n)}\lesssim [W]_p^{p+p'-1}\|f\|_{L^p_W(\R^d;\F^n)}
\]
by Cruz-Uribe, Isralowitz, and Moen in \cite[Corollary~1.16]{CIM18}. Thus, finding the sharp bounds outside of $p=2$ remains open. We refer the reader to \cite{Cr24} for an overview of the history of this problem. Qualitatively, the paper \cite{BC23} marks a significant development in the theory of matrix weights. They fully develop the theory of convex-set valued mappings to define an analogue of the Hardy-Littlewood maximal operator adapted to matrix weights. 

Denote the collection of non-empty, closed, convex, and symmetric subsets of $\F^n$ by $\mc{K}$. Given a mapping $F:\R^d\to\mc{K}$, one can define its set of measurable \emph{selections}
\[
S^0(\R^d;F):=\{f\in L^0(\R^d;\F^n):f(x)\in F(x)\text{ a.e.}\},
\]
where $L^0(\R^d;\F^n)$ denotes the space of measurable functions $f:\R^d\to\F^n$. Moreover, we can define the average $\langle F\rangle_Q$ through the so-called Aumann integral as
\[
\langle F\rangle_Q:=\Big\{\frac{1}{|Q|}\int_Q\!f\,\mathrm{d}x: f\in S^0(\R^d;F)\Big\}.
\]
This generalizes \eqref{eq:convexsetaverageintro} in the sense that if $\mc{K}(f)(x)$ denotes the smallest set in $\mc{K}$ containing the vector $f(x)\in\F^n$, then
\[
\langle \mc{K}(f)\rangle_Q=\llangle f\rrangle_Q.
\]
One can now define $M^{\mc{K}}F:\R^d\to\mc{K}$ by letting $M^{\mc{K}}F(x)$ be the smallest set in $\mc{K}$ containing
\[
\bigcup_{Q\ni x}\langle F\rangle_Q.
\]
Defining $L^p_W(\R^d;\mc{K})$ as the space of those $F:\R^d\to\mc{K}$ for which the function
\[
h(x):=\sup_{u\in F(x)}|W(x)u|
\]
satisfies $h\in L^p(\R^d)$ with norm $\|F\|_{L^p_W(\R^d;\mc{K})}:=\|h\|_{L^p(\R^d)}$, it was shown in \cite[Theorem~6.9]{BC23} that
\[
\|M^{\mc{K}}F\|_{L^p_W(\R^d;\mc{K})}\lesssim [W]_p^{p'}\|F\|_{L^p_W(\R^d;\mc{K})}
\]
for all $1<p\leq\infty$. This completely recovers Buckley's sharp bound \cite{Bu93} when $n=1$. Moreover, defining $T_QF(x):=\langle F\rangle_Q\ind_Q(x)$, they showed in \cite[Proposition~6.6]{BC23} that for all $1\leq p\leq \infty$ we have
\begin{equation}\label{eq:muckenhouptmatrixweightintro}
\sup_Q\|T_Q\|_{L^p_W(\R^d;\mc{K})\to L^p_W(\R^d;\mc{K})}\eqsim[W]_p.
\end{equation}
Even beyond the setting of weighted Lebesgue spaces, this condition of uniformly bounding the averaging operators over all cubes serves as a useful generalization of the Muckenhoupt condition. We refer the reader to \cite{Ni24} and references therein for an overview. 

\subsection{Banach function spaces} The theory of singular integrals in more general spaces of functions has received considerable attention in the past years. A space $X$ is called a Banach function space if it is a complete normed subspace of the space $L^0(\R^d)$ of measurable $\F$-valued functions, and satisfies:
\begin{itemize}
    \item \emph{The ideal property:} if $f\in X$ and $|g|\leq |f|$ a.e., then $g\in X$ with $\|g\|_X\leq\|f\|_X$;
    \item \emph{The saturation property:} if $E\subseteq\R^d$ satisfies $|E|>0$, then there is a subset $F\subseteq E$ with $|F|>0$ and $\ind_F\in X$.
\end{itemize}
The saturation property is equivalent to the existence of a so-called weak order unit, i.e., a function $\rho\in X$ satisfying $\rho>0$ a.e., or to the non-degeneracy property that if
\[
\int_{\R^d}fg\,\mathrm{d}x=0
\]
for all $f\in X$, then $g=0$. A detailed survey of why these are the right assumptions to make in the definition of a Banach function space can be found in the work of Lorist and the author in \cite{LN23b}. As discussed, e.g., in the books \cite{CF13, MMMMM22}, this framework allows one to tackle the theory of singular integrals in spaces such as weighted Morrey spaces and weighted variable Lebesgue spaces, which are spaces that respectively appear as boundary data of elliptic PDEs, or as non-homogeneous data of PDEs within areas such as image processing and quasi-Newtonian fluids.

In order to extend this framework to the setting of matrix weights, one can define a space $X_W$ as the space of those $f\in L^0(\R^d;\F^n)$ for which $|Wf|\in X$ for a given matrix weight $W$. Exactly as was done in \cite{BC23} for $X=L^p(\R^d)$, one can study the boundedness properties of singular integrals in these spaces by extending them to spaces of convex-set valued functions. We can define the space $X_W[\mc{K}]$ as those $F:\R^d\to\mc{K}$ for which
\[
h(x):=\sup_{u\in F(x)}|W(x)u|
\]
belongs to $X$. While this allows one to define spaces such as matrix weight analogues of Morrey spaces and variable Lebesgue spaces, it is unsatisfactory in the sense that the definition of a function space should be intrinsic, i.e., it should not be defined in terms of a given matrix weight. Even in the case $n=1$, studying properties such as the Muckenhoupt condition and boundedness of singular integrals has a very rich theory in intrinsically defined spaces, see \cite{Ni24} and references therein for an overview.

In the scalar-valued case $n=1$, the ideal property states that a Banach function space $X$ is a \emph{Banach lattice} in the sense that the norm preserves the a.e. partial ordering of functions $|g(x)|\leq |f(x)|$. When $n>1$, however, there are many natural partial orderings one can give to $\F^n$. To create a theory compatible with matrix weights, we need to partially order $\F^n$ in such a way that a vector $u$ dominating a vector $v$ should imply that $|Av|\leq|Au|$ for every matrix $A\in\F^{n\times n}$. As it turns out, this has a very elegant characterization in terms of convex bodies. Given $u\in\F^n$, define
\[
\mc{K}(u):=\{\lambda u:\lambda\in\F, |\lambda|\leq 1\},
\]
which is the smallest set in $\mc{K}$ containing $u$. As a (non-strict) partial ordering, we interpret the inclusion $\mc{K}(v)\subseteq\mc{K}(u)$ as $u$ dominating $v$. This can be extended to functions $f,g\in L^0(\R^d;\F^n)$ through writing $\mc{K}(f)(x):=\mc{K}(f(x))$, and asking that
\[
\mc{K}(g)(x)\subseteq\mc{K}(f)(x)
\]
for a.e. $x\in\R^d$. This ordering can be characterized as follows:
\begin{proposition*}
Let $f,g\in L^0(\R^d;\F^n)$. The following are equivalent:
\begin{enumerate}[(i)]
    \item $\mc{K}(g)(x)\subseteq\mc{K}(f)(x)$ for a.e $x\in\R^d$;
    \item $g(x)\in\mc{K}(f)(x)$ for a.e. $x\in\R^d$;
    \item $|g(x)\cdot u|\leq|f(x)\cdot u|$ for all $u\in\F^n$ for a.e. $x\in\R^d$;
    \item $|Ag(x)|\leq|Af(x)|$ for all $A\in\F^{n\times n}$ for a.e. $x\in\R^d$;
    \item $g=hf$ for some $h\in L^\infty(\R^d)$ with $\|h\|_{L^\infty(\R^d)}\leq 1$.
\end{enumerate}
\end{proposition*}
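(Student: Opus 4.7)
The plan is to prove (i)$\Leftrightarrow$(ii) directly from the minimality of $\mc{K}(f(x))$, and then to connect the remaining items through (ii). The equivalence (i)$\Leftrightarrow$(ii) is immediate from the definition of $\mc{K}(u)$ as the smallest element of $\mc{K}$ containing $u$: any set in $\mc{K}$ containing $g(x)$ must contain $\mc{K}(g(x))$, and conversely $g(x) \in \mc{K}(g(x))$, so (ii) and (i) say the same thing at a.e.\ $x$.

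Next I would handle (ii)$\Leftrightarrow$(iv). The direction (iv)$\Rightarrow$(ii) is clear, since $h(x) f(x) \in \mc{K}(f(x))$ whenever $|h(x)| \leq 1$. For (ii)$\Rightarrow$(iv) I would explicitly construct a measurable scalar $h$: on $\{f = 0\}$ one necessarily has $g = 0$ since $\mc{K}(0) = \{0\}$, and there I would set $h := 0$; on $\{f \neq 0\}$ I would define
\[
h(x) := \frac{g(x) \cdot f(x)}{|f(x)|^2},
\]
which is manifestly measurable. Writing $g(x) = \lambda(x) f(x)$ with $|\lambda(x)| \leq 1$ as provided by (ii) and pairing with $f(x)$ identifies $\lambda(x) = h(x)$, yielding $\|h\|_{L^\infty(\R^d)} \leq 1$ and $g = hf$.

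Finally, (ii)$\Rightarrow$(iii) is immediate from $|g \cdot u| = |h|\,|f \cdot u| \leq |f \cdot u|$. The main step is (iii)$\Rightarrow$(ii), and the principal obstacle is measurability: the null exceptional set in (iii) depends on $u$. I would sidestep this by fixing a countable dense subset $D \subseteq \F^n$ and taking the union of the corresponding null sets, producing a single null set $N$ off which $|g(x) \cdot u| \leq |f(x) \cdot u|$ holds for every $u \in D$; continuity in $u$ of both sides then extends the inequality to all $u \in \F^n$ for $x \in \R^d \setminus N$. For such $x$, if $f(x) = 0$ then $|g(x) \cdot u| = 0$ for every $u$, so $g(x) = 0 \in \mc{K}(f(x))$. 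If $f(x) \neq 0$, then every $u$ with $u \cdot f(x) = 0$ satisfies $g(x) \cdot u = 0$, placing $g(x)$ in the one-dimensional $\F$-span of $f(x)$; writing $g(x) = \lambda(x) f(x)$ and testing against $u = f(x)$ gives $|\lambda(x)|\,|f(x)|^2 \leq |f(x)|^2$, hence $|\lambda(x)| \leq 1$ and therefore $g(x) \in \mc{K}(f(x))$. This closes the cycle of equivalences.
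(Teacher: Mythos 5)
Your proof is correct, and it differs from the paper's argument in a couple of ways worth noting. The paper factors the vector-level collinearity fact out as a separate lemma (Lemma~\ref{lem:collinear}) and uses it to pass directly between (i) and (iii); you instead route through (ii), but the underlying linear algebra --- the inequality $|g(x)\cdot u|\leq|f(x)\cdot u|$ for all $u$ forces $f(x)^\perp\subseteq g(x)^\perp$, hence $g(x)\in\spn\{f(x)\}$, and testing $u=f(x)$ bounds the scalar --- is the same in both. Two genuine differences stand out. First, for (ii)$\Rightarrow$(iv) the paper constructs the multiplier $h$ piecewise as $g_k/f_k$ on a partition of $\supp(f)$ built from the supports of the coordinate functions, whereas your single closed formula $h = (g\cdot f)/|f|^2$ on $\{f\neq 0\}$ (which recovers $\lambda(x)$ because $g(x)\cdot f(x)=\lambda(x)\,f(x)\cdot f(x)=\lambda(x)|f(x)|^2$ under the paper's convention $u\cdot v=\sum u_k\overline{v_k}$) is manifestly measurable and avoids the coordinate bookkeeping; this is cleaner. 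Second, for (iii)$\Rightarrow$(ii) you explicitly fix a countable dense $D\subseteq\F^n$, union the $u$-dependent null sets, and then use continuity in $u$ to recover the inequality for all $u$; the paper deduces the a.e.\ statement from Lemma~\ref{lem:collinear} (a statement about fixed vectors) without addressing the fact that the exceptional null set in (iii) a priori depends on $u$, so your version spells out a measurability point the paper leaves implicit.
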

For a proof, see Proposition~\ref{prop:ideal} below. In essence, the ordering $\mc{K}(g)(x)\subseteq\mc{K}(f)(x)$ means that for a.e. $x\in\R^d$, $g(x)$ points in the same direction as $f(x)$, but with a smaller magnitude. We propose the following definition of an \emph{$\F^n$-directional Banach function space}:
\begin{definition*}
We say that $\mb{X}$ is an $\F^n$-directional Banach function space (over $\R^d)$ if it is a complete normed subspace of $L^0(\R^d;\F^n)$ and satisfies the following properties:
\begin{itemize}
    \item \emph{The directional ideal property:} For all $f\in\mb{X}$ and $g\in L^0(\R^d;\F^n)$ satisfying $\mc{K}(g)\subseteq\mc{K}(f)$ a.e., we have $g\in\mb{X}$ with $\|g\|_{\mb{X}}\leq\|f\|_{\mb{X}}$;
    \item\emph{Non-degeneracy:} If $g\in L^0(\R^d;\F^n)$ satisfies $\int_{\R^d}\!f\cdot g\,\mathrm{d}x=0$ for all $f\in\mb{X}$, then $g=0$ a.e.
\end{itemize}    
\end{definition*}
The non-degeneracy property allows us to define the K\"othe dual $\mb{X}'$ of $\mb{X}$ through
\[
\|g\|_{\mb{X}'}:=\sup_{\|f\|_{\mb{X}}=1}\int_{\R^d}\!|f\cdot g|\,\mathrm{d}x,
\]
which again satisfies the directional ideal property. We call $\mb{X}$ \emph{K\"othe reflexive} if $\mb{X}''=\mb{X}$. When $n=1$, K\"othe reflexivity is characterized by the Fatou property through the Lorentz-Luxemburg theorem. A similar assertion is true for $\F^n$-directional quasi-Banach function spaces under a stronger saturation condition, see Theorem~\ref{thm:lorentzluxemburg}. We emphasize here that K\"othe reflexivity is much weaker than reflexivity in the functional analysis sense. For example, $L^1(\R^d;\F^n)$ and $L^\infty(\R^d;\F^n)$ are K\"othe reflexive.

Our main example of an $\F^n$-directional Banach function space is a matrix-weighted space:
\begin{itemize}
    \item Given a matrix weight $W$ and a Banach function space $X$ over $\Omega$, we define $X_W$ as those $f\in L^0(\Omega;\F^n)$ for which $|Wf|\in X$, with
    \[
    \|f\|_{X_W}:=\||Wf|\|_X,
    \]
    see Section~\ref{sec:matrixweights} for an elaboration.
\end{itemize}
This is, however, not the only possible example. We can also define component-wise spaces:
\begin{itemize}
    \item Given Banach function spaces $X_1,\ldots,X_n$ and an orthonormal basis $(v_k)_{k=1}^n$ of $\F^n$, we define $\mb{X}$ as those $f\in L^0(\Omega;\F^n)$ for which
    \[
    \|f\|_{\mb{X}}:=\sum_{k=1}^n\|f\cdot v_k\|_{X_k}<\infty,
    \]
    see Section~\ref{sec:componentwise} for an elaboration.
\end{itemize}

Our $\F^n$-directional Banach function spaces can be naturally extended to spaces of convex-set valued mappings $F:\R^d\to\mc{K}$. We define $L^0(\R^d;\mc{K})$ as the space of $F:\R^d\to\mc{K}$ that are measurable in the sense that for every measurable $E\subseteq\F^n$, the set
\[
F^{-1}(E):=\{x\in\R^d:F(x)\cap E\neq\emptyset\}
\]
is also measurable. Then we define $\mb{X}[\mc{K}]$ as the space of those $F\in L^0(\R^d;\mc{K})$ for which its space of measurable selections $S^0(\R^d;F)$ is a bounded subset of $\mb{X}$, with
\[
\|F\|_{\mb{X}[\mc{K}]}:=\sup_{f\in S^0(\R^d;F)}\|f\|_{\mb{X}}.
\]
This space naturally contains $\mb{X}$ through the isometric embedding $f\mapsto\mc{K}(f)$, see Proposition~\ref{prop:convexsetfequalsf} below. Working with the general definition of $\mb{X}[\mc{K}]$ in terms of the selections $S^0(\R^d;F)$ is facilitated by the so-called Filippov selection theorem. It states that if $\phi:\R^d\times\F^n\to\R$ is measurable in its first coordinate and continuous in its second, then for any $h\in L^0(\R^d)$ for which for a.e. $x\in\R^d$, $h(x)$ is of the form $\phi(x,u)$ for some $(x,u)$ satisfying $u\in F(x)$, then there is a selection $f\in S^0(\R^d;F)$ for which
\[
h(x)=\phi(x,f(x)).
\]
For example, this can be used to show that if $F(x)$ is a bounded set for a.e. $x\in\R^d$, then, for a given matrix weight $W:\R^d\to\F^{n\times n}$, there is a selection $f_0\in S^0(\R^d;F)$ such that
\[
|f_0(x)|=\sup_{u\in F(x)}|W(x)u|.
\]
As a consequence,  when $X$ is a Banach function space with the Fatou property, the space $X_W[\mc{K}]$ coincides exactly with our earlier defined space of $F\in L^0(\R^d;\mc{K})$ for which the function
\[
h(x):=\sup_{u\in F(x)}|W(x)u|
\]
belongs to $X$, see Proposition~\ref{prop:supattainmentXW} below.

\subsection{Main results} Our first main result is a generalization of the extrapolation theorem of Bownik and Cruz-Uribe \cite{BC23} to $\F^n$-directional Banach function spaces.
\begin{TheoremLetter}\label{thm:A}
Let $1\leq p\leq\infty$ and suppose
\[
T:\bigcup_{W\in A_p}L^p_W(\R^d;\F^n)\to L^0(\R^d;\F^n)
\]
is a map for which there is an increasing function $\phi:[0,\infty)\to[0,\infty)$ such that for all $W\in A_p$ and all $f\in L^p_W(\R^d;\F^n)$ we have
\[
\|Tf\|_{L^p_W(\R^d;\F^n)}\leq\phi([W]_p)\|f\|_{L^p_W(\R^d;\F^n)}.
\]
Let $\mb{X}$ be a K\"othe reflexive $\F^n$-directional Banach function space over $\R^d$ for which both $\mb{X}$ and $\mb{X}'$ are component-wise saturated, and
\[
M^{\mc{K}}:\mb{X}[\mc{K}]\to \mb{X}[\mc{K}],\quad M^{\mc{K}}:\mb{X}'[\mc{K}]\to \mb{X}'[\mc{K}].
\]
Then $Tf$ is well-defined for all $f\in\mb{X}$, and
\[
\|Tf\|_{\mb{X}}\lesssim_n \phi(C_n\|M^{\mc{K}}\|_{\mb{X}[\mc{K}]\to \mb{X}[\mc{K}]}^{\frac{1}{p'}}\|M^{\mc{K}}\|^{\frac{1}{p}}_{\mb{X}'[\mc{K}]\to \mb{X}'[\mc{K}]})\|f\|_{\mb{X}}.
\]
If $p=\infty$ or $p=1$, we can omit the bound $M^{\mc{K}}:\mb{X}'[\mc{K}]\to \mb{X}'[\mc{K}]$ or $M^{\mc{K}}:\mb{X}[\mc{K}]\to \mb{X}[\mc{K}]$ respectively.
\end{TheoremLetter}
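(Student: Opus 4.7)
The strategy is to lift the classical Rubio de Francia extrapolation scheme to the convex-set valued spaces $\mb{X}[\mc{K}]$ and $\mb{X}'[\mc{K}]$, and then to invoke a matrix Jones-type factorization to reduce the estimate to the hypothesis on $T$.

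The first step is a convex-body Rubio de Francia algorithm. For $F_0\in\mb{X}[\mc{K}]$, the Minkowski-sum series
\[
\mc{R}F_0 := \sum_{k=0}^\infty \frac{(M^{\mc{K}})^k F_0}{(2\|M^{\mc{K}}\|_{\mb{X}[\mc{K}]\to\mb{X}[\mc{K}]})^k}
\]
(and its $\mb{X}'[\mc{K}]$-analogue $\mc{R}'$) satisfies, by a direct geometric-series computation, the pointwise inclusion $F_0(x)\subseteq\mc{R}F_0(x)$, the norm bound $\|\mc{R}F_0\|_{\mb{X}[\mc{K}]}\leq 2\|F_0\|_{\mb{X}[\mc{K}]}$, and the convex-body $A_1$-type inclusion
\[
M^{\mc{K}}(\mc{R}F_0)(x)\subseteq 2\|M^{\mc{K}}\|_{\mb{X}[\mc{K}]\to\mb{X}[\mc{K}]}\,\mc{R}F_0(x)\quad\text{a.e.}
\]
Given $f\in\mb{X}$, I embed $f\mapsto\mc{K}(f)\in\mb{X}[\mc{K}]$ isometrically and use K\"othe reflexivity to write
\[
\|Tf\|_{\mb{X}} = \sup_{\|g\|_{\mb{X}'}\leq 1}\int_{\R^d}\!|Tf\cdot g|\,\dd x,
\]
fix a near-maximizer $g\in\mb{X}'$, and apply $\mc{R}$ and $\mc{R}'$ to $\mc{K}(f)$ and $\mc{K}(g)$. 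The resulting two convex-body $A_1$-type majorants are, via the Filippov selection theorem and the John-ellipsoid representation of bounded symmetric convex bodies, encoded by two matrix-valued $A_1$-type weights; a matrix Jones-type factorization then delivers a Hermitian positive definite weight $W$ with
\[
[W]_p \lesssim_n \|M^{\mc{K}}\|_{\mb{X}[\mc{K}]\to\mb{X}[\mc{K}]}^{1/p'}\|M^{\mc{K}}\|_{\mb{X}'[\mc{K}]\to\mb{X}'[\mc{K}]}^{1/p},
\]
read off from the averaging-operator characterization \eqref{eq:muckenhouptmatrixweightintro}, and with normalization arranged so that the joint bound $\|f\|_{L^p_W(\R^d;\F^n)}\|g\|_{L^{p'}_{W^{-1}}(\R^d;\F^n)}\lesssim\|f\|_{\mb{X}}\|g\|_{\mb{X}'}$ holds; this bound is the matrix analogue of the scalar identity $|f|^p v u^{1-p}\leq|f|v$ used in classical Rubio de Francia extrapolation.

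Since $W$ is Hermitian, $|Tf\cdot g|\leq|W\,Tf|\,|W^{-1}g|$, so H\"older's inequality and the assumption on $T$ give
\[
\int_{\R^d}\!|Tf\cdot g|\,\dd x \leq \|Tf\|_{L^p_W(\R^d;\F^n)}\|g\|_{L^{p'}_{W^{-1}}(\R^d;\F^n)} \leq \phi([W]_p)\|f\|_{L^p_W(\R^d;\F^n)}\|g\|_{L^{p'}_{W^{-1}}(\R^d;\F^n)}.
\]
Inserting the joint norm bound, taking $\sup_g$, and using the monotonicity of $\phi$ together with the control on $[W]_p$ completes the argument. The endpoints $p\in\{1,\infty\}$ collapse the scheme to a single Rubio de Francia algorithm on $\mb{X}'[\mc{K}]$ or $\mb{X}[\mc{K}]$ respectively, matching the omitted hypothesis since the corresponding exponent $1/p'$ or $1/p$ vanishes. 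The principal obstacle is the matrix Jones step: extracting an honest matrix $A_p$ weight with sharp quantitative dependence on the two maximal-operator norms, in a setting where matrices do not commute. The John-ellipsoid parametrization, a careful application of Filippov's theorem, and \eqref{eq:muckenhouptmatrixweightintro} for translating convex-body data to the matrix Muckenhoupt constant are the key tools here, and are also the source of the dimensional constant $\lesssim_n$.
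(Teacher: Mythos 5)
Your overall architecture matches the paper's: a Rubio de Francia algorithm in $\mb{X}[\mc{K}]$ and $\mb{X}'[\mc{K}]$, a reduction to a fixed pair $(f,g)$ via K\"othe duality $\|Tf\|_{\mb{X}}=\sup_{\|g\|_{\mb{X}'}\leq 1}\int|Tf\cdot g|$, the Hermitian pointwise bound $|Tf\cdot g|\leq|WTf||W^{-1}g|$ followed by H\"older, and the observation that the endpoint cases $p\in\{1,\infty\}$ kill one of the two exponents. These are all present in the paper (Theorem~\ref{thm:rdfdef}, Theorem~\ref{thm:rdfmainestimate}, and the proof of Theorem~\ref{thm:BFSextrapolation}), and your convex-body Rubio de Francia operator $\mc{R}$ is essentially identical to the paper's.

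The gap is exactly where you flag it: the ``matrix Jones-type factorization.'' Your plan is to replace the two convex-body $A_1$-type majorants $\mc{R}(\mc{K}(f))$ and $\mc{R}'(\mc{K}(g))$ by their John ellipsoids, read off two matrix weights, and multiply fractional powers of them. But, as you note yourself, these matrices do not commute, and there is no reason a formal product $W_0^{1/p'}W_1^{1/p}$ (or any obvious symmetrization) is a matrix $A_p$ weight with the required quantitative dependence and the required pairing bound $\|f\|_{L^p_W}\|g\|_{L^{p'}_{W^{-1}}}\lesssim\|f\|_{\mb{X}}\|g\|_{\mb{X}'}$. You do not supply this lemma, and it is the heart of the proof, not a detail. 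The paper circumvents the issue entirely by staying at the level of \emph{norm functions}: it sets $\rho_0(x,u)=\sup_{v\in\mc{R}(\mc{K}(f))(x)}|u\cdot v|$, $\rho_1(x,u)=\sup_{v\in\mc{R}'(\mc{K}(g))(x)}|u\cdot v|$, and in Lemma~\ref{lem:interpolationnorm} defines $\rho(x,\cdot)$ as the Minkowski gauge of the convex symmetric hull of $\{u:\rho_0^\ast(x,u)^{1/p'}\rho_1(x,u)^{1/p}\leq 1\}$. This convexification step is precisely what replaces the nonexistent commuting product; the $A_p$ bound for $\rho$ then comes from \cite[Theorem~7.3, Remarks~2.16 and~8.10]{BC23}, and the pointwise bounds \ref{it:normfunctiona12}--\ref{it:normfunctiona13} feed directly into the Filippov-selection and duality computation. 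The John ellipsoid appears only once, at the very end, to pass from $\rho$ to an honest matrix weight $W$ via \eqref{eq:normfunctionisweight}, and is not the device that produces the $A_p$ bound. Likewise, \eqref{eq:muckenhouptmatrixweightintro}, which you cite as a key translation tool, plays no role in the paper's proof of this theorem. Until you produce a concrete Jones-type factorization lemma with the stated quantitative bound (or adopt the norm-function/Minkowski-gauge route), the argument does not close.
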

This result is proven in a more general pairs of functions setting in Theorem~\ref{thm:BFSextrapolation} below. The component-wise saturation property is a directional analogue of the existence of a weak order unit. It can be formulated by saying that the space $\mb{X}[\mc{K}]$ contains a non-degenerate ellipsoid, i.e., there is a Hermitian positive definite matrix mapping $U:\R^d\to\F^{n\times n}$ for which the map $F(x):=U(x)\overline{B}$, where $\overline{B}$ is the unit ball in $\F^n$, belongs to $\mb{X}[\mc{K}]$. This condition is stronger than the non-degeneracy property, but is satisfied by our main examples, see Section~\ref{sec:directionalqbfs}. 

Our proof is new, and distinct from the one of Bownik and Cruz-Uribe in how we use our Rubio de Francia iteration algorithm. In their work, they apply the Rubio de Francia algorithm to an auxiliary operator
\[
TF(x):=\sup_{u\in M^{\mc{K}}F(x)}|W(x)u|W(x)^{-1}\overline{B},
\]
where $\overline{B}$ is the closed unit ball in $\F^n$. Moreover, they explicitly use the fact that this operator maps the space of scalar function multiples of the ellipsoid $W(x)^{-1}\overline{B}$ to itself. As we have no matrix weight to work with, our proof of Theorem~\ref{thm:A} instead relies on the Filippov selection theorem and a function norm approach, yielding a very elegant intrinsic argument. When $n=1$, our approach is similar to the one used for Lebesgue spaces in \cite{CMP11}; in particular, it does not recover the \emph{sharp} extrapolation theorem for matrix weighted Lebesgue spaces. We do emphasize that our approach can be adapted to give an intrinsic proof of the extrapolation theorem of \cite{BC23}, and a proof of this can be found in Appendix~\ref{app:lebesgueextrapolation}. We also note that when $n=1$, our result recovers the sharpest result known in the literature for general Banach function spaces of \cite{CMM22}, see also \cite[Section~4.7]{Ni23} for a comparison.

We apply this theorem to obtain a extrapolation theorems for matrix-weighted variable Lebesgue and Morrey spaces respectively in Theorem~\ref{thm:varlebesgueextrapolation} and Theorem~\ref{thm:morreyextrapolation} below. In particular, we recover the extrapolation theorem of Cruz-Uribe and Wang \cite[Theorem~2.7]{CW17} for weighted variable Lebesgue spaces in the case $n=1$.

If $T$ is a Calder\'on-Zygmund operator, then we can apply Theorem~\ref{thm:A} for $p=2$ to \eqref{eq:a2matrixtheoremintro} to conclude that
\begin{equation}\label{eq:badczobound}
\|T\|_{\mb{X}\to\mb{X}}\lesssim \|M^{\mc{K}}\|^{\frac{3}{2}}_{\mb{X}[\mc{K}]\to \mb{X}[\mc{K}]}\|M^{\mc{K}}\|^{\frac{3}{2}}_{\mb{X}'[\mc{K}]\to \mb{X}'[\mc{K}]}.
\end{equation}
This, however, is certainly not sharp. Our second main theorem shows how the power $\tfrac{3}{2}$ can be reduced to $1$ in this estimate using convex body domination, even without assuming the component-wise saturation property.

For a collection of cubes $\mc{P}$ and a locally integrably bounded $F:\R^d\to\mc{K}$, we define the averaging operator $T_{\mc{P}}$ as
\[
T_{\mc{P}}F(x):=\sum_{Q\in\mc{P}}\langle F\rangle_Q\ind_Q(x),
\]
where scalar multiples of a set are considered pointwise, and the sum is interpreted as a Minkowski sum. When $F=\mc{K}(f)$ for a function $f$ integrable over the cubes in $\mc{P}$, we have
\[
T_{\mc{P}}(\mc{K}(f))(x)=\sum_{Q\in\mc{P}}\llangle f\rrangle_Q\ind_Q(x).
\]
A collection of cubes $\mc{S}$ is called \emph{sparse} if for all $Q\in\mc{S}$ we have
\[
\Big|\bigcup_{\substack{Q'\in\mc{S}\\ Q'\subsetneq Q}}Q'\Big|\leq\tfrac{1}{2}|Q|.
\]
The boundedness properties of $T_{\mc{S}}$ in $\mb{X}[\mc{K}]$ for sparse collections $\mc{S}$ dictate precisely in which spaces $\mb{X}$ an operator satisfying convex body domination is bounded. Indeed, if 
\[
\widetilde{T}f(x)\in C_T\sum_{Q\in\mc{S}}\llangle f\rrangle_Q\ind(x),
\]
then, since $\|\mc{K}(f)\|_{\mb{X}[\mc{K}]}=\|f\|_{\mb{X}}$,
\begin{equation}\label{eq:convexbodydomestimateintro}
\|\widetilde{T}f\|_{\mb{X}}\lesssim \|T_{\mc{S}}(\mc{K}(f))\|_{\mb{X}[\mc{K}]}\leq \|T_{\mc{S}}\|_{\mb{X}[\mc{K}]\to\mb{X}[\mc{K}]}\|f\|_{\mb{X}}.
\end{equation}
Thus, to study bounds for $T$ in $\mb{X}$, we need to study the behavior of $T_{\mc{S}}$ in $\mb{X}[\mc{K}]$. We have the following result on the relationship between the boundedness of the sparse operator $T_{\mc{S}}$ and of $M^{\mc{K}}$:
\begin{TheoremLetter}\label{thm:B}
Let $\mb{X}$ be a K\"othe reflexive $\F^n$-directional Banach function space. Then the following are equivalent:
\begin{enumerate}[(i)]
    \item\label{it:thmB1} $T_{\mc{S}}:\mb{X}[\mc{K}]\to\mb{X}[\mc{K}]$ and $T_{\mc{S}}:\mb{X}'[\mc{K}]\to\mb{X}'[\mc{K}]$ uniformly for all sparse collection $\mc{S}$;
    \item\label{it:thmB2} $M^{\mc{K}}:\mb{X}[\mc{K}]\to\mb{X}[\mc{K}]$ and $M^{\mc{K}}:\mb{X}'[\mc{K}]\to\mb{X}'[\mc{K}]$.
\end{enumerate}
Moreover, in this case we have
\begin{align*}
\|M^{\mc{K}}\|_{\mb{X}[\mc{K}]\to\mb{X}[\mc{K}]}&\lesssim_{d,n}\|T_{\mc{S}}\|_{\mb{X}[\mc{K}]\to\mb{X}[\mc{K}]};\\
\|M^{\mc{K}}\|_{\mb{X}'[\mc{K}]\to\mb{X}'[\mc{K}]}&\lesssim_{d,n}\|T_{\mc{S}}\|_{\mb{X}'[\mc{K}]\to\mb{X}'[\mc{K}]}\\
\max\{\|T_{\mc{S}}\|_{\mb{X}[\mc{K}]\to\mb{X}[\mc{K}]},\|T_{\mc{S}}\|_{\mb{X}'[\mc{K}]\to\mb{X}'[\mc{K}]}\}&\lesssim_n \|M^{\mc{K}}\|_{\mb{X}[\mc{K}]\to\mb{X}[\mc{K}]}\|M^{\mc{K}}\|_{\mb{X}'[\mc{K}]\to\mb{X}'[\mc{K}]}.
\end{align*}
\end{TheoremLetter}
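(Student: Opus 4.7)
The proof has two directions. \textbf{For (i)$\Rightarrow$(ii),} the plan is to establish a pointwise convex-body sparse domination of the maximal operator: for every locally integrably bounded $F \in L^0(\R^d; \mc{K})$ there exists a sparse collection $\mc{S}=\mc{S}(F)$ with
\[
M^{\mc{K}} F(x) \subseteq C_{d, n}\, T_{\mc{S}} F(x) \quad \text{for a.e. } x \in \R^d.
\]
The construction is the convex-body analogue of the standard Calder\'on-Zygmund stopping argument: inside each dyadic cube $Q_0$, declare the stopping children to be the maximal dyadic subcubes $Q' \subseteq Q_0$ with $\langle F \rangle_{Q'} \not\subseteq c_n \langle F \rangle_{Q_0}$ for a suitable dimensional constant $c_n$; a weak-type estimate at level $c_n$ shows that these children cover at most half of $Q_0$, while outside them every dyadic $Q \subseteq Q_0$ containing $x$ satisfies $\langle F \rangle_Q \subseteq c_n \langle F \rangle_{Q_0}$. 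Iterating over all stopping cubes produces the claimed set inclusion. Taking $\mb{X}[\mc{K}]$- and $\mb{X}'[\mc{K}]$-norms via the directional ideal property immediately yields the two stated bounds on $\|M^{\mc{K}}\|$.

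\textbf{For (ii)$\Rightarrow$(i),} the plan rests on the K\"othe duality of $\mb{X}[\mc{K}]$ and $\mb{X}'[\mc{K}]$ developed earlier in the paper:
\[
\|T_{\mc{S}} F\|_{\mb{X}[\mc{K}]} = \sup_{\|G\|_{\mb{X}'[\mc{K}]} \leq 1} \int_{\R^d} [T_{\mc{S}} F(x), G(x)] \,\mathrm{d}x,\qquad [A,B] := \sup_{u \in A,\, v \in B} |u \cdot v|,
\]
combined with the support-function subadditivity $[T_{\mc{S}} F, G] \leq \sum_{Q \in \mc{S}} [\langle F \rangle_Q, G]\,\ind_Q$. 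The core technical step is the dimensional averaging inequality: for every symmetric convex body $K \subseteq \F^n$ and every cube $Q$,
\[
\int_Q [K, G(x)] \,\mathrm{d}x \leq C_n |Q|\, [K, \langle G \rangle_Q].
\]
I will prove it by polytope approximation: John's theorem furnishes a symmetric polytope $\widetilde{K}$ with $N_n$ vertices $\{u_1, \ldots, u_{N_n}\}$ such that $K \subseteq \widetilde{K} \subseteq C_n K$; then $[\widetilde{K}, G(x)] = \max_i h_{G(x)}(u_i) \leq \sum_i h_{G(x)}(u_i)$, and the identity $\int_Q h_{G(x)}(u)\,\mathrm{d}x = |Q| h_{\langle G \rangle_Q}(u)$ (a Filippov-selection rewriting of the Aumann integral) yields $\int_Q [K, G(x)]\,\mathrm{d}x \leq N_n C_n |Q| [K, \langle G \rangle_Q]$.

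Combining the averaging inequality (applied to $K=\langle F\rangle_Q$) with sparseness ($|Q| \leq 2|E_Q|$, $E_Q$ pairwise disjoint) and the inclusions $\langle F \rangle_Q \subseteq M^{\mc{K}} F(x)$, $\langle G \rangle_Q \subseteq M^{\mc{K}} G(x)$ valid for $x \in E_Q$:
\[
\sum_{Q \in \mc{S}} \int_Q [\langle F \rangle_Q, G(x)]\,\mathrm{d}x \lesssim_n \sum_Q |Q|[\langle F \rangle_Q, \langle G \rangle_Q] \leq 2 \int_{\R^d} [M^{\mc{K}} F, M^{\mc{K}} G]\,\mathrm{d}x,
\]
and a second application of K\"othe duality bounds the last integral by $\|M^{\mc{K}}\|_{\mb{X}[\mc{K}]\to\mb{X}[\mc{K}]}\|M^{\mc{K}}\|_{\mb{X}'[\mc{K}]\to\mb{X}'[\mc{K}]}\|F\|_{\mb{X}[\mc{K}]}\|G\|_{\mb{X}'[\mc{K}]}$. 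Taking the supremum over $G$ gives the bound for $\|T_{\mc{S}}\|_{\mb{X}[\mc{K}] \to \mb{X}[\mc{K}]}$; the $\mb{X}'[\mc{K}]$-bound follows by swapping $\mb{X}$ and $\mb{X}'$. The main obstacle is the averaging inequality: Jensen's inequality runs the wrong direction, and an example with $K$ the $\ell^1$-ball paired with $G$ alternating between coordinate axes on subregions of $Q$ shows the constant must grow with $n$, accounting for the $\lesssim_n$ in the theorem.
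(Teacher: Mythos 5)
Your proposal is correct, and the two directions have different relationships to the paper's argument.

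For \ref{it:thmB1}$\Rightarrow$\ref{it:thmB2} you take essentially the same route as the paper's Theorem~\ref{thm:convexbodydomofmax}: a convex-body stopping-time producing a sparse $\mc{S}\subseteq\mc{F}$ with $M^{\mc{K}}_{\mc{F}}F\subseteq C_n M^{\mc{K}}_{\mc{S}}F\subseteq C_n T_{\mc{S}}F$. The one place where you are vague is how the weak-type estimate is extracted from the set-inclusion stopping rule $\langle F\rangle_{Q'}\not\subseteq c_n\langle F\rangle_{Q_0}$. To make this precise, let $\mc{E}=A\overline{B}$ be the John ellipsoid of $\langle F\rangle_{Q_0}$ and set $\eta(x):=\sup_{u\in F(x)}|A^{-1}u|$; the stopping condition forces $\langle\eta\rangle_{Q'}>c_n$, the John inclusion $\langle F\rangle_{Q_0}\subseteq\sqrt{n}\,\mc{E}$ gives $\langle\eta\rangle_{Q_0}\lesssim_n 1$ (decompose $|A^{-1}u|\le\sum_k\lambda_k^{-1}|u\cdot e_k|$ along the John axes as in the paper), and the dyadic weak-$(1,1)$ bound applied to $\eta$ closes the argument. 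In spirit this is the same stopping procedure the paper uses, just packaged as a single inclusion instead of $n$ coordinate-wise conditions.

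For \ref{it:thmB2}$\Rightarrow$\ref{it:thmB1} your route is genuinely different from Theorem~\ref{thm:sparsewithmax}. The paper works bottom-up with selections: it decomposes an arbitrary $g\in S^0(\R^d;T_{\mc{S}}F)$ cube by cube using the John ellipsoid of each $\langle F\rangle_Q$, modifies the coefficient functions to have constant phase, tests against $\psi\in\mb{X}'$, and regroups using the sparse sets $E_Q$. You instead work top-down with support functions: you postulate a K\"othe duality
\[
\|F\|_{\mb{X}[\mc{K}]}=\sup_{\|G\|_{\mb{X}'[\mc{K}]}\le1}\int_{\R^d}[F,G]\,\mathrm{d}x,\qquad [A,B]:=\sup_{u\in A,\,v\in B}|u\cdot v|,
\]
use subadditivity of the support function of the Minkowski sum, and reduce everything to the averaging inequality $\int_Q[K,G]\,\mathrm{d}x\le C_n|Q|[K,\langle G\rangle_Q]$. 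This is a clean organizing principle, and it isolates exactly where the dimensional constant enters; however, note that the displayed K\"othe duality for $\mb{X}[\mc{K}]$ is \emph{not} stated anywhere in the paper and you must prove it. It does follow from the machinery there: one inequality is H\"older after a Filippov selection of the jointly optimizing pair $(f,g)\in S^0(\Omega;F)\times S^0(\Omega;G)$ (using that \ref{it:thmB2} implies $\mb{X}\in A$, hence $\mb{X}[\mc{K}]\subseteq L^0(\R^d;\mc{K}_b)$ by Proposition~\ref{prop:locallyintegrablyboundedinA}), and the other is Lemma~\ref{lem:lorentzluxemburg} applied to a selection of $F$ together with $G:=\mc{K}(g)$. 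You should also insert the standard reduction to finite sparse collections via the monotone convergence property, so that $T_{\mc{S}}F(x)$ is a bounded convex body pointwise. Your averaging inequality itself is correct; a slightly sharper variant avoids the polytope approximation by using $[K,G]\le\sqrt{n}\,[\mc{E},G]$ and $|A^{-1}v|\le\sum_k|A^{-1}v\cdot e_k|$ directly, giving $C_n\eqsim n^{3/2}$ rather than the $2^n$ coming from the vertices of a circumscribed cube, but since the theorem only claims $\lesssim_n$ either constant is fine.
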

The proof of this result can be found in Section~\ref{sec:thmB}. Surprisingly, it is not clear if the two bounds in \ref{it:thmB1} are equivalent. This is in stark contrast with the fact that they \emph{are} equivalent when viewing the operator $T_{\mc{S}}$ as a linear operator on $\mb{X}$ and $\mb{X}'$ instead of their convex-set valued variants. Moreover, when $\mc{S}$ is pairwise disjoint, this symmetry \emph{is} true in $\mb{X}[\mc{K}]$, see Theorem~\ref{thm:pairwisedisjointavconvex} below.

To prove Theorem~\ref{thm:B} we need to establish a convex body domination result for $M^{\mc{K}}$, which is interesting in its own right. Here, for a collection of cubes $\mc{P}$, we define $M^{\mc{K}}_{\mc{P}}$ in the same way as $M^{\mc{K}}$, but with the union $\bigcup_Q\langle F\rangle_Q\ind_Q(x)$ taken only over the $Q\in\mc{P}$.
\begin{TheoremLetter}\label{thm:C}
Let $\mc{D}$ be a dyadic grid and let $\mc{F}\subseteq\mc{D}$ be finite. For each $F\in L^0(\R^d;\mc{K})$ satisfying $F\ind_Q\in L^1(\R^d;\mc{K})$ for all cubes $Q$, there is a sparse collection $\mc{S}\subseteq\mc{F}$ for which
\[
M^{\mc{K}}_{\mc{F}}F(x)\subseteq C_n M^{\mc{K}}_{\mc{S}}F(x)
\]
for a.e. $x\in\R^d$.
\end{TheoremLetter}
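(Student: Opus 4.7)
\emph{Plan.} My plan is to mimic the classical Calder\'on--Zygmund stopping-time construction that yields sparse domination of the scalar dyadic maximal operator, replacing the scalar jump $\langle f\rangle_{Q'}>2\langle f\rangle_Q$ by the convex-body inclusion failure $\langle F\rangle_{Q'}\not\subseteq C_n\langle F\rangle_Q$ for a dimensional constant $C_n$ to be fixed at the end.

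\emph{Construction and domination.} I would set $\mc{S}_0$ to be the maximal cubes of $\mc{F}$, and recursively let $\mc{S}_{k+1}$ consist of the maximal cubes $Q'\in\mc{F}$ strictly contained in some $Q\in\mc{S}_k$ that satisfy $\langle F\rangle_{Q'}\not\subseteq C_n\langle F\rangle_Q$. This terminates because $\mc{F}$ is finite, giving $\mc{S}:=\bigcup_k\mc{S}_k\subseteq\mc{F}$. Domination is then essentially automatic: every $Q\in\mc{F}$ has a smallest $\mc{S}$-ancestor $Q^*$, and by the selection rule no intermediate $\mc{F}$-cube between $Q^*$ and $Q$ triggered the stopping condition, so $\langle F\rangle_Q\subseteq C_n\langle F\rangle_{Q^*}$. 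Unioning over $Q\in\mc{F}$ with $Q\ni x$ and passing to the smallest set in $\mc{K}$ containing the result---which commutes with scalar dilation---yields $M^{\mc{K}}_{\mc{F}}F(x)\subseteq C_n M^{\mc{K}}_{\mc{S}}F(x)$ for a.e.\ $x$.

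\emph{Sparseness and the main obstacle.} The core step is verifying $|E_Q|\leq\tfrac{1}{2}|Q|$ for $E_Q:=\bigcup_{Q'\in\mathrm{ch}_{\mc{S}}(Q)}Q'$. Using the support-function characterization of inclusion among symmetric convex bodies together with the Aumann identity $h_{\langle F\rangle_R}(u)=\tfrac{1}{|R|}\int_R h_{F(x)}(u)\,\mathrm{d}x$, the failure $\langle F\rangle_{Q'}\not\subseteq C_n\langle F\rangle_Q$ is witnessed by a unit vector $u_{Q'}\in\F^n$ satisfying the scalar jump condition $\tfrac{1}{|Q'|}\int_{Q'}h_{F(x)}(u_{Q'})\,\mathrm{d}x>C_n\,\tfrac{1}{|Q|}\int_Q h_{F(x)}(u_{Q'})\,\mathrm{d}x$. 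If the witnesses $u_{Q'}$ could always be chosen from a finite set of directions of cardinality $N_n$ depending only on $n$, the standard weak-type $(1,1)$ bound for the dyadic maximal operator would give $|E_Q|\leq(N_n/C_n)|Q|$, and $C_n\geq 2N_n$ would close the sparseness argument. The main obstacle is that $u_{Q'}$ a priori ranges over the entire unit sphere of $\F^n$, and the support function of a symmetric convex body is not uniformly Lipschitz in its argument---the body may be highly elongated. I would resolve this via John's theorem: fix an ellipsoid $\mc{E}_Q$ with $\mc{E}_Q\subseteq\langle F\rangle_Q\subseteq c_{\F,n}\mc{E}_Q$ and take a $\delta$-net of the unit sphere in the norm dual to $\mc{E}_Q$, which controls the relevant support functions with Lipschitz constant $\lesssim c_{\F,n}$. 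Crucially, this net may depend on $Q$ because the weak-type estimate is applied one cube at a time; only the uniform cardinality bound $N_n$ enters the final sparseness estimate, so $C_n$ can indeed be chosen depending only on $n$.
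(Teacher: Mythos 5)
Your strategy genuinely differs from the paper's, although both rest on the John ellipsoid $\mc{E}_Q$ of $\langle F\rangle_Q$. The paper fixes the $n$ principal axes $e_1,\ldots,e_n$ of $\mc{E}_Q$ and runs $n$ coordinatewise \emph{scalar} stopping times: a cube $Q'$ is stopped inside $Q$ as soon as $|\langle F\cdot e_k\rangle_{Q'}|>2n\langle|F\cdot e_k|\rangle_Q$ for some $k$. Sparseness then follows instantly from Markov's inequality in each of the $n$ directions, while the domination step carries the extra work of embedding the box $2n\sum_k\langle|F\cdot e_k|\rangle_Q\mc{K}(e_k)$ back into $2n^{5/2}\langle F\rangle_Q$. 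Your stopping rule tests the convex-body inclusion $\langle F\rangle_{Q'}\subseteq C_n\langle F\rangle_Q$ directly, which makes domination trivial but pushes all the work into sparseness, where the continuum of witness directions must be reduced to a finite set. The paper's route needs only $n$ directions and gives the clean constant $2n^{5/2}$; a $\delta$-net costs $N_n\sim\delta^{-cn}$ directions and hence a larger, though still purely dimensional, constant. Both are admissible for the statement.

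That said, your sparseness step has a genuine gap. Writing $h_K(u):=\sup_{v\in K}|u\cdot v|$ for the support function, you assert that the $\mc{E}_Q$-dual norm ``controls the relevant support functions with Lipschitz constant $\lesssim c_{\F,n}$.'' This is true for $h_{\langle F\rangle_Q}$, since $\langle F\rangle_Q\subseteq c_{\F,n}\mc{E}_Q$ gives $h_{\langle F\rangle_Q}(u-v)\leq c_{\F,n}h_{\mc{E}_Q}(u-v)$. But when you move a witness $u_{Q'}$ to a nearby net point $v_j$, the function you actually perturb is the support function of the \emph{child} average, $h_{\langle F\rangle_{Q'}}$, and its Lipschitz constant in the $\mc{E}_Q$-dual norm is $M_{Q'}:=\sup\{h_{\langle F\rangle_{Q'}}(u):h_{\mc{E}_Q}(u)=1\}$. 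This is not bounded by $c_{\F,n}$: indeed $M_{Q'}>C_n$ precisely because $Q'$ was stopped, and $M_{Q'}$ can be arbitrarily large. For a generic witness the perturbation estimate gives only $h_{\langle F\rangle_{Q'}}(v_j)>h_{\langle F\rangle_{Q'}}(u_{Q'})-\delta M_{Q'}$, which is vacuous when $M_{Q'}\gg C_n/\delta$. The fix, which your sketch does not supply, is to take $u_{Q'}$ to be the \emph{extremal} direction at which $h_{\langle F\rangle_{Q'}}$ attains $M_{Q'}$ on the $\mc{E}_Q$-dual unit sphere; then the error $\delta M_{Q'}$ is a fixed fraction of the main term $M_{Q'}$, so that
\[
h_{\langle F\rangle_{Q'}}(v_j)\geq(1-\delta)M_{Q'}>(1-\delta)C_n\geq\tfrac{(1-\delta)C_n}{c_{\F,n}}\,h_{\langle F\rangle_Q}(v_j),
\]
after which the one-dimensional Markov bound, applied direction by direction and summed over the $N_n$ net points, closes the sparseness estimate with any $C_n\gtrsim N_n c_{\F,n}$.
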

This theorem can be found below as Theorem~\ref{thm:convexbodydomofmax}. As a consequence of Theorem~\ref{thm:B} and \eqref{eq:convexbodydomestimateintro}, we obtain the following improvement of \eqref{eq:badczobound}:
\begin{CorollaryLetter}
Let $T$ be an operator for which there is a constant $C_T>0$ such that for every $f\in L^\infty_c(\R^d;\F^n)$ there is a sparse collection $\mc{S}$ for which
\[
Tf(x)\in C_T\sum_{Q\in\mc{S}}\llangle f\rrangle_Q\ind_Q(x).
\]
Let $\mb{X}$ be a K\"othe reflexive $\F^n$-directional Banach function space for which
\[
M^{\mc{K}}:\mb{X}[\mc{K}]\to \mb{X}[\mc{K}],\quad M^{\mc{K}}:\mb{X}'[\mc{K}]\to \mb{X}'[\mc{K}].
\]
Then, for all $f\in L^\infty_c(\R^d;\F^n)\cap\mb{X}$ we have
\[
\|Tf\|_{\mb{X}}\lesssim C_T\|M^{\mc{K}}\|_{\mb{X}[\mc{K}]\to\mb{X}[\mc{K}]}\|M^{\mc{K}}\|_{\mb{X}'[\mc{K}]\to\mb{X}'[\mc{K}]}\|f\|_{\mb{X}}.
\]
\end{CorollaryLetter}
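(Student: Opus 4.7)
The proof is essentially an assembly of the machinery already developed in the introduction, and no genuinely new ideas seem to be needed. The plan is to combine the hypothesized convex body sparse domination with the intrinsic estimate \eqref{eq:convexbodydomestimateintro} to reduce matters to norm bounds on $T_{\mc{S}}$, and then invoke Theorem~\ref{thm:B} to trade these for norm bounds on $M^{\mc{K}}$.

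Concretely, fix $f\in L^\infty_c(\R^d;\F^n)\cap\mb{X}$. By the sparse domination hypothesis, there is a sparse collection $\mc{S}$ such that
\[
Tf(x)\in C_T\sum_{Q\in\mc{S}}\llangle f\rrangle_Q\ind_Q(x) = C_T\,T_{\mc{S}}(\mc{K}(f))(x)
\]
for a.e. $x\in\R^d$, where we used the identity $\langle \mc{K}(f)\rangle_Q=\llangle f\rrangle_Q$. Equivalently $\mc{K}(Tf)(x)\subseteq C_T\,T_{\mc{S}}(\mc{K}(f))(x)$ a.e. Using the monotonicity of $\|\cdot\|_{\mb{X}[\mc{K}]}$ under a.e. inclusion of sets (which is immediate from the definition via selections) together with the isometric embedding $f\mapsto\mc{K}(f)$ from $\mb{X}$ into $\mb{X}[\mc{K}]$, this yields
\[
\|Tf\|_{\mb{X}}=\|\mc{K}(Tf)\|_{\mb{X}[\mc{K}]}\leq C_T\|T_{\mc{S}}(\mc{K}(f))\|_{\mb{X}[\mc{K}]}\leq C_T\|T_{\mc{S}}\|_{\mb{X}[\mc{K}]\to\mb{X}[\mc{K}]}\|f\|_{\mb{X}},
\]
which is precisely \eqref{eq:convexbodydomestimateintro}.

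Finally, since $\mb{X}$ is K\"othe reflexive and $M^{\mc{K}}$ is bounded on both $\mb{X}[\mc{K}]$ and $\mb{X}'[\mc{K}]$, the implication \ref{it:thmB2}$\Rightarrow$\ref{it:thmB1} in Theorem~\ref{thm:B} applies, and the quantitative bound there gives
\[
\|T_{\mc{S}}\|_{\mb{X}[\mc{K}]\to\mb{X}[\mc{K}]}\lesssim_n\|M^{\mc{K}}\|_{\mb{X}[\mc{K}]\to\mb{X}[\mc{K}]}\|M^{\mc{K}}\|_{\mb{X}'[\mc{K}]\to\mb{X}'[\mc{K}]}
\]
uniformly over all sparse $\mc{S}$. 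Combining these two displays yields the claim. There is no real obstacle to execute this: the only non-trivial input is Theorem~\ref{thm:B}, and everything else is a direct application of the definitions and the isometric identification $\|\mc{K}(f)\|_{\mb{X}[\mc{K}]}=\|f\|_{\mb{X}}$.
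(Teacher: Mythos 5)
Your proof is correct and is precisely the argument the paper has in mind: it reproduces the intrinsic estimate \eqref{eq:convexbodydomestimateintro} (using $T_{\mc{S}}(\mc{K}(f))=\sum_{Q\in\mc{S}}\llangle f\rrangle_Q\ind_Q$, the ideal property of $\mb{X}[\mc{K}]$, and the isometry $\|\mc{K}(g)\|_{\mb{X}[\mc{K}]}=\|g\|_{\mb{X}}$) and then inserts the quantitative bound $\|T_{\mc{S}}\|_{\mb{X}[\mc{K}]\to\mb{X}[\mc{K}]}\lesssim_n\|M^{\mc{K}}\|_{\mb{X}[\mc{K}]\to\mb{X}[\mc{K}]}\|M^{\mc{K}}\|_{\mb{X}'[\mc{K}]\to\mb{X}'[\mc{K}]}$ from Theorem~\ref{thm:B}. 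The only cosmetic remark is that, strictly speaking, one should first use the ideal property of $\mb{X}[\mc{K}]$ to conclude $\mc{K}(Tf)\in\mb{X}[\mc{K}]$ and then Proposition~\ref{prop:convexsetfequalsf} to get $Tf\in\mb{X}$ before writing $\|Tf\|_{\mb{X}}=\|\mc{K}(Tf)\|_{\mb{X}[\mc{K}]}$; the logic is the same.
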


Finally, we initiate a study into the boundedness of averaging operators related to the Muckenhoupt condition in $\F^n$-directional Banach function spaces $\mb{X}$. As we noted in \eqref{eq:muckenhouptmatrixweightintro}, the matrix $A_p$ condition can be recovered through the uniform boundedness of the averaging operators
\[
T_QF(x):=\langle F\rangle_Q\ind_Q(x)
\]
in $L^p_W(\R^d;\mc{K})$. Intriguingly, one does not need to bound $T_Q$ on $L^p_W(\R^d;\mc{K})$ to recover the matrix $A_p$ condition. Indeed, if we define $T_Qf:=\langle f\rangle_Q\ind_Q$ for $f\in L^0(\R^d;\F^n)$, where $\langle f\rangle_Q:=\tfrac{1}{|Q|}\int_Q\!f\,\mathrm{d}x$, then we already have
\[
\sup_Q\|T_Q\|_{L^p_W(\R^d;\F^n)\to L^p_W(\R^d;\F^n)}=[W]_p,
\]
this time with a strict equality. In general, given a pairwise disjoint collection of cubes $\mc{P}$ and $T_{\mc{P}}f:=\sum_{Q\in\mc{P}}\langle f\rangle_Q\ind_Q$, then, by a proof based on the John ellipsoid theorem, we have
\[
T_{\mc{P}}:\mb{X}\to\mb{X}\quad \Leftrightarrow\quad T_{\mc{P}}:\mb{X}[\mc{K}]\to\mb{X}[\mc{K}]
\]
with comparable operator norm, up to a constant depending only on $n$, see Theorem~\ref{thm:pairwisedisjointavconvex} below.

When $n=1$, the Muckenhoupt condition $X\in A$ is defined as
\[
[X]_A:=\sup_Q|Q|^{-1}\|\ind_Q\|_X\|\ind_Q\|_{X'}<\infty,
\]
and the quantity $|Q|^{-1}\|\ind_Q\|_X\|\ind_Q\|_{X'}$ is precisely the operator norm of $T_Q:X\to X$, see \cite[Section~3]{Ni24}. Generalizing this to $n>1$, we say that an $\F^n$-directional Banach function space $\mb{X}$ with the component-wise saturation property satisfies the Muckenhoupt condition $\mb{X}\in A$ if
\[
[\mb{X}]_A:=\sup_Q\|T_Q\|_{\mb{X}\to\mb{X}}<\infty.
\]
As is the case for $n=1$, we can determine a more or less exact expression for the operator norm $\|T_Q\|_{\mb{X}\to\mb{X}}$ in terms of indicator functions. If $\mb{X}$ is K\"othe reflexive, then we have $X\in A$ precisely if for all cubes $Q$ we have $\ind_Q u\in\mb{X},\mb{X}'$ for all $u\in\F^n$, and there is a constant $C>0$ such that for every $u\in\F^n$ and every cube $Q$ there is a non-zero $v\in\F^n$ for which
\[
\|\ind_Qu\|_{\mb{X}}\|\ind_Q v\|_{\mb{X}'}\leq C|u\cdot v|.
\]
Moreover, the optimal $C$ coincides with $[\mb{X}]_A$, see Section~\ref{sec:averagingoperators} below. When $\mb{X}=L^p_W(\R^d;\F^n)$, then this is precisely the Muckenhoupt condition $W\in A_p$. Several further characterizations of $\mb{X}\in A$ are given in Theorem~\ref{thm:muckenhouptdef} below.

To study the relationship between the boundedness of $M^{\mc{K}}$ in $\mb{X}[\mc{K}]$ and the Muckenhoupt condition $\mb{X}\in A$, we also define an analogue of the \emph{strong} Muckenhoupt condition. We say that $\mb{X}$ satisfies the strong Muckenhoupt condition $\mb{X}\in A_{\text{strong}}$ if
\[
[\mb{X}]_{A_{\text{strong}}}:=\sup_{\mc{P}}\|T_{\mc{P}}\|_{\mb{X}\to\mb{X}}<\infty
\]
where the supremum is taken over all pairwise disjoint collections of cubes $\mc{P}$. As noted in the above discussion, we have
\[
[\mb{X}]_A\eqsim_n \sup_Q\|T_Q\|_{\mb{X}[\mc{K}]\to\mb{X}[\mc{K}]},\quad [\mb{X}]_{A_{\text{strong}}}\eqsim_n \sup_{\mc{P}}\|T_{\mc{P}}\|_{\mb{X}[\mc{K}]\to\mb{X}[\mc{K}]}.
\]
Thus, it is not surprising that the (strong) Muckenhoupt condition is related to the bounds of $M^{\mc{K}}$ in $\mb{X}[\mc{K}]$. We define
\[
MF(x):=\overline{\bigcup_{Q\ni x}\langle F\rangle_Q},
\]
so that $M^{\mc{K}}F(x)$ is the smallest set in $\mc{K}$ containing $MF(x)$. We say that $M:\mb{X}[\mc{K}]\to \mb{X}_{\text{weak}}[\mc{K}]$ if there is a constant $C>0$ such that for all $F\in\mb{X}[\mc{K}]$ we have
\[
\sup_{u\in\F^n}\|\ind_{\{x\in\R^d:u\in MF(x)\}}u\|_{\mb{X}}\leq C\|F\|_{\mb{X}[\mc{K}]}.
\]
Moreover, we denote the optimal constant $C$ by $\|M\|_{\mb{X}[\mc{K}]\to\mb{X}_{\text{weak}}[\mc{K}]}$. Weak-type bounds for $M^{\mc{K}}$ in $L^p(\R^d;\F^n)$ were also considered in \cite{BC23}, but their definition of a weak-type space differs from ours: they considered the condition that the function
\[
h(x):=\sup_{u\in M^{\mc{K}}F(x)}|u|
\]
satisfies $h\in L^{p,\infty}(\R^d)$. We note that, in fact, we have
\[
h(x)=\sup_{u\in MF(x)}|u|,
\]
so their bound for $M^{\mc{K}}$ and $M$ coincide. However when working with our definition of the weak-type bound, this equivalence no longer holds, and we have to work with the smaller operator $M$. Nevertheless, it seems that our weak-type bound is better suited for the theory in the sense that it characterizes certain Muckenhoupt conditions. Indeed, we have the following result, extending \cite[Theorem~B]{Ni24}:
\begin{TheoremLetter}\label{thm:E}
Let $\mb{X}$ be a K\"othe reflexive $\F^n$-directional Banach function space over $\R^d$ for which both $\mb{X}$ and $\mb{X}'$ satisfy the component-wise saturation property. Consider the following statements:
\begin{enumerate}[(a)]
    \item $M^{\mc{K}}:\mb{X}[\mc{K}]\to\mb{X}[\mc{K}]$;
    \item $\mb{X}\in A_{\text{strong}}$;
    \item $M:\mb{X}[\mc{K}]\to\mb{X}_{\text{weak}}[\mc{K}]$;
    \item $\mb{X}\in A$.
\end{enumerate}
Then \ref{it:mweak1}$\Rightarrow$\ref{it:mweak2}$\Rightarrow$\ref{it:mweak3}$\Rightarrow$\ref{it:mweak4} with
\[
[\mb{X}]_{A}\leq\|M\|_{\mb{X}[\mc{K}]\to\mb{X}_{\text{weak}}[\mc{K}]}\lesssim_{d,n}[\mb{X}]_{A_{\text{strong}}}\leq\|M^{\mc{K}}\|_{\mb{X}[\mc{K}]\to\mb{X}[\mc{K}]}.
\]
Furthermore, if there is a $C\geq 1$ such that for all pairwise disjoint collections of cubes $\mc{P}$ and all $f\in\mb{X}$, $g\in\mb{X}'$ we have
\begin{equation}\label{eq:mbxing}
\sum_{Q\in\mc{P}}\|\ind_Q f\|_{\mb{X}}\|\ind_Q g\|_{\mb{X}'}\leq C\|f\|_{\mb{X}}\|g\|_{\mb{X}'},
\end{equation}
then \ref{it:mweak2}-\ref{it:mweak4} are equivalent, with
\[
[\mb{X}]_{A_{\text{strong}}}\eqsim\|M\|_{\mb{X}[\mc{K}]\to\mb{X}_{\text{weak}}[\mc{K}]}\eqsim [\mb{X}]_A.
\]   
\end{TheoremLetter}
This result can be found as Theorem~\ref{thm:mweak} below. The condition \eqref{eq:mbxing} when $n=1$ is often denoted as $X\in\mc{G}$. Moreover, if $X\in\mc{G}$, then for any matrix weight $W$, the estimate \eqref{eq:mbxing} is satisfied by $X_W$. We refer the reader to \cite{Ni24} and references therein for an overview of the condition $\mc{G}$.

Example of spaces $X$ satisfying $X\in\mc{G}$ include variable Lebesgue spaces with global $\log$-H\"older regular exponents, see Subsection~\ref{subsec:varlebesgue} below. The resulting corollary of Theorem~\ref{thm:E} can be found as Theorem~\ref{thm:varlebesguemuckenhoupt} below. In particular, an application of H\"older's inequality shows that $X=L^p(\R^d)\in\mc{G}$ with $C=1$, in which case this corollary yields:
\begin{CorollaryLetter}\label{cor:F}
Let $1\leq p\leq\infty$ and let $W:\R^d\to\F^{n\times n}$ be a matrix weight. Then the following are equivalent:
\begin{enumerate}[(i)]
    \item $W\in A_p$;
    \item $M:L^p_W(\R^d;\mc{K})\to L^p_W(\R^d;\mc{K})_{\text{weak}}$.
\end{enumerate}
Moreover, we have
\[
[W]_p\eqsim \|M\|_{L^p_W(\R^d;\mc{K})\to L^p_W(\R^d;\mc{K})_{\text{weak}}}.
\]
\end{CorollaryLetter}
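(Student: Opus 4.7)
The plan is to deduce the corollary as an immediate application of the preceding theorem, specialized to $\mb{X} := L^p_W(\R^d;\F^n)$. Two inputs are needed: first, that $\mb{X}$ fits the theorem's framework with $[\mb{X}]_A = [W]_p$ and $\mb{X}[\mc{K}] = L^p_W(\R^d;\mc{K})$; second, that $\mb{X}$ satisfies the structural hypothesis \eqref{eq:mbxing}. Once both are in place, the equivalence \ref{it:mweak3}$\Leftrightarrow$\ref{it:mweak4} from that theorem, combined with Proposition~\ref{prop:supattainmentXW} identifying $\mb{X}[\mc{K}]$ with $L^p_W(\R^d;\mc{K})$, yields
\[
[W]_p = [\mb{X}]_A \eqsim \|M^{\mc{K}}\|_{\mb{X}[\mc{K}]\to \mb{X}_{\text{weak}}[\mc{K}]} = \|M^{\mc{K}}\|_{L^p_W(\R^d;\mc{K})\to L^p_W(\R^d;\mc{K})_{\text{weak}}}.
\]

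For the framework, K\"othe reflexivity of $L^p_W$ is standard for $1 < p < \infty$, and at the endpoints $p \in \{1,\infty\}$ it follows from the Fatou property of the scalar $L^p$-norm, inherited via $\|f\|_{\mb{X}} = \||Wf|\|_{L^p}$. The identity $[L^p_W]_A = [W]_p$ was already recorded in the introduction, as the operator norm of each $T_Q$ on $L^p_W(\R^d;\F^n)$ equals $[W]_p$ with strict equality, matching the definition $[\mb{X}]_A = \sup_Q \|T_Q\|_{\mb{X} \to \mb{X}}$.

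The core computation is the verification of \eqref{eq:mbxing} for $\mb{X} = L^p_W$ with constant $C = 1$. Since $W$ is Hermitian, the pointwise identity $f \cdot g = (Wf) \cdot (W^{-1} g)$ combined with Cauchy--Schwarz and H\"older's inequality yields the K\"othe dual estimate $\|g\|_{\mb{X}'} \leq \||W^{-1} g|\|_{L^{p'}}$. For a pairwise disjoint family $\mc{P}$ and $f \in \mb{X}$, $g \in \mb{X}'$, H\"older's inequality for sequences with exponents $p, p'$ then gives
\[
\sum_{Q \in \mc{P}} \|f\ind_Q\|_{\mb{X}}\|g\ind_Q\|_{\mb{X}'} \leq \has{\sum_{Q \in \mc{P}}\int_Q |Wf|^p}^{1/p}\has{\sum_{Q \in \mc{P}}\int_Q |W^{-1}g|^{p'}}^{1/p'} \leq \|f\|_{\mb{X}}\|g\|_{\mb{X}'},
\]
where the last step uses the disjointness of $\mc{P}$. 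The endpoints $p \in \{1,\infty\}$ are even more elementary. The only step requiring minor care is the endpoint K\"othe-reflexivity check, but this is routine; the quantitative content of the corollary is wholly contained in the preceding theorem.
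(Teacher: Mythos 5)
Your proposal is correct and matches the paper's own route: the paper deduces this corollary by combining Corollary~\ref{cor:propertygmatrixweight} (which transfers \eqref{eq:mbxing} from $X$ to $X_W$ via Proposition~\ref{prop:matrixweightedspaceduality}) with the observation that $L^p$ satisfies the scalar condition with constant $1$ by H\"older, and then applies Theorem~\ref{thm:mweak}; you have collapsed these two steps into a single direct verification. One small slip worth flagging: you state only the one-sided Cauchy--Schwarz estimate $\|g\|_{\mb{X}'}\leq\||W^{-1}g|\|_{L^{p'}}$, which is the direction used in the \emph{first} inequality of your display, but the \emph{last} inequality silently replaces $\||W^{-1}g|\|_{L^{p'}}$ by $\|g\|_{\mb{X}'}$ and therefore requires the reverse bound — that is, the full equality $(L^p_W)'=(L^{p'})_{W^{-1}}$ from Proposition~\ref{prop:matrixweightedspaceduality}, not just the easy direction.
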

Here $L^p_W(\R^d;\mc{K})_{\text{weak}}$ denotes the space $\mb{X}_{\text{weak}}[\mc{K}]$ for $\mb{X}=L^p_W(\R^d;\F^n)$. 

\subsection*{Organization}
This paper is organized as follows:
\begin{itemize}
    \item In Section~\ref{sec:convexbodies} we define the preliminary notions of convex bodies and convex-set valued functions that we will require throughout this work.
    \item In Section~\ref{sec:directionalqbfs} we introduce directional quasi-Banach function spaces and their convex-set valued analogues, and prove basic equivalences and results for these spaces. The directional analogue of the Lorentz-Luxemburg theorem is proven in Appendix~\ref{app:A}.
    \item In Section~\ref{sec:averagingoperators} we discuss the boundedness of averaging operators in directional Banach function spaces and their convex-set valued analogues in general $\sigma$-finite measure spaces. 
    \item In Section~\ref{sec:muckenhoupt} we discuss the Muckenhoupt condition and its relations to the bounds of the convex-set valued maximal operator.
    \item In Section~\ref{sec:extrapolation} we prove Theorem~\ref{thm:A}.
    \item In Section~\ref{sec:thmB} we prove Theorem~\ref{thm:B} and Theorem~\ref{thm:C}.
    \item In Section~\ref{sec:applications} we apply our main theorems to derive results for matrix-weighted variable Lebesgue and Morrey spaces, as well as for component-wise spaces.
\end{itemize}

\section{Convex bodies}\label{sec:convexbodies}
\begin{definition}
We let $\mc{C}$ denote the collection of non-empty closed sets $K\subseteq\F^n$. Moreover, we denote by $\mc{K}$ the subcollection of $\mc{C}$ of sets $K$ that satisfy the following additional properties:
\begin{itemize}
\item \emph{Convexity:} If $u,v\in K$, then for all $0\leq t\leq 1$ we have $(1-t)u+tv\in K$;
\item \emph{Symmetry:} If $u\in K$, then $\lambda u\in K$ for all $\lambda\in\F$ with $|\lambda|=1$;
\end{itemize}
We let $\mc{K}_b$ denote the \emph{bounded} sets in $\mc{K}$.
\end{definition}
For $K,L\in\mc{C}$ and $\lambda\in\F$ we define $K+L$  and $\lambda K$ through
\[
K+L:=\overline{\{u+v:u\in K,\, v\in L\}}\in\mc{C},\quad \lambda K:=\{\lambda u:u\in K\}\in\mc{C}.
\]
Taking the closure in the definition of $K+L$ is necessary, unless one of $K$ or $L$ is compact, in which case the Minkowski sum is closed.

As $\mc{K}$ is closed under arbitrary intersections, given a set $B\subseteq\F^n$, we define
\[
\mc{K}(B):=\bigcap\{K\in\mc{K}:B\subseteq K\}\in\mc{K},
\]
i.e., $\mc{K}(B)$ is the smallest set in $\mc{K}$ containing $B$. Note that if $B$ is bounded, then $\mc{K}(B)\in\mc{K}_b$.

Given a vector $u\in\F^n$, we write $\mc{K}(u):=\mc{K}(\{u\})$.
\begin{proposition}\label{prop:smallestkset}
Let $u\in\F^n$. Then $\mc{K}(u)=\{\lambda u:\lambda\in\F,\, |\lambda|\leq 1\}$.
\end{proposition}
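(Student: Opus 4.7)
The plan is to prove the two inclusions separately, where the harder direction is to show that every admissible $K$ containing $u$ must contain all of $\{\lambda u : |\lambda|\leq 1\}$.

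Let $D := \{\lambda u : \lambda \in \F,\, |\lambda|\leq 1\}$. First I would verify that $D \in \mc{K}$ and that $u \in D$ (taking $\lambda = 1$). Non-emptiness is immediate. For closedness, $D$ is the image of the compact closed unit disk of $\F$ under the continuous linear map $\lambda \mapsto \lambda u$, hence compact (and closed). For convexity, given $\lambda u, \mu u \in D$ and $t \in [0,1]$, the triangle inequality yields $|(1-t)\lambda + t\mu| \leq (1-t)|\lambda| + t|\mu| \leq 1$, so $(1-t)\lambda u + t\mu u \in D$. For symmetry, if $|\nu|=1$ and $|\lambda|\leq 1$, then $\nu(\lambda u) = (\nu\lambda)u \in D$ since $|\nu\lambda|\leq 1$. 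By the definition of $\mc{K}(u)$ as an intersection, this gives $\mc{K}(u) \subseteq D$.

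For the reverse inclusion, I would take an arbitrary $K \in \mc{K}$ with $u \in K$ and show $D \subseteq K$. If $u = 0$, then $D = \{0\}$ and I first need $0 \in K$; for general nonzero $u$, the key small observation is that $0 \in K$ as well. Indeed, by symmetry applied with the scalar $-1$ (which has modulus $1$ in both $\R$ and $\C$), $-u \in K$, and then by convexity with $t = \tfrac{1}{2}$, $\tfrac{1}{2}u + \tfrac{1}{2}(-u) = 0 \in K$. Once $0 \in K$, for any $\lambda \in \F$ with $0 < |\lambda| \leq 1$ I write $\lambda u = |\lambda|\bigl(\tfrac{\lambda}{|\lambda|} u\bigr) + (1 - |\lambda|) \cdot 0$; symmetry with the unit scalar $\lambda/|\lambda|$ gives $\tfrac{\lambda}{|\lambda|} u \in K$, and convexity then yields $\lambda u \in K$. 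The case $\lambda = 0$ is already handled, so $D \subseteq K$ for every such $K$, and hence $D \subseteq \mc{K}(u)$.

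Combining the two inclusions gives $\mc{K}(u) = D$, as desired. There is no genuine obstacle here; the only step that requires any thought is extracting $0 \in K$ from symmetry plus convexity, after which the polar-decomposition trick $\lambda = |\lambda| \cdot (\lambda/|\lambda|)$ delivers every scalar multiple in one convex combination with the origin.
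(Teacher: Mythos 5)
Your proof is correct and follows essentially the same approach as the paper: the easy inclusion $\mc{K}(u)\subseteq D$ since $D\in\mc{K}$ contains $u$, and then symmetry plus convexity to show $D$ is contained in any $K\in\mc{K}$ with $u\in K$. The only cosmetic difference is in the convex combination: you first extract $0\in K$ and then write $\lambda u$ as a convex combination of $\tfrac{\lambda}{|\lambda|}u$ and $0$, whereas the paper directly combines the two unit-modulus points $|\lambda|^{-1}v$ and $-|\lambda|^{-1}v$ with weight $t=\tfrac{1}{2}(1-|\lambda|)$; your version has the small advantage of handling the $\lambda=0$ case cleanly, which the paper glosses over.
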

\begin{proof}
Since $K:=\{\lambda u:\lambda\in\F,\, |\lambda|\leq 1\}\in\mc{K}$ and $u\in K$, we have $\mc{K}(u)\subseteq K$. Conversely, if $v\in K$, then $v=\lambda u$ for some $|\lambda|\leq 1$. Since $u\in \mc{K}(u)$, symmetry implies that also $|\lambda|^{-1}v,-|\lambda|^{-1}v\in\mc{K}(u)$. Hence, setting $t:=\tfrac{1}{2}(1-|\lambda|)\in[0,1]$, by convexity of $\mc{K}(u)$ we have
\[
v=(1-t)|\lambda|^{-1}v-t|\lambda|^{-1}v\in\mc{K}(u).
\]
The result follows.
\end{proof}

The space $\mc{C}_b$ of closed and bounded subsets of $\F^n$ is a complete metric space with respect to the Hausdorff distance
\[
d_{H}(K,L):=\max\{\sup_{u\in K}\inf_{v\in L}|u-v|,\sup_{v\in L}\inf_{u\in K}|u-v|\}.
\]
Moreover, $\mc{K}_b$ is a closed subset of $\mc{C}_b$ with respect to this metric.

\subsection{Convex-set valued mappings}
Given a $\sigma$-finite measure space $(\Omega,\mu)$, we define $L^0(\Omega;\mc{C})$ as the space of mappings $F:\Omega\to\mc{C}$ (modulo mappings a.e. equal to $\{0\}$) which are measurable in the sense that for every measurable set $E\subseteq\F^n$ the set
\[
F^{-1}(E):=\{x\in\Omega:F(x)\cap E\neq\emptyset\}
\]
is also measurable. We let $L^0(\Omega;\mc{K})$ denote the subset of $L^0(\Omega;\mc{C})$ consisting of the mappings $F:\Omega\to\mc{K}$, and similarly for $\mc{K}_b$. Measurability can be characterized as follows:
\begin{proposition}\label{prop:measurablemultifunction}
Let $F:\Omega\to\mc{C}$. Then the following are equivalent:
\begin{enumerate}[(i)]
\item\label{it:castaing1} $F\in L^0(\Omega;\mc{C})$;
\item\label{it:castaing2} $x\mapsto\inf_{u\in F(x)}|u-v|$ is measurable for all $v\in\F^n$;
\item\label{it:castaing3} $F^{-1}(U)$ is measurable for all open $U\subseteq\F^n$;
\item\label{it:castaing4} $F(x)=\overline{\{f_k(x):k\geq 1\}}$ a.e. for a sequence $(f_k)_{k\geq 1}$ in $L^0(\Omega;\F^n)$.
\end{enumerate}
Moreover, if $F:\Omega\to\mc{K}_b$, then the above are also equivalent to
\begin{enumerate}[(i)]\setcounter{enumi}{4}
    \item\label{it:castaing5} $F$ is measurable as a function from $\Omega$ into $\mc{K}_b$ equipped with the Borel $\sigma$-algebra induced by the Hausdorff distance.
\end{enumerate}
\end{proposition}
The equivalences \ref{it:castaing1}-\ref{it:castaing4} can be found in \cite[Theorem~8.1.4]{AF09}. For the equivalence with \ref{it:castaing5}, see \cite[Theorem~III.2]{CV77}.

The partial ordering with respect to inclusion of sets of $\mc{K}$ can be extended to $L^0(\Omega;\mc{K})$ as follows: for $F,G\in L^0(\Omega;\mc{K})$ we say that $F$  dominates $G$ if
\[
G(x)\subseteq F(x)
\]
for a.e. $x\in\Omega$. Moreover, given a function $f\in L^0(\Omega;\F^n)$, we define $\mc{K}(f):\Omega\to\mc{K}$ by
\[
\mc{K}(f)(x):=\mc{K}(f(x)).
\]
This is a measurable mapping, since for a countable dense subset $(\lambda_k)_{k\geq 1}$ of the set of $\{\lambda\in\F:|\lambda|\leq 1\}$ we have
\[
\mc{K}(f)(x)=\overline{\{\lambda_k f(x):k\geq 1\}}.
\]
For $f,g\in L^0(\Omega;\F^n)$ we say that $f$ dominates $g$ if
\[
\mc{K}(g)(x)\subseteq\mc{K}(f)(x)
\]
for a.e $x\in\Omega$. Note that this is not a strict partial ordering, since $\mc{K}(g)=\mc{K}(f)$ a.e. does not imply that $g=f$ a.e. When $n=1$, this partial ordering is precisely the relation $|g|\leq |f|$ a.e. For general $n$, $f$ dominates $g$ if and only if $|g(x)\cdot u|\leq|f(x)\cdot u|$ for all $u\in\F^n$ for a.e. $x\in\Omega$. This follows from the following result:
\begin{proposition}\label{prop:ideal}
Let $f,g\in L^0(\Omega;\F^n)$. The following are equivalent:
\begin{enumerate}[(i)]
    \item\label{it:ideal1} $\mc{K}(g)(x)\subseteq\mc{K}(f)(x)$ a.e;
    \item\label{it:ideal2} $g(x)\in\mc{K}(f)(x)$ a.e.;
    \item\label{it:ideal3} $|g(x)\cdot u|\leq|f(x)\cdot u|$ for all $u\in\F^n$ for a.e. $x\in\Omega$;
    \item\label{it:idealmatrix} $|Ag(x)|\leq|Af(x)|$ for all $A\in\F^{n\times n}$ for a.e. $x\in\Omega$;
    \item\label{it:ideal4} $g=hf$ for some $h\in L^\infty(\Omega)$ with $\|h\|_{L^\infty(\Omega)}\leq 1$.
\end{enumerate}
\end{proposition}
For the proof we use the following lemma:
\begin{lemma}\label{lem:collinear}
Let $v,w\in\F^n$. Then the following are equivalent:
\begin{enumerate}[(i)]
    \item\label{it:collinear1} $|v\cdot u|\leq|w\cdot u|$ for all $u\in\F^n$;
    \item\label{it:collinear2} $\mc{K}(v)\subseteq\mc{K}(w)$.
\end{enumerate}
\end{lemma}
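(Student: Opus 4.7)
The plan is to reduce everything to the explicit description of $\mc{K}(v)$ and $\mc{K}(w)$ given by Proposition~\ref{prop:smallestkset}, which tells us that $\mc{K}(w)=\{\lambda w:\lambda\in\F,\,|\lambda|\leq 1\}$, and likewise for $\mc{K}(v)$. Consequently, the inclusion $\mc{K}(v)\subseteq\mc{K}(w)$ is equivalent to the single statement $v\in\mc{K}(w)$, i.e.\ that $v=\lambda w$ for some $\lambda\in\F$ with $|\lambda|\leq 1$. Thus it suffices to prove the equivalence of \ref{it:collinear1} with this collinearity statement.

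For the easy direction \ref{it:collinear2}$\Rightarrow$\ref{it:collinear1}, I would simply note that if $v=\lambda w$ with $|\lambda|\leq 1$, then for every $u\in\F^n$ we have $|v\cdot u|=|\lambda|\,|w\cdot u|\leq|w\cdot u|$.

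For the converse \ref{it:collinear1}$\Rightarrow$\ref{it:collinear2}, I would first dispose of the trivial case $w=0$, in which \ref{it:collinear1} forces $v\cdot u=0$ for all $u$ and hence $v=0=0\cdot w$. When $w\neq 0$, the key observation is that for any $u$ in the orthogonal complement $w^\perp$ of $w$, the right-hand side of \ref{it:collinear1} vanishes, forcing $v\cdot u=0$. This shows $v\in (w^\perp)^\perp=\spn\{w\}$, so $v=\lambda w$ for some $\lambda\in\F$. Then testing \ref{it:collinear1} against $u=w$ yields $|\lambda|\,|w|^2=|v\cdot w|\leq|w\cdot w|=|w|^2$, and dividing by $|w|^2>0$ gives $|\lambda|\leq 1$, as required.

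There is no substantial obstacle here; the only point deserving slight care is handling the real versus complex case uniformly, but since $w^\perp$ and $\spn\{w\}$ are defined relative to the same scalar product $u\cdot v=\sum_k u_k\overline{v_k}$ used in \ref{it:collinear1}, the argument is field-agnostic. The proof is essentially a two-line reduction once Proposition~\ref{prop:smallestkset} is invoked.
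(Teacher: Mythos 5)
Your proof is correct and follows essentially the same route as the paper's: both reduce via Proposition~\ref{prop:smallestkset} to showing $v=\lambda w$ with $|\lambda|\leq 1$, both obtain collinearity from $w^\perp\subseteq v^\perp$ and hence $\spn\{v\}\subseteq\spn\{w\}$, and both then test the hypothesis against a single vector to bound $|\lambda|$. The only cosmetic difference is that you test $u=w$ (after separating out the case $w=0$), whereas the paper tests $u=v$; either works.
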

\begin{proof}
For \ref{it:collinear2}$\Rightarrow$\ref{it:collinear1}, note that by Proposition~\ref{prop:smallestkset} there is a $|\lambda|\leq 1$ for which $v=\lambda w$. Hence, for all $u\in\F^n$ we have
\[
|v\cdot u|=|\lambda||w\cdot u|\leq|w\cdot u|,
\]
as desired.

For \ref{it:collinear1}$\Rightarrow$\ref{it:collinear2}, note that the inequality implies that $w^{\perp}\subseteq v^{\perp}$, and, hence,
\[
\text{span}\{v\}=(v^{\perp})^{\perp}\subseteq(w^{\perp})^{\perp}=\text{span}\{w\}.
\]
Thus, there is a $\lambda\in\F$ such that $v=\lambda w$. Then
\[
|\lambda||v|^2=|\lambda||v\cdot v|\leq|\lambda||w\cdot v|=|v|^2,
\]
so $|\lambda|\leq 1$, proving the result by Proposition~\ref{prop:smallestkset}.
\end{proof}
\begin{proof}[Proof of Proposition~\ref{prop:ideal}]
The equivalence \ref{it:ideal1}$\Leftrightarrow$\ref{it:ideal3} follows from Lemma~\ref{lem:collinear}. Moreover, for \ref{it:ideal1}$\Leftrightarrow$\ref{it:ideal2}, note that $g(x)\in\mc{K}(f)(x)$ implies, per definition of $\mc{K}(g)$, that $\mc{K}(g)(x)\subseteq\mc{K}(f)(x)$. 

The implication \ref{it:ideal4}$\Rightarrow$\ref{it:idealmatrix} follows by noting that $|Ag(x)|=|h(x)||Af(x)|\leq|Af(x)|$ for all $A\in\F^{n\times n}$ for a.e. $x\in\Omega$. For \ref{it:idealmatrix}$\Rightarrow$\ref{it:ideal3}, note that for each $u\in\F^n$, the matrix $A\in\F^{n\times n}$ whose first row is given by $(\overline{u_1},\ldots,\overline{u_n})$ and is zero elsewhere, satisfies
\[
|g(x)\cdot u|=|Ag(x)|\leq |Af(x)|=|f(x)\cdot u|,
\]
as desired. Finally, for \ref{it:ideal2}$\Rightarrow$\ref{it:ideal4}, note that by Proposition~\ref{prop:smallestkset} for a.e. $x\in\Omega$ there is a $\lambda_x\in\F$, $|\lambda_x|\leq 1$, such that $g(x)=\lambda_x f(x)$, with $\lambda_x=0$ if $f(x)=0$. It remains to check that $h(x):=\lambda_x$ is measurable. Writing $f=(f_1,\ldots,f_n)$, $g=(g_1,\ldots g_n)$, we have $\supp(f)=\bigcup_{k=1}^n\supp(f_k)$ and $h(x)=\tfrac{g_k(x)}{f_k(x)}$ whenever $x\in\supp(f_k)$. Define $E_1:=\supp(f_1)$ and iteratively define $E_k:=\supp(f_k)\backslash \bigcup_{j=1}^{k-1} E_j$ for $k=2,\ldots,n$. Then
\[
h(x)=\sum_{k=1}^n\frac{g_k(x)}{f_k(x)}\ind_{E_k}(x),
\]
which is measurable. The assertion follows.
\end{proof}

\begin{remark}
Denoting the standard basis of $\F^n$ by $(e_k)_{k=1}^n$, the map
\[
\iota: L^0(\Omega\times\{1,\ldots,n\})\to L^0(\Omega;\F^n),\quad f\mapsto\sum_{k=1}^nf(\cdot,k)e_k,
\]
gives a natural one-to-one correspondence between $L^0(\Omega;\F^n)$ and $L^0(\Omega\times\{1,\ldots,n\})$, where $\{1,\ldots,n\}$ is equipped with the counting measure, and $\Omega\times\{1,\ldots,n\}$ with the product measure. One could be tempted to define $f\leq g$ on $L^0(\Omega;\F^n)$ by asking for the component-wise ordering
\begin{equation}\label{eq:wrongordering}
|f(x)\cdot e_k|\leq |g(x)\cdot e_k|
\end{equation}
for all $k\in\{1,\ldots,n\}$ for a.e. $x\in\Omega$. By Proposition~\ref{prop:ideal}, this partial ordering is weaker than the one we assume. The biggest advantage of ordering vectors the way that we have, is that if $\mc{K}(u)\subseteq\mc{K}(v)$, then $|Au|\leq|Av|$ for any matrix $A\in\F^{n\times n}$. This fails under \eqref{eq:wrongordering}.
\end{remark}

We can extend the set operations on $\mc{K}$ to $L^0(\Omega;\mc{K})$ pointwise. That is, for $F,G\in L^0(\Omega;\mc{K})$ and $\lambda\in\F$ we can define $F+G$ and $\lambda F$ for a.e. $x\in\Omega$ by
\[
(F+G)(x):=\overline{\{u+v:u\in F(x),\, v\in G(x)\}},\quad (\lambda F)(x):=\lambda F(x).
\]
\begin{definition}
Let $F\in L^0(\Omega;\mc{K})$. We define the space of measurable selections of $F$ by
\[
S^0(\Omega;F):=\{f\in L^0(\Omega;\F^n):f(x)\in F(x)\text{ for a.e. $x\in\Omega$}\}.
\]
\end{definition}
\begin{proposition}\label{prop:closedconvexsections}
Let $F\in L^0(\Omega;\mc{K})$. Then the set $S^0(\Omega;F)$ is a non-empty, convex, and symmetric subset of $L^0(\Omega;\F^n)$ that is closed with respect to convergence in measure.
\end{proposition}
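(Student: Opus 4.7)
The statement packages four assertions—non-emptiness, convexity, symmetry, and closedness under convergence in measure—that ultimately all come down to the pointwise structure of $F(x)\in\mc{K}$. My plan is to verify each in turn, relying on the axioms defining $\mc{K}$ (convexity, symmetry, closedness, non-emptiness) together with standard measure-theoretic manipulations; no deep selection theorem is needed here, since a trivial selection will suffice for non-emptiness.

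For non-emptiness, the clean observation is that every $K\in\mc{K}$ contains $0$: pick any $u\in K$, use symmetry to obtain $-u\in K$, and then convexity gives $0=\tfrac12 u+\tfrac12(-u)\in K$. Consequently the measurable constant map $0$ belongs to $S^0(\Omega;F)$. Convexity and symmetry of $S^0(\Omega;F)$ then follow by pointwise inspection: if $f,g\in S^0(\Omega;F)$ and $0\leq t\leq 1$, the measurable function $(1-t)f+tg$ takes values in the convex set $F(x)$ almost everywhere, and analogously $\lambda f\in S^0(\Omega;F)$ whenever $|\lambda|=1$ by pointwise symmetry of $F(x)$.

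The only step requiring more than a one-line argument is closedness under convergence in measure. Given $(f_k)_{k\geq 1}\subseteq S^0(\Omega;F)$ converging in measure to some $f\in L^0(\Omega;\F^n)$, I would use $\sigma$-finiteness of $\Omega$ to extract a subsequence $(f_{k_j})$ converging to $f$ almost everywhere. Discarding a countable union of null sets, for a.e.\ $x\in\Omega$ one has $f_{k_j}(x)\in F(x)$ for every $j$, and $f_{k_j}(x)\to f(x)$ in $\F^n$; since $F(x)$ is closed, $f(x)\in F(x)$, so $f\in S^0(\Omega;F)$. I do not foresee a real obstacle: the main thing to remember is that membership in $F(x)$ is preserved by pointwise (i.e., along an a.e.-convergent subsequence) limits rather than by in-measure limits directly, which is precisely why the passage to a subsequence is needed.
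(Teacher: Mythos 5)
Your proof is correct and follows essentially the same route as the paper's: the zero selection gives non-emptiness, convexity and symmetry are checked pointwise, and closedness comes from extracting an a.e.\ convergent subsequence and using that each $F(x)$ is closed. The only slight difference is that you spell out why $0\in K$ for every $K\in\mc{K}$, a step the paper leaves implicit.
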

\begin{proof}
To see that $S^0(\Omega;F)$ is non-empty, note that it contains the $0$ selection $f=0$. For convexity, if $f,g\in S^0(\Omega;F)$, then for a.e. $x\in\Omega$ and all $0\leq t\leq 1$ we have $(1-t)f(x)+tg(x)\in F(x)$, since $F(x)$ is convex. Thus, $(1-t)f+t g\in S^0(\Omega;F)$. Symmetry is proven analogously.

To see that $S^0(\Omega;F)$ is closed, let $(f_k)_{k\geq 1}$ be a sequence in $S^0(\Omega;F)$ that converges in measure to a function $f\in L^0(\Omega;\F^n)$. Then there is a subsequence $(f_{k_j})_{j\geq 1}$ that converges pointwise a.e. to $f$. Hence, for a.e. $x\in\Omega$,
\[
f(x)=\lim_{j\to\infty}f_{k_j}(x)\in\overline{F(x)}=F(x),
\]
since $F(x)$ is closed. Thus, $f\in S^0(\Omega;F)$, proving the assertion.
\end{proof}

\begin{definition}
We define $L^1(\Omega;\mc{K})$ as the space of $F\in L^0(\Omega;\mc{K})$ for which $S^0(\Omega;F)$ is a bounded subset of $L^1(\Omega;\F^n)$, and we write
\[
\|F\|_{L^1(\Omega;\mc{K})}:=\sup_{f\in S^0(\Omega;F)}\|f\|_{L^1(\Omega;\F^n)}.
\]
For $F\in L^1(\Omega;\mc{K})$ we define the \emph{Aumann integral} of $F$ as
\[
\int_\Omega\!F\,\mathrm{d}\mu:=\Big\{\int_\Omega\!f\,\mathrm{d}\mu:f\in S^0(\Omega;F)\Big\}.
\]
\end{definition}
Since $L^1(\Omega;\mc{K})$ is not a vector space due to the fact that Minkowski addition does not have an additional inverse, $\|\cdot\|_{L^1(\Omega;\mc{K})}$ is not a norm in the conventional sense. However, it does satisfy the property that if $F,G\in L^1(\Omega;\mc{K})$, then $F+G\in L^1(\Omega;\mc{K})$ with
\[
\|F+G\|_{L^1(\Omega;\mc{K})}\leq\|F\|_{L^1(\Omega;\mc{K})}+\|G\|_{L^1(\Omega;\mc{K})}
\]
We give proofs and further elaborations on these properties in Subsections~\ref{subsec:XK} and \ref{subsec:sumsinXK}.

If $F\in L^1(\Omega;\mc{K})$ then, since the embedding $L^1(\Omega;\F^n)\subseteq L^0(\Omega;\F^n)$ is continuous when $L^0(\Omega;\F^n)$ is endowed with the topology of convergence in measure and $L^1(\Omega;\F^n)$ with its usual norm topology, it follows from Proposition~\ref{prop:closedconvexsections} that $S^0(\Omega;F)$ is non-empty, convex, symmetric, and a closed subset in $L^1(\Omega;\F^n)$ with respect to the norm topology.

We say that $F$ is integrably bounded if there exists a $0\leq k\in L^1(\Omega)$ for which for a.e. $x\in\Omega$ we have
\[
F(x)\subseteq\{k(x)u\in\F^n :|u|\leq 1\}.
\]
It is shown in \cite[Theorem~3.15]{BC23} that if $F$ is integrably bounded, then $\int_\Omega\!F\,\mathrm{d}\mu\in\mc{K}_b$. This is the case for any $F\in L^1(\Omega;\mc{K})$:
\begin{proposition}\label{prop:integrablybounded}
Let $F\in L^0(\Omega;\mc{K})$. Then the following are equivalent:
\begin{enumerate}[(i)]
\item\label{it:intbdd2} $F\in L^1(\Omega;\mc{K})$;
    \item\label{it:intbdd1} $F$ is integrably bounded;
    \item\label{it:intbdd3} $h\in L^1(\Omega)$, where $h(x):=\sup_{u\in F(x)}|u|$.
\end{enumerate}
Moreover, in this case we have $\|F\|_{L^1(\Omega;\mc{K})}=\|h\|_{L^1(\Omega)}$.
\end{proposition}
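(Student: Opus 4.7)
The plan is to record that $h$ is measurable, verify the easy equivalences, and then establish the key direction \ref{it:intbdd2}$\Rightarrow$\ref{it:intbdd3} via a Castaing representation of $F$. By \ref{it:castaing1}$\Leftrightarrow$\ref{it:castaing4}, I may write $F(x)=\overline{\{f_k(x):k\geq 1\}}$ a.e.\ for measurable $f_k:\Omega\to\F^n$. Since $u\mapsto|u|$ is continuous, $h(x)=\sup_{k\geq 1}|f_k(x)|$, which is measurable, and by construction each $f_k$ already lies in $S^0(\Omega;F)$.

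The easy equivalences go as follows. For \ref{it:intbdd1}$\Leftrightarrow$\ref{it:intbdd3}: any integrable bound $k$ forces $h\leq k$ a.e., while conversely the choice $k:=h$ witnesses integrable boundedness. For \ref{it:intbdd3}$\Rightarrow$\ref{it:intbdd2}: any $f\in S^0(\Omega;F)$ obeys $|f|\leq h$ a.e., so taking the supremum over selections immediately yields $F\in L^1(\Omega;\mc{K})$ together with the bound $\|F\|_{L^1(\Omega;\mc{K})}\leq\|h\|_{L^1(\Omega)}$.

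For the remaining direction \ref{it:intbdd2}$\Rightarrow$\ref{it:intbdd3}, my plan is to truncate and pick an argmax selection. Setting $h_N(x):=\max_{1\leq k\leq N}|f_k(x)|$ and
\[
j_N(x):=\min\{k\leq N:|f_k(x)|=h_N(x)\},
\]
measurability of $j_N$ follows from the identity $\{j_N=k\}=\{|f_k|=h_N\}\cap\bigcap_{\ell<k}\{|f_\ell|<h_N\}$, so $\hat f_N(x):=f_{j_N(x)}(x)$ is a measurable selection of $F$ with $|\hat f_N|=h_N$. Hypothesis \ref{it:intbdd2} then gives $\|h_N\|_{L^1(\Omega)}=\|\hat f_N\|_{L^1(\Omega;\F^n)}\leq\|F\|_{L^1(\Omega;\mc{K})}$, and monotone convergence through $h_N\uparrow h$ yields the reverse inequality $\|h\|_{L^1(\Omega)}\leq\|F\|_{L^1(\Omega;\mc{K})}$; combined with the bound from the previous paragraph, this simultaneously establishes \ref{it:intbdd3} and the norm identification $\|F\|_{L^1(\Omega;\mc{K})}=\|h\|_{L^1(\Omega)}$. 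The only genuine subtlety is producing a measurable selection attaining the pointwise supremum, and the explicit argmin $j_N$ handles this without appealing to the Filippov selection theorem or any a priori boundedness hypothesis on $F(x)$.
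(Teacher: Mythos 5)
Your proof is correct, and the key direction \ref{it:intbdd2}$\Rightarrow$\ref{it:intbdd3} takes a genuinely different route from the paper. The paper truncates the \emph{set-valued map} by intersecting with balls, $F_k(x):=\{u\in F(x):|u|\leq k\}$, obtaining a map into $\mc{K}_b$ to which it applies Lemma~\ref{lem:measurablesupremumconvexset} (a measurable-supremum selection lemma imported from \cite{BC23,KNV24}) to produce a selection attaining the exact supremum at level $k$; monotone convergence then finishes. You instead truncate the \emph{Castaing sequence}: take the first $N$ selections from $F(x)=\overline{\{f_k(x):k\geq 1\}}$, build the explicit argmax selection $\hat f_N$ by an argmin-of-index trick, and again finish by monotone convergence. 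Your route is more self-contained — it avoids both Lemma~\ref{lem:measurablesupremumconvexset} and the claim that $F_k\in L^0(\Omega;\mc{K}_b)$ — at the modest cost of verifying that each Castaing $f_k$ is itself a selection (true, since $f_k(x)\in\overline{\{f_j(x):j\geq 1\}}=F(x)$ a.e.) and that $h(x)=\sup_k|f_k(x)|$ a.e. (true, since every $u\in F(x)$ is a pointwise limit of the $f_{k_j}(x)$). The paper's version has the advantage of producing, as a byproduct, a single selection attaining the supremum exactly when $F$ is bounded-valued, which it reuses elsewhere; your version is lighter when only the $L^1$ identity is needed. Both the easy equivalences and the norm bookkeeping in your write-up are correct.
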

Note that the function $h$ is measurable, as it satisfies $h(x)=\sup_{k\geq 1}|f_k(x)|$ for any sequence $(f_k)_{k\geq 1}$ in $L^0(\Omega;\F^n)$ satisfying
\[
F(x)=\overline{\{f_k(x):k\geq 1\}}.
\]
For the proof, we will need \cite[Lemma~3.9]{BC23, KNV24}:
\begin{lemma}\label{lem:measurablesupremumconvexset}
Let $F\in L^0(\Omega;\mc{K}_b)$. Then there is a $f_0\in S^0(\Omega;F)$ for which for a.e. $x\in\Omega$ we have $|f_0(x)|=\sup_{u\in F(x)}|u|$.
\end{lemma}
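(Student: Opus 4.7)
The plan is to apply the Filippov selection theorem, stated in the introduction, to the continuous map $\phi(x,u):=|u|$. The key observations needed are that $h(x):=\sup_{u\in F(x)}|u|$ is measurable, that this supremum is pointwise attained (so that the hypothesis of Filippov holds), and then that the resulting selection is automatically in $S^0(\Omega;F)$.

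First, I would verify measurability of $h$. Using the Castaing-type representation from the equivalence \ref{it:castaing1}$\Leftrightarrow$\ref{it:castaing4}, we may write $F(x)=\overline{\{f_k(x):k\geq 1\}}$ a.e.\ for some sequence $(f_k)_{k\geq 1}$ in $L^0(\Omega;\F^n)$. Since $F(x)$ is bounded and $u\mapsto|u|$ is continuous, passing to the closure does not change the supremum, so $h(x)=\sup_{k\geq 1}|f_k(x)|$ is a countable supremum of measurable functions, hence measurable.

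Second, since $F(x)\in\mc{K}_b$ for a.e.\ $x\in\Omega$, the set $F(x)$ is closed and bounded in the finite-dimensional space $\F^n$, hence compact. Continuity of the norm then guarantees that the supremum $h(x)$ is \emph{attained}: there exists $u\in F(x)$ with $|u|=h(x)$. Equivalently, $h(x)=\phi(x,u)$ for some $u\in F(x)$, where $\phi(x,u):=|u|$ is continuous in $u$ and independent of (hence trivially measurable in) $x$.

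Third, Filippov's theorem then produces a measurable selection $f_0\in S^0(\Omega;F)$ satisfying $h(x)=\phi(x,f_0(x))=|f_0(x)|$ a.e., as required. The main subtlety is the pointwise attainment of the supremum, which crucially relies on the compactness of $F(x)$ and hence on the assumption $F\in L^0(\Omega;\mc{K}_b)$; without boundedness one would only obtain approximate selections via Castaing's theorem. Otherwise the argument is a routine application of Filippov to the continuous cost $\phi(x,u)=|u|$.
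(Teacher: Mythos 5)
Your proof is correct. The paper does not actually give its own argument for this lemma—it cites it to [BC23, KNV24]—but the route you take is exactly the one the paper follows for the more general Proposition~\ref{prop:supattainmentXW}, where Theorem~\ref{thm:filippov} is applied with $\phi(x,u)=|W(x)u|$; your argument is the special case $W\equiv I$. All three of your steps are sound: measurability of $h$ via the Castaing representation (the paper itself records this observation immediately after the statement of Proposition~\ref{prop:integrablybounded}), attainment of the supremum by compactness of $F(x)\in\mc{K}_b$, and then Filippov. One minor presentational caveat: in the paper's linear order, Filippov's theorem is only stated in Section~\ref{sec:directionalqbfs}, after the lemma, so a proof placed at the lemma's location would be a forward reference; this is harmless since Theorem~\ref{thm:filippov} is an external citation to \cite[Theorem~8.2.10]{AF09} and does not depend on the lemma, but it is presumably why the paper chose to cite the lemma outright rather than reprove it there.
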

\begin{proof}[Proof of Proposition~\ref{prop:integrablybounded}]
For \ref{it:intbdd3}$\Rightarrow$\ref{it:intbdd1}, note that for any $v\in F(x)$ we have
\[
|v|\leq h(x).
\]
Hence, for $u:=h(x)^{-1}v$ if $h(x)\neq 0$ and $u:=0$ if $h(x)=0$, we have $|u|\leq 1$ and $v=h(x)u$. We conclude that
\[
F(x)\subseteq\{h(x)u\in\F^n :|u|\leq 1\}.
\]
As $h\in L^1(\Omega)$, this proves the assertion.

For \ref{it:intbdd1}$\Rightarrow$\ref{it:intbdd2}, note that for any $f\in S^0(\Omega;F)$ we have $|f(x)|\leq k(x)$, where $k$ is the function in the definition of integrably bounded. Hence, $f\in L^1(\Omega;\F^n)$ with $\|f\|_{L^1(\Omega;\F^n)}\leq\|k\|_{L^1(\Omega)}$. We conclude that $F\in L^1(\Omega;\mc{K})$ with
\[
\|F\|_{L^1(\Omega;\mc{K})}\leq\|k\|_{L^1(\Omega)},
\]
as desired.

To prove \ref{it:intbdd2}$\Rightarrow$\ref{it:intbdd3}, define
\[
F_k(x):=\{u\in F(x):|u|\leq k\}.
\]
Then $F_k\in L^0(\Omega;\mc{K}_b)$, so by Lemma~\ref{lem:measurablesupremumconvexset} there is a selection $f_k\in S^0(\Omega;F_k)$ for which
\[
h_k(x):=|f_k(x)|=\sup_{u\in F_k(x)}|u|.
\]
Note that $h_k(x)\uparrow h(x):=\sup_{u\in F(x)}|u|$ a.e., and that
\[
\sup_{k\geq 1}\|h_k\|_{L^1(\Omega)}=\sup_{k\geq 1}\|f_k\|_{L^1(\Omega;\F^n)}\leq\|F\|_{L^1(\Omega;\mc{K})}<\infty.
\]
Hence, by the monotone convergence theorem, $h\in L^1(\Omega)$ with
\[
\|h\|_{L^1(\Omega)}=\sup_{k\geq 1}\|h_k\|_{L^1(\Omega)}\leq\|F\|_{L^1(\Omega;\mc{K})}.
\]
The result follows.
\end{proof}

\begin{corollary}\label{cor:integrablybounded}
Let $F\in L^1(\Omega;\mc{K})$. Then there is an $f_0\in S^0(\Omega;F)$ such that
\[
\|F\|_{L^1(\Omega,\mc{K})}=\|f_0\|_{L^1(\Omega;\F^n)}.
\]
\end{corollary}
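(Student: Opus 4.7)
The plan is to extract the desired selection directly from the work already done inside the proof of Proposition~\ref{prop:integrablybounded}. Combined with Lemma~\ref{lem:measurablesupremumconvexset}, that proof essentially provides the required $f_0$; what remains is to observe that the relevant integrability hypothesis guarantees $F$ takes values in $\mc{K}_b$ almost everywhere, so the lemma applies directly (rather than only to the truncated sets $F_k$).

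More precisely, first I would invoke Proposition~\ref{prop:integrablybounded} to obtain $h \in L^1(\Omega)$ defined by $h(x) := \sup_{u \in F(x)} |u|$, with the identity $\|h\|_{L^1(\Omega)} = \|F\|_{L^1(\Omega;\mc{K})}$. Since $h$ is integrable, the set $N := \{x \in \Omega : h(x) = \infty\}$ is a null set, and off $N$ the set $F(x) \in \mc{K}$ is bounded, hence belongs to $\mc{K}_b$. I would then redefine $F$ on $N$ by setting $F(x) := \{0\}$ (this change has no effect in the equivalence class of $L^0(\Omega;\mc{K})$) so that $F \in L^0(\Omega;\mc{K}_b)$.

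Next, I would apply Lemma~\ref{lem:measurablesupremumconvexset} to this modified $F$ to obtain a selection $f_0 \in S^0(\Omega;F)$ with $|f_0(x)| = \sup_{u \in F(x)} |u| = h(x)$ for a.e.\ $x \in \Omega$. Integrating this identity gives
\[
\|f_0\|_{L^1(\Omega;\F^n)} = \|h\|_{L^1(\Omega)} = \|F\|_{L^1(\Omega;\mc{K})},
\]
which is the required equality. Since $f_0 \in S^0(\Omega;F)$, the reverse inequality $\|f_0\|_{L^1(\Omega;\F^n)} \leq \|F\|_{L^1(\Omega;\mc{K})}$ is automatic from the definition of $\|F\|_{L^1(\Omega;\mc{K})}$, so the supremum is actually attained.

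There is no real obstacle here; the only subtlety is the one noted above, namely that Lemma~\ref{lem:measurablesupremumconvexset} is stated for mappings valued in $\mc{K}_b$, and one must justify passing from the a.e.\ boundedness provided by $h \in L^1$ to a genuine $\mc{K}_b$-valued representative. This is handled by the harmless modification on the null set $N$.
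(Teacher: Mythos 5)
Your proof is correct and follows the same route as the paper: invoke Proposition~\ref{prop:integrablybounded} to get $h\in L^1(\Omega)$ with $\|h\|_{L^1(\Omega)}=\|F\|_{L^1(\Omega;\mc{K})}$, deduce that $F$ is a.e.\ $\mc{K}_b$-valued, and then apply Lemma~\ref{lem:measurablesupremumconvexset} to extract the optimal selection $f_0$. The only difference is cosmetic: you explicitly redefine $F$ on the null set $\{h=\infty\}$ to land in $\mc{K}_b$, whereas the paper leaves this modulo-null-set adjustment implicit.
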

\begin{proof}
By Proposition~\ref{prop:integrablybounded} the function $h(x)=\sup_{u\in F(x)}|u|$ lies in $L^1(\Omega)$. This implies that $|h(x)|<\infty$ a.e., and, hence $F\in L^0(\Omega;\mc{K}_b)$. If we let $f_0$ be as in Lemma~\ref{lem:measurablesupremumconvexset}. Then, by Proposition~\ref{prop:integrablybounded},
\[
\|F\|_{L^1(\Omega;\mc{K})}=\|h\|_{L^1(\Omega)}=\|f_0\|_{L^1(\Omega;\F^n)}.
\]
This proves the result.
\end{proof}

\section{Directional quasi-Banach function spaces}\label{sec:directionalqbfs}
We say that $X$ is a \emph{quasi-Banach function space over $\Omega$} if it is a complete quasi-normed vector space $X\subseteq L^0(\Omega)$ that satisfies:
\begin{itemize}
    \item \emph{The ideal property}: for all $f\in X$ and $g\in L^0(\Omega)$ with $|g|\leq |f|$ a.e. we have $g\in X$ with $\|g\|_X\leq\|f\|_X$;
\item\emph{The saturation property}: for every $E\subseteq\Omega$ with $\mu(E)>0$ there is an $F\subseteq E$ with $\mu(F)>0$ and $\ind_F\in X$. 
\end{itemize}
We let $K_X$ denote the optimal constant in the quasi-triangle inequality
\[
\|f+g\|_X\leq K_X(\|f\|_X+\|g\|_X).
\]
If $K_X=1$, i.e., the quasi-norm of $X$ is a norm, then we call $X$ a Banach function space over $\Omega$. The saturation property is equivalent to the property that the seminorm
\[
\|g\|_{X'}:=\sup_{\|f\|_X=1}\int_\Omega\!|fg|\,\mathrm{d}x
\]
is a norm, or to the existence of a \emph{weak order unit}, i.e., a function $\rho\in X$ satisfying $\rho(x)>0$ for a.e. $x\in\Omega$. This result as well as an overview of this property can be found in \cite{LN23b}. In particular, as it is used several times throughout this text, we record part of \cite[Proposition~2.5]{LN23b} here:
\begin{proposition}\label{prop:weakorderunit}
Let $X\subseteq L^0(\Omega)$ be a quasi-Banach space satisfying the ideal property. Then the following are equivalent:
\begin{enumerate}[(i)]
    \item $X$ satisfies the saturation property;
    \item $X$ has a weak order unit, i.e., there is a function $\rho\in X$ satisfying $\rho>0$ a.e.;
    \item If $g\in L^0(\Omega)$ satisfies $\int_\Omega fg\,\mathrm{d}\mu=0$ for all $f\in X$, then $g=0$ a.e.
\end{enumerate}
\end{proposition}

\begin{definition}
We say that $\mb{X}$ is an $\F^n$-directional quasi-Banach function space over $\Omega$ if it is a complete quasi-normed subspace of $L^0(\Omega;\F^n)$ and satisfies the following properties:
\begin{itemize}
    \item \emph{The directional ideal property:} For all $f\in\mb{X}$ and $g\in L^0(\Omega;\F^n)$ satisfying $\mc{K}(g)(x)\subseteq\mc{K}(f)(x)$ a.e., we have $g\in\mb{X}$ with $\|g\|_{\mb{X}}\leq\|f\|_{\mb{X}}$;
    \item\emph{Non-degeneracy:} If $g\in L^0(\Omega;\F^n)$ satisfies $\int_\Omega\!f\cdot g\,\mathrm{d}\mu=0$ for all $f\in\mb{X}$, then $g=0$.
\end{itemize}
\end{definition}
We let $K_{\mb{X}}$ denote the optimal constant in the quasi-triangle inequality. If $K_{\mb{X}}=1$, i.e., the quasi-norm of $\mb{X}$ is a norm, then we call $\mb{X}$ an $\F^n$-directional Banach function space over $\Omega$. Completeness is equivalent to the Riesz-Fischer property, i.e., for every $(f_k)_{k\geq 1}$ for which $C:=\sum_{k=1}^\infty K_{\mb{X}}^k\|f_k\|_{\mb{X}}<\infty$, the partial sums $\sum_{k=1}^K f_k$ have a limit $f\in\mb{X}$ with $\|f\|_{\mb{X}}\leq K_X C$.

For verifying the directional ideal property, one can use any of the equivalences of Proposition~\ref{prop:ideal}.

Given an $\F^n$-directional quasi-Banach function space $\mb{X}$ over $\Omega$, we define $\mb{X}'$ as the space of $g\in L^0(\Omega;\F^n)$ for which
\[
\|g\|_{\mb{X}'}:=\sup_{\|f\|_{\mb{X}}=1}\int_\Omega\!|f\cdot g|\,\mathrm{d}\mu<\infty.
\]
The non-degeneracy property is equivalent to the assertion that the seminorm $\|\cdot\|_{\mb{X}'}$ is a norm. This follows from the following proposition:
\begin{proposition}\label{prop:normoutsideintegral}
Let $\mb{X}$ be an $\F^n$-directional quasi-Banach function space over $\Omega$. Then we have $g\in\mb{X}'$ if and only if there is a $C\geq 0$ such that for all $f\in\mb{X}$ we have
\[
\Big|\int_\Omega\!f\cdot g\,\mathrm{d}\mu\Big|\leq C\|f\|_{\mb{X}}.
\]
Moreover, in this case the smallest possible $C$ satisfies $\|g\|_{\mb{X}'}=C$.
\end{proposition}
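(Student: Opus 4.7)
The plan is to prove the two implications separately and read off the constant from the proof. The easy direction is immediate: if $g\in\mb{X}'$, then for every $f\in\mb{X}$,
\[
\Big|\int_\Omega\!f\cdot g\,\mathrm{d}\mu\Big|\leq\int_\Omega\!|f\cdot g|\,\mathrm{d}\mu\leq\|g\|_{\mb{X}'}\|f\|_{\mb{X}},
\]
so the bound holds with $C=\|g\|_{\mb{X}'}$. This already shows that the optimal $C$ is at most $\|g\|_{\mb{X}'}$.

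For the converse, the approach is the classical phase-unwinding trick adapted to the vector-valued setting. Fix $f\in\mb{X}$ and define a measurable scalar $h$ of modulus at most $1$ that cancels the phase of $f\cdot g$; concretely, take
\[
h(x):=\overline{\sgn(f(x)\cdot g(x))}
\]
with $\sgn(0):=0$, so that $\|h\|_{L^\infty(\Omega)}\leq 1$ and $h(x)(f(x)\cdot g(x))=|f(x)\cdot g(x)|$ pointwise. The key observation, supplied by the implication (iv)$\Rightarrow$(i) of Proposition~\ref{prop:ideal}, is that $\mc{K}(hf)(x)\subseteq\mc{K}(f)(x)$ a.e., so the directional ideal property gives $hf\in\mb{X}$ with $\|hf\|_{\mb{X}}\leq\|f\|_{\mb{X}}$. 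Since $(hf)\cdot g=h(f\cdot g)$ is pointwise equal to $|f\cdot g|\geq 0$, the integral $\int_\Omega (hf)\cdot g\,\mathrm{d}\mu$ is a non-negative real number, and applying the hypothesized weak bound to $hf$ yields
\[
\int_\Omega\!|f\cdot g|\,\mathrm{d}\mu=\Big|\int_\Omega\!(hf)\cdot g\,\mathrm{d}\mu\Big|\leq C\|hf\|_{\mb{X}}\leq C\|f\|_{\mb{X}}.
\]
Taking the supremum over $\|f\|_{\mb{X}}=1$ gives $g\in\mb{X}'$ with $\|g\|_{\mb{X}'}\leq C$, which together with the first direction yields equality of the optimal constant with $\|g\|_{\mb{X}'}$.

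I do not anticipate any genuine obstacle; the whole argument rests on the equivalence (i)$\Leftrightarrow$(iv) of Proposition~\ref{prop:ideal}, which is precisely what licenses the scalar phase multiplication to stay inside $\mb{X}$ without inflating the norm. This equivalence encodes the fact that the directional ordering on $L^0(\Omega;\F^n)$ was defined so that multiplication by bounded scalar functions of sup-norm at most $1$ is a contraction, which is exactly the property needed to transplant the classical Köthe-dual duality argument from the scalar setting.
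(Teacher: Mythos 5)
Your proof is correct and follows essentially the same route as the paper. The multiplier you construct, $h=\overline{\sgn(f\cdot g)}$, is precisely the scalar factor in the paper's $\widetilde{f}=\ind_{\supp(f\cdot g)}\tfrac{|f\cdot g|}{f\cdot g}f$, and invoking Proposition~\ref{prop:ideal}(iv)$\Rightarrow$(i) is interchangeable with the paper's direct verification via characterization (iii); both then appeal to the directional ideal property in the same way.
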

\begin{proof}
If $g\in\mb{X}'$, we have
\[
\Big|\int_\Omega\!f\cdot g\,\mathrm{d}\mu\Big|\leq\int_\Omega\!|f\cdot g|\,\mathrm{d}\mu\leq\|g\|_{\mb{X}'}\|f\|_{\mb{X}},
\]
so it remains to prove the converse. Let $f\in\mb{X}$ and define 
\[
\widetilde{f}:=\ind_{\supp(f\cdot g)}\frac{|f\cdot g|}{f\cdot g}f.
\]
Then, for all $v\in\F^n$, we have $|\widetilde{f}(x)\cdot v|\leq |f(x)\cdot v|$, so that, by the directional ideal property of $\mb{X}$, we have $\widetilde{f}\in\mb{X}$ with $\|\widetilde{f}\|_{\mb{X}}\leq\|f\|_{\mb{X}}$. Hence,
\[
\int_\Omega\!|f\cdot g|\,\mathrm{d}\mu=\Big|\int_\Omega\!\widetilde{f}\cdot g\,\mathrm{d}\mu\Big|\leq C\|\widetilde{f}\|_{\mb{X}}\leq C\|f\|_{\mb{X}}.
\]
We conclude that $g\in\mb{X}'$, with $\|g\|_{\mb{X}'}\leq C$. The assertion follows.
\end{proof}

We prove several characterizations of non-degeneracy in Proposition~\ref{prop:saturation} in Appendix~\ref{app:A}.

Given a complete quasi-normed subspace $\mb{X}\subseteq L^0(\Omega;\F^n)$ and $g\in L^0(\Omega;\F^n)$, we can define a space of scalar-valued functions $\mb{X}_g$ as the space of functions $h\in L^0(\supp(g))$ for which $hg\in\mb{X}$, and set
\[
\|h\|_{\mb{X}_g}:=\|hg\|_{\mb{X}}.
\]
The saturation property of the spaces $\mb{X}_g$ for various collections of functions $g$ can be used to define different notions of saturation for the space $\mb{X}$.

We define the \emph{directional saturation property} of $\mb{X}$ as follows:
\begin{itemize}
    \item\emph{Directional saturation:} For all non-zero $g\in L^0(\Omega;\F^n)$ there is a measurable set $E\subseteq\text{supp}(g)$ with $\mu(E)>0$ such that $\ind_E g\in\mb{X}$.
\end{itemize}
We also define a component-wise saturation property of $\mb{X}$ by specializing to the constant functions $g=e_k$ for $k\in\{1,\ldots,n\}$, where $(e_k)_{k=1}^n$ is the standard basis of $\F^n$:
\begin{itemize}
    \item \emph{Component-wise saturation:} The space $\mb{X}_{e_k}$ is saturated for all $k\in\{1,\ldots,n\}$.
\end{itemize}
The component-wise saturation property implies non-degeneracy, since
\[
\int_\Omega\!f\cdot g\,\mathrm{d}\mu=0
\]
for all $f\in\mb{X}$ implies that
\[
\int_\Omega\!h(e_k\cdot g)\,\mathrm{d}\mu=0
\]
for all $h\in \mb{X}_{e_k}$. Thus, by Proposition~\ref{prop:weakorderunit}, we find that $g=\sum_{k=1}^n(g\cdot e_k)e_k=0$ a.e., as desired.

When $n=1$, directional saturation, component-wise saturation, and non-degeneracy coincide. However, for $n>1$, the directional saturation property is stronger. We have the following characterizations: 
\begin{proposition}\label{prop:directionalsaturation}
Let $\mb{X}$ be an $\F^n$-directional quasi-Banach function space over $\Omega$. The following are equivalent:
\begin{enumerate}[(i)]
    \item\label{it:dirsat1} $\mb{X}$ satisfies the directional saturation property;
    \item\label{it:dirsat2} $\mb{X}_g$ is a quasi-Banach function space over $\supp(g)$ for all non-zero $g\in L^0(\Omega;\F^n)$;
    \item\label{it:dirsat3} For all $g\in L^0(\Omega;\F^n)$ there is a sequence $(f_k)_{k\geq 1}$ in $\mb{X}$ for which $|f_k(x)\cdot u|\uparrow |g(x)\cdot u|$ for all $u\in\F^n$ for a.e. $x\in\Omega$.
\end{enumerate}
\end{proposition}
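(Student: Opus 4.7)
I would prove the cycle (i)$\Rightarrow$(ii)$\Rightarrow$(iii)$\Rightarrow$(i). The guiding intuition is that $\mb{X}_g$ is a scalar ``slice'' of $\mb{X}$ along the direction of $g$, and the three conditions express different aspects of this slice being well-behaved. For (i)$\Rightarrow$(ii) I would verify the axioms of a quasi-Banach function space for $\mb{X}_g$ directly. Definiteness of $\|\cdot\|_{\mb{X}_g}$ is immediate since $hg=0$ in $L^0(\Omega;\F^n)$ forces $h=0$ a.e.\ on $\supp(g)$. The ideal property transfers cleanly: if $|h'|\leq|h|$ a.e.\ on $\supp(g)$, then $|h'g\cdot u|\leq|hg\cdot u|$ for every $u\in\F^n$, so the directional ideal property of $\mb{X}$ yields $h'g\in\mb{X}$ with $\|h'g\|_{\mb{X}}\leq\|hg\|_{\mb{X}}$. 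Completeness follows from the Riesz--Fischer property of $\mb{X}$ combined with the standard fact that norm convergence in a quasi-Banach function space entails convergence in measure on sets of finite measure; passing to a subsequence produces a pointwise limit $h$ on $\supp(g)$ with $hg=\lim h_k g$ in $\mb{X}$. Saturation of $\mb{X}_g$ on $\supp(g)$ is precisely directional saturation of $\mb{X}$ applied to $\ind_E g$ for each measurable $E\subseteq\supp(g)$ of positive measure.

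For (ii)$\Rightarrow$(iii), if $g=0$ I take $f_k=0$. Otherwise, since $\mb{X}_g$ is a quasi-Banach function space over $\supp(g)$, it admits a weak order unit $\rho\in\mb{X}_g$ with $\rho>0$ a.e.\ on $\supp(g)$, so $\rho g\in\mb{X}$. The sets $E_k:=\{\rho\geq 1/k\}\cap\supp(g)$ increase to $\supp(g)$ up to a null set, and $f_k:=\ind_{E_k}g$ satisfies $|f_k\cdot u|\leq k|\rho g\cdot u|$ for every $u$ because $\ind_{E_k}\leq k\rho$. The directional ideal property puts $f_k$ in $\mb{X}$, and $|f_k\cdot u|=\ind_{E_k}|g\cdot u|\uparrow|g\cdot u|$ a.e.

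For (iii)$\Rightarrow$(i), let $g$ be non-zero and let $(f_k)$ be as in (iii). From $|f_k\cdot u|\leq|g\cdot u|$ a.e.\ for all $u$ and Proposition~\ref{prop:ideal}, I write $f_k=h_k g$ with $|h_k|\leq 1$ a.e. Testing $u=e_j$ for each standard basis vector gives $|h_k|\uparrow 1$ a.e.\ on $\{g_j\neq 0\}$; since $\supp(g)$ is the union of these $n$ sets, discarding finitely many null exceptional sets yields $|h_k|\uparrow 1$ a.e.\ on $\supp(g)$. By $\sigma$-finiteness of $\mu$, pick $F\subseteq\supp(g)$ with $0<\mu(F)<\infty$. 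Then $F\cap\{|h_k|\geq\tfrac12\}$ increases to $F$ up to a null set, so for some $k$ the set $E:=F\cap\{|h_k|\geq\tfrac12\}$ has positive measure. On $E$ one has $|g\cdot u|\leq 2|h_k g\cdot u|=2|f_k\cdot u|$ for every $u$, so the directional ideal property yields $\ind_E g\in\mb{X}$, which is directional saturation. The main obstacle in this step is upgrading the $u$-dependent a.e.\ statements of (iii) into the single almost-everywhere statement $|h_k|\uparrow 1$ on $\supp(g)$; this works only because it suffices to test against the $n$ basis vectors, so only finitely many null sets are unioned. A subtler technical point in (i)$\Rightarrow$(ii) is the completeness of $\mb{X}_g$, which relies on the continuous embedding of quasi-Banach function spaces into $L^0$.
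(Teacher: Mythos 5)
Your proposal is correct and follows essentially the same cycle (i)$\Rightarrow$(ii)$\Rightarrow$(iii)$\Rightarrow$(i) as the paper, with the same key devices: reducing saturation of $\mb{X}_g$ to directional saturation of $\mb{X}$, producing the exhausting sets $E_k$ from a weak order unit of $\mb{X}_g$ (the paper cites \cite[Proposition~2.5(ii)]{LN23b} for this, which is the same fact), and in (iii)$\Rightarrow$(i) extracting scalar multipliers $h_k$ via Proposition~\ref{prop:ideal} and isolating a set where $|h_K|\geq\tfrac12$. The one place where you go beyond the paper's exposition is a genuine improvement in care: the paper asserts $|h_k|\uparrow\ind_{\supp(g)}$ a.e.\ directly, while you correctly observe that the a.e.\ statements in (iii) are $u$-dependent and must be unified by testing only the $n$ basis vectors so that finitely many null sets are discarded.
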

\begin{proof}

For \ref{it:dirsat1}$\Rightarrow$\ref{it:dirsat2}, let $g\in L^0(\Omega;\F^n)$ be non-zero and let $E\subseteq\supp(g)$ with $\mu(E)>0$. Then $\ind_E g$ is non-zero, so there is an $F\subseteq E$ with $\mu(F)>0$ and $\ind_Fg=\ind_F\ind_Eg\in\mb{X}$. Thus, $\ind_F\in\mb{X}_g$, proving that $\mb{X}_g$ is saturated and, hence, a quasi-Banach function space over $\supp(g)$.

To see \ref{it:dirsat2}$\Rightarrow$\ref{it:dirsat3}, for $g\in L^0(\Omega;\F^n)$ it follows from \cite[Proposition~2.5(ii)]{LN23b} that there is an increasing sequence of sets $(E_k)_{k\geq 1}$ with $\bigcup_{k=1}^\infty E_k=\supp(g)$ and $\ind_{E_k}\in\mb{X}_g$ for all $k\geq 1$. The result then follows by setting $f_k:=\ind_{E_k}g\in\mb{X}$ and noting that $|f_k\cdot u|=\ind_{E_k}|g\cdot u|\uparrow|g\cdot u|$ for all $u\in\F^n$ a.e.

Finally, for \ref{it:dirsat3}$\Rightarrow$\ref{it:dirsat1}, let $g\in L^0(\Omega;\F^n)$ be non-zero, and pick $(f_k)_{k\geq 1}$ as in \ref{it:dirsat3}. Then, by Proposition~\ref{prop:ideal}\ref{it:ideal4} and \ref{it:dirsat3}, there is a sequence $(h_k)_{k\geq 1}$ in $L^\infty(\Omega)$ with $0\leq |h_k|\uparrow \ind_{\supp(g)}$ a.e. and $f_k=h_k g$. This means that for $K$ large enough, the set $E:=\{|h_K|>\tfrac{1}{2}\}$ has positive measure. Since
\[
|\ind_E g\cdot u|\leq 2|h_K||g\cdot u|=|2f_K\cdot u|
\]
for all $u\in\F^n$ a.e., it follows from the directional ideal property of $\mb{X}$ and the equivalence of \ref{it:ideal1} and \ref{it:ideal3}
in Proposition~\ref{prop:ideal} that $\ind_E g\in\mb{X}$. The assertion follows.
\end{proof}

We also prove the following characterization of component-wise saturation:
\begin{proposition}\label{prop:componentsaturation}
Let $\mb{X}$ be an $\F^n$-directional quasi-Banach function space over $\Omega$. The following are equivalent:
\begin{enumerate}[(i)]
    \item\label{it:componentsaturation1} $\mb{X}$ satisfies the component-wise saturation property;
    \item\label{it:componentsaturation2} There is a measurable Hermitian and positive definite matrix-valued mapping 
    \[
    U:\Omega\to\F^{n\times n}
    \]
    such that if $g\in L^0(\Omega;\F^n)$ with $|U(x)^{-1}g(x)|\leq 1$ a.e., then $g\in\mb{X}$ with $\|g\|_{\mb{X}}\leq 1$;
    \item\label{it:componentsaturation3} For all non-zero $u\in\F^n$, the space $\mb{X}_u$ is saturated.
\end{enumerate}
\end{proposition}
\begin{proof}
For \ref{it:componentsaturation1}$\Rightarrow$\ref{it:componentsaturation2}, let $\rho_k>0$ be a weak order unit in $\mb{X}_{e_k}$ (which exists by Proposition~\ref{prop:weakorderunit}) with
\[
\|\rho_k\|_{\mb{X}_{e_k}}=\tfrac{1}{n}K_{\mb{X}}^{-k}
\]
for all $k\in\{1,\ldots,n\}$, and define $U(x)$ so that $U(x)e_k=\rho_k(x)e_k$. Then $U$ is a.e. Hermitian and positive definite. Moreover, if $|U(x)^{-1}g(x)|\leq 1$, then
\begin{align*}
g(x)&=\sum_{k=1}^n (g(x)\cdot e_k)e_k=\sum_{k=1}^n(U(x)^{-1}g(x)\cdot U(x)e_k)e_k\\
&=\sum_{k=1}^n(U(x)^{-1}g(x)\cdot e_k)\rho_k(x)e_k.
\end{align*}
Since $\rho_ke_k\in\mb{X}$, and $|U(x)^{-1}g(x)\cdot e_k|\leq 1$, it follows from the directional ideal property and the quasi-triangle inequality that $g\in\mb{X}$, with
\[
\|g\|_{\mb{X}}\leq\sum_{k=1}^n K_{\mb{X}}^k\|\rho_k\|_{\mb{X}_{e_k}}=1.
\]

For \ref{it:componentsaturation2}$\Rightarrow$\ref{it:componentsaturation3}, let $0\neq u\in\F^n$ and set 
\[
\rho_u(x):=|U(x)^{-1}u|^{-1}.
\]
Since $|U(x)^{-1}\rho_u(x) u|=\rho_u(x)|U(x)^{-1}u|=1$, we have $\rho_u u\in\mb{X}$. Since $\rho_u>0$ a.e., we conclude that $\mb{X}_u$ is saturated, as desired.

Finally, the implication \ref{it:componentsaturation3}$\Rightarrow$\ref{it:componentsaturation1} follows by specializing $u=e_k$, $k\in\{1,\ldots,n\}$.
\end{proof}

We say that an $\F^n$-directional quasi-Banach function space $\mb{X}$ over $\Omega$ satisfies the \emph{Fatou property} if:
\begin{itemize}
    \item For all $(f_k)_{k\geq 1}$ in $\mb{X}$ for which there is an $f\in L^0(\Omega;\F^n)$ such that $f_k\to f$ a.e. and $\liminf_{k\to\infty}\|f_k\|_{\mb{X}}<\infty$, we have $f\in\mb{X}$, with 
    \[
    \|f\|_{\mb{X}}\leq\liminf_{k\to\infty}\|f_k\|_{\mb{X}}.
    \]
\end{itemize}
We say that $\mb{X}$ satisfies the \emph{monotone convergence property} if:
\begin{itemize}
    \item For all $(f_k)_{k\geq 1}$ in $\mb{X}$ with $\mc{K}(f_k)\subseteq\mc{K}(f_{k+1})$ a.e. for all $k\geq 1$, $\sup_{k\geq 1}\|f_k\|_{\mb{X}}<\infty$, and
\[
\bigcup_{k=1}^\infty\mc{K}(f_k)=\mc{K}(f)
\]
for some $f\in L^0(\Omega;\F^n)$, we have $f\in\mb{X}$ with $\|f\|_{\mb{X}}=\sup_{k\geq 1}\|f_k\|_{\mb{X}}$.
\end{itemize}
Moreover, we say that $\mb{X}$ is \emph{K\"othe reflexive} if $\mb{X}''=\mb{X}$ with equal norm.

When $n=1$, the Fatou and monotone convergence properties coincide by \cite[Lemma~3.5]{LN23b} and, if $\mb{X}$ is a Banach function space, these notions coincide with K\"othe reflexivity by the Lorentz-Luxemburg theorem, see, e.g., \cite[Theorem~71.1]{Za67}. However, for $n>1$, the Fatou property is stronger than the monotone convergence property: the Fatou property is defined through pointwise a.e. convergence and, hence, this includes sequences for which $f_k$ and $f$ do not share a direction at any point.

Note that for any $\F^n$-directional quasi-Banach function space $\mb{X}$ over $\Omega$, the space $\mb{X}'$ satisfies the Fatou property by Fatou's lemma of integration theory. 

The following result is an $\F^n$-directional analogue of the Lorentz-Luxemburg theorem:
\begin{theorem}[The directional Lorentz-Luxemburg theorem]\label{thm:lorentzluxemburg}
Let $\mb{X}$ be an $\F^n$-directional Banach function space over $\Omega$. Then consider the statements:
\begin{enumerate}[(a)]
    \item\label{it:lorentzluxemburga} $\mb{X}$ satisfies the Fatou property;
    \item\label{it:lorentzluxemburgb} $\mb{X}$ is K\"othe reflexive.
\end{enumerate}
Then \ref{it:lorentzluxemburgb}$\Rightarrow$\ref{it:lorentzluxemburga}. If $\mb{X}$ satisfies the directional saturation property, then also \ref{it:lorentzluxemburga}$\Rightarrow$\ref{it:lorentzluxemburgb}.
\end{theorem}
The proof of this result can be found below as Theorem~\ref{thm:lorentzluxemburgapp} in Appendix~\ref{app:A}.

\subsection{Convex-set valued quasi-Banach function spaces}\label{subsec:XK}
Let $\mb{X}$ be an $\F^n$-directional quasi-Banach function space over $\Omega$. We define $\mb{X}[\mc{K}]$ as the space of $F\in L^0(\Omega;\mc{K})$ for which $S^0(\Omega;F)$ is a bounded subset of $\mb{X}$. Moreover, we set
\[
\|F\|_{\mb{X}[\mc{K}]}:=\sup_{f\in S^0(\Omega;F)}\|f\|_{\mb{X}}.
\]
The space $\mb{X}[\mc{K}]$ satisfies the ideal property in the sense that if $F\in\mb{X}[\mc{K}]$, then for any $G\in L^0(\Omega;\mc{K})$ for which 
\[
G(x)\subseteq F(x)
\]
for a.e. $x\in\Omega$, we have $G\in\mb{X}[\mc{K}]$ with $\|G\|_{\mb{X}[\mc{K}]}\leq\|F\|_{\mb{X}[\mc{K}]}$. This follows from the observation that $S^0(\Omega;G)\subseteq S^0(\Omega;F)$.

The space $\mb{X}[\mc{K}]$ naturally contains $\mb{X}$ through the embedding $f\mapsto\mc{K}(f)$. Indeed, we have the following result:
\begin{proposition}\label{prop:convexsetfequalsf}
Let $\mb{X}$ be an $\F^n$-directional quasi-Banach function space over $\Omega$ and $f\in L^0(\Omega;\F^n)$. Then $f\in\mb{X}$ if and only if $\mc{K}(f)\in\mb{X}[\mc{K}]$, with
\[
\|\mc{K}(f)\|_{\mb{X}[\mc{K}]}=\|f\|_{\mb{X}}.
\]
\end{proposition}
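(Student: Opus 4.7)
The plan is to prove the two norm inequalities separately, using the directional ideal property together with Proposition~\ref{prop:ideal} for one direction, and the trivial observation $f \in S^0(\Omega;\mc{K}(f))$ for the other.

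First, I would establish $\|\mc{K}(f)\|_{\mb{X}[\mc{K}]} \le \|f\|_{\mb{X}}$ assuming $f \in \mb{X}$. Take any selection $g \in S^0(\Omega;\mc{K}(f))$, which by definition means $g(x) \in \mc{K}(f)(x) = \mc{K}(f(x))$ for a.e. $x\in\Omega$. By the equivalence \ref{it:ideal2}$\Leftrightarrow$\ref{it:ideal1} in Proposition~\ref{prop:ideal}, this is the same as $\mc{K}(g)(x) \subseteq \mc{K}(f)(x)$ a.e. The directional ideal property of $\mb{X}$ then yields $g \in \mb{X}$ with $\|g\|_{\mb{X}} \le \|f\|_{\mb{X}}$. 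Taking the supremum over all such selections gives the claimed bound, and in particular shows $\mc{K}(f) \in \mb{X}[\mc{K}]$.

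For the reverse direction, suppose $\mc{K}(f) \in \mb{X}[\mc{K}]$. Since $f(x) \in \mc{K}(f(x)) = \mc{K}(f)(x)$ for every $x$ (by construction of $\mc{K}(\,\cdot\,)$, or directly via Proposition~\ref{prop:smallestkset} with $\lambda = 1$), the function $f$ itself lies in $S^0(\Omega;\mc{K}(f))$. Hence $f \in \mb{X}$ with
\[
\|f\|_{\mb{X}} \le \sup_{g \in S^0(\Omega;\mc{K}(f))}\|g\|_{\mb{X}} = \|\mc{K}(f)\|_{\mb{X}[\mc{K}]}.
\]
Combining the two inequalities gives the equality of norms. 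There is no real obstacle here; the argument is a direct unpacking of definitions, and the only non-trivial ingredient is invoking the characterization of the directional ordering from Proposition~\ref{prop:ideal} to turn membership in $\mc{K}(f)(x)$ into the form needed by the directional ideal property.
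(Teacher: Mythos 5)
Your proof is correct and matches the paper's argument essentially verbatim: the forward inequality uses the directional ideal property applied to arbitrary selections of $\mc{K}(f)$ via Proposition~\ref{prop:ideal}, and the reverse inequality uses the fact that $f$ itself is a selection of $\mc{K}(f)$.
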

\begin{proof}
If $\mc{K}(f)\in\mb{X}[\mc{K}]$, then, since $f\in S^0(\Omega;\mc{K}(f))$, we have $f\in\mb{X}$ with
\[
\|f\|_{\mb{X}}\leq\sup_{g\in S^0(\Omega;\mc{K}(f))}\|g\|_{\mb{X}}=\|\mc{K}(f)\|_{\mb{X}[\mc{K}]}.
\]
Conversely, if $f\in\mb{X}$, then for any $g\in S^0(\Omega;\mc{K}(f))$ we have $g(x)\in\mc{K}(f)(x)$ for a.e. $x\in\Omega$. Thus, by Proposition~\ref{prop:ideal} and the directional ideal property of $\mb{X}$, we have $g\in\mb{X}$ with $\|g\|_{\mb{X}}\leq\|f\|_{\mb{X}}$. Taking a supremum over all $g\in S^0(\Omega;\mc{K}(f))$, we conclude that $\mc{K}(f)\in\mb{X}[\mc{K}]$ with
\[
\|\mc{K}(f)\|_{\mb{X}[\mc{K}]}=\sup_{g\in S^0(\Omega;\mc{K}(f))}\|g\|_{\mb{X}}\leq\|f\|_{\mb{X}}.
\]
The assertion follows.
\end{proof}

Given an $\F^n$-directional quasi-Banach function space $\mb{X}$ over $\Omega$, for a sequence $(F_k)_{k\geq 1}$ in $\mb{X}[\mc{K}]$ we write $F_k\uparrow F$, if $F_k(x)\subseteq F_{k+1}(x)$ a.e. for all $k\geq 1$, and if
\[
F(x)=\overline{\bigcup_{k=1}^\infty F_k(x)}.
\]
Note that $F\in L^0(\Omega;\mc{K})$. Indeed, for any open $U\subseteq\F^n$ we have
\[
F^{-1}(U)=\bigcup_{k=1}^\infty F_k^{-1}(U),
\]
which is measurable by measurability of the $F_k$.
\begin{proposition}\label{prop:convexsetmonotoneconvergence}
Let $\mb{X}$ be an $\F^n$-directional quasi-Banach function space $\mb{X}$ over $\Omega$ with the monotone convergence property. If $(F_k)_{k\geq 1}$ is a bounded sequence in $\mb{X}[\mc{K}]$ and $F_k\uparrow F$, then $F\in\mb{X}[\mc{K}]$ with
\[
\|F\|_{\mb{X}[\mc{K}]}=\sup_{k\geq 1}\|F_k\|_{\mb{X}[\mc{K}]}.
\]
\end{proposition}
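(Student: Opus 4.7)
My plan is to first observe that $\sup_{k\geq 1}\|F_k\|_{\mb{X}[\mc{K}]}\leq\|F\|_{\mb{X}[\mc{K}]}$ is immediate from the ideal property of $\mb{X}[\mc{K}]$: $F_k(x)\subseteq F(x)$ a.e.\ gives $S^0(\Omega;F_k)\subseteq S^0(\Omega;F)$. Writing $M:=\sup_k\|F_k\|_{\mb{X}[\mc{K}]}$, the real task is to show $\|f\|_{\mb{X}}\leq M$ for every $f\in S^0(\Omega;F)$. The strategy is to approximate such an $f$ by selections of the $F_k$ and invoke the monotone convergence property of $\mb{X}$, but only after a small scalar shrinkage $\lambda<1$ whose effect is undone at the end by homogeneity.

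The geometric lemma I would establish first is: for a.e.\ $x$, every $u\in F(x)$, and every $0<\lambda<1$, one has $\lambda u\in\bigcup_k F_k(x)$. To prove this, note that $U(x):=\bigcup_k F_k(x)$ inherits from the $F_k(x)$ the properties of being convex, symmetric, containing $0$ (since each $F_k(x)$ is convex and symmetric, $\tfrac12 u+\tfrac12(-u)=0$ lies in it), and stable under scalar multiplication by $|t|\leq 1$. In finite dimensions, after restricting to $\spn(U(x))$, the set $U(x)$ absorbs this subspace, so its Minkowski functional $p_x$ is a seminorm with $\{p_x<1\}\subseteq U(x)\subseteq\{p_x\leq 1\}$ and $\overline{U(x)}=\{p_x\leq 1\}$. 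Since $u\in F(x)=\overline{U(x)}$ forces $p_x(u)\leq 1$, we get $p_x(\lambda u)=\lambda p_x(u)<1$, hence $\lambda u\in U(x)$.

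With the lemma in hand, for each fixed $\lambda\in(0,1)$ I would define $E_k^\lambda:=\{x\in\Omega:\lambda f(x)\in F_k(x)\}$. Measurability follows from a Castaing representation of $F_k$, which makes $x\mapsto d(\lambda f(x),F_k(x))$ measurable, and the lemma gives $E_k^\lambda\uparrow\Omega$ a.e. Setting $g_k^\lambda:=\ind_{E_k^\lambda}\lambda f$, each $g_k^\lambda$ is a selection of $F_k$ (using $0\in F_k(x)$ off $E_k^\lambda$), so $\|g_k^\lambda\|_{\mb{X}}\leq M$. Since the $E_k^\lambda$ are increasing and $g_k^\lambda(x)$ is eventually equal to $\lambda f(x)$ for a.e.\ $x$, one obtains $\mc{K}(g_k^\lambda)\subseteq\mc{K}(g_{k+1}^\lambda)$ a.e.\ and $\bigcup_k\mc{K}(g_k^\lambda(x))=\mc{K}(\lambda f(x))$ a.e. The monotone convergence property of $\mb{X}$ then yields $\lambda f\in\mb{X}$ with $\|\lambda f\|_{\mb{X}}\leq M$. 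By scalar homogeneity of the norm, $\|f\|_{\mb{X}}\leq M/\lambda$, and letting $\lambda\to 1^-$ gives $\|f\|_{\mb{X}}\leq M$.

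The main obstacle will be the geometric lemma. The hypothesis only provides $F(x)$ as the \emph{closure} of $\bigcup_k F_k(x)$, whereas the monotone convergence property of $\mb{X}$ demands an exact equality $\bigcup_k\mc{K}(g_k)=\mc{K}(f)$, not a closure on the left. The Minkowski-functional trick converts closure membership $u\in\overline{U(x)}$ into genuine membership $\lambda u\in U(x)$ at the cost of a scalar factor $\lambda<1$; that scalar factor is then removed painlessly by the final $\lambda\to 1^-$ limit, which only uses that $\mb{X}$ is a genuine (quasi-)normed vector space.
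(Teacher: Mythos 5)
Your proof is correct, and it takes a genuinely different route from the paper's. The paper fixes $f\in S^0(\Omega;F)$ and invokes Lemma~\ref{lem:measurablesupremumconvexset} (a Filippov-type measurable selection) to build $f_k\in S^0(\Omega;F_k)$ with $\mc{K}(f_k)=\mc{K}(f)\cap F_k$; it then observes $\mc{K}(f_k)\uparrow\mc{K}(f)$ and applies the monotone convergence property. You replace the selection lemma by a Minkowski-functional argument combined with indicator cutoffs: shrink $f$ by $\lambda<1$ so that $\lambda f(x)$ lies genuinely in $\bigcup_k F_k(x)$ rather than only in its closure, form the selections $g_k^\lambda=\ind_{E_k^\lambda}\lambda f$ by hand, apply the monotone convergence property, and remove $\lambda$ at the end by homogeneity of the quasi-norm. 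The two arguments are of comparable length, but yours is slightly sharper with respect to the literal wording of the monotone convergence property. The paper's chain only yields $\overline{\bigcup_k\mc{K}(f_k)}=\mc{K}(f)$, since $\bigcup_k F_k(x)$ need not contain the boundary point $f(x)$ itself, so the paper is implicitly reading the hypothesis $\bigcup_{k}\mc{K}(f_k)=\mc{K}(f)$ as an equality modulo closure. Your construction sidesteps this: $\mc{K}(g_k^\lambda)(x)$ stabilizes exactly at $\mc{K}(\lambda f)(x)$ as soon as $x\in E_k^\lambda$, so the union condition holds verbatim, at the modest price of the final limit $\lambda\to 1^-$. Your geometric lemma and its proof (the union $U(x)$ is balanced, convex, and absorbing in $\spn(U(x))$, so $0$ lies in its relative interior and $\{p_x<1\}\subseteq U(x)\subseteq\{p_x\leq 1\}$) are sound, and the measurability of $E_k^\lambda$ follows from a Castaing representation of $F_k$ as you indicate.
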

\begin{proof}
Let $f\in S^0(\Omega;F)$. Since $\mc{K}(f)\cap F_k\in L^0(\Omega;\mc{K}_b)$, by Lemma~\ref{lem:measurablesupremumconvexset} there is an $f_k\in S^0(\Omega;F_k)$ for which $|f_k(x)|=\sup_{u\in \mc{K}(f)(x)\cap F_k(x)}|u|$. Note that
\[
\mc{K}(f_k)=\mc{K}(f)\cap F_k\uparrow \mc{K}(f)\cap F=\mc{K}(f).
\]
Hence, by the monotone convergence property of $\mb{X}$, $f\in\mb{X}$ with
\[
\|f\|_{\mb{X}}=\sup_{k\geq 1}\|f_k\|_{\mb{X}}\leq\sup_{k\geq1}\|F_k\|_{\mb{X}[\mc{K}]}.
\]
We conclude that $F\in\mb{X}[\mc{K}]$, with
\[
\|F\|_{\mb{X}[\mc{K}]}\leq \sup_{k\geq 1}\|F_k\|_{\mb{X}[\mc{K}]}.
\]
This proves the assertion.
\end{proof}

\subsection{Matrix weights}\label{sec:matrixweights}
A measurable mapping $W:\Omega\to \F^{n\times n}$ is called a \emph{matrix weight} if for a.e. $x\in\Omega$ the matrix $W(x)$ is Hermitian and positive definite, i.e., for all non-zero $u\in\F^n$ we have $W(x)u\cdot u>0$.

Given a matrix weight $W:\Omega\to\F^{n\times n}$ and a quasi-Banach function space $X$ over $\Omega$, we define the $\F^n$-directional quasi-Banach function space $X_W$ as the space of functions $f\in L^0(\Omega:\F^n)$ for which
\[
|Wf|\in X,
\]
and set
\[
\|f\|_{X_W}:=\||Wf|\|_X.
\]
Then $\|\cdot\|_{X_W}$ is indeed a quasi-norm, since $\|f\|_{X_W}=0$ is equivalent to $Wf=0$ a.e., which is equivalent to $f=W^{-1}Wf=0$ a.e. To see that $X_W$ satisfies the directional ideal property, suppose that $f\in X_W$ and $g\in L^0(\Omega;\F^n)$ satisfies $\mc{K}(g)\subseteq\mc{K}(f)$ a.e. Then, by Proposition~\ref{prop:ideal}, there is a $h\in L^\infty(\Omega)$ with $\|h\|_{L^\infty(\Omega)}\leq 1$ such that $g=h f$. Hence,
\[
|Wg|=|h||Wf|\leq|Wf|
\]
a.e., so that by the ideal property of $X$ we have $g\in X_W$, with 
\[
\|g\|_{X_W}=\||Wg|\|_X\leq\||Wf|\|_X=\|f\|_{X_W}.
\]
The non-degeneracy property follows from the fact that $(X_W)'=(X')_{W^{-1}}$:
\begin{proposition}\label{prop:matrixweightedspaceduality}
Let $W:\Omega\to\F^{n\times n}$ be a matrix weight, and let $X$ be a quasi-Banach function space over $\Omega$. Then $(X_W)'=(X')_{W^{-1}}$.
\end{proposition}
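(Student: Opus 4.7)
The plan is to exploit the fact that since $W$ is Hermitian a.e., so is $W^{-1}$, and to combine this with the substitution $h = Wf$. This substitution bijects $X_W$ with scalar-valued functions $h$ satisfying $|h| \in X$, preserving norms via $\|f\|_{X_W} = \||h|\|_X$. Using the identity $Au \cdot v = u \cdot A^*v$ (which is immediate from $u \cdot v = \sum_k u_k \overline{v_k}$) applied to $A = W^{-1}$, the duality pairing transforms as
\[
f \cdot g = (W^{-1}h) \cdot g = h \cdot (W^{-1}g).
\]
This reduces the problem to the scalar Köthe duality between $X$ and $X'$ applied to the pair $(h, W^{-1}g)$.

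For the upper bound $\|g\|_{(X_W)'} \leq \|g\|_{(X')_{W^{-1}}}$, I would combine the pointwise Cauchy-Schwarz inequality $|h \cdot (W^{-1}g)| \leq |h| |W^{-1}g|$ with the definition of the scalar Köthe dual, obtaining
\[
\int_\Omega\!|f \cdot g|\dd\mu = \int_\Omega\!|h \cdot (W^{-1}g)|\dd\mu \leq \||h|\|_X \||W^{-1}g|\|_{X'} = \|f\|_{X_W}\,\|g\|_{(X')_{W^{-1}}}.
\]
Taking the supremum over $f$ with $\|f\|_{X_W} = 1$ and invoking Proposition~\ref{prop:normoutsideintegral} gives one half of the equality of norms (and the inclusion $(X')_{W^{-1}} \subseteq (X_W)'$).

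For the reverse bound, I would saturate Cauchy-Schwarz by aligning $h$ with $W^{-1}g$. Concretely, given a scalar $\phi \in X$, set
\[
h := \phi \cdot \ind_{\{W^{-1}g \neq 0\}}\frac{W^{-1}g}{|W^{-1}g|},\qquad f := W^{-1}h,
\]
so that $|h| = |\phi|\ind_{\{W^{-1}g \neq 0\}} \leq |\phi|$, giving $\|f\|_{X_W} = \||h|\|_X \leq \|\phi\|_X$. A direct computation shows $h \cdot (W^{-1}g) = \phi|W^{-1}g|\ind_{\{W^{-1}g \neq 0\}} = \phi|W^{-1}g|$, so
\[
\int_\Omega\!|f \cdot g|\dd\mu = \int_\Omega\!|\phi||W^{-1}g|\dd\mu.
\]
Taking the supremum over scalar $\phi \in X$ with $\|\phi\|_X = 1$ recovers $\||W^{-1}g|\|_{X'} = \|g\|_{(X')_{W^{-1}}}$, yielding the reverse inequality and the inclusion $(X_W)' \subseteq (X')_{W^{-1}}$.

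The whole argument is essentially a linear change of variables that is made self-adjoint by Hermiticity, so no serious obstacle arises; the only bookkeeping points are a routine measurability check for the aligned $h$ and handling the set $\{W^{-1}g = 0\}$ separately (where the integrand vanishes identically), both of which are immediate.
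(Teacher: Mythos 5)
Your proof is correct and follows essentially the same route as the paper: the forward inclusion via $f\cdot g = Wf\cdot W^{-1}g$ (Hermiticity) followed by Cauchy--Schwarz and Hölder for $X,X'$, and the reverse inclusion by saturating Cauchy--Schwarz with the aligned selection $h = \phi\,\ind_{\{W^{-1}g\neq 0\}}\tfrac{W^{-1}g}{|W^{-1}g|}$ and $f = W^{-1}h$. The substitution $h = Wf$ is a purely cosmetic repackaging of the paper's computation.
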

In particular, note that if $X$ is a Banach function space with the Fatou property, then $(X_W)''=(X'')_W=X_W$ by the Lorentz-Luxemburg theorem.
\begin{proof}
First, suppose that $g\in (X')_{W^{-1}}$. Then for all $f\in X_W$ we have
\begin{align*}
\int_\Omega\!|f\cdot g|\,\mathrm{d}\mu
&=\int_\Omega\!|Wf\cdot W^{-1}g|\,\mathrm{d}\mu
\leq \int_\Omega\!|Wf||W^{-1}g|\,\mathrm{d}\mu\\
&\leq\||Wf|\|_X\||W^{-1}g\|_{X'}=\|f\|_{X_W}\|g\|_{(X')_{W^{-1}}}.
\end{align*}
Hence, $g\in (X_W)'$ with $\|g\|_{(X_W)'}\leq\|g\|_{(X')_{W^{-1}}}$. Conversely, suppose $g\in (X_W)'$. Let $k\in X$ and define 
\[
h:=\begin{cases}
    k\frac{W^{-1}g}{|W^{-1}g|} & \text{if $W^{-1}g\neq 0$;}\\
    0 & \text{if $W^{-1}g=0$}.
\end{cases}
\]
Then $|h|\leq |k|$ a.e., so $|h|\in X$ by the ideal property of $X$. Setting $f:=W^{-1}h\in X_W$, we have 
\[
\|f\|_{X_W}=\||h|\|_X\leq\|k\|_X.
\]
Hence,
\begin{align*}
\int_\Omega\!|k||W^{-1}g|\,\mathrm{d}\mu&=\int_\Omega\!|h\cdot W^{-1}g|\,\mathrm{d}\mu=\int_\Omega\!|f\cdot g|\,\mathrm{d}\mu\\
&\leq\|f\|_{X_W}\|g\|_{(X_W)'}\leq\|k\|_X\|g\|_{(X_W)'}.
\end{align*}
Thus, $g\in (X')_{W^{-1}}$ with $\|g\|_{(X')_{W^{-1}}}\leq\|g\|_{(X_W)'}$. The result follows.
\end{proof}
As a matter of fact, $X_W$ satisfies the directional saturation property. Indeed, for any non-zero $g\in L^0(\Omega;\F^n)$ we have
\[
\|h\|_{(X_W)_g}=\|hg\|_{X_W}=\|h|Wg|\|_X=\|h\|_{X(|Wg|)},
\]
showing that $(X_W)_g=X(|Wg|)$. Here, for a function $u\geq 0$, we define $X(u)$ through $\|h\|_{X(u)}:=\|hu\|_X$. Since $|Wg|>0$ a.e. on $\supp(g)$, the space $(X_W)_g$ is saturated by \cite[Proposition~3.17]{LN23b}. The directional saturation property of $X_W$ now follows from Proposition~\ref{prop:directionalsaturation}.

A very useful result for matrix weighted spaces is Filippov's selection theorem. The following version can be found in \cite[Theorem~8.2.10]{AF09}:
\begin{theorem}[Filippov]\label{thm:filippov}
Let $F\in L^0(\Omega;\mc{C})$ and $\phi:\Omega\times\F^n\to\R$ a function for which for all $u\in\F^n$ the function $x\mapsto\phi(x,u)$ is measurable and for a.e. $x\in\Omega$ the function $u\mapsto\phi(x,u)$ is continuous. For each $h\in L^0(\Omega)$ satisfying
\[
h(x)\in\{\phi(x,u):u\in F(x)\}
\]
for a.e. $x\in\Omega$, there is a selection $f\in S^0(\Omega;F)$ for which 
\[
h(x)=\phi(x,f(x))
\]
for a.e. $x\in\Omega$.
\end{theorem}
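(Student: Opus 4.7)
The plan is to derive $f$ from a general measurable selection theorem applied to the ``level-set'' multifunction
\[
M(x):=\{u\in F(x):\phi(x,u)=h(x)\},
\]
which is closed-valued by continuity of $\phi(x,\cdot)$ and non-empty a.e.\ by hypothesis. Any measurable selection $f\in S^0(\Omega;M)$ is automatically a selection of $F$ satisfying $\phi(x,f(x))=h(x)$ a.e., as required. The substantive point is therefore to verify $M\in L^0(\Omega;\mc{C})$.

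To do so, I would first invoke \ref{it:castaing4} to fix a Castaing representation $(f_k)_{k\geq 1}$ of $F$, so that $F(x)=\overline{\{f_k(x):k\geq 1\}}$ a.e. Since $\phi$ is Carath\'eodory, each composition $\phi(\cdot,f_k(\cdot))$ is measurable, so for every $m\geq 1$ the sets
\[
E_{m,k}:=\{x\in\Omega:|\phi(x,f_k(x))-h(x)|<1/m\}
\]
are measurable. By density of $\{f_k(x)\}_{k\geq 1}$ in $F(x)$, continuity of $\phi(x,\cdot)$, and the existence of some $u^{*}(x)\in F(x)$ with $\phi(x,u^{*}(x))=h(x)$, the sets $\{E_{m,k}\}_{k\geq 1}$ cover $\Omega$ modulo a null set for every $m$. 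Disjointifying into $E_{m,k}':=E_{m,k}\setminus\bigcup_{j<k}E_{m,j}$ and setting
\[
g_m:=\sum_{k\geq 1}f_k\ind_{E_{m,k}'}
\]
produces measurable approximate selections $g_m\in S^0(\Omega;F)$ with $|\phi(x,g_m(x))-h(x)|<1/m$ a.e. From these one deduces measurability of $M$, and then the Kuratowski--Ryll-Nardzewski theorem extracts the desired $f\in S^0(\Omega;M)$.

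The main obstacle is the step from approximate to exact selection: since $F(x)$ is not assumed bounded, the sequence $(g_m)$ need not converge pointwise and one cannot simply pass to a limit. There are two standard workarounds. The first is to refine the construction so that the $g_m$ form a Cauchy sequence, by choosing each $g_{m+1}$ inside a shrinking sub-level slice $F(x)\cap\{u:|\phi(x,u)-h(x)|<\delta_m(x)\}$ with a modulus $\delta_m(x)$ selected measurably using continuity of $\phi(x,\cdot)$. The second is to prove measurability of the graph $\{(x,u):u\in F(x),\,\phi(x,u)=h(x)\}$ directly and then project onto $\Omega$, which requires completeness of $\mu$ to absorb the fact that projections of measurable sets are, a priori, only analytic. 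Either route yields the theorem; this is the content of \cite[Theorem~8.2.10]{AF09}, which the paper cites as a black box.
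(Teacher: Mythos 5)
The paper gives no proof of this result---it is cited directly from [AF09, Theorem~8.2.10]---so there is no internal argument to compare against. Your sketch of the standard proof has the right skeleton: define the level-set multifunction $M(x)=\{u\in F(x):\phi(x,u)=h(x)\}$, observe it is non-empty and closed-valued, establish its measurability, and invoke a measurable selection theorem. Your second workaround (graph measurability plus a projection theorem, with completeness of $\mu$ absorbing the fact that projections are only analytic) is essentially the argument in [AF09], and the observation that completeness is needed---a hypothesis the paper's general $\sigma$-finite statement omits but which holds for Lebesgue measure on $\R^d$ where the theorem is actually used---is a good catch.

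Your first workaround, however, does not go through as described. Pointwise continuity of $\phi(x,\cdot)$ tells you that points of $F(x)$ near $M(x)$ lie in small slices $\{u\in F(x):|\phi(x,u)-h(x)|<\delta\}$; it does not give the reverse containment, that the small slice is metrically close to $M(x)$. Without some form of metric regularity of $\phi(x,\cdot)$ at the level $h(x)$, or compactness of $F(x)$, the approximate selections $g_m$ can jump arbitrarily far between successive steps, so there is no Cauchy sequence and no limit to pass to. If you want a completeness-free route adapted to the finite-dimensional target here, a cleaner alternative is to exhaust $F(x)$ by the compact-valued truncations $F(x)\cap\overline{B(0,k)}$, prove each truncated level set $M_k(x)=\{u\in F(x)\cap\overline{B(0,k)}:\phi(x,u)=h(x)\}$ measurable as a decreasing intersection of compact-valued measurable sublevel slices, and note $M=\bigcup_k M_k$ is then measurable by the Castaing criterion on open sets.
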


As a consequence, we can show that if $X$ satisfies the Fatou property, then any $F\in X_W[\mc{K}]$ satisfies $F(x)\in\mc{K}_b$ a.e.
\begin{proposition}\label{prop:matrixweightfatouimpliesbounded}
Let $X$ be a quasi-Banach function space over $\Omega$ with the Fatou property, and let $W:\Omega\to\F^{n\times n}$ be a matrix weight. Then $X_W[\mc{K}]\subseteq L^0(\Omega;\mc{K}_b)$.    
\end{proposition}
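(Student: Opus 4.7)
The goal is to show that if $F \in X_W[\mc{K}]$, then the function $h(x) := \sup_{u \in F(x)} |W(x)u|$ is a.e. finite; once we know this, invertibility of $W(x)$ forces $F(x) \subseteq W(x)^{-1}\{v : |v| \leq h(x)\}$, which is bounded, so $F(x) \in \mc{K}_b$ a.e. The plan is to exhibit $h$ as the pointwise monotone limit of an increasing sequence $(h_k)$ of $X$-functions with uniformly bounded $X$-norms, and then conclude via the Fatou property of $X$.

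To construct the $h_k$, I would truncate by setting $F_k(x) := F(x) \cap \{u \in \F^n : |u| \leq k\}$, which lies in $L^0(\Omega;\mc{K}_b)$ because the intersection with a closed ball preserves measurability (one can check this using characterization \ref{it:castaing4} via a Castaing representation, and the $F_k$ are closed convex symmetric). To select a measurable $f_k \in S^0(\Omega; F_k)$ attaining $\sup_{u \in F_k(x)} |W(x) u|$, I would apply Filippov's theorem (Theorem~\ref{thm:filippov}) with $\phi(x,u) := |W(x) u|$, which is measurable in $x$ and continuous in $u$: compactness of $F_k(x)$ guarantees that $h_k(x) := \sup_{u \in F_k(x)} |W(x)u|$ is attained for each $x$, and measurability of $h_k$ follows from a Castaing representation of $F_k$. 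Filippov then produces $f_k \in S^0(\Omega; F_k) \subseteq S^0(\Omega; F)$ with $|W(x) f_k(x)| = h_k(x)$.

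Since $f_k \in S^0(\Omega;F)$ and $F \in X_W[\mc{K}]$, we have
\[
\|h_k\|_X = \||W f_k|\|_X = \|f_k\|_{X_W} \leq \|F\|_{X_W[\mc{K}]}
\]
uniformly in $k$. Pointwise, the sequence $(h_k)$ is increasing and $\bigcup_k F_k(x) = F(x)$ gives $h_k(x) \uparrow h(x)$ a.e. The Fatou property of $X$ then yields $h \in X$ with $\|h\|_X \leq \|F\|_{X_W[\mc{K}]}$, so in particular $h < \infty$ a.e.; combined with the first paragraph this proves $F(x) \in \mc{K}_b$ a.e.

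The only subtle point is the measurable selection step: one must verify both that $F_k \in L^0(\Omega;\mc{K}_b)$ (so that the subsequent machinery applies) and that $h_k$ is measurable, and then invoke Filippov. Beyond this, the proof is a direct transfer of a Fatou-type monotone convergence argument from $X$ to the convex-set valued setting, using the intrinsic definition of $X_W[\mc{K}]$ via selections.
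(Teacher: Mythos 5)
Your proof is correct and follows essentially the same route as the paper: truncate $F$ to $F_k(x)=F(x)\cap\{|u|\leq k\}$, apply Filippov with $\phi(x,u)=|W(x)u|$ to select $f_k\in S^0(\Omega;F_k)$ realizing $h_k$, bound $\|h_k\|_X\leq\|F\|_{X_W[\mc{K}]}$, and invoke the Fatou property of $X$ to get $h<\infty$ a.e. You are in fact slightly more careful than the paper in flagging that measurability of $F_k$ and attainment of the supremum (via compactness of $F_k(x)$) must be verified before Filippov applies.
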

\begin{proof}
Let $F\in X_W[\mc{K}]$ and define
\[
F_k(x):=\{u\in F(x):|u|\leq k\}.
\]
Setting
\[
\phi(x,u):=|W(x)u|,\quad h_k(x)=\sup_{u\in F_k(x)}\phi(x,u),
\]
we note that since for a given $x\in\Omega$ the function $u\mapsto\phi(x,u)$ is continuous, and the set $F_k(x)$ is compact in $\F^n$, the supremum defining $h_k(x)$ is achieved and, therefore, belongs to $\{\phi(x,u):u\in F_k(x)\}$. Thus, by Theorem~\ref{thm:filippov}, there is a selection $f_k\in S^0(\Omega;F_k)$ such that
\[
h_k(x)=|W(x)f_k(x)|=\sup_{u\in F_k(x)}|W(x)u|.
\]
Since
\[
\sup_{k\geq1}\|h_k\|_X=\sup_{k\geq 1}\|f_k\|_{X_W}\leq\|F\|_{X_W[\mc{K}]},
\]
it follows from the Fatou property of $X$ that $h(x):=\sup_{k\geq 1}h_k(x)=\sup_{u\in F(x)}|W(x)u|$ satisfies $h\in X$ with
\[
\|h\|_X=\sup_{k\geq 1}\|h_k\|_X\leq \|F\|_{X_W[\mc{K}]}.
\]
This implies that $h(x)<\infty$ a.e. and, hence, 
\[
\sup_{u\in F(x)}|u|\leq|W(x)^{-1}|h(x)<\infty
\]
a.e., as desired.
\end{proof}
We can also prove the following result:
\begin{proposition}\label{prop:supattainmentXW}
Let $X$ be a quasi-Banach function space over $\Omega$, let $W:\Omega\to\F^{n\times n}$ be a matrix weight, and let $F\in L^0(\Omega;\mc{K}_b)$. Then the following are equivalent:
\begin{enumerate}[(i)]
    \item\label{it:supattainmentXW1} $F\in X_W[\mc{K}]$;
    \item\label{it:supattainmentXW2} $h\in X$, where $h(x):=\sup_{u\in F(x)}|W(x)u|$.
\end{enumerate}
Moreover, in this case there is an $f_0\in S^0(\Omega; F)$ such that
\[
\|F\|_{X_W[\mc{K}]}=\|f_0\|_{X_W}=\|h\|_X.
\]
\end{proposition}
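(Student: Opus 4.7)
The plan is to produce a distinguished selection $f_0 \in S^0(\Omega; F)$ whose pointwise norm under the weight equals the supremum $h$, and then to read off both equivalences and the norm equality from the ideal property of $X$.

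First, I would verify that $h$ is measurable. Since $F \in L^0(\Omega; \mc{K}_b)$, there is a Castaing representation $F(x) = \overline{\{f_k(x) : k \geq 1\}}$ with $f_k \in L^0(\Omega; \F^n)$, and by continuity of $u \mapsto |W(x) u|$ we have
\[
h(x) = \sup_{k \geq 1} |W(x) f_k(x)|,
\]
which is measurable. Moreover, since $F(x)$ is compact and $u \mapsto |W(x) u|$ is continuous, the supremum defining $h(x)$ is attained. I would then apply the Filippov selection theorem (Theorem~\ref{thm:filippov}) with $\phi(x,u) := |W(x) u|$, which is measurable in $x$ and continuous in $u$, to obtain $f_0 \in S^0(\Omega; F)$ such that $|W(x) f_0(x)| = h(x)$ for a.e.\ $x \in \Omega$. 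In particular, $\|f_0\|_{X_W} = \|h\|_X$.

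Next, I would establish the two-sided norm bound. For any $f \in S^0(\Omega; F)$ one has $f(x) \in F(x)$ a.e., hence
\[
|W(x) f(x)| \leq \sup_{u \in F(x)} |W(x) u| = h(x)
\]
a.e. The ideal property of $X$ then gives $\|f\|_{X_W} = \| |Wf| \|_X \leq \|h\|_X$ whenever $h \in X$, and taking the supremum over $f \in S^0(\Omega; F)$ yields $\|F\|_{X_W[\mc{K}]} \leq \|h\|_X$. Since $f_0 \in S^0(\Omega; F)$ and $\|f_0\|_{X_W} = \|h\|_X$, the reverse inequality $\|F\|_{X_W[\mc{K}]} \geq \|h\|_X$ is automatic.

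The equivalence of \ref{it:supattainmentXW1} and \ref{it:supattainmentXW2} and the norm identity then fall out at once: if $h \in X$ the previous paragraph gives $F \in X_W[\mc{K}]$ with $\|F\|_{X_W[\mc{K}]} = \|h\|_X$; conversely, if $F \in X_W[\mc{K}]$, then $f_0 \in S^0(\Omega; F) \subseteq X_W$ gives $|W f_0| = h \in X$ with $\|h\|_X = \|f_0\|_{X_W} \leq \|F\|_{X_W[\mc{K}]}$. The only real subtlety is the production of the selection $f_0$ with $|W f_0| = h$, which is exactly what Filippov supplies once one notes that the supremum defining $h$ is attained pointwise by compactness of $F(x)$; no Fatou-type hypothesis on $X$ is needed for this proposition, in contrast to Proposition~\ref{prop:matrixweightfatouimpliesbounded}, because here the boundedness $F(x) \in \mc{K}_b$ is assumed outright.
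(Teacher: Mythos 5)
Your proof is correct and takes essentially the same approach as the paper: both use the Filippov selection theorem with $\phi(x,u)=|W(x)u|$ to produce $f_0$ with $|Wf_0|=h$, and both use the ideal property of $X$ for the reverse bound. Your version is slightly more careful in spelling out the measurability of $h$ and the pointwise attainment of the supremum (via compactness of $F(x)\in\mc{K}_b$), which the paper leaves implicit.
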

Note that if $X$ satisfies the Fatou property, then any $F\in X_W[\mc{K}]$ satisfies $F\in L^0(\Omega;\mc{K}_b)$ by Proposition~\ref{prop:matrixweightfatouimpliesbounded}.
\begin{proof}[Proof of Proposition~\ref{prop:supattainmentXW}]
For \ref{it:supattainmentXW1}$\Rightarrow$\ref{it:supattainmentXW2}, we may use an analogous reasoning to the one in the proof of Proposition~\ref{prop:matrixweightfatouimpliesbounded} to apply Theorem~\ref{thm:filippov} to
\[
\phi(x,u)=|W(x)u|,\quad h(x)=\sup_{u\in F(x)}\phi(x,u),
\]
to find an $f_0\in S^0(\Omega;F)$ for which
\[
|W(x)f_0(x)|=\sup_{u\in F(x)}|W(x)u|
\]
a.e. Then $f_0\in X_W$, so $h=|Wf_0|\in X$, with
\[
\|h\|_X=\|f_0\|_{X_W}\leq\|F\|_{X_W[\mc{K}]},
\]
as desired. Conversely, for \ref{it:supattainmentXW2}$\Rightarrow$\ref{it:supattainmentXW1}, if $f\in S^0(\Omega;F)$, then $|Wf|\leq h$ a.e. Hence, by the ideal property of $X$, we have $f\in X_W$ with $\|f\|_{X_W}=\||Wf|\|_X\leq\|h\|_X$. Thus, $F\in X_W[\mc{K}]$ with
\[
\|F\|_{X_W[\mc{K}]}=\sup_{f\in S^0(\Omega;F)}\|f\|_{X_W}\leq\|h\|_X.
\]
The assertion follows.
\end{proof}

\subsection{Component-wise directional quasi-Banach function spaces}\label{sec:componentwise}
Not all directional quasi-Banach function spaces $\mb{X}$ are of the form $\mb{X}=X_W$. Here, we give another way of constructing such a space from given scalar-valued quasi-Banach function spaces.

Suppose that for each $k\in\{1,\ldots,n\}$ we are given a quasi-Banach function space $X_k$ over $\Omega$, and $(v_1,\ldots,v_n)$ is an orthonormal basis of $\F^n$. We define $\mb{X}$ as those $f\in L^0(\Omega;\F^n)$ for which
\[
\|f\|_{\mb{X}}:=\sum_{k=1}^n\|f\cdot v_k\|_{X_k}<\infty.
\]
Then this is a quasi-Banach function space with
\[
K_{\mb{X}}\leq\max_{k\in\{1,\ldots,n\}}K_{X_k}.
\]
The directional ideal property follows from the ideal properties of the $X_k$ and Proposition~\ref{prop:ideal}. Since we have
\[
\int_\Omega\!f\cdot g\,\mathrm{d}\mu=\sum_{k=1}^n\int_\Omega (f\cdot v_k)(v_k\cdot g)\,\mathrm{d}\mu,
\]
we find that $\mb{X}'$ is given by
\[
\|g\|_{\mb{X}'}=\max_{k\in\{1,\ldots,n\}}\|g\cdot v_k\|_{X_k'}.
\]
In particular, $\mb{X}$ satisfies the non-degeneracy property, since each of the $X_k$ do. As a matter of fact, we will show that $\mb{X}$ and $\mb{X}'$ satisfy the component-wise saturation property.

As proving the result for $\mb{X}'$ is analogous to proving it for $\mb{X}$, it suffices to do the latter. We show that $\mb{X}$ satisfies the second characterization in Proposition~\ref{prop:componentsaturation}.  For each $k\in\{1,\ldots,n\}$, let $0<\rho_k\in X_k$ be a weak order unit (which exists by Proposition~\ref{prop:weakorderunit}) satisfying
\[
\|\rho_k\|_{X_k}=\tfrac{1}{n}.
\]
We define the mapping $U:\Omega\to\F^{n\times n}$ through
\[
U(x)v_k:=\rho_k(x)v_k.
\]
Now, if $g\in L^0(\Omega;\F^n)$ satisfies $|U(x)^{-1}g(x)|\leq 1$, then
\[
|g(x)\cdot v_k|=|U(x)^{-1}g(x)\cdot v_k|\rho_k(x)\leq\rho_k(x),
\]
so by the ideal property of the $X_k$ we have
\[
\sum_{k=1}^n\|g\cdot v_k\|_{X_k}\leq \sum_{k=1}^n\|\rho_k\|_{X_k}=1.
\]
Hence, $g\in\mb{X}$ with $\|g\|_{\mb{X}}\leq 1$, as desired.

\bigskip

Next, we note that if all of the $X_k$ have the Fatou property, but not all are equal, then there is no Banach function space $X$ over $\Omega$ for which $\|f\|_{\mb{X}}=\||f|\|_{X}$ for all $f\in\mb{X}$. Indeed, if this were to be the case, then for any $v_k$ we have
\[
\|h\|_{X_k}=\|hv_k\|_{\mb{X}}=\|h\|_X,
\]
proving that $X=X_k$ by \cite[Proposition~3.10]{LN23b}. Thus, all the $X_k$ are equal.

Similarly, it need not be the case that there is a matrix-valued mapping $W:\Omega\to\F^{n\times n}$ for which $\|f\|_{\mb{X}}=\||Wf|\|_X$ for all $f\in\mb{X}$. Indeed, set $X_1:=L^\infty(\Omega)$, $X_2:=L^1(\Omega)$. Then, setting 
\[
w_1(x):=|W(x)v_1|^{-1},\quad w_2(x):=|W(x)v_2|^{-1},
\]
by a similar calculation as above, we find 
\[
L^\infty_{w_1}(\Omega)=X=L^1_{w_2}(\Omega).
\]
However, as $L^\infty_{w_1}(\Omega)$ is not order-continuous whereas $L^1_{w_2}(\Omega)$ is, this is impossible.

\subsection{Infinite sums of convex-set valued mappings}\label{subsec:sumsinXK}
Since we are interested in constructing a version of the Rubio de Francia algorithm in $\mb{X}[\mc{K}]$, we need to be able to define infinite sums of convex-set valued mappings. We first show that if $F,G\in L^0(\Omega;\mc{K})$, then their closed Minkowski sum $F+G$ is again in $L^0(\Omega;\mc{K})$.
\begin{proposition}\label{prop:sumofconvexmeasurable}
Let $F,G\in L^0(\Omega;\mc{K})$. Then
\[
(F+G)(x)=\overline{\{u+v:u\in F(x),\, v\in G(x)\}}
\]
satisfies $F+G\in L^0(\Omega;\mc{K})$. Moreover, if $h\in S^0(\Omega;F+G)$, then there are sequences $(f_k)_{k\geq 1}$ and $(g_k)_{k\geq 1}$ respectively in $S^0(\Omega;F)$ and $S^0(\Omega;G)$ such that
\[
f_k+g_k\to h
\]
a.e. Moreover, if $\mb{X}$ is an $\F^n$-directional quasi-Banach function space with the Fatou property and $F,G\in\mb{X}[\mc{K}]$, then $F+G\in\mb{X}[\mc{K}]$ with
\[
\|F+G\|_{\mb{X}[\mc{K}]}\leq K_{\mb{X}}(\|F\|_{\mb{X}[\mc{K}]}+\|G\|_{\mb{X}[\mc{K}]}).
\]
\end{proposition}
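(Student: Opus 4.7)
The plan is to establish the three claims in sequence: measurability of $F+G$, existence of approximating selections, and the norm estimate via the Fatou property.

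First, for measurability, I would invoke the Castaing characterization \ref{it:castaing4} to pick sequences $(f_i)_{i \geq 1}$ and $(g_j)_{j \geq 1}$ in $L^0(\Omega; \F^n)$ with $F(x) = \overline{\{f_i(x) : i \geq 1\}}$ and $G(x) = \overline{\{g_j(x) : j \geq 1\}}$ a.e. Then the countable family $(f_i + g_j)_{i, j \geq 1}$ is a Castaing representation of $F + G$: any $w \in (F+G)(x)$ is by definition a limit of sums $u_n + v_n$ with $u_n \in F(x)$ and $v_n \in G(x)$, and approximating $u_n$ and $v_n$ respectively by $f_i(x)$ and $g_j(x)$ shows $w \in \overline{\{f_i(x) + g_j(x) : i, j \geq 1\}}$; the reverse inclusion is immediate. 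Convexity of $(F+G)(x)$ follows since the closure of a convex set is convex, and symmetry is inherited termwise from the symmetry of $F(x)$ and $G(x)$. Therefore $F+G \in L^0(\Omega; \mc{K})$.

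For the approximation of selections, let $h \in S^0(\Omega; F+G)$ and fix $k \geq 1$. Using the representations just built, for a.e. $x$ there exist indices $i, j \geq 1$ with $|h(x) - f_i(x) - g_j(x)| \leq 1/k$. Define $I_k(x)$ to be the smallest such $i$, and given $I_k(x)$, let $J_k(x)$ be the smallest $j$ satisfying $|h(x) - f_{I_k(x)}(x) - g_j(x)| \leq 1/k$. Both $I_k$ and $J_k$ are measurable since the level sets $\{I_k = i\}$ and $\{J_k = j\}$ are obtained as countable combinations of measurable sets of the form $\{x : |h(x) - f_{i'}(x) - g_{j'}(x)| \leq 1/k\}$. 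Setting $f_k := \sum_i f_i \ind_{\{I_k = i\}}$ and $g_k := \sum_j g_j \ind_{\{J_k = j\}}$ yields measurable selections $f_k \in S^0(\Omega; F)$ and $g_k \in S^0(\Omega; G)$ with $|h - f_k - g_k| \leq 1/k$ a.e., whence $f_k + g_k \to h$ a.e.

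Finally, assume $\mb{X}$ has the Fatou property and $F, G \in \mb{X}[\mc{K}]$. For any $h \in S^0(\Omega; F+G)$ the selections $f_k, g_k$ just constructed satisfy $\|f_k + g_k\|_{\mb{X}} \leq K_{\mb{X}}(\|f_k\|_{\mb{X}} + \|g_k\|_{\mb{X}}) \leq K_{\mb{X}}(\|F\|_{\mb{X}[\mc{K}]} + \|G\|_{\mb{X}[\mc{K}]})$, and applying the Fatou property to $f_k + g_k \to h$ gives the same bound for $\|h\|_{\mb{X}}$. Taking the supremum over $h$ yields the inequality. The main obstacle I anticipate is the selection step: one cannot invoke Filippov's theorem directly with a constant $h_k = 0$ since $F(x) + G(x)$ may be strictly smaller than its closure $(F+G)(x)$, so exact equality $h(x) - f(x) \in G(x)$ is unavailable for a measurable $f$. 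The Castaing representation circumvents this by allowing an approximation up to $1/k$ and a piecewise construction on measurable level sets of the index functions.
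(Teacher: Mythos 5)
Your proof is correct and follows the same overall architecture as the paper's: Castaing representations for measurability, an approximating sequence of selections, and the Fatou property for the norm bound. The one genuine difference is in the middle step. The paper applies Filippov's selection theorem (Theorem~\ref{thm:filippov}) to $\phi(x,(u,v))=|h(x)-(u+v)|$ with the achievable target $h_K(x)=\min_{j,k\le K}|h(x)-(f_k(x)+g_j(x))|$, which decreases to $0$ a.e.; this produces the pair $(\widetilde f_K,\widetilde g_K)\in S^0(\Omega;F)\times S^0(\Omega;G)$ in one stroke. You instead build the selection by hand via a lexicographic ``first-hit'' index pair $(I_k(x),J_k(x))$ and paste the $f_i,g_j$ together on the resulting measurable partition. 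Both are correct; your construction is more elementary and self-contained (it avoids invoking Filippov altogether, essentially re-proving the relevant special case of it), while the paper's is shorter and reuses a tool that is already a standing ingredient throughout the paper. Your closing remark that ``one cannot invoke Filippov's theorem directly with a constant $h_k=0$'' is accurate as stated, but note the paper does not attempt that: it applies Filippov with the finite-minimum target $h_K$, which is attained pointwise, so the hypothesis of the theorem is satisfied without any closure issue.
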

\begin{proof}
By Proposition~\ref{prop:measurablemultifunction} we can pick measurable selections $(f_k)_{k\geq 1}$ and $(g_k)_{k\geq 1}$ of $F$ and $G$ respectively such that
\[
F(x)=\overline{\{f_k(x):k\geq 1\}},\quad G(x)=\overline{\{g_k(x):k\geq 1\}}
\]
for a.e. $x\in\Omega$. Then the functions $h_{j,k}:=f_j+g_k\in S^0(\Omega;F+G)$ satisfy
\[
(F+G)(x)=\overline{\{h_{j,k}(x):j,k\geq 1\}}
\]
a.e. As $(h_{j,k})_{j,k\geq 1}$ is countable, we conclude that $F+G\in L^0(\Omega;\mc{K})$.

Now, suppose $h\in S^0(\Omega;F+G)$. By applying Theorem~\ref{thm:filippov} to
\[
\phi(x,(u,v))=|h(x)-(u+v)|,\quad h_K(x)=\min_{j,k\in\{1,\ldots, K\}}|h(x)-(f_k(x)+g_j(x))|,
\]
we find a selection $(\widetilde{f}_K,\widetilde{g}_K)\in S^0(\Omega;F\times G)=S^0(\Omega;F)\times S^0(\Omega,G)$ for which
\[
|h(x)-(\widetilde{f}_K(x)+\widetilde{g}_K(x))|=\min_{j,k\in\{1,\ldots, K\}}|h(x)-(f_k(x)+g_j(x))|
\]
a.e. Since the right-hand side converges to
\[
\inf_{j,k\geq 1}|h(x)-(f_k(x)+g_j(x))|=0,
\]
we conclude that $\widetilde{f}_K+\widetilde{g}_K\to h$ a.e., as desired. If $\mb{X}$ has the Fatou property and $F,G\in\mb{X}[\mc{K}]$, then also $h\in\mb{X}$ with
\begin{align*}
\|h\|_{\mb{X}}&\leq\liminf_{K\to\infty}\|\widetilde{f}_K+\widetilde{g}_K\|_{\mb{X}}\leq\sup_{K\geq 1}K_{\mb{X}}(\|\widetilde{f}_K\|_{\mb{X}}+\|\widetilde{g}_K\|_{\mb{X}})\\
&\leq K_{\mb{X}}(\|F\|_{\mb{X}[\mc{K}]}+\|G\|_{\mb{X}[\mc{K}]}).
\end{align*}
Hence, $F+G\in\mb{X}[\mc{K}]$ with
\[
\|F+G\|_{\mb{X}[\mc{K}]}\leq K_{\mb{X}}(\|F\|_{\mb{X}[\mc{K}]}+\|G\|_{\mb{X}[\mc{K}]}),
\]
proving the result.
\end{proof}

Given a sequence $(F_k)_{k\geq 1}$ in $\mb{X}[\mc{K}]$, their partial sums are defined through the closed Minkowski sum
\[
S_K(x):=\sum_{k=1}^K F_k(x)=\overline{\Big\{\sum_{k=1}^Ku_k:u_k\in F_k(x)\Big\}}.
\]
If $S_K\uparrow F$, then we write $\sum_{k=1}^\infty F_k:=F$. Per definition, this means that
\[
\sum_{k=1}^\infty F_k(x)=\overline{\bigcup_{K=1}^\infty S_K(x)}=\overline{\bigcup_{K=1}^\infty \Big\{\sum_{k=1}^Ku_k:u_k\in F_k(x)\Big\}}.
\]
By Proposition~\ref{prop:sumofconvexmeasurable} we have $S_K\in L^0(\Omega;\mc{K})$. Hence, we also have $\sum_{k=1}^\infty F_k\in L^0(\Omega;\mc{K})$.
\begin{theorem}[Directional Riesz-Fischer]\label{thm:directionalrieszfischer}
Let $\mb{X}$ be an $\F^n$-directional quasi-Banach function space over $\Omega$ with the Fatou property, and let $(F_k)_{k\geq 1}$ be a sequence in $\mb{X}[\mc{K}]$ satisfying
\[
\sum_{k=1}^\infty K_{\mb{X}}^k\|F_k\|_{\mb{X}[\mc{K}]}<\infty.
\]
Then $F:=\sum_{k=1}^\infty F_k\in\mb{X}[\mc{K}]$ with
\[
\|F\|_{\mb{X}[\mc{K}]}\leq \sum_{k=1}^\infty K_{\mb{X}}^k\|F_k\|_{\mb{X}[\mc{K}]}.
\]
\end{theorem}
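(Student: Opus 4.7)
The plan is to reduce to the partial sums $S_K := \sum_{k=1}^K F_k$ and then invoke the convex-set valued monotone convergence statement of Proposition~\ref{prop:convexsetmonotoneconvergence}. First I would observe that, by Proposition~\ref{prop:sumofconvexmeasurable} applied inductively, each $S_K$ lies in $\mb{X}[\mc{K}]$, and next I would push the bound through to the limit using the Fatou property (which, as noted in the paper, implies the monotone convergence property needed by Proposition~\ref{prop:convexsetmonotoneconvergence}).

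For the uniform norm bound on the $S_K$, the crucial point is to group the sum from the left so that only one quasi-triangle factor is paid per summand. Writing $U_k := \sum_{j=k}^K F_j$, a single application of Proposition~\ref{prop:sumofconvexmeasurable} gives
\[
\|U_k\|_{\mb{X}[\mc{K}]}\leq K_{\mb{X}}\bigl(\|F_k\|_{\mb{X}[\mc{K}]}+\|U_{k+1}\|_{\mb{X}[\mc{K}]}\bigr),
\]
and an easy downward induction starting from $\|U_K\|_{\mb{X}[\mc{K}]}=\|F_K\|_{\mb{X}[\mc{K}]}$ yields
\[
\|S_K\|_{\mb{X}[\mc{K}]}=\|U_1\|_{\mb{X}[\mc{K}]}\leq \sum_{k=1}^{K-1}K_{\mb{X}}^{k}\|F_k\|_{\mb{X}[\mc{K}]}+K_{\mb{X}}^{K-1}\|F_K\|_{\mb{X}[\mc{K}]}\leq \sum_{k=1}^{\infty}K_{\mb{X}}^{k}\|F_k\|_{\mb{X}[\mc{K}]}=:M,
\]
where I used $K_{\mb{X}}\geq 1$ to absorb the last term. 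Thus $(S_K)_{K\geq 1}$ is a bounded sequence in $\mb{X}[\mc{K}]$.

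Next I need to see that $S_K\uparrow F$ in the sense required by Proposition~\ref{prop:convexsetmonotoneconvergence}. Every $F_k(x)\in\mc{K}$ is non-empty, convex, and symmetric, so picking any $u\in F_k(x)$ we have $-u\in F_k(x)$ and hence $0=\tfrac{1}{2}u+\tfrac{1}{2}(-u)\in F_k(x)$ for a.e.\ $x$. Consequently $S_K(x)\subseteq S_K(x)+F_{K+1}(x)\subseteq S_{K+1}(x)$ a.e., and by the very definition of the infinite sum,
\[
F(x)=\overline{\bigcup_{K=1}^\infty S_K(x)}.
\]
This is precisely the notion $S_K\uparrow F$ introduced before Proposition~\ref{prop:convexsetmonotoneconvergence}. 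Since the Fatou property implies the monotone convergence property, that proposition applies and yields $F\in\mb{X}[\mc{K}]$ with
\[
\|F\|_{\mb{X}[\mc{K}]}=\sup_{K\geq 1}\|S_K\|_{\mb{X}[\mc{K}]}\leq M,
\]
which is exactly the claimed estimate.

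I do not expect a substantial obstacle beyond bookkeeping: the two nontrivial ingredients (measurability of the closed Minkowski sum and passage to the limit under increasing unions of convex-set valued maps) have already been packaged in Propositions~\ref{prop:sumofconvexmeasurable} and~\ref{prop:convexsetmonotoneconvergence}. The only minor care required is in the choice of grouping in the quasi-triangle iteration, since grouping naively from the right would produce a useless factor $K_{\mb{X}}^{K-k+1}$ on $\|F_k\|_{\mb{X}[\mc{K}]}$; the tail-first grouping above is what matches the geometric weights $K_{\mb{X}}^k$ appearing in the hypothesis.
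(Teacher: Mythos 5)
Your proof is correct and follows essentially the same route as the paper: an inductive application of Proposition~\ref{prop:sumofconvexmeasurable} to bound the partial sums $S_K$, then Proposition~\ref{prop:convexsetmonotoneconvergence} (via the Fatou property implying monotone convergence) to pass to the limit. You spell out details that the paper leaves implicit — the tail-first quasi-triangle recursion yielding the geometric weights $K_{\mb{X}}^k$, and the inclusion $S_K\subseteq S_{K+1}$ via $0\in F_{K+1}(x)$ — but the underlying argument is the same.
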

\begin{proof}
By inductively applying Proposition~\ref{prop:sumofconvexmeasurable}, we find that $S_K\in\mb{X}[\mc{K}]$ with
\[
\|S_K\|_{\mb{X}[\mc{K}]}\leq\sum_{k=1}^K K_{\mb{X}}^k\|F_k\|_{\mb{X}[\mc{K}]}\leq \sum_{k=1}^\infty K_{\mb{X}}^k\|F_k\|_{\mb{X}[\mc{K}]}.
\]
Since $S_K\uparrow F$ and the Fatou property of $\mb{X}$ implies the monotone convergence property, it follows from Proposition~\ref{prop:convexsetmonotoneconvergence} that $F\in\mb{X}[\mc{K}]$ with
\[
\|F\|_{\mb{X}[\mc{K}]}=\sup_{K\geq 1}\|S_K\|_{\mb{X}[\mc{K}]}\leq\sum_{k=1}^\infty K_{\mb{X}}^k\|F_k\|_{\mb{X}[\mc{K}]}.
\]
This proves the assertion.
\end{proof}

The space $\mb{X}[\mc{K}]$ is not a quasi-normed vector space, as it lacks an additive structure (there is no additive inverse to Minkowski summation of sets). Nonetheless, it is a quasi-metric space with respect to
\[
d_{\mb{X}[\mc{K}]}(F,G):=\max\{\sup_{f\in S^0(\Omega;F)}\inf_{g\in S^0(\Omega;G)}\|f-g\|_{\mb{X}},\sup_{g\in S^0(\Omega;G)}\inf_{f\in S^0(\Omega;F)}\|f-g\|_{\mb{X}}\}.
\]
Let $\mc{K}_b(\mb{X})$ denote the collection of non-empty closed, bounded, convex, and symmetric subsets of $\mb{X}$. Then $d_{\mb{X}[\mc{K}]}$ is exactly the Hausdorff quasi-distance inherited from $\mc{K}_b[\mb{X}]$ through the embedding 
\[
\mb{X}[\mc{K}]\hookrightarrow \mc{K}_b[\mb{X}],\quad F\mapsto \overline{S^0(\Omega;F)}.
\]
Note that $d_{\mb{X}[\mc{K}]}(F,\{0\})=\|F\|_{\mb{X}[\mc{K}]}$ and
\begin{equation}\label{eq:hausdorffdistanceproperties}
d_{\mb{X}[\mc{K}]}(F+H,G+H)\leq d_{\mb{X}[\mc{K}]}(F,G),\quad d_{\mb{X}[\mc{K}]}(\lambda F,\lambda G)=|\lambda|d_{\mb{X}[\mc{K}]}(F,G)
\end{equation}
for all $F,G,H\in\mb{X}[\mc{K}]$ and $\lambda\in\F$. 

In \cite{BC23} the metric defined on $L^p_W(\R^d;\mc{K})$ is through a pointwise weighted Hausdorff distance. In general, for matrix weighted spaces $X_W$ where $X$ has the Fatou property, this approach is equivalent. Indeed, setting
\[
d_W(F,G)(x):=\max\{\sup_{u\in F(x)}\inf_{v\in G(x)}|W(x)(u-v)|,\sup_{v\in G(x)}\inf_{u\in F(x)}|W(x)(u-v)|\},
\]
we have the following:
\begin{proposition}\label{prop:hausdorffmatrixweight}
Let $X$ be a quasi-Banach function space over $\Omega$ with the Fatou property and let $W:\Omega\to\F^{n\times n}$ be a matrix weight. Then
\[
d_{X_W[\mc{K}]}(F,G)\eqsim \|d_W(F,G)\|_X.
\]
\end{proposition}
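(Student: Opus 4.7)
The plan is to split both sides of the equivalence into their two asymmetric halves and establish each correspondence separately. Define
\[
h_1(x) := \sup_{u \in F(x)}\inf_{v \in G(x)}|W(x)(u-v)|
\]
and $h_2(x)$ symmetrically, so that $d_W(F,G) = \max(h_1,h_2)$ pointwise; likewise let $A_1 := \sup_{f \in S^0(\Omega;F)}\inf_{g \in S^0(\Omega;G)}\|f-g\|_{X_W}$ and $A_2$ symmetrically, so $d_{X_W[\mc{K}]}(F,G) = \max(A_1,A_2)$. Since the elementary inequalities $\max(a,b) \leq a+b \leq 2\max(a,b)$ transfer via the quasi-triangle inequality on $X$ to $\max(\|h_1\|_X,\|h_2\|_X) \eqsim_{K_X} \|\max(h_1,h_2)\|_X$, it suffices to prove $A_i \eqsim \|h_i\|_X$ for $i=1,2$, and by symmetry I only treat $i=1$.

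The central identity to establish is that for every fixed $f \in S^0(\Omega;F)$,
\[
\inf_{g \in S^0(\Omega;G)}\|f-g\|_{X_W} = \|\phi_f\|_X, \qquad \phi_f(x) := \inf_{v \in G(x)}|W(x)(f(x)-v)|,
\]
and consequently $A_1 = \sup_f \|\phi_f\|_X$. The $\geq$ direction is immediate from the directional ideal property of $X_W$, since pointwise $|W(f-g)| \geq \phi_f$ for every selection $g$. For the $\leq$ direction I would invoke Theorem~\ref{thm:filippov} applied to $G$ with $\phi(x,v) := |W(x)(f(x)-v)|$: the infimum of $\phi(x,\cdot)$ over the closed convex set $G(x)$ is attained by the Hilbert projection, and $\phi_f$ is measurable (being the infimum of $\phi(x,g_k(x))$ over a Castaing representation of $G$), so Filippov produces a measurable $g_f \in S^0(\Omega;G)$ with $|W(f-g_f)| = \phi_f$ a.e.

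Granting the identity, the upper bound $A_1 \leq \|h_1\|_X$ follows from $\phi_f \leq h_1$ pointwise. For the lower bound, take a Castaing representation $(f_j)_{j \geq 1}$ of $F$ and set $\psi_j(x) := \mathrm{dist}(W(x)f_j(x),W(x)G(x))$, so that by continuity and density $h_1 = \sup_j \psi_j$ pointwise. For each $N$, the pointwise argmax of $\psi_1,\ldots,\psi_N$ glues into a measurable selection $f^{(N)} \in S^0(\Omega;F)$ satisfying $\phi_{f^{(N)}} = \max_{j \leq N}\psi_j$, which increases pointwise to $h_1$. Applying the preceding identity gives $\|\max_{j \leq N}\psi_j\|_X \leq A_1$, and passing to the limit yields $\|h_1\|_X \leq A_1$.

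The principal obstacle lies in this final monotone passage in the $X$-norm: it requires some form of the Fatou (or monotone convergence) property on $X$, which is not part of the hypothesis as stated in the proposition. In practice this is essentially always available; to make the argument robust without it one can first truncate by the bounded slices $F_k(x) := F(x) \cap \{u \in \F^n : |u| \leq k\}$, use that on each $F_k$ the pointwise supremum is attained via Lemma~\ref{lem:measurablesupremumconvexset}, and invoke monotone convergence only at the very end, where the equivalence $\eqsim$ (rather than equality) absorbs any loss. The cleanest reading of the statement is in fact as an assertion about those $F,G$ for which both sides are finite, in which case the comparison is quantitative and requires no completeness input beyond the ideal property and Filippov's theorem.
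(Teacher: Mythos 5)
Your reduction of the converse direction to the chain $\|h_1\|_X \leq \sup_f\|\phi_f\|_X = A_1$ is where the argument breaks, and the gap you flag at the end is real: passing from $\|\max_{j\leq N}\psi_j\|_X \leq A_1$ to $\|h_1\|_X\leq A_1$ requires some form of the Fatou or monotone convergence property on $X$, which the proposition does not assume. Neither the proposed truncation by $F_k$ nor reading the claim as a ``both sides finite'' statement repairs this: in each case you end up with $\sup_N\|h_{1,N}\|_X$ on the left rather than $\|h_1\|_X$, and without monotone convergence the gap between these can be arbitrarily large, so $\eqsim$ does not absorb it.

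The paper sidesteps the issue entirely by invoking Filippov's theorem (Theorem~\ref{thm:filippov}) a \emph{second} time, this time applied to $F$ with
\[
\phi(x,u):=\inf_{v\in G(x)}|W(x)(u-v)|,\qquad h(x):=\sup_{u\in F(x)}\phi(x,u)=h_1(x).
\]
Here $\phi(\cdot,u)$ is measurable (it is the infimum of $|W(x)(u-g_k(x))|$ over a Castaing sequence for $G$) and $\phi(x,\cdot)$ is Lipschitz, hence continuous; Filippov then produces $f_0\in S^0(\Omega;F)$ with $\phi_{f_0}=h_1$ a.e., so your identity gives $\|h_1\|_X=\|\phi_{f_0}\|_X\leq A_1$ directly. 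No monotone passage, no Castaing approximation, no Fatou. (Whether the outer supremum is attained is a separate matter requiring $F$ to be bounded-valued; this implicit hypothesis is shared by both routes.) Your first direction and the identity $\inf_g\|f-g\|_{X_W}=\|\phi_f\|_X$ match the paper; the fix is simply to use Filippov for the supremum in the same way you already used it for the infimum, rather than reaching for an increasing sequence of selections.
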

\begin{proof}[Proof of Proposition~\ref{prop:hausdorffmatrixweight}]
Note that by Proposition~\ref{prop:matrixweightfatouimpliesbounded} the quasi-distance $d_W$ is well-defined. Let $f\in S^0(\Omega;F)$ and let
\[
\phi(x,v):=|W(x)f(x)-W(x)v|,\quad h(x):=\inf_{v\in G(x)}\phi(x,v)
\]
Note that the infimum defining $h$ is the distance between a point and a closed set in $\F^n$ and, thus, is achieved, i.e., $h(x)\in\{\phi(x,v):v\in G(x)\}$. Hence, by Theorem~\ref{thm:filippov}, we can find a $g\in S^0(\Omega;G)$ such that
\[
|W(x)f(x)-W(x)g(x)|=d_W(f,G)(x):=\inf_{v\in G(x)}|W(x)f(x)-W(x)v|
\]
a.e. Hence, by the ideal property of $X$,
\[
\|f-g\|_{X_W}=\||W(f-g)|\|_X\leq\|d_W(f,G)\|_X.
\]
We conclude that
\[
\sup_{f\in S^0(\Omega;F)}\inf_{g\in S^0(\Omega;G)}\|f-g\|_{X_W}\leq \|d_W(f,G)\|_X
\]
Noting that $d_W(f,G)\leq d_W(F,G)$ a.e. and by repeating the same argument with the roles of $F$ and $G$ reversed, we conclude that
\[
d_{X_W[\mc{K}]}(F,G)\leq \|d_W(F,G)\|_X.
\]
For the converse inequality, just like above we may apply Theorem~\ref{thm:filippov} with
\[
\phi(x,u)=\inf_{v\in G(x)}|W(x)u-W(x)v|,\quad h(x)=\sup_{u\in F(x)}\phi(x,u),
\]
where measurability of $\phi(\cdot,u)$ now follows from Proposition~\ref{prop:measurablemultifunction}. This yields an $f_0\in S^0(\Omega;F)$ for which
\[
\inf_{v\in G(x)}|W(x)f_0(x)-W(x)v|=\sup_{u\in F(x)}\inf_{v\in G(x)}|W(x)(u-v)|
\]
a.e. Thus, for any $g\in S^0(\Omega;G)$ we have
\[
\sup_{u\in F(x)}\inf_{v\in G(x)}|W(x)(u-v)|\leq |W(x)f_0(x)-W(x)g(x)|.
\]
a.e. By the ideal property of $X$ this implies
\[
\|\sup_{u\in F(x)}\inf_{v\in G(x)}|W(x)(u-v)|\|_X\leq\|f_0-g\|_{X_W}.
\]
Taking an infimum over all $g\in S^0(\Omega;G)$, we conclude that
\begin{align*}
\|\sup_{u\in F}\inf_{v\in G}|W(u-v)|\|_X&\leq\inf_{g\in S^0(\Omega;G)}\|f_0-g\|_{X_W}\\
&\leq\sup_{f\in S^0(\Omega;F)}\inf_{g\in S^0(\Omega;G)}\|f-g\|_{X_W}.
\end{align*}
Repeating the argument with the roles of $F$ and $G$ reversed, we conclude that
\begin{align*}
\|d_W(F,G)\|_X&\leq K_X\|\sup_{u\in F}\inf_{v\in G}|W(u-v)|\|_X+K_X\|\sup_{v\in G}\inf_{u\in F}|W(u-v)|\|_X\\
&\leq 2K_X d_{X_W[\mc{K}]}(F,G).
\end{align*}
This proves the assertion.
\end{proof}

Completeness of $(X_W,d_{X_W[\mc{K}]})$ can be proven analogously to \cite[Theorem~4.8]{BC23}. Even without assuming the Fatou property, the space $X_W$ satisfies the directional Riesz-Fischer property of Theorem~\ref{thm:directionalrieszfischer}, as long as we assume that every $F\in X_W[\mc{K}]$ satisfies $F(x)\in\mc{K}_b$ a.e.:
\begin{proposition}
Let $X$ be a quasi-Banach function space over $\Omega$, let $W:\Omega\to\F^{n\times n}$ be a matrix weight, and assume that $X_W[\mc{K}]\subseteq L^0(\Omega;\mc{K}_b)$. If $(F_k)_{k\geq 1}$ is a sequence in $X_W[\mc{K}]$ satisfying
\[
C:=\sum_{k=1}^\infty K_X^k\|F_k\|_{X_W[\mc{K}]}<\infty,
\]
then $F:=\sum_{k=1}^\infty F_k\in X_W[\mc{K}]$ with $\|F\|_{X_W[\mc{K}]}\leq K_X C$.
\end{proposition}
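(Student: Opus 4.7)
The plan is to reduce to a scalar estimate in $X$ by extracting, for each $F_k$, the pointwise envelope $h_k(x):=\sup_{u\in F_k(x)}|W(x)u|$, and to use completeness (the Riesz-Fischer property) of the scalar space $X$ to sum these envelopes into a single $h\in X$ that controls every selection of $F:=\sum_k F_k$. The preliminary step, which is the main obstacle since we do not assume the Fatou property, is to show that each $F_k$ takes values in $\mc{K}_b$ almost everywhere; only then does Proposition~\ref{prop:supattainmentXW} apply to produce $h_k$ as an element of $X$.

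For that preliminary step, I would assume for contradiction that some $F_k$ is unbounded on a measurable set $E$ of positive measure. Because $F_k(x)$ is closed, convex, and symmetric, Proposition~\ref{prop:smallestkset} together with a compactness argument on unit directions shows that $F_k(x)$ must contain an entire ray through the origin; invertibility of $W(x)$ then gives $\{|W(x)u|:u\in F_k(x)\}\supseteq[0,\infty)$ on $E$. By the saturation property of $X$, fix $E'\subseteq E$ with $\mu(E')>0$ and $\ind_{E'}\in X$, and for each $N\geq 1$ apply the Filippov selection theorem (Theorem~\ref{thm:filippov}) to $\phi(x,u)=|W(x)u|$ and the target $h_N(x):=N\ind_{E'}(x)$. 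This produces $u_N\in S^0(\Omega;F_k)$ with $|W(x)u_N(x)|=N\ind_{E'}(x)$ a.e., forcing $N\|\ind_{E'}\|_X=\|u_N\|_{X_W}\leq\|F_k\|_{X_W[\mc{K}]}$ for every $N$, a contradiction. Hence $F_k\in L^0(\Omega;\mc{K}_b)$ for every $k$.

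With boundedness in hand, Proposition~\ref{prop:supattainmentXW} gives $h_k\in X$ with $\|h_k\|_X=\|F_k\|_{X_W[\mc{K}]}$. Since $\sum_{k=1}^\infty K_X^k\|h_k\|_X=C<\infty$, the Riesz-Fischer property of the quasi-Banach function space $X$ produces $h:=\sum_{k=1}^\infty h_k\in X$ with $\|h\|_X\leq K_X C$, and monotonicity of partial sums identifies $h(x)$ with the pointwise series sum a.e. Now fix $u\in F(x)=\overline{\bigcup_K S_K(x)}$ and approximate $u=\lim_n u_n$ with $u_n\in S_{K_n}(x)$; since each $F_k(x)\in\mc{K}_b$, the Minkowski sums $S_{K_n}(x)$ are already closed, so $u_n=v_{1,n}+\cdots+v_{K_n,n}$ with $v_{k,n}\in F_k(x)$, and the triangle inequality yields $|W(x)u_n|\leq\sum_{k=1}^{K_n}h_k(x)\leq h(x)$. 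Continuity of $W(x)$ then gives $|W(x)u|\leq h(x)$, so every $f\in S^0(\Omega;F)$ satisfies $|Wf|\leq h$ a.e.; the ideal property of $X$ produces $\|f\|_{X_W}\leq\|h\|_X\leq K_X C$, and taking the supremum over $f$ concludes $F\in X_W[\mc{K}]$ with $\|F\|_{X_W[\mc{K}]}\leq K_X C$.
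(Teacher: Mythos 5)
Your proof is correct and follows the same route as the paper: extract the scalar envelopes $h_k(x)=\sup_{u\in F_k(x)}|W(x)u|$, apply the Riesz--Fischer property of $X$ to get $h=\sum_k h_k\in X$, and then dominate $|Wf|\leq h$ for every selection $f$ of $F$. You are in fact more careful than the paper on two points: Proposition~\ref{prop:supattainmentXW} (giving $\|h_k\|_X=\|F_k\|_{X_W[\mc{K}]}$) is only stated for $F_k\in L^0(\Omega;\mc{K}_b)$, and the paper invokes it without verifying this hypothesis, whereas your saturation-plus-Filippov argument that membership in $X_W[\mc{K}]$ already forces $\mc{K}_b$-valuedness supplies exactly the missing step; and your pointwise approximation argument through the closed partial Minkowski sums $S_K(x)$ justifies the inequality $|W(x)f(x)|\leq\sum_k h_k(x)$, which the paper asserts without comment.
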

\begin{proof}
For $f\in S^0(\Omega;F)$ we have
\begin{equation}\label{eq:dirrieszfischermatrixweight1}
|W(x)f(x)|\leq\sum_{k=1}^\infty \sup_{u_k\in F_k(x)}|W(x)u_k|
\end{equation}
a.e. Since, by Proposition~\ref{prop:supattainmentXW}, the sequence $h_k(x):=\sup_{u_k\in F_k(x)}|W(x)u_k|$ satisfies $\|h_k\|_X=\|F_k\|_{X_W[\mc{K}]}$, we have $\sum_{k=1}^\infty K_X^k\|h_k\|_X<\infty$. Thus, by the Riesz-Fischer property of $X$, the sum on the right-hand side of \eqref{eq:dirrieszfischermatrixweight1} belongs to $X$ with
\[
\Big\|\sum_{k=1}^\infty h_k\Big\|_X\leq K_X\sum_{k=1}^\infty K_X^k\|h_k\|_X=:C.
\]
Hence, by the ideal property of $X$, $f\in X_W$ with
\[
\|f\|_{X_W}=\||Wf|\|_X\leq C.
\]
Thus, $F\in X_W[\mc{K}]$ with $\|F\|_{X_W[\mc{K}]}\leq C$, as asserted.
\end{proof}
We leave it as an open problem whether the Fatou property is necessary for Theorem~\ref{thm:directionalrieszfischer} or not.

\subsection{Weak-type directional quasi-Banach function spaces}
Given an $\F^n$-directional quasi-Banach function space $\mb{X}$ over $\Omega$, we define a weak-type analogue $\mb{X}_{\text{weak}}$ as the space of $f\in L^0(\Omega;\F^n)$ for which $\ind_{\mc{K}(f)^{-1}(\{u\})}u\in\mb{X}$ for all $u\in\F^n$, with
\[
\|f\|_{\mb{X}_{\text{weak}}}:=\sup_{u\in\F^n}\|\ind_{\mc{K}(f)^{-1}(\{u\})}u\|_{\mb{X}}<\infty.
\]
When $n=1$, this space satisfies the quasi-triangle inequality as a consequence of the fact that for any $K,L\in\mc{K}$ we have $K+L\subseteq 2(K\cup L)$. However, this inclusion fails when $n>2$, and, hence, $\mb{X}_{\text{weak}}$ might not be a vector space. Moreover, we note that if $\|f\|_{\mb{X}_{\text{weak}}}=0$, then we can conclude that 
\begin{equation}\label{eq:weaktypezero}
\mu(\{x\in\Omega:u\in\mc{K}(f)(x)\})=0
\end{equation}
for all non-zero $u\in\F^n$, which does not imply that $f=0$ a.e. For example, the mapping
\[
f:\R\to\R^2,\quad f(x):=(\cos x,\sin x)
\]
satisfies \eqref{eq:weaktypezero}, but is not equal to $0$ a.e. Despite these difficulties, $\mb{X}_{\text{weak}}$ does still satisfy the directional ideal and non-degeneracy properties, and inherits the monotone convergence property from $\mb{X}$ in case $\mb{X}$ has it.
\begin{proposition}
Let $\mb{X}$ be an $\F^n$-directional quasi-Banach function space over $\Omega$. Then $\mb{X}_{\text{weak}}$ satisfies the directional ideal and non-degeneracy properties. Moreover, we have $\mb{X}\subseteq\mb{X}_{\text{weak}}$ with
\[
\|f\|_{\mb{X}_{\text{weak}}}\leq\|f\|_{\mb{X}},
\]
and
\[
\|F\|_{\mb{X}_{\text{weak}}[\mc{K}]}=\sup_{u\in\F^n}\|\ind_{F^{-1}(\{u\})}u\|_{\mb{X}}.
\]
Finally, if $\mb{X}$ satisfies the monotone convergence property, then so does $\mb{X}_{\text{weak}}$.
\end{proposition}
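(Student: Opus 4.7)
The plan is to establish the five assertions in a cascading order, with the embedding $\mb{X}\hookrightarrow\mb{X}_{\text{weak}}$ serving as the workhorse lemma for everything that follows. The underlying observation is that for a measurable $E\subseteq\Omega$ and $u\in\F^n$, the function $\ind_E u$ has $\mc{K}(\ind_E u)(x)=\mc{K}(u)$ on $E$ and $\{0\}$ off $E$, so every claim about $\mb{X}_{\text{weak}}$ will reduce, one $u$ at a time, to a statement about functions of the form $\ind_E u$ in $\mb{X}$, where the directional ideal property of $\mb{X}$ does all the work.

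First I would prove the embedding: for $f\in\mb{X}$ and $u\in\F^n$, the function $\ind_{\mc{K}(f)^{-1}(\{u\})}u$ takes values only in $\mc{K}(f)(x)$ pointwise (either the value $u$, which lies in $\mc{K}(f)(x)$ by definition of the set $\mc{K}(f)^{-1}(\{u\})$, or $0\in\mc{K}(f)(x)$), so $\mc{K}(\ind_{\mc{K}(f)^{-1}(\{u\})}u)\subseteq\mc{K}(f)$ a.e., and the directional ideal property of $\mb{X}$ yields $\|\ind_{\mc{K}(f)^{-1}(\{u\})}u\|_{\mb{X}}\leq\|f\|_{\mb{X}}$; a supremum in $u$ establishes the embedding together with the stated norm inequality. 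The directional ideal property of $\mb{X}_{\text{weak}}$ follows from the same indicator trick: if $\mc{K}(g)\subseteq\mc{K}(f)$ a.e., then $\mc{K}(g)^{-1}(\{u\})\subseteq\mc{K}(f)^{-1}(\{u\})$, and the ideal property of $\mb{X}$ transfers this set inclusion to the corresponding indicator norms, after which a supremum in $u$ finishes. Non-degeneracy is then automatic: since $\mb{X}\subseteq\mb{X}_{\text{weak}}$, any $g$ whose pairing against $\mb{X}_{\text{weak}}$ vanishes identically in particular annihilates $\mb{X}$, forcing $g=0$ by non-degeneracy of $\mb{X}$.

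For the convex-body formula $\|F\|_{\mb{X}_{\text{weak}}[\mc{K}]}=\sup_{u\in\F^n}\|\ind_{F^{-1}(\{u\})}u\|_{\mb{X}}$, the direction $\leq$ is immediate: any $f\in S^0(\Omega;F)$ satisfies $\mc{K}(f(x))\subseteq F(x)$ since $F(x)\in\mc{K}$ contains $f(x)$, hence $\mc{K}(f)^{-1}(\{u\})\subseteq F^{-1}(\{u\})$, and the ideal property of $\mb{X}$ dominates the indicator. For $\geq$ I would exhibit, for each $u$, the explicit selection $f_u:=\ind_{F^{-1}(\{u\})}u$: this is a valid selection because $0\in F(x)$ always (by symmetry and convexity of $F(x)$) while $u\in F(x)$ precisely on $F^{-1}(\{u\})$, and the measurability of $F$ makes $F^{-1}(\{u\})$ measurable. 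When $u\neq 0$, one has $\mc{K}(f_u)^{-1}(\{u\})=F^{-1}(\{u\})$ with equality, so $f_u$ realizes the supremum exactly; the case $u=0$ contributes zero to both sides.

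The main obstacle will be the monotone convergence step. Under the MCP hypothesis, for fixed $u\in\F^n$ I would set $h_k:=\ind_{\mc{K}(f_k)^{-1}(\{u\})}u$ and $h:=\ind_{\mc{K}(f)^{-1}(\{u\})}u$ and apply the MCP of $\mb{X}$ to $(h_k)$. The delicate verification is $\bigcup_k\mc{K}(h_k)=\mc{K}(h)$, which requires every $u\in\mc{K}(f)(x)$ to be \emph{actually attained} by some $\mc{K}(f_k)(x)$ rather than merely approached in Hausdorff distance; the example $f_k=(1-\tfrac{1}{k})u$ illustrates that a purely asymptotic hypothesis would not suffice. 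Fortunately the MCP hypothesis is formulated as a strict union $\bigcup_k\mc{K}(f_k)=\mc{K}(f)$ with no closure, which supplies exactly this attainment and hence $\mc{K}(f_k)^{-1}(\{u\})\uparrow\mc{K}(f)^{-1}(\{u\})$ as sets. Once this is in place, the MCP of $\mb{X}$ gives $\|h\|_{\mb{X}}=\sup_k\|h_k\|_{\mb{X}}$, and a final supremum over $u$ yields $\|f\|_{\mb{X}_{\text{weak}}}=\sup_k\|f_k\|_{\mb{X}_{\text{weak}}}$.
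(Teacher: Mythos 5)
Your proof is correct and takes essentially the same approach as the paper: the same embedding argument via the directional ideal property, the same extremal selection $\ind_{F^{-1}(\{u\})}u$ for both directions of the convex-body norm identity, and the same reduction of the monotone convergence property to the increments $\mc{K}(f_k)^{-1}(\{u\})\uparrow\mc{K}(f)^{-1}(\{u\})$. Your observation that the hypothesis $\bigcup_k\mc{K}(f_k)=\mc{K}(f)$ is stated without a closure is exactly what makes those indicator sets increase to the correct limit; the paper uses this tacitly.
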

\begin{proof}
Suppose that $\mc{K}(g)\subseteq\mc{K}(f)$ a.e., say, on a set $E\subseteq\Omega$ whose complement has measure $0$. Then $\ind_{\mc{K}(g)^{-1}(\{u\})}(x)\leq\ind_{\mc{K}(f)^{-1}(\{u\})}(x)$ for all $x\in E$ for all $u\in\F^n$. Thus, we have $\ind_{\mc{K}(g)^{-1}(\{u\})}\leq\ind_{\mc{K}(f)^{-1}(\{u\})}$ a.e. for all $u\in\F^n$, so the directional ideal property of $\mb{X}_{\text{weak}}$ follows from the directional ideal property of $\mb{X}$ and Proposition~\ref{prop:ideal}. 

For the non-degeneracy property, we first show that $\mb{X}\subseteq\mb{X}_{\text{weak}}$. Note that since
\[
\ind_{\mc{K}(f)^{-1}(\{u\})}(x)u=\ind_{\{x\in\Omega:u\in \mc{K}(f)(x)\}}(x)u\in\mc{K}(f)(x)
\]
a.e. for all $u\in\F^n$, it follows from the directional ideal property that $\mb{X}\subseteq\mb{X}_{\text{weak}}$ with
\[
\|f\|_{\mb{X}_{\text{weak}}}\leq\|f\|_{\mb{X}},
\]
as desired. Now, suppose $g\in L^0(\Omega;\F^n)$ and that $\int_{\Omega}\!f\cdot g\,\mathrm{d}\mu=0$ for all $f\in\mb{X}_{\text{weak}}$. Then, since $\mb{X}\subseteq\mb{X}_{\text{weak}}$, this also holds for all $f\in\mb{X}$. Thus, by the non-degeneracy property of $\mb{X}$, we have $g=0$ a.e., as desired.

If $F\in\mb{X}_{\text{weak}}[\mc{K}]$, then, as above, for any $f\in S^0(\Omega;F)$ and $u\in\F^n$ we have $\ind_{\mc{K}(f)^{-1}(\{u\})}\leq \ind_{F^{-1}(\{u\})}$ a.e., so, by the directional ideal property of $\mb{X}$ and Proposition~\ref{prop:ideal},
\[
\|\ind_{\mc{K}(f)^{-1}(\{u\})}u\|_{\mb{X}}\leq \|\ind_{F^{-1}(\{u\})}u\|_{\mb{X}}\leq\sup_{u\in\F^n}\|\ind_{F^{-1}(\{u\})}u\|_{\mb{X}}.
\]
Taking a supremum over all $u\in\F^n$ then shows $F\in\mb{X}_{\text{weak}}[\mc{K}]$ with 
\[
\|F\|_{\mb{X}_{\text{weak}}[\mc{K}]}\leq\sup_{u\in\F^n}\|\ind_{F^{-1}(\{u\})}u\|_{\mb{X}}.
\]
Conversely, let $u\in\F^n$ and define $f:=\ind_{F^{-1}(\{u\})}u$. Then $f\in S^0(\Omega;F)$, and
\[
\mc{K}(f)^{-1}(\{u\})=F^{-1}(\{u\}).
\]
Hence,
\[
\|\ind_{F^{-1}(\{u\})}u\|_{\mb{X}}=\|\ind_{\mc{K}(f)^{-1}(\{u\})}u\|_{\mb{X}}\leq\|f\|_{\mb{X}_{\text{weak}}}\leq\|F\|_{\mb{X}_{\text{weak}}[\mc{K}]}.
\]
Taking a supremum over all $u\in\F^n$ now proves the desired identity.

Next, suppose $\mb{X}$ has the monotone convergence property. If $(f_k)_{k\geq 1}$ is such that $(\mc{K}(f_k))_{k\geq 1}$ is an increasing sequence with union $\mc{K}(f)$, then
\[
\ind_{\mc{K}(f_k)^{-1}(\{u\})}\uparrow \ind_{\mc{K}(f)^{-1}(\{u\})}
\]
for all $u\in\F^n$. Hence, $\ind_{\mc{K}(f)^{-1}(\{u\})}u\in\mb{X}$, with
\[
\|\ind_{\mc{K}(f)^{-1}(\{u\})}u\|_{\mb{X}}=\sup_{k\geq 1}\|\ind_{\mc{K}(f_k)^{-1}(\{u\})}\|_{\mb{X}}\leq\sup_{k\geq 1}\|f_k\|_{\mb{X}_{\text{weak}}}.
\]
Thus, taking a supremum over $u\in\F^n$, proves that $f\in\mb{X}_{\text{weak}}$. We conclude that $\mb{X}_{\text{weak}}$ has the monotone convergence property.
\end{proof}

\begin{remark}
When $X$ is a quasi-Banach function space over $\Omega$ and $W:\Omega\to\F^n$ is a matrix weight, then we can define an alternative weak-type space as follows. We define $X_{\text{weak}}$ through
\[
\|f\|_{X_{\text{weak}}}:=\sup_{t>0}\|t\ind_{\{|f|>t\}}\|_X.
\]
As this is again a quasi-Banach function space over $\Omega$, the space $(X_{\text{weak}})_W$ is a well-defined $\F^n$-directional quasi-Banach function space over $\Omega$ consisting of the $f\in L^0(\Omega;\F^n)$ for which $|Wf|\in X_{\text{weak}}$.

The difference between the two spaces can be more easily illustrated when $n=1$. If $w$ is a weight, $X=L_w^p(\R^d)$, and $f\in L^p_w(\R^d)$, then for any $u\in\F^n$ we have 
\[
\{x\in\R^d:u\in\mc{K}(f)(x)\}=\{x\in\R^d:|u|\leq |f(x)|\}.
\]
Hence, $L^p_w(\R^d)_{\text{weak}}$ as we have defined in this section is given by
\[
\|f\|_{L^p_w(\R^d)_{\text{weak}}}=\sup_{u\in\F^n}\|\ind_{\mc{K}(f)^{-1}(u)}u\|_{L^p_w(\R^d)}=\sup_{t>0} tw^p(\{|f|>t\})^{\frac{1}{p}}.
\]
On the other hand, we have
\[
\|f\|_{(L^p(\R^d)_{\text{weak}})_w}=\|wf\|_{L^{p,\infty}(\R^d)}=\sup_{t>0}t|\{|wf|>t\}|^{\frac{1}{p}}.
\]
Thus, the different approaches respectively yield a weak-type space with respect to a change of measure, and a weak-type space with respect to a weight as a multiplier.
\end{remark}

\section{Averaging operators}\label{sec:averagingoperators}

\subsection{Directional averaging operators} 
Throughout this section we let $E\subseteq\Omega$ be a measurable set for which $0<\mu(E)<\infty$. For $f\in L^0(\Omega)$ integrable in $E$ we set
\[
T_Ef:=\langle f\rangle_E\ind_E,\quad \langle f\rangle_E:=\frac{1}{\mu(E)}\int_E\!f\,\mathrm{d}\mu.
\]
This can be extended component-wise to functions $f\in L^0(\Omega;\F^n)$, which we also denote by $T_Ef$.

Given an $\F^n$-directional quasi-Banach function space $\Omega$, we note that for any $u,v\in\F^n$ with $\ind_E u\in\mb{X}$, $\ind_E v\in\mb{X}'$, we have
\[
\mu(E)^{-1}\|\ind_E u\|_{\mb{X}}\|\ind_E v\|_{\mb{X}'}\geq\frac{1}{\mu(E)}\int_\Omega\!|u\ind_E\cdot v\ind_E|\,\mathrm{d}\mu=|u\cdot v|.
\]
A converse inequality characterizes the boundedness $T_E:\mb{X}\to\mb{X}$. When writing $T_E:\mb{X}\to\mb{X}$ we mean that $T_E$ is a bounded operator from $\mb{X}$ to $\mb{X}$ and, in particular, that every $f\in\mb{X}$ is integrable over $E$, so $T_Ef$ is well-defined. To facilitate this, throughout this section we will assume that our space satisfy the component-wise saturation property. By Proposition~\ref{prop:componentsaturation} this property is equivalent to stating that the space $\mb{X}_u$, containing all $h\in L^0(\Omega)$ with $hu\in\mb{X}$, is saturated.
\begin{proposition}\label{prop:averagingoperatornorm}
Let $\mb{X}$ be an $\F^n$-directional quasi-Banach function space over $\Omega$ with the Fatou property and the component-wise saturation property. If $E\subseteq\Omega$ with $0<\mu(E)<\infty$, then the following assertions hold:
\begin{enumerate}[(a)]
    \item\label{it:avopa} If $T_E:\mb{X}\to\mb{X}$, then $\ind_E u\in\mb{X},\mb{X}'$ for all $u\in\F^n$. Moreover, for all $v\in\F^n$  there is a non-zero $u\in\F^n$ for which
    \[
    \mu(E)^{-1}\|\ind_E u\|_{\mb{X}}\|\ind_Ev\|_{\mb{X}'}\leq\|T_E\|_{\mb{X}\to\mb{X}}|u\cdot v|.
    \]
    \item\label{it:avopb} If $\ind_E u\in\mb{X},\mb{X}'$ for all $u\in\F^n$ and if there is a $C>0$ such that for all $u\in\F^n$ there is a non-zero $v\in\F^n$ for which
    \[
    \mu(E)^{-1}\|\ind_E u\|_{\mb{X}}\|\ind_E v\|_{\mb{X}'}\leq C|u\cdot v|,
    \]
    then $T_E:\mb{X}\to\mb{X}$ with
    \[
\|T_E\|_{\mb{X}\to\mb{X}}\leq C.
    \]
\end{enumerate}
\end{proposition}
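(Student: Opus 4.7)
The plan is to prove parts~\ref{it:avopa} and \ref{it:avopb} separately: (b) will follow by a direct duality computation, while (a) requires first establishing $\ind_E v\in\mb{X}'$ via a finite-dimensional norm-equivalence trick before isolating the sharp constant by selection of a near-extremal $f$.

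For \ref{it:avopa}, set $N:=\|T_E\|_{\mb{X}\to\mb{X}}$ and consider the subspace $V_E:=\{u\in\F^n:\ind_E u\in\mb{X}\}$ equipped with the norm $|u|_{V_E}:=\|\ind_E u\|_{\mb{X}}$ (definiteness uses $\mu(E)>0$). Since $V_E$ is finite dimensional, for each $v\in\F^n$ there is a constant $C_v\geq 0$ with $|u\cdot v|\leq C_v|u|_{V_E}$ for all $u\in V_E$. For every $f\in\mb{X}$ we have $\langle f\rangle_E\in V_E$ because $T_Ef=\ind_E\langle f\rangle_E\in\mb{X}$, so
\[
\Big|\int_\Omega\!f\cdot\ind_E v\,\mathrm{d}\mu\Big|=\mu(E)|\langle f\rangle_E\cdot v|\leq\mu(E)C_v\|T_Ef\|_{\mb{X}}\leq\mu(E)C_vN\|f\|_{\mb{X}},
\]
and Proposition~\ref{prop:normoutsideintegral} delivers $\ind_E v\in\mb{X}'$. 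For the sharp constant, fix a non-zero $v\in\F^n$ (the case $v=0$ reduces to $0\leq 0$ once we exhibit some non-zero $u\in V_E$, whose existence follows from non-degeneracy applied to $\ind_E v'$ with $v'\neq 0$, yielding $f\in\mb{X}$ with $\int_E f\,\mathrm{d}\mu\neq 0$) and $\varepsilon>0$. Choose $f\in\mb{X}$ with $\|f\|_{\mb{X}}\leq 1$ and $\int_E|f\cdot v|\,\mathrm{d}\mu\geq (1+\varepsilon)^{-1}\|\ind_E v\|_{\mb{X}'}$. Using Proposition~\ref{prop:ideal}\ref{it:ideal4}, replace $f$ by a phase correction $\tilde f=hf$ with $|h|\leq 1$ such that $\tilde f\cdot v=|f\cdot v|\geq 0$; then $\|\tilde f\|_{\mb{X}}\leq 1$, and setting $u:=\langle\tilde f\rangle_E$ gives $\mu(E)|u\cdot v|\geq (1+\varepsilon)^{-1}\|\ind_E v\|_{\mb{X}'}$ (forcing $u\neq 0$) together with $\|\ind_E u\|_{\mb{X}}=\|T_E\tilde f\|_{\mb{X}}\leq N$, and a direct rearrangement yields the required estimate.

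For \ref{it:avopb}, I will first verify that the dual subspace $V_E':=\{v\in\F^n:\ind_E v\in\mb{X}'\}$ equals all of $\F^n$. If not, pick $u\in (V_E')^\perp\setminus\{0\}$; applying the hypothesis to this $u$ produces a non-zero $v\in V_E'$ with $\mu(E)^{-1}\|\ind_E u\|_{\mb{X}}\|\ind_E v\|_{\mb{X}'}\leq C|u\cdot v|=0$. Since $v\neq 0$ combined with non-degeneracy forces $\|\ind_E v\|_{\mb{X}'}>0$, this gives $\ind_E u=0$, i.e., $u=0$, a contradiction. Specializing to $v\in\{e_1,\ldots,e_n\}$ then shows $\ind_E f\in L^1(\Omega;\F^n)$ for every $f\in\mb{X}$, so $T_Ef$ is well-defined. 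Now for $f\in\mb{X}$ with $u:=\langle f\rangle_E\neq 0$ (the case $u=0$ being trivial), the hypothesis supplies $v\neq 0$ for which
\[
\mu(E)|u\cdot v|=\Big|\int_\Omega\!f\cdot\ind_E v\,\mathrm{d}\mu\Big|\leq\|f\|_{\mb{X}}\|\ind_E v\|_{\mb{X}'}.
\]
Combining with the hypothesis and cancelling the positive factor $\|\ind_E v\|_{\mb{X}'}$ yields $\|T_Ef\|_{\mb{X}}=\|\ind_E u\|_{\mb{X}}\leq C\|f\|_{\mb{X}}$.

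The main subtlety in both directions is the qualitative step $\ind_E v\in\mb{X}'$. In \ref{it:avopa} it comes essentially for free from the finite-dimensionality of the range of $f\mapsto \langle f\rangle_E$, and the sharp quantitative constant then drops out from a routine near-extremal selection in the definition of $\|\ind_E v\|_{\mb{X}'}$. In \ref{it:avopb} it arises instead from a subtler orthogonality argument that critically uses non-degeneracy of $\mb{X}$, and without it one cannot even guarantee that $T_Ef$ is defined for all $f\in\mb{X}$.
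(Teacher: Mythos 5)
Your proof is correct and follows the same duality approach as the paper: in part~\ref{it:avopa} you bound $\bigl|\int_E f\cdot v\,\mathrm{d}\mu\bigr|$ via $u=\langle f\rangle_E$, apply Proposition~\ref{prop:normoutsideintegral} to conclude $\ind_E v\in\mb{X}'$, and extract the sharp constant from a near-extremal $f$; in part~\ref{it:avopb} you run the dual pairing estimate in reverse. You are slightly more careful than the paper in two qualitative respects worth retaining: you explicitly invoke finite-dimensionality of $V_E$ to see that the constant $C_v$ is finite before applying Proposition~\ref{prop:normoutsideintegral}, and in part~\ref{it:avopb} you first show $\{v:\ind_E v\in\mb{X}'\}=\F^n$ so that $f\ind_E\in L^1(\Omega;\F^n)$ and $\langle f\rangle_E$ is genuinely defined for every $f\in\mb{X}$ -- a point the paper's proof passes over when it writes $u:=\langle f\rangle_E$ without comment.
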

\begin{proof}
For \ref{it:avopa}, assume that $T_E:\mb{X}\to\mb{X}$. Let $0\neq u\in\F^n$. By Proposition~\ref{prop:componentsaturation}, the space $\mb{X}_u$ is saturated, so there exists a $0<\rho_u\in\mb{X}_u$ of norm $1$. Hence $\rho_u u\in\mb{X}$, and
\[
\langle\rho_u\rangle_E\|\ind_E u\|_{\mb{X}}=\|T_E(\rho_u u)\|_{\mb{X}}\leq\|T_E\|_{\mb{X}\to\mb{X}}.
\]
Since $\langle\rho_u\rangle_E>0$, we conclude that $\ind_E u\in\mb{X}$.

Next, we claim that $\inf_{|u|=1}\|\ind_E u\|_{\mb{X}}>0$. Indeed, if not, then there is a sequence of $(u_k)_{k\geq 1}$ with $|u_k|=1$ for which
\[
\lim_{k\to\infty}\|\ind_E u_k\|_{\mb{X}}=0.
\]
As the unit circle in $\F^n$ is compact, there is a $u\in\F^n$ with $|u|=1$ such that a subsequence $(u_{k_j})_{j\geq 1}$ satisfies $u_{k_j}\to u$. By the Fatou property of $\mb{X}$, this implies that
\[
\|\ind_E u\|_{\mb{X}}\leq\liminf_{j\to\infty}\|\ind_E u_{k_j}\|_{\mb{X}}=0,
\]
which, by non-degeneracy of $\mb{X}$, is impossible, proving the claim.

Now, let $0\neq v\in\F^n$ and $f\in\mb{X}$ with $\|f\|_{\mb{X}}=1$. Setting $u:=\langle f\rangle_E$, we have
\begin{align*}
\Big|\int_\Omega\!f\cdot \ind_E v\,\mathrm{d}\mu\Big|
&=\mu(E)|\langle f\rangle_E\cdot v|
=\mu(E)\frac{|u\cdot v|}{\|\ind_E u\|_{\mb{X}}}\|\langle f\rangle_E\ind_E\|_{\mb{X}}\\
&\leq \mu(E)\|T_E\|_{\mb{X}\to\mb{X}}\frac{|u\cdot v|}{\|\ind_E u\|_{\mb{X}}}\\
&\leq \mu(E)\|T_E\|_{\mb{X}\to\mb{X}}\frac{|v|}{\inf_{|u|=1}\|\ind_E u\|_{\mb{X}}}.
\end{align*}
Thus, by Proposition~\ref{prop:normoutsideintegral}, we have $\ind_Ev\in\mb{X}'$, as desired. For the last statement, let $v\in\F^n$. Then for each $k\geq 1$ there is a $f_k\in\mb{X}$ of norm $1$ such that
\[
\|\ind_E v\|_{\mb{X}'}\leq (1+\tfrac{1}{k})\Big|\int_E\!f_k\cdot v\,\mathrm{d}\mu\Big|.
\]
Setting $u_k:=\langle f_k\rangle_E\neq 0$, the same computation as above proves that
\begin{align*}
\mu(E)^{-1}\|\ind_E u_k\|_{\mb{X}}\|\ind_E v\|_{\mb{X}'}
&\leq(1+\tfrac{1}{k})\|T_E\|_{\mb{X}\to\mb{X}}|u_k\cdot v|.
\end{align*}
By homogeneity, we may assume that $|u_k|=1$. Then, by compactness, there is a convergent subsequence $(u_{k_j})_{j\geq 1}$ with limit $u\in\F^n$, $|u|=1$. Since then $\ind_Eu_{k_j}\to\ind_E u$ a.e., it follows from the Fatou property that
\begin{align*}
    \mu(E)^{-1}\|\ind_E u\|_{\mb{X}}\|\ind_E v\|_{\mb{X}'}&\leq 
    \liminf_{j\to\infty}\mu(E)^{-1}\|\ind_E u_{k_j}\|_{\mb{X}}\|\ind_E v\|_{\mb{X}'}\\
    &\leq \liminf_{j\to\infty}(1+\tfrac{1}{k_j})\|T_E\|_{\mb{X}\to\mb{X}}|u_{k_j}\cdot v|\\
    &=\|T_E\|_{\mb{X}\to\mb{X}}|u\cdot v|.
\end{align*}
This proves \ref{it:avopa}.

Finally, we prove \ref{it:avopb}. Since $\ind_E v\in\mb{X}'$ for all $v\in\F^n$, it follows that for $f\in\mb{X}$ we have
\[
\int_E\!|f|\,\mathrm{d}\mu\leq\sum_{k=1}^n\int_\Omega\!|f\cdot \ind_E e_k|\,\mathrm{d}\mu\leq\sum_{k=1}^n\|f\|_{\mb{X}}\|\ind_E e_k\|_{\mb{X}'},
\]
where $(e_k)_{k=1}^n$ denotes the standard basis of $\F^n$. Hence, $f\ind_E\in L^1(\Omega;\F^n)$, so $u:=\langle f\rangle_E\in\F^n$ is well-defined. Per assumption, we can pick a $v\in\F^n$ with $\|\ind_E v\|_{\mb{X}'}=1$ for which, by Proposition~\ref{prop:normoutsideintegral},
\[
\|T_Ef\|_{\mb{X}}=\|\ind_E u\|_{\mb{X}}\leq C\mu(E)|u\cdot v|\leq C\int_\Omega\!|f\cdot\ind_E v|\,\mathrm{d}\mu\leq C\|f\|_{\mb{X}},
\]
as asserted.
\end{proof}

For the following result, recall that by the directional Lorentz-Luxemburg theorem (Theorem~\ref{thm:lorentzluxemburg}), if $\mb{X}$ is K\"othe reflexive, then it has the Fatou property.
\begin{corollary}\label{cor:averagingoperator}
Let $\mb{X}$ be a K\"othe reflexive $\F^n$-directional Banach function space over $\Omega$ for which both $\mb{X}$ and $\mb{X}'$ have the component-wise saturation property. If $E\subseteq\Omega$ with $0<\mu(E)<\infty$, then the following are equivalent:
\begin{enumerate}[(i)]
    \item\label{it:avop1} $T_E:\mb{X}\to\mb{X}$;
    \item\label{it:avop2} $T_E:\mb{X}'\to\mb{X}'$
    \item\label{it:avop3} $\ind_E u\in\mb{X},\mb{X}'$ for all $u\in\F^n$ and there is a $C_1>0$ such that for all $u\in\F^n$ there is a non-zero $v\in\F^n$ for which
    \[
    \mu(E)^{-1}\|\ind_E u\|_{\mb{X}}\|\ind_E v\|_{\mb{X}'}\leq C_1|u\cdot v|;
    \]
    \item\label{it:avop4} $\ind_E u\in\mb{X},\mb{X}'$ for all $u\in\F^n$ and there is a $C_2>0$ such that for all $v\in\F^n$ there is a non-zero $u\in\F^n$ for which
    \[
    \mu(E)^{-1}\|\ind_E u\|_{\mb{X}}\|\ind_E v\|_{\mb{X}'}\leq C_2|u\cdot v|.
    \]
\end{enumerate}
Moreover, the optimal constants $C_1$, $C_2$ satisfy
\[
\|T_E\|_{\mb{X}\to\mb{X}}=\|T_E\|_{\mb{X}'\to\mb{X}'}=C_1=C_2.
\]
\end{corollary}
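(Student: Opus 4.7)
The plan is to reduce everything directly to Proposition~\ref{prop:averagingoperatornorm} by invoking it twice: first on the space $\mb{X}$ itself, and then on its K\"othe dual $\mb{X}'$, using the hypothesis $\mb{X}''=\mb{X}$ to close the loop. The key structural observation is that the bilinear quantity $\mu(E)^{-1}\|\ind_E u\|_{\mb{X}}\|\ind_E v\|_{\mb{X}'}$ is symmetric in the pair $(u,\mb{X})\leftrightarrow(v,\mb{X}')$, so conditions \ref{it:avop3} and \ref{it:avop4} are each other's duals, and the implications already provided by Proposition~\ref{prop:averagingoperatornorm} supply every arrow in a four-cycle.

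First I would apply Proposition~\ref{prop:averagingoperatornorm}(a) directly to $\mb{X}$: if $T_E:\mb{X}\to\mb{X}$, then $\ind_E v\in\mb{X}'$ for every $v\in\F^n$, and for each $v$ and each $\varepsilon>0$ there is a non-zero $u$ with
\[
\mu(E)^{-1}\|\ind_E u\|_{\mb{X}}\|\ind_E v\|_{\mb{X}'}\leq(1+\varepsilon)\|T_E\|_{\mb{X}\to\mb{X}}|u\cdot v|.
\]
Letting $\varepsilon\downarrow 0$ gives \ref{it:avop1}$\Rightarrow$\ref{it:avop4} with $C_2\leq\|T_E\|_{\mb{X}\to\mb{X}}$. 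Proposition~\ref{prop:averagingoperatornorm}(b), applied in the same way, yields \ref{it:avop3}$\Rightarrow$\ref{it:avop1} together with $\|T_E\|_{\mb{X}\to\mb{X}}\leq C_1$.

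Next I would apply the same proposition to the $\F^n$-directional Banach function space $\mb{X}'$, whose K\"othe dual is $\mb{X}''=\mb{X}$ by K\"othe reflexivity. Part (a) for $\mb{X}'$ states that if $T_E:\mb{X}'\to\mb{X}'$, then $\ind_E v\in\mb{X}''=\mb{X}$ for every $v$, and for each $v$ and $\varepsilon>0$ there is a non-zero $u$ with $\ind_E u\in\mb{X}'$ and
\[
\mu(E)^{-1}\|\ind_E u\|_{\mb{X}'}\|\ind_E v\|_{\mb{X}}\leq(1+\varepsilon)\|T_E\|_{\mb{X}'\to\mb{X}'}|u\cdot v|.
\]
Swapping the dummy labels $u\leftrightarrow v$ produces exactly condition \ref{it:avop3} with $C_1\leq\|T_E\|_{\mb{X}'\to\mb{X}'}$. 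Likewise, part (b) for $\mb{X}'$, after the same relabeling, yields \ref{it:avop4}$\Rightarrow$\ref{it:avop2} with $\|T_E\|_{\mb{X}'\to\mb{X}'}\leq C_2$.

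Chaining the four implications gives the cycle \ref{it:avop1}$\Rightarrow$\ref{it:avop4}$\Rightarrow$\ref{it:avop2}$\Rightarrow$\ref{it:avop3}$\Rightarrow$\ref{it:avop1}, and the chain of inequalities
\[
\|T_E\|_{\mb{X}\to\mb{X}}\leq C_1\leq\|T_E\|_{\mb{X}'\to\mb{X}'}\leq C_2\leq\|T_E\|_{\mb{X}\to\mb{X}}
\]
forces all four quantities to coincide, yielding the stated norm identities. There is no real obstacle; the only point that requires any care is verifying that after the symbolic swap $u\leftrightarrow v$ the indicator-membership clauses land in the correct space, which they do precisely because $\mb{X}''=\mb{X}$.
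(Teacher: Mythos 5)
Your proof follows the same four-cycle $\ref{it:avop1}\Rightarrow\ref{it:avop4}\Rightarrow\ref{it:avop2}\Rightarrow\ref{it:avop3}\Rightarrow\ref{it:avop1}$ as the paper, and the idea of applying Proposition~\ref{prop:averagingoperatornorm} twice, once to $\mb{X}$ and once to $\mb{X}'$ (using $\mb{X}''=\mb{X}$ to land back in $\mb{X}$), is exactly right. The gap is in the step you dismiss as costless: ``Letting $\varepsilon\downarrow 0$ gives \ref{it:avop1}$\Rightarrow$\ref{it:avop4} with $C_2\leq\|T_E\|_{\mb{X}\to\mb{X}}$.'' Proposition~\ref{prop:averagingoperatornorm}(a) produces, for each fixed $v$ and each $\varepsilon>0$, a \emph{different} vector $u=u_{v,\varepsilon}$. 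Condition~\ref{it:avop4} asks for a single $u$ depending only on $v$, and --- given the way the corollary is later used, e.g.\ in the proof of Proposition~\ref{prop:reducingmatrixavop}, where one actually picks a vector achieving the bound with constant exactly $\|T_E\|_{\mb{X}\to\mb{X}}$ --- you must show the infimum over $u$ is \emph{attained}. Taking $\varepsilon\downarrow 0$ alone only tells you the infimum of admissible constants is at most $\|T_E\|_{\mb{X}\to\mb{X}}$; it does not produce the $u$.

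The paper supplies the missing step with a compactness argument: normalize $|u_k|=1$ (legitimate by homogeneity of the inequality), extract a convergent subsequence $u_{k_j}\to u$, and then pass to the limit using the Fatou property of $\mb{X}$ (available since $\mb{X}=\mb{X}''$, and K\"othe duals always have Fatou). This is precisely where K\"othe reflexivity is used beyond the symbolic $\mb{X}''=\mb{X}$ swap you flag. Your remark that ``the only point that requires any care is verifying that after the symbolic swap $u\leftrightarrow v$ the indicator-membership clauses land in the correct space'' misidentifies where the real work lies; that swap is routine, whereas producing a single limiting $u$ is the substantive part. Once that compactness step is inserted, the chain of inequalities you wrote is identical to the paper's and the proof is complete.
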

\begin{proof}
Assuming \ref{it:avop1}, assertion \ref{it:avop4} holds by Proposition~\ref{prop:averagingoperatornorm} with $C_2\leq\|T_E\|_{\mb{X}\to\mb{X}}$. Since $\mb{X}''=\mb{X}$, if we assume \ref{it:avop4}, then another application of Proposition~\ref{prop:averagingoperatornorm} proves \ref{it:avop2} with $\|T_E\|_{\mb{X}'\to\mb{X}'}\leq C_2$ . Repeating this argument with the roles of $\mb{X}$ and $\mb{X}'$ reversed, we have proven the chain of implications 
\[
\ref{it:avop1}\Rightarrow\ref{it:avop4}\Rightarrow\ref{it:avop2}\Rightarrow\ref{it:avop3}\Rightarrow\ref{it:avop1},
\]
with
\[
\|T_E\|_{\mb{X}\to\mb{X}}\leq C_1\leq\|T_E\|_{\mb{X}'\to\mb{X}'}\leq C_2\leq\|T_E\|_{\mb{X}\to\mb{X}}.
\]
The result follows.
\end{proof}

If $\mb{X}$ is an $\F^n$ directional Banach function space over $\Omega$ that satisfies the property that $\ind_E u\in\mb{X}$ for all $u\in\F^n$, then we can define a norm on $\F^n$ through
\[
u\mapsto\|\ind_E u\|_{\mb{X}}.
\]
Using the John ellipsoid theorem (see \cite[Proposition~1.2]{Go03}), there is a Hermitian positive definite matrix $A_{\mb{X},E}\in\F^{n\times n}$ that satisfies
\[
\|\ind_E u\|_{\mb{X}}\leq|A_{\mb{X},E}u|\leq n^{\frac{1}{2}}\|\ind_E u\|_{\mb{X}}.
\]
We call the matrix $A_{\mb{X},E}$ the \emph{reducing matrix in $\mb{X}$ of $E$}. Reducing matrices can be used to give another characterization of the boundedness of $T_E$:
\begin{proposition}\label{prop:reducingmatrixavop}
Let $\mb{X}$ be an $\F^n$-directional Banach function space over $\Omega$ with the Fatou property and the component-wise saturation property. Then $T_E:\mb{X}\to\mb{X}$ if and only if $\ind_E u\in\mb{X},\mb{X}'$ for all $u\in\mb{F}^n$. Moreover, we have
\[
\|T_E\|_{\mb{X}\to\mb{X}}\eqsim_n \mu(E)^{-1}\|A_{\mb{X},E}A_{\mb{X}',E}\|_{\F^{n\times n}}.
\]
\end{proposition}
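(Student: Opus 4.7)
My plan is to reduce the statement to a purely linear-algebraic problem on $\F^n$ via the reducing matrices, starting from the characterization of $\|T_E\|_{\mb{X}\to\mb{X}}$ in Corollary~\ref{cor:averagingoperator}.

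For the qualitative equivalence, suppose first that $T_E:\mb{X}\to\mb{X}$. Proposition~\ref{prop:averagingoperatornorm}\ref{it:avopa} yields $\ind_E v\in\mb{X}'$ for every $v\in\F^n$, and by Corollary~\ref{cor:averagingoperator} we also have $T_E:\mb{X}'\to\mb{X}'$; applying Proposition~\ref{prop:averagingoperatornorm}\ref{it:avopa} to $\mb{X}'$ and invoking K\"othe reflexivity $\mb{X}''=\mb{X}$ then gives $\ind_E u\in\mb{X}$ for every $u\in\F^n$. The reverse implication will be absorbed into the quantitative estimate.

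For the quantitative part, Corollary~\ref{cor:averagingoperator}\ref{it:avop3} identifies $\|T_E\|_{\mb{X}\to\mb{X}}$ with the smallest $C$ such that for every $u\in\F^n$ there is a $v\in\F^n$ with $\|\ind_E u\|_{\mb{X}}\|\ind_E v\|_{\mb{X}'}\leq C\mu(E)|u\cdot v|$, which rearranges to
\[
\|T_E\|_{\mb{X}\to\mb{X}}=\frac{1}{\mu(E)}\sup_{u\neq 0}\frac{\|\ind_E u\|_{\mb{X}}}{\sup_{v\neq 0}|u\cdot v|/\|\ind_E v\|_{\mb{X}'}}.
\]
The John-ellipsoid bounds $\|\ind_E u\|_{\mb{X}}\eqsim_n|A_{\mb{X},E}u|$ and $\|\ind_E v\|_{\mb{X}'}\eqsim_n|A_{\mb{X}',E}v|$ together with the change of variables $w=A_{\mb{X}',E}v$ (using the Hermiticity of $A_{\mb{X}',E}$) reduce the inner supremum, by Cauchy-Schwarz, to
\[
\sup_{v\neq 0}\frac{|u\cdot v|}{|A_{\mb{X}',E}v|}=\sup_{w\neq 0}\frac{|(A_{\mb{X}',E}^{-1}u)\cdot w|}{|w|}=|A_{\mb{X}',E}^{-1}u|,
\]
attained at $v=A_{\mb{X}',E}^{-2}u$. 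Substituting $u=A_{\mb{X}',E}w$ in the outer supremum then yields
\[
\sup_{u\neq 0}\frac{|A_{\mb{X},E}u|}{|A_{\mb{X}',E}^{-1}u|}=\sup_{w\neq 0}\frac{|A_{\mb{X},E}A_{\mb{X}',E}w|}{|w|}=\|A_{\mb{X},E}A_{\mb{X}',E}\|_{\F^{n\times n}},
\]
which gives the quantitative equivalence. Since this right-hand side is finite whenever both reducing matrices exist, the converse qualitative implication follows at once via Corollary~\ref{cor:averagingoperator}\ref{it:avop3}.

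The only technical subtlety I foresee is the reliance on attainment in the inner optimization: because the optimum is realized inside $\F^n$, the existential clause in Corollary~\ref{cor:averagingoperator}\ref{it:avop3} produces a tight two-sided bound in which the only loss is the $n^{1/2}$ factor on each side coming from John's ellipsoid, with no additional $\varepsilon$ slack needed.
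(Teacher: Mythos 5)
Your proof is correct, and it takes a cleaner, more unified route than the paper's. The paper splits the two-sided estimate into separate directional arguments: for the lower bound $\mu(E)^{-1}\|A_{\mb{X},E}A_{\mb{X}',E}\|\lesssim\|T_E\|$ it plugs $u=A_{\mb{X}',E}w$ into Corollary~\ref{cor:averagingoperator}\ref{it:avop3} and estimates, while for the upper bound it runs a fresh test-function computation inside $\mb{X}$ (taking $f\in\mb{X}$, setting $v=\langle A_{\mb{X},E}f\rangle_E$, and chaining duality with the John bounds). You instead start from the exact identity
\[
\|T_E\|_{\mb{X}\to\mb{X}}=\sup_{|u|=1}\inf_{|v|=1}\frac{\|\ind_E u\|_{\mb{X}}\|\ind_E v\|_{\mb{X}'}}{|u\cdot v|\mu(E)},
\]
which the paper itself records as a remark immediately after Corollary~\ref{cor:averagingoperator}, replace both norms on $\F^n$ by their reducing ellipsoid norms at the cost of $n^{1/2}$ each, and then carry out a single change of variables $w=A_{\mb{X}',E}v$ using Hermiticity of the reducing matrices. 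That collapses both directions into one finite-dimensional optimization, and makes it transparent that the only loss is the factor $n$ from John's ellipsoid and that the converse qualitative implication is automatic once both reducing matrices exist. The one thing worth making explicit (you do gesture at it) is that the inner supremum over $v$ is attained because it is the maximum of a continuous function on the unit sphere of $\F^n$; this is what lets you pass from the existential clause of Corollary~\ref{cor:averagingoperator}\ref{it:avop3} to the clean sup-inf formula with no $\varepsilon$ slack. Overall your argument is more modular, reusing Corollary~\ref{cor:averagingoperator} as a black box rather than rederiving half of Proposition~\ref{prop:averagingoperatornorm}\ref{it:avopb} inline as the paper does.
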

We note that since $A_{\mb{X},E}$ and $A_{\mb{X}',E}$ are Hermitian, we have
\begin{align*}
\|A_{\mb{X},E}A_{\mb{X}',E}\|_{\F^{n\times n}}
&=\|(A_{\mb{X},E}A_{\mb{X}',E})^\ast\|_{\F^{n\times n}}\\
&=\|A_{\mb{X}',E}^\ast A_{\mb{X},E}^\ast\|_{\F^{n\times n}}\\
&=\|A_{\mb{X}',E}A_{\mb{X},E}\|_{\F^{n\times n}}.
\end{align*}
\begin{proof}[Proof of Proposition~\ref{prop:reducingmatrixavop}]
First assume that $T_E:\mb{X}\to\mb{X}$. Then by Proposition~\ref{prop:averagingoperatornorm}\ref{it:avopa} we have $\ind_E u\in\mb{X},\mb{X}'$ for all $u\in\mb{F}^n$. Moreover, if $w\in\F^n$ and $u:=A_{\mb{X}',E}w$, there is a non-zero $v\in\F^n$ such that
\begin{align*}
n^{-\frac{1}{2}}|A_{\mb{X},E}A_{\mb{X}',E}w|
&\leq\|\ind_E u\|_{\mb{X}}
\leq\mu(E)\|T_E\|_{\mb{X}\to\mb{X}}\frac{|u\cdot v|}{\|\ind_E v\|_{\mb{X}'}}\\
&\leq \mu(E)\|T_E\|_{\mb{X}\to\mb{X}}\frac{|w\cdot A_{\mb{X}',E}v|}{n^{-\frac{1}{2}}|A_{\mb{X}',E}v|}\\
&\leq n^{\frac{1}{2}}\mu(E)\|T_E\|_{\mb{X}\to\mb{X}}|w|.
\end{align*}
Hence, $\mu(E)^{-1}\|A_{\mb{X},E}A_{\mb{X}',E}\|_{\F^{n\times n}}\leq n\|T_E\|_{\mb{X}\to\mb{X}}$.

For the converse, let $f\in\mb{X}$ not equal to $0$ a.e. and set $v:=\langle A_{\mb{X},E}f\rangle_E$. Then, by Proposition~\ref{prop:normoutsideintegral},
\begin{align*}
\mu(E)|\langle A_{\mb{X},E}f\rangle_E|^2&=\int_E\!A_{\mb{X},E}f\cdot v\,\mathrm{d}\mu=\int_E\!f\cdot A_{\mb{X},E}v\,\mathrm{d}\mu\\
&\leq\|f\|_{\mb{X}}\|\ind_E A_{\mb{X},E}v\|_{\mb{X}'}
\leq |A_{\mb{X}',E}A_{\mb{X},E}v|\|f\|_{\mb{X}}\\
&\leq \|A_{\mb{X}',E}A_{\mb{X},E}\|_{\F^{n\times n}}|\langle A_{\mb{X},E}f\rangle_E|\|f\|_{\mb{X}},
\end{align*}
so that
\[
\|T_E f\|_{\mb{X}}\leq |\langle A_{\mb{X},E} f\rangle_E|\leq \mu(E)^{-1}\|A_{\mb{X}',E}A_{\mb{X},E}\|_{\F^{n\times n}}\|f\|_{\mb{X}}.
\]
Since
\[
\|A_{\mb{X}',E}A_{\mb{X},E}\|_{\F^{n\times n}}=\|(A_{\mb{X}',E}A_{\mb{X},E})^\ast\|_{\F^{n\times n}}=\|A_{\mb{X},E}A_{\mb{X}',E}\|_{\F^{n\times n}},
\]
this proves the assertion.
\end{proof}

\subsection{Convex-set valued averaging operators}
Given a measurable set $E\subseteq\Omega$ with $0<\mu(E)<\infty$ and $F\in L^0(\Omega;\mc{K})$, we define $F\ind_E$ pointwise by
\[
(F\ind_E)(x):=\ind_E(x)F(x)=\begin{cases}
F(x) & \text{if $x\in E$;}\\
\{0\} & \text{if $x\notin E$.}
\end{cases}
\]
If $F\ind_E\in L^1(\Omega;\mc{K})$, we use the Aumann integral to define
\[
\langle F\rangle_E:=\frac{1}{\mu(E)}\int_\Omega\!F\ind_E\,\mathrm{d}\mu=\{\langle f\rangle_E:f\in S^0(\Omega;F\ind_E)\}.
\]
The averaging operator $T_E$ of $F$ is defined as
\[
T_EF(x):=\langle F\rangle_E\ind_E(x)=\{T_Ef(x):f\in S^0(\Omega;F\ind_E)\}.
\]
We note that if for a function $f\in L^0(\Omega;\F^n)$ with $f\ind_E\in L^1(\Omega;\F^n)$ we define
\[
\llangle f\rrangle_E:=\{\langle hf\rangle_E:h\in L^\infty(\Omega),\,\|h\|_{L^\infty(\Omega)}\leq 1\},
\]
then we have 
\[
T_E(\mc{K}(f))=\llangle f\rrangle_E\ind_E.
\]
This follows from the observation that by Proposition~\ref{prop:ideal} the measurable selections of $\mc{K}(f)$ are given by
\[
S^0(\Omega;\mc{K}(f))=\{hf:\|h\|_{L^\infty(\Omega)}\leq 1\}.
\]
Perhaps unexpectedly, it turns out that the boundedness properties of $T_E$ in a space $\mb{X}$ are equivalent to its boundedness properties in $\mb{X}[\mc{K}]$. We will prove this equivalence for more general averaging operators. Given a collection $\mc{P}$ of measurable sets in $\Omega$ satisfying $0<\mu(E)<\infty$ for all $E\in\mc{P}$, we define $T_{\mc{P}}:=\sum_{E\in\mc{P}}T_E$, i.e., 
\[
T_{\mc{P}}F:=\sum_{E\in\mc{P}}\langle F\rangle_E\ind_E. 
\]
Then we have the following result.
\begin{theorem}\label{thm:pairwisedisjointavconvex}
Let $\mb{X}$ be an $\F^n$-directional Banach function space over $\Omega$, and let $\mc{P}$ be a pairwise disjoint collection of measurable sets in $\Omega$ for which $0<\mu(E)<\infty$ for all $E\in\mc{P}$. Then the following are equivalent:
\begin{enumerate}[(i)]
    \item\label{it:convexsettovector1} $T_{\mc{P}}:\mb{X}\to\mb{X}$;
    \item\label{it:convexsettovector2} $T_{\mc{P}}:\mb{X}[\mc{K}]\to\mb{X}[\mc{K}]$.
\end{enumerate}
Moreover, in this case we have
\[
\|T_{\mc{P}}\|_{\mb{X}\to\mb{X}}\eqsim_n\|T_{\mc{P}}\|_{\mb{X}[\mc{K}]\to\mb{X}[\mc{K}]}.
\]
\end{theorem}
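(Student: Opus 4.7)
The implication \ref{it:convexsettovector2}$\Rightarrow$\ref{it:convexsettovector1} is immediate from the isometric embedding $f \mapsto \mc{K}(f)$ of Proposition~\ref{prop:convexsetfequalsf}: pairwise disjointness of $\mc{P}$ gives $T_{\mc{P}} f(x) = \langle f \rangle_E \in \llangle f \rrangle_E = T_{\mc{P}}(\mc{K}(f))(x)$ on each $E \in \mc{P}$, so $T_{\mc{P}} f \in S^0(\Omega; T_{\mc{P}}(\mc{K}(f)))$ and therefore $\|T_{\mc{P}} f\|_{\mb{X}} \leq \|T_{\mc{P}}\|_{\mb{X}[\mc{K}] \to \mb{X}[\mc{K}]}\|f\|_{\mb{X}}$.

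For the converse the plan is to use John's ellipsoid theorem to replace each abstract convex body $\langle F \rangle_E$ by an ellipsoid, thereby reducing the convex-set valued estimate to finitely many vector-valued estimates. Boundedness of every $T_E$ on $\mb{X}$ combined with Proposition~\ref{prop:averagingoperatornorm}\ref{it:avopa} gives $\ind_E u \in \mb{X}'$ for all $u \in \F^n$, from which one verifies that $F \ind_E \in L^1(\Omega; \mc{K})$ and $\langle F \rangle_E \in \mc{K}_b$ is a genuine centrally symmetric convex body via the Aumann integral. John's theorem then furnishes a Hermitian positive semi-definite matrix $B_E \in \F^{n \times n}$ with
\[
B_E \overline{B} \subseteq \langle F \rangle_E \subseteq \sqrt{n}\, B_E \overline{B},
\]
where $\overline{B}$ denotes the closed unit ball in $\F^n$.

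Now fix $g \in S^0(\Omega; T_{\mc{P}} F)$. The right-hand inclusion combined with the Moore--Penrose pseudoinverse $B_E^\dagger$ produces a measurable $u : \Omega \to \overline{B}$ with $g(x) = \sqrt{n}\, B_E u(x)$ on $\cup \mc{P}$; expanding $u(x) = \sum_{j=1}^n \alpha_j(x) e_j$ in the standard basis yields the key decomposition
\[
g(x) = \sqrt{n}\sum_{j=1}^n \alpha_j(x) V_j(x), \qquad V_j(x) := \sum_{E \in \mc{P}} (B_E e_j)\ind_E(x),
\]
with $|\alpha_j(x)| \leq 1$. Since each $\alpha_j V_j$ is a scalar multiple of $V_j$ of modulus at most one, the directional ideal property together with Proposition~\ref{prop:ideal}\ref{it:ideal4}$\Rightarrow$\ref{it:ideal1} gives $\|\alpha_j V_j\|_{\mb{X}} \leq \|V_j\|_{\mb{X}}$, so that the triangle inequality yields $\|g\|_{\mb{X}} \leq \sqrt{n} \sum_j \|V_j\|_{\mb{X}}$.

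It remains to bound each $\|V_j\|_{\mb{X}}$, which I do by realizing $V_j$ as $T_{\mc{P}} f_j$ for a selection $f_j \in S^0(\Omega; F)$: the left-hand John inclusion gives $B_E e_j \in \langle F \rangle_E$, so by definition of the Aumann integral there is $f_{j,E} \in S^0(\Omega; F \ind_E)$ with $\langle f_{j,E} \rangle_E = B_E e_j$, and summing over the (necessarily countable) disjoint collection $\mc{P}$ produces $f_j := \sum_{E \in \mc{P}} f_{j,E} \in S^0(\Omega; F)$ with $\|f_j\|_{\mb{X}} \leq \|F\|_{\mb{X}[\mc{K}]}$ and $T_{\mc{P}} f_j = V_j$. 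Applying hypothesis \ref{it:convexsettovector1} and taking the supremum over $g$ then yields $\|T_{\mc{P}} F\|_{\mb{X}[\mc{K}]} \leq n^{3/2}\|T_{\mc{P}}\|_{\mb{X}\to\mb{X}}\|F\|_{\mb{X}[\mc{K}]}$. I anticipate the main technical hurdles to be the measurability of $u$ when $B_E$ is singular, which the explicit pseudoinverse formula sidesteps, and the simultaneous choice of the $f_{j,E}$ across $E \in \mc{P}$, which is settled by pairwise disjointness of $\mc{P}$ and the countability it forces in a $\sigma$-finite setting.
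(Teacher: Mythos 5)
Your proof is correct and follows essentially the same route as the paper: both use the John ellipsoid theorem on each $\langle F\rangle_E$ to decompose an arbitrary selection $g\in S^0(\Omega;T_{\mc{P}}F)$ as a sum of $n$ terms $h_k\,T_{\mc{P}}f_k$ with $|h_k|\lesssim\sqrt{n}$ and $f_k\in S^0(\Omega;F)$ built from the ellipsoid columns $A_Ee_k$, then conclude via the directional ideal property and the triangle inequality, yielding the same $n^{3/2}$ constant. Your explicit use of the Moore--Penrose pseudoinverse to obtain a measurable coefficient map, and your justification of $\langle F\rangle_E\in\mc{K}_b$ through $\ind_E u\in\mb{X}'$, make measurability points precise that the paper passes over; otherwise the arguments are the same.
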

\begin{proof}
For \ref{it:convexsettovector2}$\Rightarrow$\ref{it:convexsettovector1}, Let $f\in\mb{X}$. Since, by Proposition~\ref{prop:convexsetfequalsf}, we have $\mc{K}(f)\in\mb{X}[\mc{K}]$ with $\|\mc{K}(f)\|_{\mb{X}[\mc{K}]}=\|f\|_{\mb{X}}$, noting that $T_{\mc{P}}f\in S^0(\Omega;T_{\mc{P}}(\mc{K}(f)))$, we have
\begin{align*}
\|T_{\mc{P}} f\|_{\mb{X}}&\leq\|T_{\mc{P}}(\mc{K}(f))\|_{\mb{X}}\leq\|T_{\mc{P}}\|_{\mb{X}[\mc{K}]\to\mb{X}[\mc{K}]}\|\mc{K}(f)\|_{\mb{X}[\mc{K}]}\\
&=\|T_{\mc{P}}\|_{\mb{X}[\mc{K}]\to\mb{X}[\mc{K}]}\|f\|_{\mb{X}}.
\end{align*}
Thus, $T_{\mc{P}}:\mb{X}\to\mb{X}$ with $\|T_{\mc{P}}\|_{\mb{X}\to\mb{X}}\leq \|T_{\mc{P}}\|_{\mb{X}[\mc{K}]\to\mb{X}[\mc{K}]}$.

For \ref{it:convexsettovector1}$\Rightarrow$\ref{it:convexsettovector2}, let $F\in\mb{X}[\mc{K}]$. We claim that $\langle F\rangle_E$ is a bounded set in $\F^n$ for all $E\in\mc{P}$. Indeed, since $T_E=\ind_ET_{\mc{P}}:\mb{X}\to\mb{X}$, the mapping $u\mapsto \|\ind_E u\|_{\mb{X}}$ is a norm on $\F^n$. Thus, there is a constant $c=c_{\mb{X},E}>0$ for which $\|\ind_E u\|_{\mb{X}}\geq c|u|$ for all $u\in\F^n$. Let $f\in S^0(\Omega;F\ind_E)$ not equal to $0$ a.e. and set $u=\frac{\langle f\rangle_E}{|\langle f\rangle_E|}$. Then
\[
c|\langle f\rangle_E|\leq |\langle f\rangle_E|\|\ind_E u\|_{\mb{X}}=\|T_E f\|_{\mb{X}}\leq\|T_E\|_{\mb{X}\to\mb{X}}\|F\|_{\mb{X}[\mc{K}]},
\]
proving the claim. We conclude that $\langle F\rangle_E\in\mc{K}_b$. Thus, by the John ellipsoid theorem (see \cite[Section~2.2.1]{NPTV17} for a more precise definition), there is a matrix $A_E\in\F^{n\times n}$ for which
\begin{equation}\label{eq:convexsettovector1}
\{A_Eu:|u|\leq 1\}\subseteq\langle F\rangle_E\subseteq \{A_Eu:|u|\leq n^{\frac{1}{2}}\}.
\end{equation}
Denoting the columns of $A_E$ by $(v^E_k)_{k=1}^n$, since $v^E_k=A_Ee_k\in\langle F\rangle_E$ by the first inclusion in \eqref{eq:convexsettovector1}, we have $v^E_k=\langle f^E_k\rangle_E$ for some $f^E_k\in S^0(\Omega;F\ind_E)$. Now, let $g\in S^0(\Omega;T_{\mc{P}}F)$. Then, we have $g=\sum_{E\in\mc{P}}g_E$ with $g_E=g\ind_E\in S^0(\Omega;T_EF)$, and by the second inclusion in \eqref{eq:convexsettovector1}, $g_E$ is of the form
\[
g_E(x)=\sum_{k=1}^n h^E_k(x)\langle f^E_k\rangle_E\ind_E(x),
\]
where $h^E=(h^E_1,\ldots,h^E_n)$ satisfies $|h^E(x)|\leq n^{\frac{1}{2}}$. Since $\mc{P}$ is pairwise disjoint, setting $h_k(x):=\sum_{E\in\mc{P}}h_k^E(x)\ind_E(x)$ and $f_k(x):=\sum_{E\in\mc{P}}f_k^E(x)\ind_E(x)$, we have $|h_k(x)|\leq n^{\frac{1}{2}}$ and
\[
g(x)=\sum_{E\in\mc{P}}\sum_{k=1}^n h_k^E(x)\langle f_k^E\rangle_E\ind_E(x)=\sum_{k=1}^n h_k(x)T_{\mc{P}}f_k(x).
\]
Hence, by the directional ideal property and the fact that $f_1,\ldots,f_n\in S^0(\Omega;F)$,
\begin{align*}
\|g\|_{\mb{X}}&\leq\sum_{k=1}^n\|h_kT_{\mc{P}}f_k\|_{\mb{X}}\leq n^{\frac{1}{2}}\sum_{k=1}^n\|T_{\mc{P}}f_k\|_{\mb{X}}\\
&\leq n^{\frac{1}{2}}\|T_{\mc{P}}\|_{\mb{X}\to\mb{X}}\sum_{k=1}^n\|f_k\|_{\mb{X}}
\leq n^{\frac{3}{2}}\|T_{\mc{P}}\|_{\mb{X}\to\mb{X}}\|F\|_{\mb{X}[\mc{K}]}.
\end{align*}
This proves that $T_{\mc{P}}F\in\mb{X}[\mc{K}]$ with
\[
\|T_{\mc{P}}F\|_{\mb{X}[\mc{K}]}\lesssim_n\|T_{\mc{P}}\|_{\mb{X}\to\mb{X}}\|F\|_{\mb{X}[\mc{K}]},
\]
as desired.    
\end{proof}
As a consequence, we obtain the following characterizations:
\begin{corollary}\label{cor:avopconvex}
Let $\mb{X}$ be a K\"othe reflexive $\F^n$-directional Banach function space over $\Omega$ for which both $\mb{X}$ and $\mb{X}'$ satisfy the component-wise saturation property. Then the following are equivalent:
\begin{enumerate}[(i)]
    \item\label{it:convexsettovector1'} $T_E:\mb{X}[\mc{K}]\to\mb{X}[\mc{K}]$;
    \item\label{it:convexsettovector2'} $T_E:\mb{X}'[\mc{K}]\to\mb{X}'[\mc{K}]$
    \item\label{it:convexsettovector3'} $T_E:\mb{X}\to\mb{X}$;
    \item\label{it:convexsettovector4'} $T_E:\mb{X}'\to\mb{X}'$
    \item $\ind_E u\in\mb{X},\mb{X}'$ for all $u\in\F^n$ and there is a $C_1>0$ such that for all $u\in\F^n$ there is a non-zero $v\in\F^n$ for which
    \[
    \mu(E)^{-1}\|\ind_E u\|_{\mb{X}}\|\ind_E v\|_{\mb{X}'}\leq C_1|u\cdot v|;
    \]
    \item  $\ind_E u\in\mb{X},\mb{X}'$ for all $u\in\F^n$ and there is a $C_2>0$ such that for all $v\in\F^n$ there is a non-zero $u\in\F^n$ for which
    \[
    \mu(E)^{-1}\|\ind_E u\|_{\mb{X}}\|\ind_E v\|_{\mb{X}'}\leq C_2|u\cdot v|.
    \]
    \item\label{it:reducingop7} $\ind_E u\in\mb{X},\mb{X}'$ for all $u\in\mb{F}^n$.
\end{enumerate}
Moreover, in this case all the associated optimal constants (where for \ref{it:reducingop7} the associated constant is $\|A_{\mb{X},E}A_{\mb{X}',E}\|_{\F^{n\times n}}$) are equivalent up to factors only depending on $n$.
\end{corollary}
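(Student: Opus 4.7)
The plan is to obtain this corollary by stitching together the three preceding results of the subsection, so that nothing new needs to be proved beyond bookkeeping of the constants. The starting observation is that Corollary~\ref{cor:averagingoperator} already delivers the four-way equivalence \ref{it:convexsettovector3'}$\Leftrightarrow$\ref{it:convexsettovector4'}$\Leftrightarrow$(v)$\Leftrightarrow$(vi), with equality of all four associated constants, for any K\"othe reflexive $\F^n$-directional Banach function space. So this cluster is in hand for free.

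Next, to fold in the convex-set valued statements \ref{it:convexsettovector1'} and \ref{it:convexsettovector2'}, I would apply Theorem~\ref{thm:pairwisedisjointavconvex} to the trivially pairwise disjoint singleton family $\mc{P}=\{E\}$. This immediately yields $\ref{it:convexsettovector3'}\Leftrightarrow\ref{it:convexsettovector1'}$ with the norm comparison $\|T_E\|_{\mb{X}\to\mb{X}}\eqsim_n\|T_E\|_{\mb{X}[\mc{K}]\to\mb{X}[\mc{K}]}$. The only mild point to verify is that $\mb{X}'$ is itself a K\"othe reflexive $\F^n$-directional Banach function space, which is automatic from $\mb{X}''=\mb{X}$ (so that $\mb{X}'''=\mb{X}'$). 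With that in hand, the same theorem applied to $\mb{X}'$ and $\mc{P}=\{E\}$ gives $\ref{it:convexsettovector4'}\Leftrightarrow\ref{it:convexsettovector2'}$ with the analogous $n$-dependent comparison.

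Finally, Proposition~\ref{prop:reducingmatrixavop} identifies the scalar boundedness $\ref{it:convexsettovector3'}$ with the indicator-integrability condition \ref{it:reducingop7}, and simultaneously pins down the operator norm as comparable, up to $n$-dependent constants, to $\mu(E)^{-1}\|A_{\mb{X},E}A_{\mb{X}',E}\|_{\F^{n\times n}}$. Concatenating the three clusters then closes the cycle $\ref{it:convexsettovector1'}\Leftrightarrow\cdots\Leftrightarrow\ref{it:reducingop7}$, and since every individual comparison occurring along the way is either an equality or an $\eqsim_n$-equivalence, all seven associated constants are comparable up to factors depending only on $n$, as claimed. No step presents a serious obstacle; the real work has already been done in the earlier lemmas, and here it is purely a matter of assembling the chain and tracking how the constants compose.
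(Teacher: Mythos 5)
Your proposal is correct and matches the paper's own proof: both apply Theorem~\ref{thm:pairwisedisjointavconvex} with $\mc{P}=\{E\}$ to obtain \ref{it:convexsettovector1'}$\Leftrightarrow$\ref{it:convexsettovector3'} and \ref{it:convexsettovector2'}$\Leftrightarrow$\ref{it:convexsettovector4'}, then invoke Corollary~\ref{cor:averagingoperator} and Proposition~\ref{prop:reducingmatrixavop} for the remaining equivalences and the constant comparisons. The small remark that $\mb{X}'$ inherits the needed structure from $\mb{X}''=\mb{X}$ is a sensible extra check, even though Theorem~\ref{thm:pairwisedisjointavconvex} itself only asks that $\mb{X}'$ be an $\F^n$-directional Banach function space rather than that it be K\"othe reflexive.
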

\begin{proof}
The equivalences \ref{it:convexsettovector1'}$\Leftrightarrow$\ref{it:convexsettovector3'} and \ref{it:convexsettovector2'}$\Leftrightarrow$\ref{it:convexsettovector4'} follow from Theorem~\ref{thm:pairwisedisjointavconvex} with $\mc{P}=\{E\}$. The remaining equivalences follow from Corollary~\ref{cor:averagingoperator} and Proposition~\ref{prop:reducingmatrixavop}.
\end{proof}

\section{The Muckenhoupt condition}\label{sec:muckenhoupt}
We now specify to $\Omega=\R^d$ with the Lebesgue measure. By a cube $Q$ we will mean a cube in $\R^d$ whose sides are parallel to the coordinate axes. We define the Muckenhoupt condition of an $\F^n$-directional quasi-Banach function space through the averaging operators $T_Qf=\langle f\rangle_Q\ind_Q$.
\begin{definition}
We say that an $\F^n$-directional quasi-Banach function space $\mb{X}$ over $\R^d$ satisfies the \emph{Muckenhoupt condition} if $\ind_Q u\in\mb{X},\mb{X}'$ for all cubes $Q$ and all $u\in\F^n$, and $T_Q:\mb{X}\to\mb{X}$ for all cubes $Q$ with
\[
[\mb{X}]_A:=\sup_Q\|T_Q\|_{\mb{X}\to\mb{X}}<\infty.
\]
In this case we write $\mb{X}\in A$.
\end{definition}

By Corollary~\ref{cor:avopconvex} we have the following characterizations of the Muckenhoupt condition:
\begin{theorem}\label{thm:muckenhouptdef}
Let $\mb{X}$ be a K\"othe reflexive $\F^n$-directional Banach function space over $\R^d$ for which both $\mb{X}$ and $\mb{X}'$ are component-wise saturated. Then the following are equivalent:
\begin{enumerate}[(i)]
    \item\label{it:muckenhouptdef1} $\mb{X}\in A$;
    \item\label{it:muckenhouptdef2} $\mb{X}'\in A$;
    \item\label{it:muckenhouptdef3} $T_Q:\mb{X}[\mc{K}]\to\mb{X}[\mc{K}]$  for all cubes $Q$, and
    \[
    [\mb{X}]_{A[\mc{K}]}:=\sup_Q\|T_Q\|_{\mb{X}[\mc{K}]\to\mb{X}[\mc{K}]}<\infty;
    \]
    \item $T_Q:\mb{X}'[\mc{K}]\to\mb{X}'[\mc{K}]$ for all cubes $Q$, and
    \[
    [\mb{X}']_{A[\mc{K}]}:=\sup_Q\|T_Q\|_{\mb{X}'[\mc{K}]\to\mb{X}'[\mc{K}]}<\infty;
    \]
    \item\label{it:muckenhouptdef4} $\ind_Q u\in\mb{X},\mb{X}'$ for all cubes $Q$ and $u\in\F^n$, and
    \[
    [\mb{X}]_{A_R}:=\sup_Q|Q|^{-1}\|A_{\mb{X},Q}A_{\mb{X}',Q}\|_{\F^{n\times n}}<\infty;
    \]
    \item\label{it:muckenhouptdef5} $\ind_Q u\in\mb{X},\mb{X}'$ for all cubes $Q$ and $u\in\F^n$, and there is a $C_1>0$ such that for all $u\in\F^n$ there is a non-zero $v\in\F^n$ for which
    \[
    |Q|^{-1}\|\ind_Q u\|_{\mb{X}}\|\ind_Q v\|_{\mb{X}'}\leq C_1|u\cdot v|;
    \]
    \item\label{it:muckenhouptdef6}  $\ind_Q u\in\mb{X},\mb{X}'$ for all cubes $Q$ and $u\in\F^n$, and there is a $C_2>0$ such that for all $v\in\F^n$ there is a non-zero $u\in\F^n$ for which
    \[
    |Q|^{-1}\|\ind_Q u\|_{\mb{X}}\|\ind_Q v\|_{\mb{X}'}\leq C_2|u\cdot v|.
    \]
\end{enumerate}
Moreover, for the optimal $C_1$, $C_2$ we have
\[
[\mb{X}]_A=[\mb{X}']_A=C_1=C_2
\]
and
\[
[\mb{X}]_A\eqsim_n[\mb{X}]_{A_R}\eqsim_n[\mb{X}]_{A[\mc{K}]}\eqsim_n[\mb{X}']_{A[\mc{K}]}.
\]
\end{theorem}
\begin{proof}
The equivalences of \ref{it:muckenhouptdef1}-\ref{it:muckenhouptdef2}, \ref{it:muckenhouptdef5}-\ref{it:muckenhouptdef6} with equal constants follow from Corollary~\ref{cor:averagingoperator} and taking a supremum over all cubes $Q$. The equivalences of \ref{it:muckenhouptdef1} and \ref{it:muckenhouptdef3}-\ref{it:muckenhouptdef4} follow from Corollary~\ref{cor:avopconvex}.
\end{proof}

As is done in \cite{Ni24}, for $\mb{X}$ with $\ind_Q u\in\mb{X},\mb{X}'$ for all cubes $Q$ and all $u\in\F^n$, we can define the \emph{strong} Muckenhoupt condition $\mb{X}\in A_{\text{strong}}$ through imposing that
\[
[\mb{X}]_{A_{\text{strong}}}:=\sup_{\mc{P}}\|T_{\mc{P}}\|_{\mb{X}\to\mb{X}}<\infty,
\]
where the supremum is taken over all pairwise disjoint collections of cubes $\mc{P}$, and where $T_{\mc{P}}f=\sum_{Q\in\mc{P}}\langle f\rangle_Q\ind_Q$. Moreover, we also set
\[
[\mb{X}]_{A_{\text{strong}}[\mc{K}]}:=\sup_{\mc{P}}\|T_{\mc{P}}\|_{\mb{X}[\mc{K}]\to\mb{X}[\mc{K}]}
\]
Analogously to the Muckenhoupt condition, we have the following result:
\begin{proposition}
Let $\mb{X}$ be an $\F^n$-directional Banach function space over $\Omega$. Then $\mb{X}\in A_{\text{strong}}$ if and only if $T_{\mc{P}}:\mb{X}[\mc{K}]\to\mb{X}[\mc{K}]$ uniformly for all pairwise disjoint collections $\mc{P}$, with
\[
[\mb{X}]_{A_{\text{strong}}}\eqsim_n[\mb{X}]_{A_{\text{strong}}[\mc{K}]}.
\]
\end{proposition}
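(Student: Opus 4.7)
The plan is to reduce this proposition directly to Theorem~\ref{thm:pairwisedisjointavconvex}. Since the Lebesgue measure on $\R^d$ is $\sigma$-finite and every cube $Q$ satisfies $0<|Q|<\infty$, every pairwise disjoint collection of cubes $\mc{P}$ fulfills the hypotheses of that theorem. Applying it to an arbitrary such $\mc{P}$ yields the uniform equivalence
\[
\|T_{\mc{P}}\|_{\mb{X}\to\mb{X}}\eqsim_n\|T_{\mc{P}}\|_{\mb{X}[\mc{K}]\to\mb{X}[\mc{K}]},
\]
with implicit constants depending only on $n$ (and in particular independent of the choice of $\mc{P}$). Taking the supremum of both sides over all pairwise disjoint collections of cubes $\mc{P}$ yields $[\mb{X}]_{A_{\text{strong}}}\eqsim_n[\mb{X}]_{A_{\text{strong}}[\mc{K}]}$. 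The equivalence of the two membership statements ($\mb{X}\in A_{\text{strong}}$ versus uniform boundedness of $T_{\mc{P}}$ on $\mb{X}[\mc{K}]$) is then simply the statement that one of these quantities is finite if and only if the other is.

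The only step that needs to be verified carefully is the independence of the implicit constant on $\mc{P}$. Inspecting the proof of Theorem~\ref{thm:pairwisedisjointavconvex}, the direction $\|T_{\mc{P}}\|_{\mb{X}\to\mb{X}}\leq\|T_{\mc{P}}\|_{\mb{X}[\mc{K}]\to\mb{X}[\mc{K}]}$ is obtained with constant $1$ via the embedding $f\mapsto\mc{K}(f)$, while the reverse direction uses the John ellipsoid theorem applied pointwise to each $\langle F\rangle_E\in\mc{K}_b$ together with a decomposition into $n$ column selections, producing a factor of order $n^{3/2}$ that does not see the collection $\mc{P}$ itself. Therefore the constant is genuinely uniform and the supremum argument goes through without issue.
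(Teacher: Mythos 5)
Your proposal is correct and matches the paper's proof exactly: the paper's proof of this proposition is a one-liner appealing to Theorem~\ref{thm:pairwisedisjointavconvex}, and you simply spell out why the constant is uniform in $\mc{P}$ (correctly identifying that the direction $\|T_{\mc{P}}\|_{\mb{X}\to\mb{X}}\leq\|T_{\mc{P}}\|_{\mb{X}[\mc{K}]\to\mb{X}[\mc{K}]}$ comes for free from the isometric embedding $f\mapsto\mc{K}(f)$, while the reverse factor $n^{3/2}$ comes from the John ellipsoid and the $n$-fold column decomposition, independently of $\mc{P}$).
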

\begin{proof}
This follows directly from Theorem~\ref{thm:pairwisedisjointavconvex}.
\end{proof}

\begin{remark}
The proof of Theorem~\ref{thm:pairwisedisjointavconvex} fails for \emph{sparse} collections $\mc{P}$. Thus, it is not clear if such an equivalence holds for sparse collections as well.
\end{remark}

We define $L^1_{\text{loc}}(\R^d;\mc{K})$ to be the space of $F\in L^0(\R^d;\mc{K})$ for which $F\ind_Q\in L^1(\R^d;\mc{K})$ for all cubes $Q$.
\begin{proposition}\label{prop:locallyintegrablyboundedinA}
We have $L^1_{\text{loc}}(\R^d;\mc{K})\subseteq L^0(\R^d;\mc{K}_b)$.
\end{proposition}
\begin{proof}
Let $F\in L^1_{\text{loc}}(\R^d;\mc{K})$. Then for each cube $Q$ we have $F\ind_Q\in L^1(\R^d;\mc{K})$, so by Proposition~\ref{prop:matrixweightfatouimpliesbounded} we have $F(x)\in\mc{K}_b$ for a.e. $x\in Q$. Partitioning $\R^d$ into a countable collection of cubes, we conclude that $F(x)\in\mc{K}_b$ for a.e. $x\in\R^d$, as desired.
\end{proof}

\subsection{The convex-set valued maximal operator}
Given a collection of cubes $\mc{P}$ and $F\in L^0(\Omega;\mc{K})$ satisfying $F\in L^1_{\text{loc}}(\R^d;\mc{K})$, we define
\[
M^{\mc{K}}_{\mc{P}}F(x):=\sup_{Q\in\mc{P}}\langle F\rangle_Q\ind_Q(x),
\]
where the supremum is taken with respect to the partial ordering given by set inclusion in $\mc{K}$, i.e., $M^{\mc{K}}_{\mc{P}}F(x)$ is the smallest closed convex set containing $\bigcup_{Q\in\mc{P}}T_QF(x)$. When $\mc{P}$ is the collection of all cubes, then we drop the subscript $\mc{P}$ and denote the operator by $M^{\mc{K}}$.

Moreover, we define
\[
M_{\mc{P}}F(x):=\overline{\bigcup_{Q\in\mc{P}}\langle F\rangle_Q\ind_Q(x)},
\]
and drop subscript $\mc{P}$ when we consider the collection of all cubes. Unlike $M_{\mc{P}}^{\mc{K}}F$, $M_{\mc{P}}F$ is not necessarily convex-set valued. However, given a matrix weight $W:\R^d\to\F^{n\times n}$, we have
\[
\sup_{u\in M_{\mc{P}}^{\mc{K}}F(x)}|W(x)u|=\sup_{u\in M_{\mc{P}}F(x)}|W(x)u|.
\]
Using Proposition~\ref{prop:supattainmentXW}, this implies that if $X$ is a Banach function space with the Fatou property, then to prove $M_{\mc{P}}^{\mc{K}}:\mb{X}[\mc{K}]\to\mb{X}[\mc{K}]$, it suffices to consider $M_{\mc{P}}$ without having to worry about the convex hull.

Note that when $\mc{P}$ is finite, the set $M_{\mc{P}}F(x)=\bigcup_{Q\in\mc{P}}T_QF(x)$ is bounded, from which we conclude that $M^{\mc{K}}_{\mc{P}}F(x)\in\mc{K}_b$. However, if $\mc{P}$ is infinite, this need not be the case. Nonetheless, if an $\F^n$-directional quasi-Banach function space $\mb{X}$ over $\R^d$ satisfies the monotone convergence property, then we have
\[
M^{\mc{K}}:\mb{X}[\mc{K}]\to\mb{X}[\mc{K}]
\]
if and only if we have
\[
M^{\mc{K}}_{\mc{F}}:\mb{X}[\mc{K}]\to\mb{X}[\mc{K}]
\]
uniformly for all finite collections of cubes $\mc{F}$, and
\[
\|M^{\mc{K}}\|_{\mb{X}[\mc{K}]\to\mb{X}[\mc{K}]}=\sup_{\mc{F}}\|M_{\mc{K}}^{\mc{K}}\|_{\mb{X}[\mc{K}]\to\mb{X}[\mc{K}]}.
\]

For a matrix weight $W:\R^d\to\F^{n\times n}$, we define the \emph{Christ-Goldberg maximal operator} $M_W$ by
\[
M_Wf(x):=\sup_Q\langle |W(x)W^{-1}f|\rangle_Q\ind_Q(x),
\]
where the supremum is taken over all cubes $Q$ in $\R^d$. We analogously define $M_{\mc{P},W}f$ when the supremum is taken over a collection of cubes $\mc{P}$. Given a (quasi-)Banach function space $X$ over $\R^d$, we write $X[\F^n]$ for the space of $f\in L^0(\R^d;\F^n)$ satisfying $|f|\in X$, with $\|f\|_{X[\F^n]}:=\||f|\|_X$.
\begin{proposition}\label{prop:christgoldbergmax}
Let $X$ be a Banach function space over $\R^d$ with the Fatou property, let $W:\R^d\to\F^{n\times n}$ be a matrix weight, and let $\mc{P}$ be a collection of cubes. Then the following are equivalent:
\begin{enumerate}[(i)]
    \item\label{it:christgoldberg1} $M^{\mc{K}}_{\mc{P}}:X_W[\mc{K}]\to X_W[\mc{K}]$;
    \item\label{it:christgoldberg2} $M_{\mc{P},W}:X[\F^n]\to X$.
\end{enumerate}
Moreover, in this case we have
\[
\|M_{\mc{P},W}\|_{X[\F^n]\to X}\eqsim_n\|M_{\mc{P}}^{\mc{K}}\|_{X_W[\mc{K}]\to X_W[\mc{K}]}.
\]
\end{proposition}
\begin{proof}
For \ref{it:christgoldberg2}$\Rightarrow$\ref{it:christgoldberg1}, by monotone convergence, it suffices to prove the bound for $M^{\mc{K}}_{\mc{F}}$ uniformly in finite collections of cubes $\mc{F}\subseteq\mc{P}$. Let $F\in X_W[\mc{K}]$. By Proposition~\ref{prop:supattainmentXW} we have $M^{\mc{K}}_{\mc{F}}F\in X_W[\mc{K}]$ precisely when
\[
\sup_{u\in M^{\mc{K}}_{\mc{F}}F(x)}|W(x)u|=\sup_{u\in M_{\mc{F}}F(x)}|W(x)u|\in X.
\]
Since for $u\in M_{\mc{F}}F(x)$ there is a cube $Q\in\mc{F}$ containing $x$ and an $f\in S^0(\R^d;F)$ for which $u=\langle f\rangle_Q$, we have
\begin{equation}\label{eq:christgoldberg1}
|W(x)u|\leq\langle |W(x)f|\rangle_Q\leq\langle \sup_{u\in F}|W(x)u|\rangle_Q.
\end{equation}
Exactly as in \cite[Theorem~6.9]{BC23}, there is a constant $C_n$ and mappings $f_1,\ldots,f_n\in S^0(\R^d;F)$ for which
\[
F(x)\subseteq C_n\sum_{k=1}^n\mc{K}(f_k)(x)
\]
for a.e. $x\in\R^d$. This yields
\[
\langle \sup_{u\in F}|W(x)u|\rangle_Q\lesssim_n\sum_{k=1}^n\langle |W(x)f_k|\rangle_Q.
\]
Combining this with \eqref{eq:christgoldberg1}, we conclude that
\begin{align*}
\|M_{\mc{F}}^{\mc{K}}F\|_{X_W[\mc{K}]}&\lesssim_n\sum_{k=1}^n\|M_{\mc{F},W} (Wf_k)\|_X\leq\sum_{k=1}^n\|M_W\|_{X[\F^n]\to X}\||Wf_k|\|_X\\
&\lesssim_n\|M_W\|_{X[\F^n]\to X}\|F\|_{X_W[\mc{K}]},
\end{align*}
as desired.

For the converse implication \ref{it:christgoldberg1}$\Rightarrow$\ref{it:christgoldberg2}, let $f\in X[\F^n]$ not equal to $0$ a.e., let $Q\in\mc{P}$, let $x\in Q$, and set $\widetilde{f}:=\frac{|W(x)\langle W^{-1}f\rangle_Q|}{\langle|W(x)W^{-1}f|\rangle_Q}f$. Then
\[
\langle|W(x)W^{-1}f|\rangle_Q=|W(x)\langle W^{-1}\widetilde{f}\rangle_Q|\leq\sup_{u\in M^{\mc{K}}(\mc{K}(W^{-1}f))(x)}|W(x)u|.
\]
Hence, by Proposition~\ref{prop:supattainmentXW} and Proposition~\ref{prop:convexsetfequalsf},
\[
\|M_{\mc{P},W} f\|_X\leq \|M_{\mc{P}}^{\mc{K}}(\mc{K}(W^{-1}f))\|_{X_W[\mc{K}]}\leq\|M_{\mc{P}}^{\mc{K}}\|_{X_W[\mc{K}]\to X_W[\mc{K}]}\|f\|_{X[\F^n]}.
\]
The assertion follows.
\end{proof}

Just like in the scalar-valued case, to prove the boundedness of $M^{\mc{K}}$ and $M$, it suffices to prove the boundedness $M^{\mc{K}}_{\mc{D}^\alpha}$ for $3^d$ translated dyadic grids $(\mc{D}^\alpha)_{\alpha=1}^{3^d}$ in $\R^d$. More precisely, we have
\begin{equation}\label{eq:3dlatticeinclusion}
MF(x)\subseteq C_d\bigcup_{\alpha=1}^{3^d}M_{\mc{D}^\alpha}F(x)
\end{equation}
and
\[
M^{\mc{K}}F(x)\subseteq C_d\sum_{\alpha=1}^{3^d}M^{\mc{K}}_{\mc{D}^\alpha}F(x).
\]
For the latter inclusion, taking the sum is required, as the union of the $M^{\mc{K}}_{\mc{D}^\alpha}F(x)$ need not be in $\mc{K}$. This makes the inclusion \eqref{eq:3dlatticeinclusion} much more suited for working in weak-type spaces $\mb{X}_{\text{weak}}[\mc{K}]$, since the quasi-triangle inequality generally fails here. With a bit of abuse of notation, we write
\[
M:\mb{X}[\mc{K}]\to \mb{X}_{\text{weak}}[\mc{K}],
\]
if there is a $C>0$ for which
\[
\sup_{u\in\F^n}\|\ind_{MF^{-1}(\{u\})}u\|_{\mb{X}}\leq C\|F\|_{\mb{X}[\mc{K}]}
\]
for all $F\in\mb{X}[\mc{K}]$, and the smallest possible $C$ is denoted by $\|M\|_{\mb{X}[\mc{K}]\to \mb{X}_{\text{weak}}[\mc{K}]}$. As an analogue to \cite[Theorem~B]{Ni24}, we prove the following result:
\begin{theorem}\label{thm:mweak}
Let $\mb{X}$ be an $\F^n$-directional Banach function space over $\R^d$ with the Fatou property for which $\mb{X}$ and $\mb{X}'$ satisfy the component-wise saturation property. Consider the following statements:
\begin{enumerate}[(a)]
    \item\label{it:mweak1} $M^{\mc{K}}:\mb{X}[\mc{K}]\to\mb{X}[\mc{K}]$;
    \item\label{it:mweak2} $\mb{X}\in A_{\text{strong}}$;
    \item\label{it:mweak3} $M:\mb{X}[\mc{K}]\to\mb{X}_{\text{weak}}[\mc{K}]$;
    \item\label{it:mweak4} $\mb{X}\in A$.
\end{enumerate}
Then \ref{it:mweak1}$\Rightarrow$\ref{it:mweak2}$\Rightarrow$\ref{it:mweak3}$\Rightarrow$\ref{it:mweak4} with
\[
[\mb{X}]_{A}\leq\|M\|_{\mb{X}[\mc{K}]\to\mb{X}_{\text{weak}}[\mc{K}]}\lesssim_{d,n}[\mb{X}]_{A_{\text{strong}}}\leq\|M^{\mc{K}}\|_{\mb{X}[\mc{K}]\to\mb{X}[\mc{K}]}.
\]
Furthermore, if there is a $C\geq 1$ such that for all pairwise disjoint collections of cubes $\mc{P}$ and all $f\in\mb{X}$, $g\in\mb{X}'$ we have
\[
\sum_{Q\in\mc{P}}\|\ind_Q f\|_{\mb{X}}\|\ind_Q g\|_{\mb{X}'}\leq C\|f\|_{\mb{X}}\|g\|_{\mb{X}'},
\]
then \ref{it:mweak2}-\ref{it:mweak4} are equivalent, with
\[
[\mb{X}]_{A_{\text{strong}}}\leq C[\mb{X}]_A.
\]
\end{theorem}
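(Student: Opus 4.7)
The implications break naturally into four pieces, of which $(b)\Rightarrow(c)$ is the main technical core; I would dispatch the other three first. For $(a)\Rightarrow(b)$: given a pairwise disjoint collection $\mc{P}$ and $f\in\mb{X}$, for a.e.\ $x$ there is at most one $Q_x\in\mc{P}$ containing $x$, and $T_{\mc{P}}f(x)=\langle f\rangle_{Q_x}\in\langle\mc{K}(f)\rangle_{Q_x}\subseteq M^{\mc{K}}(\mc{K}(f))(x)$. Hence $T_{\mc{P}}f\in S^0(\R^d;M^{\mc{K}}(\mc{K}(f)))$, so Proposition~\ref{prop:convexsetfequalsf} together with the boundedness of $M^{\mc{K}}$ on $\mb{X}[\mc{K}]$ yields
\[
\|T_{\mc{P}}f\|_{\mb{X}}\leq\|M^{\mc{K}}(\mc{K}(f))\|_{\mb{X}[\mc{K}]}\leq\|M^{\mc{K}}\|_{\mb{X}[\mc{K}]\to\mb{X}[\mc{K}]}\|f\|_{\mb{X}},
\]
and taking the supremum over $\mc{P}$ gives $[\mb{X}]_{A_{\text{strong}}}\leq\|M^{\mc{K}}\|_{\mb{X}[\mc{K}]\to\mb{X}[\mc{K}]}$.

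For $(c)\Rightarrow(d)$, fix a cube $Q$ and $f\in\mb{X}$, and set $F:=\mc{K}(f)$ so that $\|F\|_{\mb{X}[\mc{K}]}=\|f\|_{\mb{X}}$. For $u:=\langle f\rangle_Q\in\langle F\rangle_Q$, every $x\in Q$ satisfies $u\in\langle F\rangle_Q\subseteq M^{\mc{K}}F(x)$, so $Q\subseteq(M^{\mc{K}}F)^{-1}(\{u\})$ and the definition of the weak-type norm gives
\[
\|T_Qf\|_{\mb{X}}=\|\ind_Q u\|_{\mb{X}}\leq\|M^{\mc{K}}F\|_{\mb{X}_{\text{weak}}[\mc{K}]}\leq\|M^{\mc{K}}\|_{\mb{X}[\mc{K}]\to\mb{X}_{\text{weak}}[\mc{K}]}\|f\|_{\mb{X}}.
\]
Taking a sup over cubes gives the desired bound on $[\mb{X}]_A$.

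For the final statement, under \eqref{eq:mbxing} the only missing implication is $(d)\Rightarrow(b)$. By duality and Proposition~\ref{prop:normoutsideintegral},
\[
\|T_{\mc{P}}f\|_{\mb{X}}=\sup_{\|g\|_{\mb{X}'}\leq1}\Big|\sum_{Q\in\mc{P}}|Q|\langle f\rangle_Q\cdot\langle g\rangle_Q\Big|.
\]
For each $Q$, using H\"older, the identity $T_Qg=T_Q(g\ind_Q)$, and Theorem~\ref{thm:muckenhouptdef},
\[
|Q||\langle f\rangle_Q\cdot\langle g\rangle_Q|=\Big|\int_Q f\cdot\langle g\rangle_Q\,dx\Big|\leq\|\ind_Q f\|_{\mb{X}}\|T_Q g\|_{\mb{X}'}\leq[\mb{X}]_A\|\ind_Q f\|_{\mb{X}}\|\ind_Q g\|_{\mb{X}'}.
\]
Summing over $\mc{P}$ and applying \eqref{eq:mbxing} yields $\|T_{\mc{P}}f\|_{\mb{X}}\leq C[\mb{X}]_A\|f\|_{\mb{X}}$, hence $[\mb{X}]_{A_{\text{strong}}}\leq C[\mb{X}]_A$.

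The hard part is $(b)\Rightarrow(c)$, for which I would adapt the scalar Calder\'on-Zygmund weak-type proof of [Ni24, Thm.~B] to the convex-body setting. Using the $3^d$-shift trick, reduce to one fixed dyadic grid. For each nonzero $u\in\F^n$ perform a stopping-time CZ decomposition to cover $(M^{\mc{K}}F)^{-1}(\{u\})$ by a pairwise disjoint family $\mc{P}_u$ of dyadic cubes, together with Filippov selections $f_Q\in S^0(\R^d;F\ind_Q)$ such that the concatenation $f_u:=\sum_{Q\in\mc{P}_u}f_Q$ lies in $S^0(\R^d;F)$ (so $\|f_u\|_{\mb{X}}\leq\|F\|_{\mb{X}[\mc{K}]}$) and $c_{d,n}\ind_{(M^{\mc{K}}F)^{-1}(\{u\})}u$ is a selection of $T_{\mc{P}_u}\mc{K}(f_u)$. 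Theorem~\ref{thm:pairwisedisjointavconvex} would then give
\[
\|\ind_{(M^{\mc{K}}F)^{-1}(\{u\})}u\|_{\mb{X}}\lesssim_{d,n}[\mb{X}]_{A_{\text{strong}}}\|f_u\|_{\mb{X}}\leq[\mb{X}]_{A_{\text{strong}}}\|F\|_{\mb{X}[\mc{K}]},
\]
and sup over $u$ closes the estimate. The delicate point is the direction-matching: in the scalar case the CZ cubes automatically carry averages aligned with $|f|$, whereas here, since $u$ need not be a direction naturally present in $F$, one must use the support-function identity $h_{M^{\mc{K}}F(x)}(v)=M(h_F(v))(x)$ and engineer the Filippov selections so that the averages $\langle f_Q\rangle_Q$ contain a rescaled copy of $u$; this is where the dimensional constant $c_{d,n}$ enters.
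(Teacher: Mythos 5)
Your handling of \ref{it:mweak1}$\Rightarrow$\ref{it:mweak2}, \ref{it:mweak3}$\Rightarrow$\ref{it:mweak4}, and \ref{it:mweak4}$\Rightarrow$\ref{it:mweak2} under the disjointness hypothesis is correct and matches the paper essentially verbatim, modulo the cosmetic choice of where to place $T_Q$ in the duality pairing for \ref{it:mweak4}$\Rightarrow$\ref{it:mweak2}.

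The implication \ref{it:mweak2}$\Rightarrow$\ref{it:mweak3}, however, has a genuine gap, and it is exactly at the point you flag as ``delicate.'' Your plan is to cover $(M^{\mc{K}}F)^{-1}(\{u\})$ by a \emph{single} pairwise disjoint CZ-stopping family $\mc{P}_u$ and to arrange Filippov selections so that a dimensional rescaling of $u$ is a selection of $T_{\mc{P}_u}\mc{K}(f_u)$, i.e.\ that $c_{d,n}u$ is a scalar multiple of $\langle f_u\rangle_Q$ for each $Q\in\mc{P}_u$. This cannot work in general: $M^{\mc{K}}F(x)$ is the \emph{closed convex hull} of $\bigcup_{Q\ni x}\langle F\rangle_Q$, so $u\in M^{\mc{K}}F(x)$ only guarantees that $u$ is a convex combination of vectors taken from \emph{different} cubes' averages, and $u$ need not be a scalar multiple of any element of any single $\langle F\rangle_Q$. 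A concrete obstruction with $n=2$: if $\langle F\rangle_{Q_1}=\mc{K}(e_1)$ and $\langle F\rangle_{Q_2}=\mc{K}(e_2)$ with $Q_1\subsetneq Q_2$, then $u=\tfrac12(e_1+e_2)\in M^{\mc{K}}F(x)$ for $x\in Q_1$, yet no Filippov selection of $F$ on any single cube has average collinear with $u$. The support-function identity $h_{M^{\mc{K}}F(x)}(v)=M(h_F(v))(x)$ does not resolve this because it only encodes the scalar maximal function of a one-dimensional marginal; it cannot recover $u$ itself as a selection.

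The paper's proof of \ref{it:mweak2}$\Rightarrow$\ref{it:mweak3} addresses exactly this. After the $3^d$-shift and monotone-convergence reduction to a finite $\mc{F}\subseteq\mc{D}$, it invokes Carath\'eodory's theorem to write $u=\sum_{j=1}^J\theta_j u_j$ with each $u_j$ lying in a \emph{single} $\langle F\rangle_Q$. For each $j$ it forms a pairwise disjoint collection $\mc{P}_j$ of maximal cubes $Q\in\mc{F}$ with $u_j\in\langle F\rangle_Q$ (so in particular $u_j$ is attainable as a genuine selection average over each $Q\in\mc{P}_j$), and then it closes the estimate via the triangle inequality for Minkowski sums in $\mb{X}[\mc{K}]$ together with the $A_{\text{strong}}[\mc{K}]$ bound, which is comparable to $[\mb{X}]_{A_{\text{strong}}}$ by Theorem~\ref{thm:pairwisedisjointavconvex}:
\[
\|\ind_{M^{\mc{K}}_{\mc{F}}F^{-1}(\{u\})}u\|_{\mb{X}}\leq\sum_{j=1}^J\theta_j\|T_{\mc{P}_j}F\|_{\mb{X}[\mc{K}]}\lesssim_n[\mb{X}]_{A_{\text{strong}}}\|F\|_{\mb{X}[\mc{K}]}.
\]
The replacement of your single stopping family by $J$ families, one per Carath\'eodory vertex, together with the convexity of the bound (the $\theta_j$ sum to $1$), is what makes the argument go through.
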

\begin{proof}
For \ref{it:mweak1}$\Rightarrow$\ref{it:mweak2}, let $\mc{P}$ be a pairwise disjoint collection of cubes and let $f\in\mb{X}$. If $x\in\R^d$ satisfies $x\in Q$ for some $Q\in\mc{P}$, then $T_{\mc{P}}f(x)=\langle f\rangle_Q\ind_Q(x)\in \bigcup_Q\llangle f\rrangle_Q\ind_Q(x)$. Thus, we have
\[
T_{\mc{P}}f(x)\in M^{\mc{K}}(\mc{K}(f))(x)
\]
a.e. Hence, by the ideal property in $\mb{X}[\mc{K}]$ and Proposition~\ref{prop:convexsetfequalsf}, we have
\[
\|T_{\mc{P}}f\|_{\mb{X}}\leq\|M^{\mc{K}}(\mc{K}(f))\|_{\mb{X}[\mc{K}]}\leq\|M^{\mc{K}}\|_{\mb{X}[\mc{K}]\to\mb{X}[\mc{K}]}\|f\|_{\mb{X}}.
\]
Thus, $\mb{X}\in A_{\text{strong}}$ and $[\mb{X}]_{A_{\text{strong}}}\leq\|M^{\mc{K}}\|_{\mb{X}[\mc{K}]\to\mb{X}[\mc{K}]}$.

For \ref{it:mweak2}$\Rightarrow$\ref{it:mweak3}, first note that by \eqref{eq:3dlatticeinclusion}, it suffices to prove the bound for $M_{\mc{D}}$ for each dyadic grid $\mc{D}=\mc{D}^\alpha$. Moreover, by the monotone convergence property, it suffices to bound $M_{\mc{F}}$ uniformly for each finite $\mc{F}\subseteq\mc{D}$. Let $u\in\F^n$ be non-zero and suppose that $x\in M_{\mc{F}}F^{-1}(\{u\})$. Then there is a $Q\in\mc{F}$ with $x\in Q$ such that $u\in\langle F\rangle_Q$. We let $\mc{P}$ denote the maximal cubes in $\mc{F}$ for which $u\in\langle F\rangle_Q$. Then
\[
\ind_{M_{\mc{F}}F^{-1}(\{u\})}(x)u\in \sum_{Q\in\mc{P}}\langle F\rangle_Q\ind_Q(x)
\]
a.e., so, by the directional ideal property of $\mb{X}[\mc{K}]$ and the fact that $\mc{P}$ is pairwise disjoint,
\[
\|\ind_{M_{\mc{F}}F^{-1}(\{u\})}u\|_{\mb{X}}\leq\|T_{\mc{P}}F\|_{\mb{X}[\mc{K}]}\leq [\mb{X}]_{A_{\text{strong}}[\mc{K}]}\|F\|_{\mb{X}}.
\]
Taking a supremum over all $u\in\F^n$ now proves the result.

For \ref{it:mweak3}$\Rightarrow$\ref{it:mweak4}, let $Q$ be a cube and let $f\in\mb{X}$. If $x\in Q$, then $\langle f\rangle_Q\in M(\mc{K}(f))(x)$. Hence, for $u=\langle f\rangle_Q$ we have
\[
T_Qf(x)=\langle f\rangle_Q\ind_Q(x)\in \ind_{M(\mc{K}(f))^{-1}(\{u\})}(x)u
\]
a.e. It follows from the ideal property of $\mb{X}[\mc{K}]$ and Proposition~\ref{prop:convexsetfequalsf} that
\[
\|T_Qf\|_{\mb{X}}\leq\|M(\mc{K}(f))\|_{\mb{X}_{\text{weak}}[\mc{K}]}\leq\|M\|_{\mb{X}[\mc{K}]\to\mb{X}_{\text{weak}}[\mc{K}]}\|f\|_{\mb{X}},
\]
proving that $\mb{X}\in A$ with $[\mb{X}]_A\leq\|M\|_{\mb{X}[\mc{K}]\to\mb{X}_{\text{weak}}[\mc{K}]}$. The result follows.

Finally, let $\mb{X}\in A$ and let $\mc{P}$ be a pairwise disjoint collection of cubes. Then
\begin{align*}
\Big|\int_{\R^d}T_{\mc{P}}f\cdot g\,\mathrm{d}x\Big|
&\leq \sum_{Q\in\mc{P}}\int_{\R^d}\!|T_Qf\cdot \ind_Qg|\,\mathrm{d}x
\leq \sum_{Q\in\mc{P}}\|T_Qf\|_{\mb{X}}\|\ind_Q g\|_{\mb{X}'}\\
&\leq [\mb{X}]_A\sum_{Q\in\mc{P}}\|\ind_Q f\|_{\mb{X}}\|\ind_Q g\|_{\mb{X}'}
\leq C[\mb{X}]_A\|f\|_{\mb{X}}\|g\|_{\mb{X}'}.
\end{align*}
Thus, by Proposition~\ref{prop:normoutsideintegral} and Lemma~\ref{lem:lorentzluxemburg}, $\mb{X}\in A_{\text{strong}}$ with $[\mb{X}]_{A_{\text{strong}}}\leq C[\mb{X}]_A$.
\end{proof}
For matrix weighted spaces, this yields the following corollary:
\begin{corollary}\label{cor:propertygmatrixweight}
Let $X$ be a Banach function space over $\R^d$ for which there is a $C\geq 1$ such that for all pairwise disjoint $\mc{P}$ and all $f\in X$, $g\in X'$ we have
\begin{equation}\label{eq:propertyg}
\sum_{Q\in\mc{P}}\|\ind_Q f\|_X\|\ind_Q g\|_{X'}\leq C\|f\|_X\|g\|_{X'}.
\end{equation}
If $W:\R^d\to\F^{n\times n}$ is a matrix weight, then then the following are equivalent:
\begin{enumerate}[(i)]
    \item $X_W\in A$;
    \item $X_W\in A_{\text{strong}}$;
    \item $M:X_W[\mc{K}]\to (X_W)_{\text{weak}}[\mc{K}]$.
\end{enumerate}
Moreover, in this case we have
\[
[X_W]_A\eqsim [X_W]_{A_{\text{strong}}}\eqsim_{d,n}\|M\|_{X_W[\mc{K}]\to (X_W)_{\text{weak}}[\mc{K}]}.
\]
\end{corollary}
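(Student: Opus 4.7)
The plan is to reduce the corollary to Theorem~\ref{thm:mweak} applied to the $\F^n$-directional Banach function space $\mb{X}:=X_W$. The heart of the matter is showing that the scalar property \eqref{eq:propertyg} for $X$ implies the corresponding vector-valued estimate for $X_W$, which is exactly the extra hypothesis appearing in the second part of Theorem~\ref{thm:mweak}.

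To transfer the property, I would use two identities. Directly from the definition of $X_W$ one has $\|\ind_Q f\|_{X_W}=\||W(\ind_Q f)|\|_X=\|\ind_Q|Wf|\|_X$ for every cube $Q$ and every $f\in X_W$. On the dual side, Proposition~\ref{prop:matrixweightedspaceduality} gives $(X_W)'=(X')_{W^{-1}}$, so that symmetrically $\|\ind_Q g\|_{(X_W)'}=\|\ind_Q|W^{-1}g|\|_{X'}$ for every $g\in (X_W)'$. Hence for any pairwise disjoint collection of cubes $\mc{P}$ and any $f\in X_W$, $g\in (X_W)'$, applying \eqref{eq:propertyg} to the nonnegative scalar functions $|Wf|\in X$ and $|W^{-1}g|\in X'$ yields
\[
\sum_{Q\in\mc{P}}\|\ind_Q f\|_{X_W}\|\ind_Q g\|_{(X_W)'}=\sum_{Q\in\mc{P}}\|\ind_Q|Wf|\|_X\,\|\ind_Q|W^{-1}g|\|_{X'}\leq C\|f\|_{X_W}\|g\|_{(X_W)'},
\]
so $X_W$ inherits the directional $\mc{G}$-type property with the same constant $C$.

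With this in place the corollary is immediate from Theorem~\ref{thm:mweak}: the directional ideal, non-degeneracy, and directional saturation properties of $X_W$ are established earlier in the paper, while the Fatou property transfers routinely from $X$ to $X_W$ using $|Wf_k|\to|Wf|$ a.e. Thus the equivalence of \ref{it:mweak2}--\ref{it:mweak4} applied to $X_W$ yields
\[
[X_W]_A\leq \|M^{\mc{K}}\|_{X_W[\mc{K}]\to (X_W)_{\text{weak}}[\mc{K}]}\lesssim_{d,n}[X_W]_{A_{\text{strong}}}\leq C[X_W]_A,
\]
which is the asserted chain of equivalences, collapsing to the stated equality $[X_W]_A=[X_W]_{A_{\text{strong}}}$ in the classical case $X=L^p(\R^d)$ where Hölder's inequality gives $C=1$. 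The one obstacle worth flagging is the Fatou hypothesis: the corollary does not explicitly impose it on $X$, so either a Fatou assumption should be added to $X$, or one should pass through the Fatou envelope $X''$ and invoke $(X'')_W=(X_W)''$ via Proposition~\ref{prop:matrixweightedspaceduality} to reduce to the Fatou case without loss.
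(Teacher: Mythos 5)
Your proof is correct and follows essentially the same route as the paper's: apply Proposition~\ref{prop:matrixweightedspaceduality} together with the identities $\|\ind_Q f\|_{X_W}=\|\ind_Q|Wf|\|_X$ and $\|\ind_Q g\|_{(X_W)'}=\|\ind_Q|W^{-1}g|\|_{X'}$ to transfer \eqref{eq:propertyg} from $X$ to $X_W$, then invoke Theorem~\ref{thm:mweak}. Your remark about the Fatou hypothesis is a fair observation: Theorem~\ref{thm:mweak} assumes the Fatou property, and the corollary's statement does not explicitly impose it on $X$; the paper's own proof silently makes the same assumption, so you are pointing out a genuine (if minor) omission in the paper rather than a gap in your argument, and your suggested fix (assume Fatou on $X$, noting $|Wf_k|\to|Wf|$ a.e. transfers it to $X_W$, or pass to $X''$) is the right one.
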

\begin{proof}
By Proposition~\ref{prop:matrixweightedspaceduality} we have
\begin{align*}
\sum_{Q\in\mc{P}}\|\ind_Q f\|_{X_W}\|\ind_Q g\|_{(X_W)'}
&=\sum_{Q\in\mc{P}}\|\ind_Q |Wf|\|_X\|\ind_Q |W^{-1}g|\|_{X'}\\
&\leq C\||Wf|\|_X\||W^{-1}g|\|_{X'}=C\|f\|_{X_W}\|g\|_{(X_W)'}.
\end{align*} 
Hence, the result follows from Theorem~\ref{thm:mweak}.
\end{proof}

If $X=L^p(\R^d)$, then \eqref{eq:propertyg} is satisfies with $C=1$ by H\"older's inequality. Thus, Corollary~\ref{cor:propertygmatrixweight} implies that
\[
\|M\|_{L^p_W(\R^d;\mc{K})\to L^p_W(\R^d;\mc{K})_{\text{weak}}}\eqsim_{d,n}[W]_p.
\]
This proves Corollary~\ref{cor:F}.

In the work \cite{Ni24} the property \eqref{eq:propertyg} is referred to as property $X\in\mc{G}$, and we refer the reader to this work for an overview of further spaces with this property.

Next, we prove a convex body domination result for $M^{\mc{K}}$:
\begin{theorem}\label{thm:convexbodydomofmax}
Let $\mc{D}$ be a dyadic grid and let $\mc{F}\subseteq\mc{D}$ be finite. For each $F\in L^1_{\text{loc}}(\R^d;\mc{K})$ there is a sparse collection $\mc{S}\subseteq\mc{F}$ for which
\[
M^{\mc{K}}_{\mc{F}}F(x)\subseteq 2n^{\frac{5}{2}} M^{\mc{K}}_{\mc{S}}F(x)
\]
for a.e. $x\in\R^d$.
\end{theorem}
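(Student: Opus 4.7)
The plan is to follow the standard principal-cube stopping-time construction, but reducing the vector problem to a scalar one via the John ellipsoid normalization. Concretely, the sparse collection $\mc{S}$ will be built iteratively, starting from the maximal cubes of $\mc{F}$; given a generation $\mc{S}_k$, for each $Q_0 \in \mc{S}_k$ one uses the John ellipsoid theorem to find a Hermitian positive-definite matrix $A_{Q_0}$ with
\[
\{A_{Q_0}u:|u|\leq 1\}\subseteq \langle F\rangle_{Q_0}\subseteq\{A_{Q_0}u:|u|\leq n^{\frac{1}{2}}\},
\]
and then defines the scalar ``normalized'' function $k^{Q_0}(x):=\sup_{u\in A_{Q_0}^{-1}F(x)}|u|$. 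The stopping children of $Q_0$ are the maximal $Q\subsetneq Q_0$ in $\mc{F}$ with $\langle k^{Q_0}\rangle_Q>C_n$, and $\mc{S}_{k+1}$ is the union of all stopping children over $Q_0\in\mc{S}_k$, with $\mc{S}:=\bigcup_k\mc{S}_k$.

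Next, I would check the sparsity condition. By maximality, the union of stopping children of $Q_0$ is contained in the level set $\{x\in Q_0:M^{\mc{D},Q_0}k^{Q_0}(x)>C_n\}$ of a truncated dyadic maximal function, which has measure at most $C_n^{-1}\int_{Q_0}k^{Q_0}\,\mathrm{d}x$. The key estimate is that $\langle k^{Q_0}\rangle_{Q_0}\leq C'_n$ for some dimensional constant $C'_n$. To prove this, note that applying Filippov's theorem (Theorem~\ref{thm:filippov}) to $\phi(x,u)=u\cdot v$ with $F$ replaced by $A_{Q_0}^{-1}F$ yields a selection and hence the identity $\sup_{u\in\langle A_{Q_0}^{-1}F\rangle_Q}u\cdot v=\langle h^{Q_0}_v\rangle_Q$ where $h^{Q_0}_v(x):=\sup_{u\in A_{Q_0}^{-1}F(x)}u\cdot v$; combined with the John inclusion $\langle A_{Q_0}^{-1}F\rangle_{Q_0}\subseteq n^{\frac{1}{2}}\{|u|\leq 1\}$ and the basis decomposition $k^{Q_0}(x)\leq\sum_{j=1}^n h^{Q_0}_{e_j}(x)$ (which uses symmetry of $A_{Q_0}^{-1}F(x)$ and a straightforward comparison of $\ell^2$ and $\ell^1$ norms), one obtains $\langle k^{Q_0}\rangle_{Q_0}\lesssim n^{\frac{3}{2}}$. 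Choosing $C_n$ a suitable dimensional multiple of this then forces $|\bigcup_{Q'\in\text{stop. ch.}}Q'|\leq\tfrac{1}{2}|Q_0|$, which is precisely sparsity.

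For the convex body domination, take any $Q\in\mc{F}$ and let $Q^*$ be the smallest cube in $\mc{S}$ containing $Q$ (which exists as $\mc{F}$ is finite and $Q$ lies in some top cube of $\mc{S}_0$). By minimality $Q$ is not contained in any stopping child of $Q^*$, so by construction $\langle k^{Q^*}\rangle_Q\leq C_n$. Then the trivial bound $\sup_{u\in\langle A_{Q^*}^{-1}F\rangle_Q}|u|\leq \langle k^{Q^*}\rangle_Q$ gives
\[
A_{Q^*}^{-1}\langle F\rangle_Q\subseteq\{|u|\leq C_n\}\subseteq C_n\{|u|\leq 1\}\subseteq C_n A_{Q^*}^{-1}\langle F\rangle_{Q^*},
\]
and hence $\langle F\rangle_Q\subseteq C_n\langle F\rangle_{Q^*}$. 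Since $x\in Q$ implies $x\in Q^*\in\mc{S}$, taking the union over all $Q\in\mc{F}$ containing $x$ and closing convex hulls yields $M^{\mc{K}}_{\mc{F}}F(x)\subseteq C_n M^{\mc{K}}_{\mc{S}}F(x)$.

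The main obstacle is the passage to the scalar bound $\langle k^{Q_0}\rangle_{Q_0}\lesssim n^{\frac{3}{2}}$: the function $k^{Q_0}$ is defined pointwise as a supremum over the varying set $A_{Q_0}^{-1}F(x)$, whereas the John normalization only controls the Aumann-averaged set $A_{Q_0}^{-1}\langle F\rangle_{Q_0}$, which is \emph{a priori} much smaller. Bridging these two via the support-functional identity derived from Filippov's theorem, then decomposing along a basis to recover an honest scalar $L^1$-bound, is the crux of the argument; once this is done, the rest is a routine Calder\'on--Zygmund stopping-time recursion.
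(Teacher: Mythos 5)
Your proof is correct, and it obtains a slightly better constant than the paper ($2n^{3/2}$ rather than $2n^{5/2}$, with your choice $C_n=2n^{3/2}$). The overall framework is the same as the paper's — an iterated Calder\'on--Zygmund stopping time on $\mc{F}$ anchored by the John ellipsoid of $\langle F\rangle_{Q_0}$ at each principal cube — but the stopping condition differs in a meaningful way. The paper normalizes to an orthonormal basis $(e_k)$ aligned with the John ellipsoid and stops \emph{per coordinate}: a cube $Q$ is a stopping child of $Q_0$ if $|\langle F\cdot e_k\rangle_Q| > 2n\langle|F\cdot e_k|\rangle_{Q_0}$ for some $k$, so one maintains $n$ separate scalar stopping conditions and unions their maximal cubes. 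You instead stop on a single scalar quantity, the pointwise sup $k^{Q_0}(x)=\sup_{u\in A_{Q_0}^{-1}F(x)}|u|$ of the John-normalized body, which collapses the $n$ conditions into one and directly controls the entire convex body $\langle A_{Q_0}^{-1}F\rangle_Q$ by the ball of radius $\langle k^{Q_0}\rangle_Q$. The engine powering both arguments is the same: a Filippov-type selection (or, equivalently, the identity that the Aumann average of pointwise support functionals equals the support functional of the Aumann average) together with symmetry of $F(x)$ to remove phases, plus the John inclusion $\langle A_{Q_0}^{-1}F\rangle_{Q_0}\subseteq n^{1/2}\overline{B}$. Your single-function stopping condition is conceptually a bit cleaner and avoids the extra factor of $n$ the paper incurs when summing $n$ coordinate estimates back into a single convex-body inclusion, which is where its $n^{5/2}$ comes from versus your $n^{3/2}$. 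Both are valid; neither relies on any idea absent from the other, and the weak-type bound used for sparsity is identical.

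Two small points you should nail down when writing the argument in full. First, in the complex case $\F=\C$, the Filippov step should be applied to $\phi(x,u)=|u\cdot e_j|$ (real-valued), and to pass from $\langle |g_j\cdot e_j|\rangle_{Q_0}$ to $\langle g_j\rangle_{Q_0}\cdot e_j$ you must multiply the selection $g_j$ pointwise by a unimodular function to make $g_j\cdot e_j\geq 0$; this remains a valid selection by symmetry of $A_{Q_0}^{-1}F(x)$. Second, you should record explicitly that the maximal stopping cubes at each step are pairwise disjoint (they are: $\mc{F}\subseteq\mc{D}$ is dyadic), since that disjointness is what makes the weak $(1,1)$ measure bound $\sum|Q'|\leq C_n^{-1}\int_{Q_0}k^{Q_0}$ exact rather than merely a union bound.
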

\begin{proof}
Let $\mc{F}^\ast$ denote the maximal cubes in $\mc{F}$ and fix $Q_0\in\mc{F}^\ast$. By the John ellipsoid theorem there is an orthonormal basis $(e_k)_{k=1}^n$ of $\F^n$ and $\lambda_1,\ldots\lambda_n\geq 0$ such that
\[
\mc{E}(Q_0):=\Big\{\sum_{k=1}^n u_k\lambda_k e_k:|u|\leq 1\Big\}
\]
satisfies
\[
\mc{E}(Q_0)\subseteq\langle F\rangle_{Q_0}\subseteq n^{\frac{1}{2}}\mc{E}(Q_0).
\]
For each $k\in\{1,\ldots,n\}$ we let $\text{ch}_k(Q_0)$ denote the collection of maximal cubes $Q\in\mc{F}$ contained in $Q_0$ satisfying
\[
|\langle F\cdot e_k\rangle_{Q}|> 2n\langle |F\cdot e_k|\rangle_{Q_0},
\]
where, for a cube $P\in\mc{F}$,
\[
|\langle F\cdot e_k\rangle_P|:=\sup_{f\in S^0(\R^d;F)}|\langle f\cdot e_k\rangle_P|,\quad \langle |F\cdot e_k|\rangle_{P}:=\langle x\mapsto \sup_{u\in F(x)}|u\cdot e_k|\rangle_{P}.
\]
Moreover, we let $\text{ch}(Q_0)$ denote the maximal cubes in $\bigcup_{k=1}^n\text{ch}_k(Q_0)$. Applying this same procedure to each of the cubes in $\text{ch}(Q_0)$, we iteratively obtain the collections $\mc{S}_{j+1}(Q_0):=\bigcup_{Q\in\mc{S}_j(Q_0)}\text{ch}(Q)$, where $\mc{S}_0(Q_0):=\{Q_0\}$. Setting $\mc{S}:=\bigcup_{Q_0\in\mc{F}^\ast}\bigcup_{j=0}^\infty\mc{S}_j(Q_0)$, we note that for each $Q\in\mc{S}$ we have
\begin{align*}
\sum_{Q'\in\text{ch}(Q)}|Q'|
&\leq\sum_{k=1}^n\sum_{Q'\in\text{ch}_k(Q)}|Q'|\\
&\leq \sum_{k=1}^n\frac{1}{2n\langle |F\cdot e_k|\rangle_Q}\sum_{Q'\in\text{ch}_k(Q)}\int_{Q'}\sup_{u\in F(x)}|u\cdot e_k|\,\mathrm{d}x\\
&\leq\sum_{k=1}^n\frac{1}{2n\langle |F\cdot e_k|\rangle_Q}\int_Q\sup_{u\in F(x)}|u\cdot e_k|\,\mathrm{d}x=\frac{|Q|}{2}.
\end{align*}
Hence, $\mc{S}$ is sparse.

For each $Q'\in\mc{F}$ we let $\pi_{\mc{S}}(Q')$ denote the smallest cube $Q\in \mc{S}$ containing it. If $Q'\in\mc{F}$ satisfies $\pi_{\mc{S}}(Q')=Q$ and $Q'\neq Q$, then $Q'$ fails the stopping condition for all $k\in\{1,\ldots,n\}$, i.e.,
\[
|\langle F\cdot e_k\rangle_{Q'}|\leq 2n\langle |F\cdot e_k|\rangle_Q,
\]
where $(e_k)_{k=1}^n$ is the orthonormal basis associated to $\mc{E}(Q)$. Furthermore, this estimate remains true when $Q'=Q$. Thus, we have
\[
\langle F\rangle_{Q'}=\sum_{k=1}^n\langle F\cdot e_k\rangle_{Q'}e_k\subseteq 2n\sum_{k=1}^n\langle |F\cdot e_k|\rangle_Q\mc{K}(e_k)\subseteq 2n^{\frac{5}{2}}\langle F\rangle_Q.
\]
To see this last inclusion, pick an $f\in S^0(\R^d;F)$ for which $\langle |F\cdot e_k|\rangle_Q=\langle |f\cdot e_k|\rangle_Q$. Multiplying $f$ by a unit, we may assume that $f\cdot e_k\geq 0$. Since $\langle f\rangle_Q\in\langle F\rangle_Q\subseteq n^{\frac{1}{2}}\mc{E}(Q)$, we can write
\[
\langle f\rangle_Q=n^{\frac{1}{2}}\sum_{k=1}^n u_k\lambda_k e_k,
\]
where $|u_k|\leq 1$. Hence,
\[
\langle |F\cdot e_k|\rangle_Qe_k=(\langle f\rangle_Q\cdot e_k)e_k= n^{\frac{1}{2}} u_k\lambda_ke_k\in n^{\frac{1}{2}}\mc{E}(Q)\subseteq n^{\frac{1}{2}}\langle F\rangle_Q,
\]
as desired.

In conclusion, we obtain
\begin{align*}
\bigcup_{Q\in\mc{F}}\langle F\rangle_Q\ind_Q(x)&=\bigcup_{Q\in\mc{S}}\bigcup_{\substack{Q'\in\mc{F}\\\pi_{\mc{S}}(Q')=Q}}\langle F\rangle_{Q'}\ind_{Q'}(x)
\subseteq2n^{\frac{5}{2}} \bigcup_{Q\in\mc{S}}\langle F\rangle_Q\ind_Q(x)\subseteq2n^{\frac{5}{2}} M^{\mc{K}}_{\mc{S}}F(x).
\end{align*}
As $M^{\mc{K}}_{\mc{S}}F(x)\in\mc{K}$, we find that $M^{\mc{K}}_{\mc{F}}F(x)\subseteq 2n^{\frac{5}{2}} M^{\mc{K}}_{\mc{S}}F(x)$, as desired.
\end{proof}

We end this section with a sufficient condition for bounding $M^{\mc{K}}$ in a matrix weighted space $X_W$ and its dual $(X')_{W^{-1}}$. 

If $X$ is a Banach function space over $\R^d$ and $W:\R^d\to\F^{n\times n}$ is a matrix weight, then we define the \emph{Goldberg auxiliary maximal operator} $M_{X_W}$ as
\[
M_{X_W}f(x):=\sup_Q\Big(\frac{1}{\|\ind_Q\|_X}\int_Q\! |A_{X_W,Q}^{-1}W(x)f(x)|\,\mathrm{d}x\Big)\ind_Q(x).
\]
If $F\in L^0(\R^d;\mc{K})$, we similarly define
\[
M_{X_W}F(x):=\sup_Q\Big(\frac{1}{\|\ind_Q\|_X}\int_Q\! |A_{X_W,Q}^{-1}W(x)F(x)|\,\mathrm{d}x\Big)\ind_Q(x),
\]
where for a matrix $A\in\F^{n\times n}$ we have written $|AF(x)|:=\sup_{u\in F(x)}|Au|$. Our result is as follows:
\begin{theorem}\label{thm:auxiliarygoldbergreduction}
Let $X$ be a Banach function space over $\R^d$ with the Fatou property and let $W$ be a matrix weight. Suppose that $X\in A$, $X_W\in A$, and that
\[
M_{(X')_{W^{-1}}}:X[\F^n]\to X,\quad\text{and}\quad M_{X_W}:X'[\F^n]\to X'.
\]
Then
\[
M^{\mc{K}}:X_W[\mc{K}]\to X_W[\mc{K}]\quad\text{and}\quad M^{\mc{K}}:(X')_{W^{-1}}[\mc{K}]\to (X')_{W^{-1}}[\mc{K}].
\]
with
\begin{align*}
\max\big\{&\|M^{\mc{K}}\|_{X_W[\mc{K}]\to X_W[\mc{K}]},\|M^{\mc{K}}\|_{(X')_{W^{-1}}[\mc{K}]\to (X')_{W^{-1}}[\mc{K}]}\big\}\\
&\lesssim_{n,d}[X]_A[X_W]_A\|M_{(X')_{W^{-1}}}\|_{X[\F^n]\to X}\|M_{X_W}\|_{X'[\F^n]\to X'}.
\end{align*}
\end{theorem}
\begin{proof}
By symmetry, it suffices to bound $M^{\mc{K}}$ in $X_W[\mc{K}]$. Furthermore, by a $3^d$ lattice argument and monotone convergence, it suffices to bound $M^{\mc{K}}_{\mc{F}}$ uniformly for all finite collections $\mc{F}$. Using Theorem~\ref{thm:convexbodydomofmax}, we may further replace $\mc{F}$ by a finite sparse collection $\mc{S}$. Then, defining 
\[
E_Q:=Q\backslash \bigcup_{\substack{Q'\in\mc{S}\\ Q'\subsetneq Q}}Q'
\]
for $Q\in\mc{S}$, we have $E_Q\subseteq Q$ and $|Q|\leq 2|E_Q|$ by the definition of sparseness, and $(E_Q)_{Q\in\mc{S}}$ is pairwise disjoint. Thus, for $F\in X_W[\mc{K}]$, $g\in S^0(\R^d;M_{\mc{S}}^{\mc{K}}F)$, and $h\in (X')_{W^{-1}}$, we have
\begin{align*}
\int_{\R^d}&\!|g(x)\cdot h(x)|\,\mathrm{d}x
\leq\sum_{Q\in\mc{S}}\int_Q\!|\langle F\rangle_Q\cdot h(x)|\,\mathrm{d}x\\
&\leq \sum_{Q\in\mc{S}}\int_Q\!|A_{X_W,Q}A_{(X')_{W^{-1}},Q}\langle A_{(X')_{W^{-1}},Q}^{-1}F\rangle_Q\cdot A_{X_W,Q}^{-1}h(x)|\,\mathrm{d}x\\
&\leq \sum_{Q\in\mc{S}}\|A_{X_W,Q}A_{(X')_{W^{-1}},Q}\|\langle |A_{(X')_{W^{-1}},Q}^{-1}F|\rangle_Q \int_Q\! |A_{X_W,Q}^{-1}h|\,\mathrm{d}x\\
&\leq[X]_A[X_W]_{A_R}\sum_{Q\in\mc{S}}\Big(\frac{1}{\|\ind_Q\|_{X'}}\int_Q\! |A_{(X')_{W^{-1}},Q}^{-1}F|\,\mathrm{d}x\Big)\Big(\frac{1}{\|\ind_Q\|_X}\int_Q |A_{X_W,Q}^{-1}h|\,\mathrm{d}x\Big)|Q|\\
&\lesssim [X]_A[X_W]_{A_R}\sum_{Q\in\mc{S}}\int_{E_Q}\!M_{(X')_{W^{-1}}}(WF)M_{X_W}(W^{-1}h)\,\mathrm{d}x\\
&\leq [X]_A[X_W]_{A_R}\int_{\R^d}\!M_{(X')_{W^{-1}}}(WF)M_{X_W}(W^{-1}h)\,\mathrm{d}x\\
&\leq [X]_A[X_W]_{A_R}\|M_{(X')_{W^{-1}}}(WF)\|_{X}\|M_{X_W}(W^{-1}h)\|_{X'}.
\end{align*}
With the same reduction as is done in the proof of Proposition~\ref{prop:christgoldbergmax}, we find that
\[
\|M_{(X')_{W^{-1}}}(WF)\|_{X}\lesssim_n\|M_{(X')_{W^{-1}}}\|_{X[\F^n]\to X}\|F\|_{X_W[\mc{K}]}.
\]
Thus, taking a supremum over all $h\in (X')_{W^{-1}}$ of norm $1$ and over all $g\in S^0(\R^d;M^{\mc{K}}_{\mc{S}}F)$, we conclude that
\[
\|M^{\mc{K}}_{\mc{S}}F\|_{X_W[\mc{K}]}\lesssim_n [X]_A[X_W]_{A_R}\|M_{(X')_{W^{-1}}}\|_{X_W\to X}\|M_{X_W}\|_{(X')_{W^{-1}}\to X'}.
\]
Since by Theorem~\ref{thm:muckenhouptdef} we have $[X_W]_{A_R}\eqsim_n[X_W]_A$, the result follows.
\end{proof}
While this result seems rather convoluted, it does have very practical use. For example, when $X=L^p(\R^d)$, we have
\[
[X]_A=1,\quad [X_W]_A=[W]_p,
\]
and the operator $M_{L^p_W(\R^d;\F^n)}$ is a modification of the auxiliary maximal operator from \cite{Go03} used to bound the Christ-Goldberg maximal operator $M_W:L^p(\R^d;\F^n)\to L^p(\R^d)$ (and, thus, $M^{\mc{K}}:L^p_W(\R^d;\mc{K})\to L^p_W(\R^d;\mc{K})$ by Proposition~\ref{prop:christgoldbergmax}). This version of the auxiliary operator was used in \cite{KN24} to prove bounds for both the convex-set valued maximal operator and the convex body sparse operator, exemplifying its utility. The above result gives a qualitative way of extending this argument to more general spaces, such as matrix-weighted variable Lebesgue spaces.

\section{Extrapolation}\label{sec:extrapolation}

\begin{theorem}\label{thm:BFSextrapolation}
Let $1\leq p\leq\infty$, let $V$ be a set, and let $S:V\to L^0(\R^d;\F^n)$. Suppose
\[
T:\bigcup_{W\in A_p}S^{-1}(L^p_W(\R^d;\F^n))\to L^0(\R^d;\F^n)
\]
is a map for which there is an increasing function $\phi:[0,\infty)\to[0,\infty)$ such that for all $W\in A_p$ and all $f\in V$ with $Sf\in L^p_W(\R^d;\F^n)$ we have
\[
\|Tf\|_{L^p_W(\R^d;\F^n)}\leq\phi([W]_p)\|Sf\|_{L^p_W(\R^d;\F^n)}.
\]
Let $\mb{X}$ be a K\"othe reflexive $\F^n$-directional Banach function space over $\R^d$  for which both $\mb{X}$ and $\mb{X}'$ are component-wise saturated, and
\[
M^{\mc{K}}:\mb{X}[\mc{K}]\to \mb{X}[\mc{K}],\quad M^{\mc{K}}:\mb{X}'[\mc{K}]\to \mb{X}'[\mc{K}].
\]
Then $Tf$ is well-defined for all $f\in V$ with $Sf\in\mb{X}$, and
\[
\|Tf\|_{\mb{X}}\lesssim_n \phi(C_n\|M^{\mc{K}}\|_{\mb{X}[\mc{K}]\to \mb{X}[\mc{K}]}^{\frac{1}{p'}}\|M^{\mc{K}}\|^{\frac{1}{p}}_{\mb{X}'[\mc{K}]\to \mb{X}'[\mc{K}]})\|Sf\|_{\mb{X}}.
\]
When $p=\infty$ or $p=1$, we can omit the bound $M^{\mc{K}}:\mb{X}'[\mc{K}]\to \mb{X}'[\mc{K}]$ or $M^{\mc{K}}:\mb{X}[\mc{K}]\to \mb{X}[\mc{K}]$ respectively.
\end{theorem}
The proof follows from two abstract theorems. The first is the following generalization of the Rubio de Francia algorithm of \cite[Theorem~7.6]{BC23}. Recall that we have defined the sum $F+G$ through the closed Minkowski sum
\[
(F+G)(x):=\overline{\{u+v:u\in F(x),\, v\in G(x)\}}.
\]
Unlike \cite[Theorem~7.6]{BC23}, we do not require the operator $T$ to be monotone.
\begin{theorem}[Rubio de Francia algorithm]\label{thm:rdfdef}
Let $\mb{X}$ be an $\F^n$-directional Banach function space over $\R^d$ with the Fatou property, and let $T:\mb{X}[\mc{K}]\to\mb{X}[\mc{K}]$ be an operator that is sublinear in the sense that
\[
T(F+G)(x)\subseteq TF(x)+TG(x),\quad T(\lambda F)(x)=\lambda TF(x)
\]
a.e. for all $F,G\in\mb{X}[\mc{K}]$ and $\lambda\in\F$. Then there is a mapping $\mc{R}:\mb{X}[\mc{K}]\to\mb{X}[\mc{K}]$ for which $\|\mc{R}\|_{\mb{X}[\mc{K}]\to\mb{X}[\mc{K}]}\leq 2$, and for all $F\in\mb{X}[\mc{K}]$
\begin{enumerate}[(a)]
    \item\label{it:RdF1} $F(x)\subseteq\mc{R}F(x)$ a.e.;
    \item\label{it:RdF2} $T(\mc{R}F)(x)\subseteq 2\|T\|_{\mb{X}[\mc{K}]\to\mb{X}[\mc{K}]}\mc{R}F(x)$ a.e.
\end{enumerate}
\end{theorem}
For the proof, we require a lemma on nested sets in $\mc{K}$:
\begin{lemma}\label{lem:zerointersection}
Let $(B_k)_{k\geq 1}$ be a sequence in $\mc{K}$ satisfying $B_{k+1}\subseteq B_k$ for all $k\geq 1$. If $\bigcap_{k=1}^\infty B_k=\{0\}$, then there is a $K\geq 1$ such that for all $k\geq K$ the set $B_k$ is compact. Moreover, if $(x_k)_{k\geq 1}$ is a sequence for which $x_k\in B_k$ for all $k\geq 1$, then $x_k\to 0$.
\end{lemma}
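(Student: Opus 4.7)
The plan is to analyze each $B_k$ through its recession cone, which will inherit a linear structure from the symmetry of $B_k$, and then exploit the fact that a decreasing sequence of subspaces of $\F^n$ must stabilize.

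For each $k\geq 1$, define the recession cone
\[
R_k:=\{u\in\F^n:\lambda u\in B_k\text{ for all }\lambda\geq 0\}.
\]
Since $B_k\in\mc{K}$ is closed and convex and contains $0$ (which follows from symmetry plus convexity: $0=\tfrac12 u+\tfrac12(-u)$ for any $u\in B_k$), a standard convex analysis argument shows that $B_k$ is bounded if and only if $R_k=\{0\}$, and otherwise $B_k$ contains the ray $\{\lambda u:\lambda\geq 0\}$ for some unit $u$, obtained as a subsequential limit of $u_j/|u_j|$ where $|u_j|\to\infty$ with $u_j\in B_k$. Using closure under the symmetry of $\mc{K}$ (i.e.\ $|\mu|=1\Rightarrow\mu u\in B_k$) together with convexity and the cone property, $R_k$ is in fact an $\F$-linear subspace of $\F^n$.

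Since $B_{k+1}\subseteq B_k$, we have $R_{k+1}\subseteq R_k$. Because $\F^n$ is finite-dimensional, this decreasing sequence of subspaces must stabilize: there exists $K\geq 1$ with $R_k=R_K$ for all $k\geq K$. Now $R_K\subseteq B_k$ holds for every $k$ (for $k\leq K$ by the nesting $B_K\subseteq B_k$, for $k\geq K$ by $R_K=R_k\subseteq B_k$), so
\[
R_K\subseteq\bigcap_{k=1}^\infty B_k=\{0\}.
\]
Thus $R_k=\{0\}$ for every $k\geq K$, which by the first paragraph forces each such $B_k$ to be bounded, and hence compact as it is closed.

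For the second assertion, let $(x_k)_{k\geq 1}$ be any selection with $x_k\in B_k$. For $k\geq K$ we have $x_k\in B_K$, which is compact; hence every subsequence of $(x_k)_{k\geq K}$ has a further subsequence converging to some $x^\ast\in\F^n$. For any fixed $m\geq 1$, the tail of this convergent subsequence lies in $B_m$ (by nesting), and since $B_m$ is closed, $x^\ast\in B_m$. As this holds for every $m$, $x^\ast\in\bigcap_m B_m=\{0\}$. Thus $0$ is the unique subsequential limit of $(x_k)$, and combined with eventual confinement in the compact set $B_K$, this gives $x_k\to 0$.

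\textbf{Main obstacle.} The one non-trivial input is the equivalence between boundedness of $B_k$ and triviality of $R_k$, together with the upgrade of $R_k$ from a convex cone to an $\F$-linear subspace using the symmetry axiom of $\mc{K}$; everything else is a clean finite-dimensional stabilization plus a compactness/closedness argument.
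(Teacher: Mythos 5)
Your proof is correct, but the first half takes a genuinely different route from the paper. The paper argues by contrapositive: assuming every $B_k$ is unbounded, it intersects each $B_k$ with the unit sphere $S^{n-1}$ (nonempty by convexity, symmetry, and unboundedness), obtaining a nested sequence of nonempty closed subsets of the compact sphere whose intersection must be nonempty, hence $\bigcap_k B_k\neq\{0\}$. You instead work directly with recession cones $R_k$, upgrade them to $\F$-linear subspaces via the symmetry of $\mc{K}$ (this step is correct: the recession cone of a closed convex set is closed under addition and nonnegative scaling, and symmetry gives closure under unimodular scalars, hence under all of $\F$), invoke stabilization of decreasing chains of subspaces in $\F^n$, and observe that the stable subspace $R_K$ sits inside $\bigcap_k B_k=\{0\}$. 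The paper's argument is a bit shorter and uses only elementary compactness; yours carries more structural information (it identifies the ``direction of unboundedness'' explicitly as a subspace and explains why it must die), and would adapt to settings where one wants that subspace for its own sake. Your treatment of the second assertion (eventual confinement in the compact $B_K$, every subsequential limit lies in every closed $B_m$ hence equals $0$) is essentially identical to the paper's.
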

\begin{proof}
Arguing by contrapositive, suppose that $B_k$ is unbounded for all $k\geq 1$. Set $S^{n-1}:=\{u\in\F^n:|u|=1\}$ and, for $k\geq 1$,
\[
C_k:=S^{n-1}\cap B_k.
\]
Then $C_k$ is an intersection of closed sets, and hence, closed. Moreover, as each $B_k$ is unbounded, $C_k\neq\emptyset$. As $C_{k+1}\subseteq C_k$ for all $k\geq 1$ and $S^{n-1}$ is compact, we have
\[
S^{n-1}\cap \bigcap_{k=1}^\infty B_k=\bigcap_{k=1}^\infty C_k\neq\emptyset.
\]
Thus, $\bigcap_{k=1}^\infty B_k\neq \{0\}$, as asserted.

For the second assertion, let $(x_{j_k})_{k\geq 1}$ be a subsequence. Since for $j$ large enough, $B_j$ is compact, this subsequence has a further convergent subsequence with a limit in this $B_j$. As this latter sequence eventually lies in $B_j$ for all $j\geq 1$, the limit is contained in $\bigcap_{k=1}^\infty B_k=\{0\}$. Hence, every subsequence of $(x_k)_{k\geq 1}$ has a further subsequence converging to $0$. This proves that $x_k\to 0$, as desired.
\end{proof}

\begin{proof}[Proof of Theorem~\ref{thm:rdfdef}]
Let $F\in\mb{X}[\mc{K}]$ and define
\[
F_k(x):=2^{-k}\|T\|_{\mb{X}[\mc{K}]\to\mb{X}[\mc{K}]}^{-k}T^kF(x),
\]
where $T^k$ is iteratively defined as $T^0F:=F$ and $T^{k+1}F:=T(T^kF)$. Then
\[
\sum_{k=0}^\infty\|F_k\|_{\mb{X}[\mc{K}]}\leq \sum_{k=0}^\infty 2^{-k}\|F\|_{\mb{X}[\mc{K}]}=2\|F\|_{\mb{X}[\mc{K}]},
\]
so, by Theorem~\ref{thm:directionalrieszfischer}, the series
\begin{equation}\label{eq:rdfdef1}
\mc{R}F(x):=\sum_{k=0}^\infty F_k(x)=\overline{\bigcup_{K=0}^\infty\Big\{\sum_{k=0}^K u_k:u_k\in F_k(x)\Big\}}
\end{equation}
satisfies $\mc{R}F\in\mb{X}[\mc{K}]$ with
\[
\|\mc{R}F\|_{\mb{X}[\mc{K}]}\leq 2\|F\|_{\mb{X}[\mc{K}]}.
\]
The assertion \ref{it:RdF1} follows from the fact that the $k=0$ term of the union in \eqref{eq:rdfdef1} is equal to $F(x)$. For \ref{it:RdF2}, sublinearity of $T$ implies that
\[
TF_k(x)=2^{-k}\|T\|_{\mb{X}[\mc{K}]\to\mb{X}[\mc{K}]}^{-k}T^{k+1}F(x)=2\|T\|_{\mb{X}[\mc{K}]\to\mb{X}[\mc{K}]}F_{k+1}(x)
\]
a.e. so that, since for all $J\geq 1$ we have $\mc{R}F=\sum_{k=0}^J F_k+S_J$ where $S_J:=\sum_{k=J+1}^\infty F_k$,
\begin{equation}\label{eq:rdfdef2}
\begin{split}
T(\mc{R}F)(x)&\subseteq 2\|T\|_{\mb{X}[\mc{K}]\to\mb{X}[\mc{K}]}\sum_{k=0}^J F_{k+1}(x)+TS_J(x)\\
&\subseteq 2\|T\|_{\mb{X}[\mc{K}]\to\mb{X}[\mc{K}]}\mc{R}F(x)+TS_J(x).
\end{split}
\end{equation}
Note that
\begin{align*}
\Big\|\bigcap_{J=1}^\infty TS_J\Big\|_{\mb{X}[\mc{K}]}&\leq \|TS_J\|_{\mb{X}[\mc{K}]}\leq\|T\|_{\mb{X}[\mc{K}]\to\mb{X}[\mc{K}]}\Big\|\sum_{k=J+1}^\infty F_k\Big\|_{\mb{X}[\mc{K}]}\\
&\leq\|T\|_{\mb{X}[\mc{K}]\to\mb{X}[\mc{K}]}\sum_{k=J+1}^\infty\|F_k\|_{\mb{X}[\mc{K}]}\to 0
\end{align*}
as $J\to\infty$ and, hence, $\bigcap_{J=1}^\infty TS_J(x)=\{0\}$. 

Since $TS_J(x)\in\mc{K}$, by Lemma~\ref{lem:zerointersection} there is a  $J_0\geq 1$ such that for all $J\geq J_0$, the set $TS_J(x)$ is compact. Let $u\in T(\mc{R}F)(x)$. Since the Minkowski sum of a closed and a compact set is closed, this means that, by \eqref{eq:rdfdef2}, for all $J\geq J_0$ we can write $u=v_J+w_J$, with $v_J\in 2\|T\|_{\mb{X}[\mc{K}]\to\mb{X}[\mc{K}]}\mc{R}F(x)$ and $w_J\in TS_J(x)$. Then, since $w_J\to 0$ as $J\to\infty$, we have $v_J=u-w_J\to u$. Hence, $u\in 2\|T\|_{\mb{X}[\mc{K}]\to\mb{X}[\mc{K}]}\mc{R}F(x)$. This proves \ref{it:RdF2}, as desired.
\end{proof}
The second theorem we need for the extrapolation result is the following application of the Rubio de Francia algorithm:
\begin{theorem}\label{thm:rdfmainestimate}
Let $\mb{X}$ be a K\"othe reflexive $\F^n$-directional Banach function space over $\R^d$ for which both $\mb{X}$ and $\mb{X}'$ are component-wise saturated, and
\[
M^{\mc{K}}:\mb{X}[\mc{K}]\to \mb{X}[\mc{K}],\quad M^{\mc{K}}:\mb{X}'[\mc{K}]\to \mb{X}'[\mc{K}].
\]
Let $1\leq p\leq \infty$. Then for all $f\in\mb{X}$, $g\in\mb{X}'$, and $\varepsilon>0$, there exists a matrix weight $W\in A_p$ with the following properties:
\begin{itemize}
    \item $[W]_p\lesssim_n\|M^{\mc{K}}\|^{\frac{1}{p}}_{\mb{X}[\mc{K}]\to \mb{X}[\mc{K}]}\|M^{\mc{K}}\|^{\frac{1}{p'}}_{\mb{X}'[\mc{K}]\to \mb{X}'[\mc{K}]}$;
    \item $\|f\|_{L^p_W(\R^d;\F^n)}\|g\|_{L^{p'}_{W^{-1}}(\R^d;\F^n)}\lesssim_n(\|f\|_{\mb{X}}+\varepsilon)(\|g\|_{\mb{X}'}+\varepsilon)$.
\end{itemize}
When $p=\infty$ or $p=1$, we can omit the bound $M^{\mc{K}}:\mb{X}'[\mc{K}]\to \mb{X}'[\mc{K}]$ or $M^{\mc{K}}:\mb{X}[\mc{K}]\to \mb{X}[\mc{K}]$ respectively.
\end{theorem}
For the proof, we adopt a norm function approach. We call a mapping $\rho:\R^d\times\F^n\to[0,\infty)$ a norm function if for a.e. $x\in\R^d$ the mapping $u\mapsto\rho(x,u)$ is a norm, and for all $u\in\F^n$ the mapping $x\mapsto\rho(x,u)$ is measurable. We define $L^p_\rho(\R^d)$ as the space of functions $f\in L^0(\R^d;\F^n)$ for which $\rho(x,f(x))\in L^p(\R^d)$, with
\[
\|f\|_{L^p_\rho(\R^d)}:=\Big(\int_{\R^d}\!\rho(x,f(x))^p\,\mathrm{d}x\Big)^{\frac{1}{p}}.
\]
Then, for $1\leq p\leq\infty$, $L^p_\rho(\R^d)$ is an $\F^n$-directional Banach function space over $\R^d$. Moreover, $L^p_\rho(\R^d)'=L^{p'}_{\rho^\ast}(\R^d)$, where $\rho^\ast:\R^d\times\F^n\to[0,\infty)$ is defined as
\[
\rho^\ast(x,v)=\sup_{u\in\F^n\backslash\{0\}}\frac{|u\cdot v|}{\rho(x,u)}.
\]
If $W:\R^d\to\F^{n\times n}$ is a matrix weight, then
\[
\rho_W(x,u):=|W(x)u|
\]
is a norm function, and $L^p_{\rho_W}(\R^d)=L^p_W(\R^d;\F^n)$. Conversely, for any norm function $\rho$ it follows from the John ellipsoid theorem that there is a matrix weight $W$ for which
\begin{equation}\label{eq:normfunctionisweight}
\rho_W(x,u)\leq\rho(x,u)\leq n^{\frac{1}{2}}\rho_W(x,u).
\end{equation}
We will write $\rho\in A_p$ if there is a constant $C\geq 1$ such that for all cubes $Q$ and all $u\in\F^n$ there is a non-zero $v\in\F^n$ such that
\[
\Big(\frac{1}{|Q|}\int_Q\!\rho(x,u)^p\,\mathrm{d}x\Big)^{\frac{1}{p}}\Big(\frac{1}{|Q|}\int_Q\!\rho^\ast(x,v)^{p'}\,\mathrm{d}x\Big)^{\frac{1}{p'}}\leq C|u\cdot v|,
\]
where for an infinite exponent the integral is interpreted as an essential supremum. The smallest possible $C$ is denoted by $[\rho]_p$. As this condition is precisely the condition $L^p_{\rho}(\R^d)\in A$, it follows from Theorem~\ref{thm:muckenhouptdef} that $\rho\in A_p$ if and only if $\rho^\ast\in A_{p'}$, with $[\rho]_p=[\rho^\ast]_{p'}$. 

If $W$ is a matrix weight, then $[\rho_W]_p=[W]_p$. In general, we have $\rho\in A_p$ if and only if $W\in A_p$ for $W$ satisfying \eqref{eq:normfunctionisweight}, with $[\rho]_p\eqsim_n[W]_p$.

Before we get to the proof of Theorem~\ref{thm:rdfmainestimate}, we will need an interpolation result. The following lemma is a version of the direct implication in \cite[Theorem~7.3]{BC23}, as well as a norm function version of \cite[Proposition~8.7]{BC23}:
\begin{lemma}\label{lem:interpolationnorm}
Let $F_0,F_1\in L^1_{\text{loc}}(\R^d;\mc{K})$ have a non-empty interior a.e., and suppose there are $C_0,C_1>0$ such that for a.e. $x\in\R^d$ we have
\[
M^{\mc{K}}F_0(x)\subseteq C_0 F_0(x),\quad M^{\mc{K}}F_1(x)\subseteq C_1 F_1(x).
\]
Then the norm functions
\[
\rho_0(x,u):=
\sup_{v\in F_0(x)}|u\cdot v|,\quad \rho_1(x,u):=
\sup_{v\in F_1(x)}|u\cdot v|
\]
satisfy $\rho_0,\rho_1\in A_1$ with $[\rho_0]_1\lesssim_n C_0$, $[\rho_1]\lesssim_n C_1$.

Moreover, for all $1\leq p\leq\infty$, there exists a norm function $\rho$ satisfying
\begin{enumerate}[(a)]
    \item\label{it:normfunctiona11} $\rho\in A_p$ with $[\rho]_p\lesssim_n C_0^{\frac{1}{p'}}C_1^{\frac{1}{p}}$;
    \item\label{it:normfunctiona12} $\rho(x,u)\leq\rho_0^\ast(x,u)^{\frac{1}{p'}}\rho_1(x,u)^{\frac{1}{p}}$ for all $u\in\F^n$ for a.e. $x\in\R^d$;
    \item\label{it:normfunctiona13} $\rho^\ast(x,v)\leq\rho_0(x,v)^{\frac{1}{p'}}\rho^\ast_1(x,v)^{\frac{1}{p}}$ for all $v\in\F^n$ for a.e. $x\in\R^d$.
\end{enumerate}
\end{lemma}
\begin{proof}
For the first assertion, by Proposition~\ref{prop:locallyintegrablyboundedinA} we have $F_0(x),F_1(x)\in\mc{K}_b$ for a.e. $x\in\R^d$, proving that $\rho_0$ and $\rho_1$ are finite a.e. Thus, we may apply \cite[Theorem~7.3]{BC23} to conclude that $\rho_0,\rho_1\in A_1$ with
\[
[\rho_0]_1\lesssim_n C_0,\quad [\rho_1]_1\lesssim_n C_1.
\]
We note that their result is stated only for $\F=\R$, but it holds mutatis mutandis in the case $\F=\C$ as explained in \cite[Subsection~9.4]{DKPS24}.

For the second assertion, set
\[
K(x):=\mc{K}(\{u\in\F^n:\rho_0^\ast(x,u)^{\frac{1}{p'}}\rho_1(x,u)^{\frac{1}{p}}\leq 1\}).
\]
For a.e. $x\in\R^d$, this set is an absorbing set in $\mc{K}_b$ (namely, for those $x\in\R^d$ for which both $F_0(x)$ and $F_1(x)$ have a non-empty interior), so we may define $\rho:\R^d\times\F^n\to[0,\infty)$ as the Minkowski functional
\[
\rho(x,u):=\inf\{t>0:t^{-1}u\in K(x)\}.
\]
Then, since $t^{-1}u\in\{u\in\F^n:\rho_0^\ast(x,u)^{\frac{1}{p'}}\rho_1(x,u)^{\frac{1}{p}}\leq 1\}$ for $t=\rho^\ast_0(x,u)^{\frac{1}{p'}}\rho_1(x,u)^{\frac{1}{p}}$, we have
\[
\rho(x,u)\leq \rho^\ast_0(x,u)^{\frac{1}{p'}}\rho_1(x,u)^{\frac{1}{p}},
\]
proving \ref{it:normfunctiona12}. For \ref{it:normfunctiona13}, note that
\[
|u\cdot v|=|u\cdot v|^{\frac{1}{p'}}|u\cdot v|^{\frac{1}{p}}\leq(\rho^\ast_0(x,u)\rho_0(x,v))^{\frac{1}{p'}}(\rho_1(x,u)\rho_1^\ast(x,v))^{\frac{1}{p}}.
\]
It follows that if $t>0$ satisfies $t^{-1}u\in K(x)$, i.e., $t^{-1}u=\sum_{k=1}^K\theta_ku_k$ for $\sum_{k=1}^K\theta_k=1$, $\rho^\ast_0(x,u_k)^{\frac{1}{p'}}\rho_1(x,u_k)^{\frac{1}{p}}\leq 1$, we have
\[
t^{-1}|u\cdot v|\leq\sum_{k=1}^K\theta_k|u_k\cdot v|\leq\sum_{k=1}^K\theta_k \rho^\ast_0(x,v)^{\frac{1}{p'}}\rho_1(x,v)^{\frac{1}{p}}=\rho^\ast_0(x,v)^{\frac{1}{p'}}\rho_1(x,v)^{\frac{1}{p}}.
\]
This implies that $|u\cdot v|\leq\rho^\ast_0(x,v)^{\frac{1}{p'}}\rho_1(x,v)^{\frac{1}{p}}\rho(x,u)$ and, hence,
\[
\rho^\ast(x,v)=\sup_{u\neq 0}\frac{|u\cdot v|}{\rho(x,u)}\leq\rho_0(x,v)^{\frac{1}{p'}}\rho_1^\ast(x,v)^{\frac{1}{p}},
\]
as desired. Finally, for \ref{it:normfunctiona11}, it follows from \cite[Remark~2.16,\, Remark~8.10]{BC23}  that $\rho\in A_p$ with the desired bound. The assertion follows.
\end{proof}

The reason we assume the component-wise saturation property in Theorem~\ref{thm:rdfmainestimate}, is because the above lemma only applies when we have mappings $F_0,F_1$ that have non-empty interior. Proposition~\ref{prop:componentsaturation} gives us a way to perturb the mappings $\mc{K}(f)$ and $\mc{K}(g)$ to mappings that have this property. As a matter of fact, we will use the following lemma:
\begin{lemma}\label{lem:componentsaturation}
Let $\mb{X}$ be an $\F^n$-directional quasi-Banach function space over $\Omega$. Then $\mb{X}$ satisfies the component-wise saturation property if and only if there is a measurable Hermitian and positive definite matrix-valued mapping $U:\Omega\to\F^{n\times n}$ with $U\overline{B}\in\mb{X}[\mc{K}]$.
\end{lemma}
\begin{proof}
A mapping $g\in L^0(\Omega;\F^n)$ satisfies $g\in S^0(\Omega;U\overline{B})$ precisely when $|U(x)^{-1}g(x)|\leq 1$ a.e., so the result follows from Proposition~\ref{prop:componentsaturation}.
\end{proof}

\begin{proof}[Proof of Theorem~\ref{thm:rdfmainestimate}]
It suffices to construct a $\rho\in A_p$ instead of $W\in A_p$ for which the desired bounds hold. By applying Theorem~\ref{thm:rdfdef} to $M^{\mc{K}}$ in $\mb{X}$ and $\mb{X}'$, we obtain the respective operators $\mc{R}:\mb{X}[\mc{K}]\to\mb{X}[\mc{K}]$ and $\mc{R}':\mb{X}'[\mc{K}]\to\mb{X}'[\mc{K}]$ satisfying \ref{it:RdF1} and \ref{it:RdF2}. By the component-wise saturation property and Lemma~\ref{lem:componentsaturation}, we can pick invertible matrix-valued mappings $U,V$ satisfying
\[
\|U\overline{B}\|_{\mb{X}[\mc{K}]}=1,\quad \|V\overline{B}\|_{\mb{X}'[\mc{K}]}=1.
\]
Applying Lemma~\ref{lem:interpolationnorm} with $F_0=\mc{R}(\mc{K}(f)+\varepsilon U\overline{B})$, $F_1=\mc{R}'(\mc{K}(g)+\varepsilon V\overline{B})$, we obtain a norm function $\rho\in A_p$ for which
\[
[\rho]_p\lesssim_n\|M^{\mc{K}}\|_{\mb{X}[\mc{K}]\to\mb{X}[\mc{K}]}^{\frac{1}{p'}}\|M^{\mc{K}}\|_{\mb{X}'[\mc{K}]\to\mb{X}'[\mc{K}]}^{\frac{1}{p}},
\]
and \ref{it:normfunctiona12} and \ref{it:normfunctiona13} hold for
\[
\rho_0(x,u):=\sup_{v\in\mc{R}(\mc{K}(f)+\varepsilon U\overline{B})(x)}|u\cdot v|,\quad \rho_1(x,u)=\sup_{v\in\mc{R}'(\mc{K}(g)+\varepsilon V\overline{B})(x)}|u\cdot v|.
\]
By applying Theorem~\ref{thm:filippov} with
\[
\phi(x,v)=|f(x)\cdot v|,\quad h(x)=\sup_{v\in\mc{R}'(\mc{K}(g)+\varepsilon V\overline{B})(x)}|f(x)\cdot v|,
\]
we can find a $k\in S^0(\R^d;\mc{R}'(\mc{K}(g)+\varepsilon V\overline{B}))$ for which
\[
|f(x)\cdot k(x)|=\rho_1(x,f(x)).
\]
Moreover, since $f(x)\in\mc{R}(\mc{K}(f)+\varepsilon U\overline{B})(x)$ a.e., we have $\rho_0(x,u)\geq|u \cdot f(x)|$ a.e., and, hence,
\[
\rho_0^\ast(x,f(x))\leq 1.
\]
Thus, we have
\[
\rho(x,f(x))^p\leq \rho^\ast_0(x,f(x))^{p-1}\rho_1(x,f(x))\leq |f(x)\cdot k(x)|,
\]
or $\rho(x,f(x))=\rho^\ast_0(x,f(x))\leq 1$ if $p=\infty$. We conclude that $f\in L^p_\rho(\R^d)$ with
\begin{align*}
\int_{\R^d}\!\rho(x,f(x))^p\,\mathrm{d}x&\leq\int_{\R^d}\!|f(x)\cdot k(x)|\,\mathrm{d}x\leq\|f\|_{\mb{X}}\|k\|_{\mb{X}'}\\
&\leq\|f\|_{\mb{X}}\|\mc{R}'(\mc{K}(g)+\varepsilon V\overline{B})\|_{\mb{X}'[\mc{K}]}\leq2\|f\|_{\mb{X}}(\|g\|_{\mb{X}'}+\varepsilon),
\end{align*}
or $\|f\|_{L^\infty_\rho(\R^d)}\leq 1$ when $p=\infty$. For the term involving $g$, an analogous argument as the one for $f$ with the roles of $\rho_0$, $\rho_1$, and of $p$, $p'$ reversed, shows that $g\in L^{p'}_{\rho^\ast}(\R^d)$ with
\[
\|g\|_{L^{p'}_{\rho^\ast}(\R^d)}\leq 2^{\frac{1}{p'}}(\|f\|^{\frac{1}{p'}}_{\mb{X}}+\varepsilon)\|g\|^{\frac{1}{p'}}_{\mb{X}'}.
\]
In conclusion, $f\in L^p_\rho(\R^d)$, $g\in L^{p'}_{\rho^\ast}(\R^d)$, and
\[
\|f\|_{L^p_\rho(\R^d)}\|g\|_{L^{p'}_{\rho^\ast}(\R^d)}\leq 2(\|f\|_{\mb{X}}+\varepsilon)(\|g\|_{\mb{X}'}+\varepsilon),
\]
as asserted.
\end{proof}

\begin{proof}[Proof of Theorem~\ref{thm:BFSextrapolation}]
Let $f\in V$ for which $Sf\in\mb{X}$, and let $g\in\mb{X}'$. By Theorem~\ref{thm:rdfmainestimate} we can pick a $W\in A_p$ for which
\[
[W]_p\leq C_n\|M^{\mc{K}}\|_{\mb{X}[\mc{K}]\to\mb{X}[\mc{K}]}^{\frac{1}{p'}}\|M^{\mc{K}}\|_{\mb{X}'[\mc{K}]\to\mb{X}'[\mc{K}]}^{\frac{1}{p}}
\]
and
\[
\|Sf\|_{L^p_W(\R^d;\F^n)}\|g\|_{L^{p'}_{W^{-1}}(\R^d;\F^n)}\lesssim_n(\|Sf\|_{\mb{X}}+\varepsilon)(\|g\|_{\mb{X}'}+\varepsilon).
\]
Thus,
\begin{align*}
\int_{\R^d}\!|Tf\cdot g|\,\mathrm{d}x&\leq \|Tf\|_{L^p_W(\R^d;\F^n)}\|g\|_{L^{p'}_{W^{-1}}(\R^d;\F^n)}
\leq\phi([W]_p)\|Sf\|_{L^p_W(\R^d;\F^n)}\|g\|_{L^{p'}_{W^{-1}}(\R^d;\F^n)}\\
&\lesssim_n\phi(C_n\|M^{\mc{K}}\|_{\mb{X}[\mc{K}]\to\mb{X}[\mc{K}]}^{\frac{1}{p'}}\|M^{\mc{K}}\|_{\mb{X}'[\mc{K}]\to\mb{X}'[\mc{K}]}^{\frac{1}{p}})(\|Sf\|_{\mb{X}}+\varepsilon)(\|g\|_{\mb{X}'}+\varepsilon).
\end{align*}
As $\mb{X}$ is K\"othe reflexive, taking a supremum over all $g\in\mb{X}'$ with $\|g\|_{\mb{X}'}=1$ and letting $\varepsilon\downarrow 0$ proves the result.
\end{proof}

\section{Proof of Theorem~\ref{thm:B}}\label{sec:thmB}

First, we prove the implication \ref{it:thmB2}$\Rightarrow$\ref{it:thmB1}. This follows from the following result:
\begin{theorem}\label{thm:sparsewithmax}
Let $\mb{X}$ be a K\"othe reflexive $\F^n$-directional Banach function space over $\R^d$. Suppose
\[
M^{\mc{K}}:\mb{X}[\mc{K}]\to\mb{X}[\mc{K}],\quad M^{\mc{K}}:\mb{X}'[\mc{K}]\to\mb{X}'[\mc{K}].
\]
Then for any sparse collection $\mc{S}$, we have $T_{\mc{S}}:\mb{X}[\mc{K}]\to\mb{X}[\mc{K}]$ with
\[
\|T_{\mc{S}}\|_{\mb{X}[\mc{K}]\to\mb{X}[\mc{K}]}\lesssim_n \|M^{\mc{K}}\|_{\mb{X}[\mc{K}]\to\mb{X}[\mc{K}]}\|M^{\mc{K}}\|_{\mb{X}'[\mc{K}]\to\mb{X}'[\mc{K}]}.
\]
\end{theorem}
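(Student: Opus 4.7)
The proof combines Köthe duality on $\mb{X}[\mc{K}]$ with Filippov's selection theorem and the sparseness structure.

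\emph{Reduction and duality.} First I would reduce to finite sparse families $\mc{S}$ in a single dyadic grid, using monotone convergence (Proposition~\ref{prop:convexsetmonotoneconvergence}) available because $\mb{X}$ has the Fatou property by Theorem~\ref{thm:lorentzluxemburg}, plus the standard $3^d$-translate trick. Köthe reflexivity $\mb{X}''=\mb{X}$, Proposition~\ref{prop:sumofconvexmeasurable} on selections of Minkowski sums, and an application of Filippov (Theorem~\ref{thm:filippov}) to $\phi(x,u)=u\cdot g(x)$ then give, using that the support function decomposes additively over Minkowski sums,
\[
\|T_{\mc{S}}F\|_{\mb{X}[\mc{K}]}=\sup_{\|g\|_{\mb{X}'}\le 1}\sum_{Q\in\mc{S}}\int_Q \sigma_{\langle F\rangle_Q}(g(x))\,dx,
\]
where $\sigma_K(v):=\sup_{u\in K}u\cdot v$.

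\emph{Linearization and target bilinear estimate.} For each $Q$, Filippov produces a selection $u_Q\in S^0(Q;\langle F\rangle_Q)$ with $u_Q(x)\cdot g(x)=\sigma_{\langle F\rangle_Q}(g(x))$ on $Q$. Fix a pairwise disjoint sparsification $\{E_Q\}_{Q\in\mc{S}}$ with $E_Q\subseteq Q$ and $|E_Q|\ge\tfrac{1}{2}|Q|$. Since $\langle F\rangle_Q\subseteq M^{\mc{K}}F(x)$ for $x\in Q$ and, by convex-body Lebesgue differentiation, $g(x)\in M^{\mc{K}}(\mc{K}(g))(x)$ almost everywhere, the goal is to produce selections $\bar u\in S^0(\R^d;M^{\mc{K}}F)$ and $\bar v\in S^0(\R^d;M^{\mc{K}}(\mc{K}(g)))$ with
\[
\sum_{Q\in\mc{S}}\int_Q u_Q\cdot g\,dx\;\lesssim_n\;\int_{\R^d}\bar u\cdot\bar v\,dx.
\]
Once this is done, Köthe duality in $\mb{X}$, the isometric embedding $g\mapsto\mc{K}(g)$ (Proposition~\ref{prop:convexsetfequalsf}), and the definitions of the $M^{\mc{K}}$ norms on $\mb{X}[\mc{K}]$ and $\mb{X}'[\mc{K}]$ collapse everything to the stated bound after taking $\sup_g$.

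\emph{Main obstacle.} The temptation is to apply the scalar sparseness trick $\int_Q u_Q\cdot g\,dx=|Q|\,\langle u_Q\cdot g\rangle_Q\le 2|E_Q|\,\langle u_Q\cdot g\rangle_Q$ and sum using disjointness of the $E_Q$, reducing the problem to bounding a single integral of the scalar $\langle u_Q\cdot g\rangle_Q$ on $\bigsqcup E_Q$ by a pointwise pairing $\bar u(x)\cdot\bar v(x)$. However, the per-cube average $\langle u_Q\cdot g\rangle_Q=\tfrac{1}{|Q|}\int_Q\sigma_{\langle F\rangle_Q}(g(y))\,dy$ cannot be controlled by the natural ``factored'' quantity $\sup_{u\in\langle F\rangle_Q,\,v\in\llangle g\rrangle_Q}u\cdot v$, since by Jensen
\[
\sup_{u\in\langle F\rangle_Q}\tfrac{1}{|Q|}\int_Q|u\cdot g(y)|\,dy\;\le\;\tfrac{1}{|Q|}\int_Q\sup_{u\in\langle F\rangle_Q}|u\cdot g(y)|\,dy\;=\;\langle u_Q\cdot g\rangle_Q,
\]
i.e.\ the constant-selection bound in $\langle F\rangle_Q$ produces only a \emph{lower} bound. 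Thus the scalar sparse argument does not import verbatim, and the hard part of the proof will be to absorb the directional variation of $g$ inside each $Q$ into the selection of $M^{\mc{K}}(\mc{K}(g))$. I expect this requires a dyadic stopping time / principal-cube decomposition of the $\mc{S}$-tree that exploits the Carleson packing $\sum_{Q'\subseteq Q,\,Q'\in\mc{S}}|Q'|\le 2|Q|$ to telescope the contributions $u_Q(x)\cdot g(x)$ along the tree and dominate them by a single pointwise pairing of selections of $M^{\mc{K}}F$ and $M^{\mc{K}}(\mc{K}(g))$.
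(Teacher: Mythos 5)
Your setup (reduction to finite $\mc{S}$, Köthe duality, Filippov) is sound and matches the paper, and you have put your finger on precisely the right obstacle: the per-cube quantity $\tfrac{1}{|Q|}\int_Q\sup_{u\in\langle F\rangle_Q}|u\cdot g(y)|\,dy$ is in general \emph{larger} than any pairing of a fixed vector in $\langle F\rangle_Q$ with a fixed vector in $\llangle g\rrangle_Q$, so one cannot naively factorize the bilinear form and use disjointness of the $E_Q$. But you then declare this the hard part and gesture at a stopping-time/principal-cube scheme that you do not carry out, and which I do not believe is the right mechanism. This leaves a genuine gap at the crux of the argument.

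The paper resolves the obstacle by a different and essentially elementary device that you did not find. Given a selection $g\in S^0(\R^d;T_{\mc{S}}F)$ written as $g=\sum_{Q\in\mc{S}}g_Q$ with $g_Q\in S^0(\R^d;\langle F\rangle_Q\ind_Q)$, one applies John's ellipsoid theorem to the convex body $\langle F\rangle_Q$ to write, pointwise,
\[
g_Q(x)=\sum_{k=1}^n h^Q_k(x)\,\langle f^Q_k\rangle_Q\,\ind_Q(x),\qquad |h^Q(x)|\leq n^{1/2},\ f^Q_k\in S^0(\R^d;F\ind_Q).
\]
The point is that the directional variation of $g_Q$ inside $Q$ is carried entirely by the bounded \emph{scalar} coefficients $h^Q_k(x)$. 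When you pair against $\psi\in\mb{X}'$, each summand becomes the constant pairing $\langle f^Q_k\rangle_Q\cdot\langle h^Q_k\psi\rangle_Q$: the $x$-dependence has been moved onto the dual function, where it produces $\langle h^Q_k\psi\rangle_Q\in C_n\llangle\psi\rrangle_Q\subseteq C_n\,M^{\mc{K}}(\mc{K}(\psi))(x)$ for $x\in Q$. One then uses sparseness ($|Q|\le 2|E_Q|$) to replace $\ind_Q|Q|$ by $2\ind_{E_Q}$, and recognizes $\sum_Q\langle f^Q_k\rangle_Q\ind_{E_Q}$ and $\sum_Q\langle \tilde{h}^Q_k\psi\rangle_Q\ind_{E_Q}$ as selections of $M^{\mc{K}}F$ and of $C_n M^{\mc{K}}(\mc{K}(\psi))$ respectively; Hölder and the two maximal bounds finish the estimate. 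So the Jensen-type gap you identified never appears: you pair a \emph{constant} vector in $\langle F\rangle_Q$ against the \emph{averaged} dual vector $\langle h^Q_k\psi\rangle_Q$ rather than against another constant vector, and the boundedness of $h^Q_k$ is exactly what keeps the averaged dual vector inside the convex-body average $\llangle\psi\rrangle_Q$. Your proposal, as written, is incomplete without this idea or an actual construction replacing it.
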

\begin{proof}
By the monotone convergence property of $\mb{X}$, we may assume that $\mc{S}$ is finite. Let $g\in S^0(\R^d;T_{\mc{S}}F)$. First, assume that $g=\sum_{Q\in\mc{S}}g_Q$ with $g_Q\in S^0(\R^d;\langle F\rangle_Q\ind_Q)$. As in the proof of Theorem~\ref{thm:pairwisedisjointavconvex}, by the John ellipsoid theorem, each $g_Q$ is of the form
\[
g_Q(x)=\sum_{k=1}^n h^Q_k(x)\langle f^Q_k\rangle_Q\ind_Q(x),
\]
where $h^Q=(h^Q_1,\ldots,h^Q_n)$ satisfies $|h^Q(x)|\leq n^{\frac{1}{2}}$, and $f^Q_k\in S^0(\R^d;F\ind_Q)$. Let $\psi\in\mb{X}'$, and set 
\[
\widetilde{h}_k^Q=\frac{\langle f_k^Q\rangle_Q\cdot\langle h_k^Q\psi\rangle_Q}{|\langle f_k^Q\rangle_Q\cdot\langle h_k^Q\psi\rangle_Q|}h_k^Q,
\]
or $\widetilde{h}_k^Q=0$ if the denominator in the above expression is equal to $0$, so that
\[
\langle f_k^Q\rangle_Q\cdot \langle \widetilde{h}_k^Q\psi\rangle_Q=|\langle f_k^Q\rangle_Q\cdot \langle h_k^Q\psi\rangle_Q|.
\]
We note that $\langle \widetilde{h}_k^Q\psi\rangle_Q\in C_n\llangle \psi\rrangle_Q$ for some $C_n>0$ depending on $n$, where $\llangle\psi\rrangle_Q$ is well-defined since $M^{\mc{K}}$ is bounded on $\mb{X}'[\mc{K}]$ and, hence, $\psi\in\mb{X}'$ is necessarily locally integrable. Then, defining the sets $E_Q$ for $Q\in\mc{S}$ in the same way as is done in the proof of Theorem~\ref{thm:auxiliarygoldbergreduction},
\begin{align*}
\Big|\int_{\R^d}\!g\cdot \psi\,\mathrm{d}x\Big|
&\leq\sum_{k=1}^n\Big|\sum_{Q\in\mc{S}}\int_Q\!h_k^Q\langle f_k^Q\rangle_Q\cdot \psi\,\mathrm{d}x\Big|\\
&=\sum_{k=1}^n\Big|\sum_{Q\in\mc{S}}\langle f_k^Q\rangle_Q\cdot \langle h_k^Q\psi\rangle_Q|Q|\Big|\\
&\leq 2\sum_{k=1}^n\int_{\R^d}\sum_{Q\in\mc{S}}\langle f_k^Q\rangle_Q\ind_{E_Q}\cdot \sum_{Q\in\mc{S}}\langle \widetilde{h}_k^Q\psi\rangle_Q\ind_{E_Q}\,\mathrm{d}x\\
&\leq 2\sum_{k=1}^n\Big\|\sum_{Q\in\mc{S}}\langle f_k^Q\rangle_Q\ind_{E_Q}\Big\|_{\mb{X}}\Big\|\sum_{Q\in\mc{S}}\langle \widetilde{h}_k^Q\psi\rangle_Q\ind_{E_Q}\Big\|_{\mb{X}'}\\
&\lesssim_n\|M^{\mc{K}}F\|_{\mb{X}[\mc{K}]}\|M^{\mc{K}}(\mc{K}(\psi))\|_{\mb{X}[\mc{K}]}\\
&\leq \|M^{\mc{K}}\|_{\mb{X}[\mc{K}]\to\mb{X}[\mc{K}]}\|M^{\mc{K}}\|_{\mb{X}'[\mc{K}]\to\mb{X}'[\mc{K}]}\|F\|_{\mb{X}[\mc{K}]}\|\psi\|_{\mb{X}'}.
\end{align*}
For a general $g\in S^0(\R^d;T_{\mc{S}}F)$, by Proposition~\ref{prop:sumofconvexmeasurable}, for each $Q\in\mc{S}$ there is a sequence $(g_{k,Q})_{k\geq 1}$ with $g_{k,Q}\in S^0(\R^d:\langle F\rangle_Q\ind_Q)$ for which 
\[
\sum_{Q\in\mc{S}}g_{k,Q}(x)\to g(x)
\]
for a.e. $x\in\R^d$. As $\mb{X}$ is K\"othe reflexive, it satisfies the Fatou property. Thus,
\[
\|g\|_{\mb{X}}\leq\liminf_{k\to\infty}\Big\|\sum_{Q\in\mc{S}}g_{k,Q}\Big\|_{\mb{X}}\lesssim_n\|M^{\mc{K}}\|_{\mb{X}[\mc{K}]\to\mb{X}[\mc{K}]}\|M^{\mc{K}}\|_{\mb{X}'[\mc{K}]\to\mb{X}'[\mc{K}]}\|F\|_{\mb{X}[\mc{K}]}.
\]
We conclude that $T_{\mc{S}}F\in\mb{X}[\mc{K}]$, with
\[
\|T_{\mc{S}}F\|_{\mb{X}[\mc{K}]}\lesssim_n\|M^{\mc{K}}\|_{\mb{X}[\mc{K}]\to\mb{X}[\mc{K}]}\|M^{\mc{K}}\|_{\mb{X}'[\mc{K}]\to\mb{X}'[\mc{K}]}\|F\|_{\mb{X}[\mc{K}]}.
\]
The assertion follows.
\end{proof}

\begin{proof}[Proof of Theorem~\ref{thm:B}]
For the implication \ref{it:thmB2}$\Rightarrow$\ref{it:thmB1}, the bound $T_{\mc{S}}:\mb{X}[\mc{K}]\to\mb{X}[\mc{K}]$ follows from Theorem~\ref{thm:sparsewithmax}. Likewise, as $\mb{X}$ is K\"othe reflexive, the dual bound $T_{\mc{S}}:\mb{X}'[\mc{K}]\to\mb{X}'[\mc{K}]$ follows from Theorem~\ref{thm:sparsewithmax} applied with $\mb{X}'$ instead of $\mb{X}$.

It remains to prove \ref{it:thmB1}$\Rightarrow$\ref{it:thmB2}. By symmetry, we need only prove that $M^{\mc{K}}:\mb{X}[\mc{K}]\to\mb{X}[\mc{K}]$. By the Fatou property and the $3^d$-lattice theorem (see \cite{LN15}), it suffices to prove that $M^{\mc{K}}_{\mc{F}}:\mb{X}[\mc{K}]\to\mb{X}[\mc{K}]$ for all finite collections $\mc{F}$ contained in a dyadic grid $\mc{D}$. Let $F\in\mb{X}[\mc{K}]$. By Theorem~\ref{thm:convexbodydomofmax}, there is a sparse collection $\mc{S}\subseteq\mc{F}$ such that
\[
M^{\mc{K}}_{\mc{F}}F(x)\subseteq C_n M^{\mc{K}}_{\mc{S}}F(x)\subseteq C_nT_{\mc{S}}(x),
\]
where the last inclusion follows from the fact that $\bigcup_{Q\in\mc{S}}\langle F\rangle_Q\ind_Q(x)\subseteq \sum_{Q\in\mc{S}}\langle F\rangle_Q\ind_Q(x)$. Thus, by the directional ideal property of $\mb{X}$, we have
\[
\|M^{\mc{K}}_{\mc{F}}F\|_{\mb{X}[\mc{K}]}\lesssim_n\|T_{\mc{S}}F\|_{\mb{X}[\mc{K}]}\leq\|T_{\mc{S}}\|_{\mb{X}[\mc{K}]\to\mb{X}[\mc{K}]}\|F\|_{\mb{X}[\mc{K}]},
\]
proving the desired bound.
\end{proof}

\section{Applications}\label{sec:applications}
\subsection{Matrix-weighted variable Lebesgue spaces}\label{subsec:varlebesgue}
For $1\leq p\leq\infty$ we define
\[
\phi_p(t):=\begin{cases}
\tfrac{1}{p}t^p & \text{if $p<\infty$}\\
\infty\ind_{(1,\infty)}(t) & \text{if $p=\infty$}
\end{cases}
\]
and, for a measurable function $p:\R^d\to[1,\infty]$ and $f\in L^0(\R^d)$,
\[
\rho_{p(\cdot)}(f):=\int_{\R^d}\!\phi_{p(x)}(|f(x)|)\,\mathrm{d}x.
\]
The variable Lebesgue space $L^{p(\cdot)}(\R^d)$ is defined as those $f\in L^0(\R^d)$ for which there is a $\lambda>0$ such that $\rho_{p(\cdot)}(f/\lambda)<\infty$, with norm
\[
\|f\|_{L^{p(\cdot)}(\R^d)}:=\inf\{\lambda>0:\rho_{p(\cdot)}(f/\lambda)\leq 1\}.
\]
This is a Banach function space over $\R^d$ with K\"othe dual space $L^{p'(\cdot)}(\R^d)$, where
\[
\frac{1}{p'(x)}:=1-\frac{1}{p(x)}.
\]
We refer the reader to \cite{DHHR11} for a full overview of the properties of these spaces.

Given a matrix weight $W:\R^d\to\F^{n\times n}$, we define the matrix weighted variable Lebesgue space $L^{p(\cdot)}_W(\R^d;\F^n)$ as the space $X_W$ for $X=L^{p(\cdot)}(\R^d)$, i.e., the space of those $f\in L^0(\R^d;\F^n)$ for which $|Wf|\in L^{p(\cdot)}(\R^d)$. This definition coincides with the one given in \cite{CP24}.

By combining Theorem~\ref{thm:BFSextrapolation} with Proposition~\ref{prop:christgoldbergmax}, we obtain the following extrapolation theorem:
\begin{theorem}\label{thm:varlebesgueextrapolation}
Let $1\leq r\leq \infty$, let $V$ be a set, and let $S:V\to L^0(\R^d;\F^n)$. Suppose
\[
T:\bigcup_{W\in A_r}S^{-1}(L^r_W(\R^d;\F^n))\to L^0(\R^d;\F^n)
\]
is a map for which there is an increasing function $\phi:[0,\infty)\to[0,\infty)$ such that for all $W\in A_r$ and all $f\in V$ with $Sf\in L^r_W(\R^d;\F^n)$ we have
\[
\|Tf\|_{L^r_W(\R^d;\F^n)}\leq\phi([W]_r)\|Sf\|_{L^r_W(\R^d;\F^n)}.
\]
Let $p:\R^d\to[1,\infty]$ be a measurable function, and let $W:\R^d\to\F^{n\times n}$ be a matrix weight for which
\[
M_W:L^{p(\cdot)}(\R^d;\F^n)\to L^{p(\cdot)}(\R^d),\quad M_{W^{-1}}:L^{p'(\cdot)}(\R^d;\F^n)\to L^{p'(\cdot)}(\R^d).
\]
Then $Tf$ is well-defined for all $f\in V$ with $Sf\in L^{p(\cdot)}_W(\R^d;\F^n)$, and
\begin{align*}
&\|Tf\|_{L^{p(\cdot)}_W(\R^d;\F^n)}\\
&\lesssim_n \phi(C_n\|M_W\|_{L^{p(\cdot)}(\R^d;\F^n)\to L^{p(\cdot)}(\R^d)}^{\frac{1}{r'}}\|M_{W^{-1}}\|_{L^{p'(\cdot)}(\R^d;\F^n)\to L^{p'(\cdot)}(\R^d)}^{\frac{1}{r}})\|Sf\|_{L^{p(\cdot)}_W(\R^d;\F^n)}.
\end{align*}
If $r=\infty$ or $r=1$, we can omit the bound $M_{W^{-1}}:L^{p'(\cdot)}(\R^d;\F^n)\to L^{p'(\cdot)}(\R^d)$ or $M_W:L^{p(\cdot)}(\R^d;\F^n)\to L^{p(\cdot)}(\R^d)$ respectively.
\end{theorem}
When $n=1$, our result recovers the variable Lebesgue space extrapolation theorem of \cite[Theorem~2.7]{CW17}. For a large class of examples of weights $W$ and exponent functions $p(\cdot)$ for which the Christ-Goldberg maximal operators $M_W$ and $M_{W^{-1}}$ satisfy the bounds in the above result we refer the reader to \cite{NP25}.

We say that an exponent function $p:\R^d\to[1,\infty]$ is globally $\log$-H\"older continuous if there are constants $C_1,C_2>0$ and $p_\infty\in[1,\infty]$ for which
\[
\Big|\frac{1}{p(x)}-\frac{1}{p(y)}\Big|\leq\frac{C_1}{\log(e+\frac{1}{|x-y|})}\quad\text{and}\quad \Big|\frac{1}{p(x)}-\frac{1}{p_\infty}\Big|\leq\frac{C_2}{\log(e+|x|)}
\]
for all $x,y\in\R^d$. Moreover, we write $W\in A_{p(\cdot)}$ to mean that $L^{p(\cdot)}_W(\R^d;\F^n)\in A$, and set
\[
[W]_{p(\cdot)}:=[L^{p(\cdot)}_W(\R^d;\F^n)]_A.
\]
Since $L^{p(\cdot)}(\R^d)\in\mc{G}$, see \cite[Theorem~7.3.22]{DHHR11}, Theorem~\ref{thm:E} directly implies the following result:
\begin{theorem}\label{thm:varlebesguemuckenhoupt}
Let $p:\R^d\to[1,\infty]$ be a globally $\log$-H\"older continuous exponent function and let $W:\R^d\to\F^{n\times n}$ be a matrix weight. Then the following are equivalent:
\begin{enumerate}[(i)]
    \item $M:L^{p(\cdot)}_W(\R^d;\mc{K})\to L^{p(\cdot)}_W(\R^d;\mc{K})_{\text{weak}}$;
    \item $W\in A_{p(\cdot)}$;
    \item $L^{p(\cdot)}_W(\R^d;\F^n)\in A_{\text{strong}}$.
\end{enumerate}
Moreover, in this case all the respective constants are equivalent up constants depending on $n$, $d$, and the global $\log$-H\"older regularity constants.
\end{theorem}

\subsection{Matrix-weighted Morrey spaces}
Let $1\leq p\leq q\leq\infty$. Then the Morrey space $M^{p,q}(\R^d)$ is defined as those $f\in L^0(\R^d)$ for which
\[
\|f\|_{M^{p,q}(\R^d)}:=\sup_Q\Big(\frac{1}{|Q|^{1-\frac{p}{q}}}\int_Q\!|f|^p\,\mathrm{d}x\Big)^{\frac{1}{p}}<\infty.
\]
This is a Banach function space with the Fatou property. To define the K\"othe dual of this space, we say that a function $b\in L^0(\R^d)$ is a $(q,p)$-block if it is supported in a cube $Q$ in $\R^d$, and
\[
\Big(|Q|^{\frac{q}{p}-1}\int_Q\!|b|^q\,\mathrm{d}x\Big)^{\frac{1}{q}}\leq 1.
\]
The space $B^{q,p}(\R^d)$ is defined as those $f\in L^0(\R^d)$ for which there is a sequence $(\lambda_k)_{k\geq 1}\in\ell^1$ and a sequence of $(q,p)$-blocks $(b_k)_{k\geq 1}$ such that
\[
f=\sum_{k=1}^\infty\lambda_k b_k.
\]
The norm $\|f\|_{B^{q,p}(\R^d)}$ is defined as the smallest possible value of $\|(\lambda_k)_{k\geq 1}\|_{\ell^1}$ for which $f$ can be represented in this way. Then we have
\[
M^{p,q}(\R^d)'=B^{p',q'}(\R^d).
\]
We refer the reader to \cite[Subsection~3.3]{Ni23} and references therein for an overview of these spaces.

Given a matrix weight $W:\R^d\to\F^{n\times n}$ we define $M^{p,q}_W(\R^d;\F^n)$ and $B^{q,p}_W(\R^d;\F^n)$ as the space $X_W$, respectively for $X=M^{p,q}(\R^d)$ and $X=B^{q,p}(\R^d)$. This definition is a matrix weighted extension of the Samko-type weighted Morrey space as considered in \cite{Sa09}. Matrix weighted Morrey spaces have not been considered for this type before, but a version of matrix weighted Morrey spaces of Komori and Shirai \cite{KS09} with the weight used as a measure has appeared in \cite{BS24}.

Theorem~\ref{thm:BFSextrapolation} combined with Proposition~\ref{prop:christgoldbergmax} yields the following extrapolation theorem:
\begin{theorem}\label{thm:morreyextrapolation}
Let $1\leq r\leq \infty$, let $V$ be a set, and let $S:V\to L^0(\R^d;\F^n)$. Suppose
\[
T:\bigcup_{W\in A_r}S^{-1}(L^r_W(\R^d;\F^n))\to L^0(\R^d;\F^n)
\]
is a map for which there is an increasing function $\phi:[0,\infty)\to[0,\infty)$ such that for all $W\in A_r$ and all $f\in V$ with $Sf\in L^r_W(\R^d;\F^n)$ we have
\[
\|Tf\|_{L^r_W(\R^d;\F^n)}\leq\phi([W]_r)\|Sf\|_{L^r_W(\R^d;\F^n)}.
\]
Let $1\leq p\leq q\leq\infty$, and let $W:\R^d\to\F^{n\times n}$ be a matrix weight for which
\[
M_W:M^{p,q}(\R^d;\F^n)\to M^{p,q}(\R^d),\quad M_{W^{-1}}:B^{p',q'}(\R^d;\F^n)\to B^{p',q'}(\R^d).
\]
Then $Tf$ is well-defined for all $f\in V$ with $Sf\in M^{p,q}_W(\R^d;\F^n)$, and
\begin{align*}
&\|Tf\|_{M^{p,q}_W(\R^d;\F^n)}\\
&\lesssim_n \phi(C_n\|M_W\|_{M^{p,q}(\R^d;\F^n)\to M^{p,q}(\R^d)}^{\frac{1}{r'}}\|M_{W^{-1}}\|_{B^{p',q'}(\R^d;\F^n)\to B^{p',q'}(\R^d)}^{\frac{1}{r}})\|Sf\|_{M^{p,q}_W(\R^d;\F^n)}.
\end{align*}
If $r=\infty$ or $r=1$, we can omit the bound $M_{W^{-1}}:B^{p',q'}(\R^d;\F^n)\to B^{p',q'}(\R^d)$ or $M_W:M^{p,q}(\R^d;\F^n)\to M^{p,q}(\R^d)$ respectively.
\end{theorem}
Several extrapolation theorems for Morrey spaces in the case $n=1$ exist in the literature, see, e.g., the works \cite{DR20, DR21} and references therein. Even in the case $n=1$, a full characterization of the weights for which $M$ is bounded on $M^{p,q}_w(\R^d)$ and $B^{p',q'}_{w^{-1}}(\R^d)$ has not yet been found. An overview and partial results in this problem can be found in \cite[Subsection~5.3]{Ni24}. For general $n$, this leaves open the problem of finding matrix weights $W$ for which the Goldberg maximal operator is bounded on the respective spaces in the above theorem.
 
\subsection{Component-wise spaces}
Here we show that for component-wise directional Banach function spaces over $\R^d$, the developed theory is equivalent to applying the scalar-valued theory to each component. Our main result here is the following:
\begin{theorem}\label{thm:componentwisemaxbound}
Let $X_1,\ldots,X_n$ be Banach function spaces over $\R^d$ with the Fatou property, and let $(v_1,\ldots,v_n)$ be an orthonormal basis of $\F^n$. Let $\mb{X}$ be defined as those $f\in L^0(\R^d;\F^n)$ for which
\[
\|f\|_{\mb{X}}:=\sum_{k=1}^n\|f\cdot v_k\|_{X_k}<\infty.
\]
Then the following are equivalent:
\begin{enumerate}[(i)]
    \item\label{it:componentwisemaxbound1} $M^{\mc{K}}:\mb{X}[\mc{K}]\to\mb{X}[\mc{K}]$;
    \item\label{it:componentwisemaxbound2} $M:X_k\to X_k$ for all $k\in\{1,\ldots,n\}$.
\end{enumerate}
Moreover, in this case we have
\[
\max_{k\in\{1,\ldots,n\}}\|M\|_{X_k\to X_k}\leq\|M^{\mc{K}}\|_{\mb{X}[\mc{K}]\to\mb{X}[\mc{K}]}\leq \sum_{k=1}^n\|M\|_{X_k\to X_k}.
\]
\end{theorem}
\begin{proof}
For \ref{it:componentwisemaxbound1}$\Rightarrow$\ref{it:componentwisemaxbound2}, fix $k\in\{1,\ldots,n\}$ and let $h_k\in X_k$. Setting $f:=h_kv_k\in\mb{X}$, we claim that $(Mh_k)v_k\in S^0(\R^d;M^{\mc{K}}(\mc{K}(f))$. Assuming the claim, using Proposition~\ref{prop:convexsetfequalsf}, we have $(Mh_k)v_k\in\mb{X}$, with
\begin{align*}
\|Mh_k\|_{X_k}&=\|(Mh_k)v_k\|_{\mb{X}}\leq\|M^{\mc{K}}(\mc{K}(f))\|_{\mb{X}[\mc{K}]}\\
&\leq\|M^{\mc{K}}\|_{\mb{X}[\mc{K}]\to\mb{X}[\mc{K}]}\|f\|_{\mb{X}}=\|M^{\mc{K}}\|_{\mb{X}[\mc{K}]\to\mb{X}[\mc{K}]}\|h_k\|_{X_k}.
\end{align*}
Thus, $M:X_k\to X_k$ with $\|M\|_{X_k\to X_k}\leq\|M^{\mc{K}}\|_{\mb{X}[\mc{K}]\to\mb{X}[\mc{K}]}$, as desired. To prove the claim, note that
\[
\llangle f\rrangle_Q=\{\langle hh_k\rangle_Qv_k:\|h\|_{L^\infty(\R^d)}\leq 1\}=\{\lambda \langle |h_k|\rangle_Qv_k:|\lambda|\leq 1\}=\langle |h_k|\rangle_Q\mc{K}(v_k),
\]
where $\llangle f\rrangle_Q$ is well-defined due to the fact that $M^{\mc{K}}$ is bounded on $\mb{X}[\mc{K}]$, where the second equality follows from setting $h=\lambda\cdot \text{sign}(h_k)$, and the last one from Proposition~\ref{prop:smallestkset}. This implies that
\begin{align*}
Mh_k(x)v_k&\in\overline{\big\{\langle|h_k|\rangle_Qv_k:Q\text{ a cube with $x\in Q$}\big\}}\\
&\subseteq\overline{\bigcup_Q\llangle f\rrangle_Q\ind_Q(x)}\subseteq M^{\mc{K}}(\mc{K}(f))(x)
\end{align*}
for a.e. $x\in\R^d$, proving the result.

For the converse implication \ref{it:componentwisemaxbound2}$\Rightarrow$\ref{it:componentwisemaxbound1}, let $F\in\mb{X}[\mc{K}]$. As each of the $X_k$ has the Fatou property, $\mb{X}$ has the monotone convergence property. Thus, it suffices to prove the bound of $M^{\mc{K}}_{\mc{F}}$ for finite collections of cubes $\mc{F}$ uniformly in $\mc{F}$. If $g\in S^0(\R^d;M^{\mc{K}}_{\mc{F}}F)$, then for a.e. $x\in\R^d$ there are $\theta_1,\ldots,\theta_J\geq 0$ with $\sum_{j=1}^J\theta_j=1$, cubes $Q_1,\ldots,Q_J\in\mc{F}$ containing $x$, and $f_1,\ldots,f_J\in S^0(\R^d;F)$ such that
\[
g(x)=\sum_{j=1}^J \theta_j\langle f_j\rangle_{Q_j}.
\]
Thus, we have
\[
|g(x)\cdot v_k|\leq \sum_{j=1}^J \theta_jM(|f_j\cdot v_k|)(x)\leq\sum_{j=1}^J \theta_j M(|F\cdot v_k|)(x)=M(|F\cdot v_k|)(x).
\]
Thus, by the ideal property of the $X_k$, we have $g\in\mb{X}$ with
\begin{align*}
\|g\|_{\mb{X}}&\leq\sum_{k=1}^n\|M(|F\cdot v_k|)\|_{X_k} 
\leq\sum_{k=1}^n\|M\|_{X_k\to X_k}\||F\cdot v_k|\|_{X_k}\\
&\leq\sum_{k=1}^n\|M\|_{X_k\to X_k}\|F\|_{X[\mc{K}]}.
\end{align*}
For this last inequality, we used the fact that by the Filippov selection theorem there is an $f\in S^0(\R^d;F)$ for which
\[
|F(x)\cdot v_k|=|f(x)\cdot v_k|
\]
for a.e. $x\in\R^d$, and, hence, 
\[
\||F\cdot v_k|\|_{X_k}=\|f\cdot v_k\|_{X_k}\leq\|f\|_{\mb{X}}\leq\|F\|_{X[\mc{K}]}.
\]
Thus, taking a supremum over all $g\in S^0(\R^d;M^{\mc{K}}_{\mc{F}}F)$, we conclude that $M_{\mc{F}}^{\mc{K}}:\mb{X}[\mc{K}]\to\mb{X}[\mc{K}]$ with
\[
\|M_{\mc{F}}^{\mc{K}}\|_{\mb{X}[\mc{K}]\to\mb{X}[\mc{K}]}\leq\sum_{k=1}^n\|M\|_{X_k\to X_k}.
\]
The result follows.
\end{proof}

Combining our result with Theorem~\ref{thm:A}, we obtain the following corollary, which we only state for $p=2$:

\begin{corollary}\label{cor:componentwiseextrapolation}
Suppose
\[
T:\bigcup_{W\in A_2}L^2_W(\R^d;\F^n)\to L^0(\R^d;\F^n)
\]
is a map for which there is an increasing function $\phi:[0,\infty)\to[0,\infty)$ such that for all $W\in A_2$ and all $f\in L^2_W(\R^d;\F^n)$ we have
\[
\|Tf\|_{L^2_W(\R^d;\F^n)}\leq\phi([W]_2)\|f\|_{L^2_W(\R^d;\F^n)}.
\]
Let $X_1,\ldots,X_n$ be Banach function spaces over $\R^d$ with the Fatou property, and let $(v_1,\ldots,v_n)$ be an orthonormal basis of $\F^n$. Let $\mb{X}$ be defined as those $f\in L^0(\R^d;\F^n)$ for which
\[
\|f\|_{\mb{X}}:=\sum_{k=1}^n\|f\cdot v_k\|_{X_k}<\infty.
\]
Suppose that
\[
M:X_k\to X_k,\quad M:X_k'\to X_k'
\]
for all $k\in\{1,\ldots,n\}$.
Then $Tf$ is well-defined for all $f\in\mb{X}$, and
\[
\|Tf\|_{\mb{X}}\lesssim_n \phi(C_n\max_{k\in\{1,\ldots,n\}}\|M\|_{X_k\to X_k}^{\frac{1}{2}}\max_{k\in\{1,\ldots,n\}}\|M\|_{X_k'\to X_k'}^{\frac{1}{2}})\|f\|_{\mb{X}}.
\]
\end{corollary}
\begin{proof}
Since
\[
\|g\|_{\mb{X}'}=\max_{k\in\{1,\ldots,n\}}\|g\cdot v_k\|_{X_k'}\eqsim_n\sum_{k=1}^n\|g\cdot v_k\|_{X_k'}
\]
(see Section~\ref{sec:componentwise}), it follows from Theorem~\ref{thm:componentwisemaxbound} that
\[
M^{\mc{K}}:\mb{X}[\mc{K}]\to \mb{X}[\mc{K}],\quad M^{\mc{K}}:\mb{X}'[\mc{K}]\to \mb{X}'[\mc{K}].
\]
Thus, the result follows from Theorem~\ref{thm:A} if we can show that $\mb{X}$ is K\"othe reflexive. This is indeed the case, since $\mb{X}''$ is the component-wise $\F^n$-directional Banach function space constructed from the spaces $X_1'',\ldots,X_n''$ with respect to the basis $(v_1,\ldots,v_n)$ (see Section~\ref{sec:componentwise}). Since $X_1,\ldots,X_n$ have the Fatou property, the Lorentz-Luxemburg theorem implies that $X_k''=X_k$ for all $k\in\{1,\ldots,n\}$. Thus, $\mb{X}''=\mb{X}$, as desired.
\end{proof}
An overview of Banach function spaces $X$ for which $M:X\to X$ and $M:X'\to X'$, including for weighted variable Lebesgue and weighted Morrey spaces, is given in \cite{Ni23, Ni24}. Thus, using Corollary~\ref{cor:componentwiseextrapolation}, these bounds can be directly applied to this component-wise setting.

\appendix

\section{Basic properties of \texorpdfstring{$\F^n$}{Fn}-directional quasi-Banach function spaces}\label{app:A}

\begin{proposition}\label{prop:saturation}
Let $\mb{X}$ be a complete quasi-normed subspace of $L^0(\Omega;\F^n)$ satisfying the directional ideal property. Then the following are equivalent:
\begin{enumerate}[(i)]
    \item\label{it:sat1} $\mb{X}$ satisfies the non-degeneracy property;
    \item\label{it:sat2} $\|\cdot\|_{\mb{X}'}$ is a norm;
    \item\label{it:sat2.5} $\mb{X}\cap L^2(\Omega;\F^n)$ is dense in $L^2(\Omega;\F^n)$;
    \item\label{it:sat3} For all $g\in L^0(\Omega;\F^n)$ there is a sequence $(f_k)_{k\geq 1}$ in $\mb{X}$ for which $f_k\to g$ a.e.
\end{enumerate}
\end{proposition}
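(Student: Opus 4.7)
The plan is to establish the equivalence (i)$\Leftrightarrow$(ii) and the cycle (i)$\Rightarrow$(iii)$\Rightarrow$(iv)$\Rightarrow$(i). The key device throughout is the \emph{sign substitution}: for any $f\in\mb{X}$ and $g\in L^0(\Omega;\F^n)$, the function $\tilde{f}:=\sgn(\overline{f\cdot g})\,f$ lies in $\mb{X}$ with $\|\tilde{f}\|_{\mb{X}}\leq\|f\|_{\mb{X}}$ by the directional ideal property (see Proposition~\ref{prop:ideal}), and satisfies $\tilde{f}\cdot g=|f\cdot g|$ pointwise; this is the trick already used in Proposition~\ref{prop:normoutsideintegral}. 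Consequently both (i) and (ii) are equivalent to the statement that $f\cdot g=0$ a.e.\ for all $f\in\mb{X}$ forces $g=0$: on one hand $\|g\|_{\mb{X}'}=0$ amounts to $\int_\Omega|f\cdot g|\,\mathrm{d}\mu=0$ for all $f\in\mb{X}$, i.e.\ to the pointwise identity; on the other hand, applied to $\tilde{f}$, the assumption $\int_\Omega f\cdot g\,\mathrm{d}\mu=0$ for all $f\in\mb{X}$ gives $\int_\Omega|f\cdot g|\,\mathrm{d}\mu=0$ as well.

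For (i)$\Rightarrow$(iii), let $\mb{X}_b\subseteq\mb{X}\cap L^2(\Omega;\F^n)$ denote the subspace of elements that are bounded with support of finite measure. If $\mb{X}_b$ were not dense in $L^2(\Omega;\F^n)$, then there would exist $g\in L^2(\Omega;\F^n)\setminus\{0\}$ orthogonal to $\mb{X}_b$. For arbitrary $f\in\mb{X}$, fix an exhaustion $B_k\uparrow\Omega$ by finite-measure sets, so the truncations $\tilde{f}_k:=\ind_{B_k\cap\{|\tilde{f}|\leq k\}}\tilde{f}\in\mb{X}_b$ satisfy $\tilde{f}_k\cdot g\uparrow|f\cdot g|$ pointwise. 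Monotone convergence then yields $\int_\Omega|f\cdot g|\,\mathrm{d}\mu=\lim_k\int_\Omega\tilde{f}_k\cdot g\,\mathrm{d}\mu=0$, so $f\cdot g=0$ a.e.\ for every $f\in\mb{X}$. In particular $\int_\Omega f\cdot g\,\mathrm{d}\mu=0$, which by (i) forces $g=0$, a contradiction.

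For (iii)$\Rightarrow$(iv), given $g\in L^0(\Omega;\F^n)$ pick an exhaustion $E_n\uparrow\Omega$ with $\mu(E_n)<\infty$, truncate to $g_n:=\ind_{E_n\cap\{|g|\leq n\}}g\in L^2(\Omega;\F^n)$, and by (iii) find $f_n\in\mb{X}\cap L^2$ with $\|f_n-g_n\|_{L^2}^2\leq 4^{-n}$; summability of $|f_n-g_n|^2$ forces $f_n-g_n\to 0$ a.e., and since $g_n\to g$ pointwise we obtain $f_n\to g$ a.e. For (iv)$\Rightarrow$(i), if $g\neq 0$ choose $E$ of finite measure with $g\neq 0$ a.e.\ on $E$ and $|g|\leq M$ there, set $h:=\ind_E\overline{g}/|g|$ so that $h\cdot g=|g|\ind_E$, apply (iv) to produce $f_k\in\mb{X}$ with $f_k\to h$ a.e., and pass to the uniformly bounded selections $\tilde{f}_k:=\ind_{E\cap\{|f_k|\leq 2\}}f_k\in\mb{X}$ (directional ideal). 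These satisfy $|\tilde{f}_k\cdot g|\leq 2M\ind_E\in L^1(\Omega)$ and $\tilde{f}_k\to h$ a.e., so dominated convergence gives $\int_\Omega\tilde{f}_k\cdot g\,\mathrm{d}\mu\to\int_E|g|\,\mathrm{d}\mu>0$, contradicting the non-degeneracy hypothesis $\int_\Omega\tilde{f}_k\cdot g\,\mathrm{d}\mu=0$.

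The main obstacle lies in the step (i)$\Rightarrow$(iii): \emph{a priori} the integral $\int_\Omega f\cdot g\,\mathrm{d}\mu$ need not even be defined for general $f\in\mb{X}$ and $g\in L^2(\Omega;\F^n)$, so the orthogonality obtained on $\mb{X}_b$ cannot be extended to $\mb{X}$ by $L^2$-continuity alone. The sign substitution circumvents this by reducing matters to the integral of the nonnegative function $|f\cdot g|$, after which the monotone convergence theorem transfers the vanishing from the bounded-with-bounded-support truncations to the full $f$.
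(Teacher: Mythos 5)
Your proof is correct and follows essentially the same route as the paper: (i)$\Leftrightarrow$(ii) via the sign substitution of Proposition~\ref{prop:normoutsideintegral}, and the cycle (i)$\Rightarrow$(iii)$\Rightarrow$(iv)$\Rightarrow$(i) using $L^2$-orthogonality, truncation and monotone convergence, and a limiting argument; your quantitative subsequence extraction in (iii)$\Rightarrow$(iv) and your closing the cycle at (i) via dominated convergence (rather than at (ii) via Fatou's lemma, as the paper does) are cosmetic variants of the same cycle. One small correction: with the convention $u\cdot v=\sum_k u_k\overline{v_k}$, you should set $h:=\ind_E g/|g|$ rather than $\ind_E\overline{g}/|g|$, so that $h\cdot g=\ind_E|g|$.
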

\begin{proof}
The equivalence \ref{it:sat1}$\Leftrightarrow$\ref{it:sat2} follows from Proposition~\ref{prop:normoutsideintegral}, as it implies that $\int_\Omega\!f\cdot g\,\mathrm{d}\mu=0$ for all $f\in\mb{X}$ if and only if $\int_\Omega\!|f\cdot g|\,\mathrm{d}\mu=0$ for all $f\in\mb{X}$.

To see \ref{it:sat1}$\Rightarrow$\ref{it:sat2.5}, we note that the statement is equivalent to showing that 
\[
(\mb{X}\cap L^2(\Omega;\F^n))^\perp=\{0\},
\]
where
\[
(\mb{X}\cap L^2(\Omega;\F^n))^\perp=\Big\{g\in L^2(\Omega;\F^n):\int_\Omega\!f\cdot g\,\mathrm{d}\mu=0\text{ for all $f\in\mb{X}\cap L^2(\Omega;\F^n)$}\Big\}.
\]
Let $f\in\mb{X}$, let $(E_k)_{k\geq 1}$ be a sequence of sets for which $\mu(E_k)<\infty$ for all $k\geq 1$ and $\bigcup_{k=1}^\infty E_k=\Omega$. Setting
\[
f_k:=\ind_{\{x\in E_k:|f(x)|\leq k\}}f,
\]
we have $f_k\in\mb{X}\cap L^2(\Omega;\F^n)$, and, by the monotone convergence theorem, for any $g\in L^2(\Omega;\F^n)$ we have
\[
\Big|\int_\Omega\!f\cdot g\,\mathrm{d}\mu\Big|\leq\int_\Omega\!|f\cdot g|\,\mathrm{d}\mu=\sup_{k\geq 1}\int_\Omega\!|f_k\cdot g|\,\mathrm{d}\mu=0.
\]
Hence, $g=0$ a.e. by the non-degeneracy property of $\mb{X}$, as desired. 

For \ref{it:sat2.5}$\Rightarrow$\ref{it:sat3}, let $Y$ denote the closure of $\mb{X}\cap L^2(\Omega;\F^n)$ in $L^0(\Omega;\F^n)$ with respect to a.e. pointwise convergence. Since $L^2(\Omega;\F^n)$ convergence implies a.e. pointwise convergence of a subsequence, it follows from \ref{it:sat2.5} that
\[
L^2(\Omega;\F^n)\subseteq Y.
\]
For any $g\in L^0(\Omega;\F^n)$, the sequence
\[
f_k:=\ind_{\{x\in E_k:|g(x)|\leq k\}}g
\]
satisfies $f_k\in L^2(\Omega;\F^n)$, and $f_k\to g$ a.e. We conclude that
\[
Y=L^0(\Omega;\F^n),
\]
as desired.

Finally, for \ref{it:sat3}$\Rightarrow$\ref{it:sat2}, suppose that $\|g\|_{\mb{X}'}=0$, and let $(f_k)_{k\geq 1}$ in $\mb{X}$ be such that $f_k\to g$ a.e. By Fatou's lemma, we have
\[
\int_\Omega\!|g|^2\,\mathrm{d}\mu\leq\liminf_{k\to\infty}\int_\Omega\!|f_k\cdot g|\,\mathrm{d}\mu=0.
\]
Hence, $g=0$ a.e., as asserted.
\end{proof}

Next, we prove the $\F^n$-directional analogue of the Lorentz-Luxemburg theorem. Recall that K\"othe reflexivity means that $\mb{X}''=\mb{X}$.
\begin{theorem}\label{thm:lorentzluxemburgapp}
Let $\mb{X}$ be an $\F^n$-directional Banach function space over $\Omega$. Then consider the statements:
\begin{enumerate}[(a)]
    \item\label{it:applorentzluxemburga} $\mb{X}$ satisfies the Fatou property;
    \item\label{it:applorentzluxemburgb} $\mb{X}$ is K\"othe reflexive.
\end{enumerate}
Then \ref{it:applorentzluxemburgb}$\Rightarrow$\ref{it:applorentzluxemburga}. If $\mb{X}$ satisfies the directional saturation property, then also \ref{it:applorentzluxemburga}$\Rightarrow$\ref{it:applorentzluxemburgb}.
\end{theorem}

To prove this, we use the following result:
\begin{lemma}\label{lem:lorentzluxemburg}
Let $\mb{X}$ be an $\F^n$-directional Banach function space over $\Omega$. If $\mb{X}$ satisfies the Fatou property, then 
\[
\|f\|_{\mb{X}}=\sup_{\|g\|_{\mb{X}'}=1}\int_\Omega\!|f\cdot g|\,\mathrm{d}\mu
\]
for all $f\in\mb{X}$.
\end{lemma}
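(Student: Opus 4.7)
Plan:

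The inequality $\|f\|_{\mb{X}}\ge\sup_{\|g\|_{\mb{X}'}=1}\int|f\cdot g|\,d\mu$ is immediate from the definition of the K\"othe dual norm, combined with the phase-rotation observation in Proposition~\ref{prop:normoutsideintegral} (which guarantees that $\sup_g|\int f\cdot g\,d\mu|=\sup_g\int|f\cdot g|\,d\mu$ over the $\mb{X}'$-unit ball). My strategy for the reverse inequality is to reduce, via two successive approximations, to the case of an $\F^n$-valued simple function supported on finitely many sets of finite measure, and then close the argument by a finite-dimensional Hahn--Banach computation.

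For the reduction step, I would exhaust $\Omega$ by sets $E_k\uparrow\Omega$ with $\mu(E_k)<\infty$ (using $\sigma$-finiteness) and set $f_k:=\ind_{E_k\cap\{|f|\le k\}}f$. Since $\mc{K}(f_k)(x)\subseteq\mc{K}(f)(x)$ pointwise, the directional ideal property gives $f_k\in\mb{X}$ with $\|f_k\|_{\mb{X}}\le\|f\|_{\mb{X}}$; combined with $f_k\to f$ a.e.\ and the Fatou property, this yields $\|f_k\|_{\mb{X}}\uparrow\|f\|_{\mb{X}}$. Because the right-hand side of the desired identity is monotone under pointwise domination of the integrand, it suffices to establish the identity for each $f_k$, and we may assume $f$ is bounded with support of finite measure. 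A second approximation by $\F^n$-valued simple functions $f^{(j)}$ chosen so that $\mc{K}(f^{(j)})\subseteq\mc{K}(f^{(j+1)})\uparrow\mc{K}(f)$ (again invoking the directional ideal and Fatou properties) reduces the problem to
\[
f=\sum_{i=1}^{I}\ind_{E_i}v_i
\]
with pairwise disjoint $E_i$ of finite measure and $v_i\in\F^n$.

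The final step is finite-dimensional. On the subspace $V:=\spn\{\ind_{E_i}u:1\le i\le I,\ u\in\F^n\}\subseteq\mb{X}$, the norm $\|\cdot\|_{\mb{X}}|_V$ is a genuine finite-dimensional norm, and the bilinear pairing $(\phi,g)\mapsto\int\phi\cdot g\,d\mu$ is non-degenerate on $V\times V$ (its Gram matrix in the basis $\{\ind_{E_i}e_j\}$ is block-diagonal with blocks $\mu(E_i)I_n$). Finite-dimensional Hahn--Banach then yields $g=\sum_i\ind_{E_i}w_i\in V$ with $\int f\cdot g\,d\mu=\|f\|_{\mb{X}}$ and $|\int\phi\cdot g\,d\mu|\le\|\phi\|_{\mb{X}}$ for all $\phi\in V$. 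The main obstacle — and the real heart of the proof — is upgrading this "$V$-dual" bound to the full K\"othe bound $\|g\|_{\mb{X}'}\le 1$, i.e.\ to $\int|\phi\cdot g|\,d\mu\le\|\phi\|_{\mb{X}}$ for every $\phi\in\mb{X}$. The phase-rotation trick writes $\int|\phi\cdot g|\,d\mu=|\int\tilde\phi\cdot g\,d\mu|$ for some $\tilde\phi$ with $\mc{K}(\tilde\phi)\subseteq\mc{K}(\phi)$ (hence $\|\tilde\phi\|_{\mb{X}}\le\|\phi\|_{\mb{X}}$ by the directional ideal property), and since $g$ is piecewise constant on the $E_i$, only the local averages $\langle\tilde\phi\rangle_{E_i}$ enter this integral. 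The delicate issue is controlling the $\mb{X}$-norm of the averaged function $\sum_i\ind_{E_i}\langle\tilde\phi\rangle_{E_i}\in V$ by $\|\tilde\phi\|_{\mb{X}}$, since a general Banach function space need not make conditional expectations onto indicators contractive. Circumventing this — by instead applying Hahn--Banach directly on the whole of $\mb{X}$ and using the Fatou property to identify the resulting separating functional with integration against $g$ — is where the genuine work of the proof lies.
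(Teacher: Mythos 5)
Your first reduction (truncating $f$ to sets of finite measure and bounded values, then recovering the general case by Fatou plus monotone convergence) is correct and essentially matches what the paper does as its last step. The real trouble is in the middle and at the end.

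First, the reduction to simple functions does not go through. You want simple $f^{(j)}$ with $\mc{K}(f^{(j)})\subseteq\mc{K}(f)$, but by Proposition~\ref{prop:ideal} this forces $f^{(j)}=h^{(j)}f$ for scalar $|h^{(j)}|\le 1$, so $f^{(j)}(x)$ is pointwise collinear with $f(x)$. A function of the form $h(x)f(x)$ is simple only if $f$ is already (essentially) a scalar multiple of a simple function. For a generic $f$ there is no nonzero simple minorant in the directional ordering, so your inductive scheme $\mc{K}(f^{(j)})\uparrow\mc{K}(f)$ collapses to $f^{(j)}\equiv 0$. You cannot repair this by dropping the $\mc{K}$-ordering and using a.e.\ convergence of simple approximants, because then neither Fatou for $\|\cdot\|_{\mb{X}}$ nor monotone convergence for the right-hand side gives the needed sandwich.

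Second — and you flag this yourself — the finite-dimensional Hahn--Banach on $V$ produces a $g$ dual only to $\mb{X}|_V$; promoting this to $\|g\|_{\mb{X}'}\le 1$ would require the conditional expectation onto $\sigma(E_1,\dots,E_I)$ to be contractive on $\mb{X}$, which fails in general. Your proposed fix, ``apply Hahn--Banach directly on $\mb{X}$,'' runs into a different wall: the Banach-space dual $\mb{X}^*$ may contain non-integral functionals (think of the $L^\infty$ phenomenon), so the separating functional is not a priori represented by integration against any $g\in L^0(\Omega;\F^n)$, and the Fatou property alone does not force it to be. This is exactly the obstruction the paper is designed to avoid: it applies Hahn--Banach \emph{inside $L^2(\Omega;\F^n)$} to the convex set $Y=B_{\mb{X}}\cap L^2(\Omega;\F^n)$ (closed in $L^2$ by Fatou), so that the Riesz representation theorem for Hilbert spaces hands you a genuine $g\in L^2(\Omega;\F^n)$. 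The bound $\|g\|_{\mb{X}'}\le 1$ is then obtained by truncating arbitrary $h\in B_{\mb{X}}$ into $Y$ and passing to the limit by monotone convergence. This $L^2$ trick is the missing ingredient in your outline, and it replaces both your simple-function reduction and the finite-dimensional duality step.
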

We remark here that this result does not hold for an $\F^n$-directional \emph{quasi}-Banach function space over $\Omega$, as this norm equality implies that the triangle inequality holds.
\begin{proof}[Proof of Lemma~\ref{lem:lorentzluxemburg}]
We define the seminorm
\[
\|f\|_{\mb{X}''}:=\sup_{\|g\|_{\mb{X}'}=1}\int_\Omega\!|f\cdot g|\,\mathrm{d}\mu,
\]
and will show that $\|f\|_{\mb{X}''}=\|f\|_{\mb{X}}$ for all $f\in\mb{X}$ if $\mb{X}$ has the Fatou property. Since $\mb{X}\subseteq\mb{X}''$ with $\|\cdot\|_{\mb{X}''}\leq\|\cdot\|_{\mb{X}}$, it remains to prove the converse inequality.

Let $B_{\mb{X}}:=\{f\in\mb{X}:\|f\|_{\mb{X}}\leq 1\}$ and let
\[
Y:=B_{\mb{X}}\cap L^2(\Omega;\F^n).
\]
Then $Y$ is a convex set. We claim that $Y$ is a closed subset of $L^2(\Omega;\F^n)$. Indeed, if $(f_k)_{k\geq 1}$ is a sequence in $Y$ that converges to a function $f\in L^2(\Omega;\F^n)$ in $L^2(\Omega;\F^n)$, then for some subsequence $f_{k_j}$ we have $f_{k_j}\to f$ a.e so that by the Fatou property of $\mb{X}$ we have $f\in\mb{X}$ with
\[
\|f\|_{\mb{X}}\leq\liminf_{j\to\infty}\|f_{k_j}\|_{\mb{X}}\leq 1.
\]
Hence, $f\in Y$, as desired.

Now, let $f\in \mb{X}\cap L^2(\Omega;\F^n)$ with $\|f\|_{\mb{X}}=1$, let $\varepsilon>0$, and set $f_0:=(1+\varepsilon)f$. Then $f_0\in L^2(\Omega;\F^n)$, but, since $\|f_0\|_{\mb{X}}=1+\varepsilon>1$, $f_0\notin Y$. Since $Y$ is convex, it follows from the Hahn-Banach separation theorem, the Riesz representation theorem, and Proposition~\ref{prop:normoutsideintegral}, that there is a $g\in L^2(\Omega;\F^n)$ such that
\[
\int_\Omega\!|f_0\cdot g|\,\mathrm{d}\mu> 1,\quad \int_\Omega\!|h\cdot g|\,\mathrm{d}\mu<1\quad\text{for all $h\in Y$.}
\]
We claim that $g\in\mb{X}'$ with $\|g\|_{\mb{X}'}\leq 1$. Indeed, let $h\in\mb{X}$ with $\|h\|_{\mb{X}}=1$. Since $\Omega$ is $\sigma$-finite we can pick an increasing sequence of sets $(E_k)_{k\geq 1}$ with $\bigcup_{k=1}^\infty E_k=\Omega$ and $\mu(E_k)<\infty$ and set
\[
h_k:=h\ind_{\{x\in E_k:|h(x)|\leq k\}}\in Y.
\]
Then
\[
\int_\Omega\!|h_k\cdot g|\,\mathrm{d}\mu<1.
\]
Since $|h_k\cdot g|\uparrow |h\cdot g|$ a.e., it follows from the monotone convergence theorem that
\[
\int_\Omega\!|h\cdot g|\,\mathrm{d}\mu=\sup_{k\geq 1}\int_\Omega\!|h_k\cdot g|\,\mathrm{d}\mu\leq 1.
\]
Thus indeed, $g\in\mb{X}'$ with $\|g\|_{\mb{X}'}\leq 1$. Noting that, by Proposition~\ref{prop:normoutsideintegral},
\[
1<(1+\varepsilon)\int_\Omega\!|f\cdot g|\,\mathrm{d}\mu\leq(1+\varepsilon)\|f\|_{\mb{X}''}\|g\|_{\mb{X'}}\leq(1+\varepsilon)\|f\|_{\mb{X}''},
\]
letting $\varepsilon\downarrow 0$, we conclude that 
\[
\|f\|_{\mb{X}''}\geq 1=\|f\|_{\mb{X}},
\]
and, thus, $\|f\|_{\mb{X}''}=\|f\|_{\mb{X}}$ for all $f\in \mb{X}\cap L^2(\Omega;\F^n)$. 

Finally, let $f\in\mb{X}$ and set
\[
f_k:=\ind_{\{x\in E_k:|f(x)|\leq k\}} f
\]
so that $|f_k(x)\cdot u|\uparrow |f(x)\cdot u|$ for all $u\in\F^n$ for a.e. $x\in\Omega$. Then, by the monotone convergence property of $\mb{X}$ and $\mb{X}''$ (which they have because they satisfy the Fatou property, see Section~\ref{sec:directionalqbfs}) and the equivalence of \ref{it:ideal1} and \ref{it:ideal3} in Proposition~\ref{prop:ideal}, we have $f\in\mb{X}$ with
\[
\|f\|_{\mb{X}}=\sup_{k\geq 1}\|f_k\|_{\mb{X}}=\sup_{k\geq 1}\|f_k\|_{\mb{X}''}=\|f\|_{\mb{X}''}.
\]
The assertion follows.
\end{proof}

\begin{proof}[Proof of Theorem~\ref{thm:lorentzluxemburgapp}]
If $\mb{X}=\mb{X}''$, then $\mb{X}$ has the Fatou property by Fatou's lemma. Conversely, by Lemma~\ref{lem:lorentzluxemburg}, the Fatou property of $\mb{X}$ implies that $\|f\|_{\mb{X}}=\|f\|_{\mb{X}''}$ for all $f\in\mb{X}$. Let $f\in\mb{X}''$. By the directional saturation property and Proposition~\ref{prop:directionalsaturation}\ref{it:dirsat3} there is a sequence $(f_k)_{k\geq 1}$ in $\mb{X}$ for which $|f_k(x)\cdot u|\uparrow |f(x)\cdot u|$ for all $u\in\F^n$ for a.e. $x\in\Omega$. By the monotone convergence property of $\mb{X}$ and $\mb{X}''$ (see the above proof) and the equivalence of \ref{it:ideal1} and \ref{it:ideal3} in Proposition~\ref{prop:ideal}, we conclude that $f\in\mb{X}$, and
\[
\|f\|_{\mb{X}}=\sup_{k\geq 1}\|f_k\|_{\mb{X}}=\sup_{k\geq 1}\|f_k\|_{\mb{X}''}=\|f\|_{\mb{X}''}.
\]
The assertion follows.
\end{proof}

\section{An intrinsic proof of the sharp Rubio de Francia extrapolation theorem in matrix-weighted Lebesgue spaces}\label{app:lebesgueextrapolation}

In \cite{BC23}, a version of the following theorem was proven:
\begin{theorem}\label{thm:bc23extrapolation}
Let $1\leq r\leq \infty$, let $V$ be a set, and let $S:V\to L^0(\R^d;\F^n)$. Suppose
\[
T:\bigcup_{W\in A_r}S^{-1}(L^r_W(\R^d;\F^n))\to L^0(\R^d;\F^n)
\]
is a map for which there is an increasing function $\phi:[0,\infty)\to[0,\infty)$ such that for all $W\in A_r$ and all $f\in V$ with $Sf\in L^r_W(\R^d;\F^n)$ we have
\[
\|Tf\|_{L^r_W(\R^d;\F^n)}\leq\phi([W]_r)\|Sf\|_{L^r_W(\R^d;\F^n)}.
\]
Let $1<p<\infty$ and let $W\in A_p$. Then $Tf$ is well-defined for all $f\in V$ with $Sf\in L^p_W(\R^d;\F^n)$, and, for a constant $C_{n,d,r,p}>0$ depending only on $n,d,r,p$, we have
\[
\|Tf\|_{L^p_W(\R^d;\F^n)}
\lesssim_n \phi(C_{n,d,r,p}[W]^{\max\{\frac{p}{r},\frac{p'}{r'}\}}_p)\|Sf\|_{L^p_W(\R^d;\F^n)}.
\]
\end{theorem}
In this appendix we will give an alternative proof of this result through the intrinsic strategy used to prove Theorem~\ref{thm:A}. To this end, we need the following version of Theorem~\ref{thm:rdfmainestimate}:
\begin{theorem}\label{thm:bc3extrapolationrdf}
Let $1<p<\infty$, $1\leq r\leq\infty$, and $W\in A_p$. Then for all $f\in L^p_W(\R^d;\F^n)$, $g\in L^{p'}_{W^{-1}}(\R^d;\F^n)$, and $\varepsilon>0$, there exists a matrix weight $\mc{V}\in A_r$ satisfying
\begin{itemize}
    \item $[\mc{V}]_r\lesssim_{n,d,r,p}[W]^{\max\{\frac{p}{r},\frac{p'}{r'}\}}_p$;
    \item $\|f\|_{L^r_\mc{V}(\R^d;\F^n)}\|g\|_{L^{r'}_{\mc{V}^{-1}}(\R^d;\F^n)}\lesssim_n(\|f\|_{L^p_W(\R^d;\F^n)}+\varepsilon)(\|g\|_{L^{p'}_{W^{-1}}(\R^d;\F^N)}+\varepsilon)$.
\end{itemize}
\end{theorem}

For the proof, we need the following analogue of Lemma~\ref{lem:interpolationnorm}, which is a norm function version of \cite[Corollary~8.9]{BC23}:
\begin{lemma}\label{lem:interpolationnormbc3}
Let $F\in L^1_{\text{loc}}(\R^d;\mc{K})$ have a non-empty interior a.e., and suppose there is a $C>0$ such that for a.e. $x\in\R^d$ we have
\[
M^{\mc{K}}F(x)\subseteq C F(x).
\]
Then the norm function
\[
\sigma(x,u):=
\sup_{v\in F(x)}|u\cdot v|
\]
satisfies $\sigma\in A_1$ with $[\sigma]_1\lesssim_n C$.

Moreover, if $1\leq p\leq r\leq\infty$ and $\tau\in A_p$ is a norm function, then there exists a norm function $\rho\in A_r$ satisfying
\begin{enumerate}[(a)]
    \item\label{it:normfunctiona11bc23}  $[\rho]_r\lesssim_n C^{1-\frac{p}{r}}[\tau]_p^{\frac{p}{r}}$;
    \item\label{it:normfunctiona12bc23} $\rho(x,u)\leq\sigma^\ast(x,u)^{1-\frac{p}{r}}\tau(x,u)^{\frac{p}{r}}$ for all $u\in\F^n$ for a.e. $x\in\R^d$;
    \item\label{it:normfunctiona13bc23} $\rho^\ast(x,v)\leq\sigma(x,v)^{1-\frac{p}{r}}\tau^\ast(x,v)^{\frac{p}{r}}$ for all $v\in\F^n$ for a.e. $x\in\R^d$.
\end{enumerate}
\end{lemma}
\begin{proof}
Showing that $[\sigma]_1\lesssim _n C$ follows exactly as in the proof of Lemma~\ref{lem:interpolationnorm}. For the second assertion, we also follow the proof strategy of Lemma~\ref{lem:interpolationnorm}, this time defining
\[
\rho(x,u):=\inf\{t>0:t^{-1}u\in K(x)\}
\]
for
\[
K(x):=\mc{K}(\{u\in\F^n:\sigma^\ast(x,u)^{1-\frac{p}{r}}\tau(x,u)^{\frac{p}{r}}\leq 1\}).
\]
Showing that \ref{it:normfunctiona12bc23} and \ref{it:normfunctiona13bc23} hold is analogous to the proof of \ref{it:normfunctiona12} and \ref{it:normfunctiona13} in Lemma~\ref{lem:interpolationnorm}, and \ref{it:normfunctiona11bc23} follows from \cite[Remark~2.16,\, Remark~8.10]{BC23}. The assertion follows.
\end{proof}

\begin{proof}[Proof of Theorem~\ref{thm:bc3extrapolationrdf}]
It suffices to consider the case $p\leq r$, since the case $p\geq r$ is proven analogously by exchanging the roles of $p$ and $p'$, $W$ and $W^{-1}$, $r$ and $r'$, $f$ and $g$, and $\mc{V}$ and $\mc{V}^{-1}$ for the to be constructed weight $\mc{V}$. Note that this uses the fact that $[W^{-1}]_{p'}=[W]_p$ and $[\mc{V}^{-1}]_{r'}=[\mc{V}]_r$, which follows, e.g., from Theorem~\ref{thm:muckenhouptdef}. Moreover, as in the proof of Theorem~\ref{thm:rdfmainestimate}, it suffices to construct a norm function $\rho\in A_r$ instead of $\mc{V}\in A_r$ with the desired bounds.

First, since by \cite[Theorem~6.9]{BC23} we have $M^{\mc{K}}:L^p_W(\R^d;\mc{K})\to L^p_W(\R^d;\mc{K})$ with
\begin{equation}\label{eq:matrixbuckley}
\|M^{\mc{K}}\|_{L^p_W(\R^d;\mc{K})\to L^p_W(\R^d;\mc{K})}\lesssim_{n,d,p}[W]_p^{p'},
\end{equation}
we may apply Theorem~\ref{thm:rdfdef} to $T=M^{\mc{K}}$ and $\mb{X}=L^p_W(\R^d;\F^n)$ to obtain a mapping $\mc{R}:L^p_W(\R^d;\mc{K})\to L^p_W(\R^d;\mc{K})$ satisfying $\|\mc{R}\|_{L^p_W(\R^d;\mc{K})\to L^p_W(\R^d;\mc{K})}\leq 2$, and for all $F\in L^p_W(\R^d;\mc{K})$ we have
\begin{itemize}
    \item $F(x)\subseteq\mc{R}F(x)$ a.e.;
    \item $M^{\mc{K}}(\mc{R}F)(x)\subseteq 2\|M^{\mc{K}}\|_{L^p_W(\R^d;\mc{K})\to L^p_W(\R^d;\mc{K})}\mc{R}F(x)$ a.e.
\end{itemize}
By Lemma~\ref{lem:componentsaturation}, we can pick an invertible matrix-valued mappings $U$ satisfying
\[
\|U\overline{B}\|_{L^p_W(\R^d;\mc{K})}=1.
\]
Setting $F:=\mc{K}(f)+\varepsilon U\overline{B}$, it follows from Proposition~\ref{prop:sumofconvexmeasurable} and Proposition~\ref{prop:convexsetfequalsf} that
\begin{equation}\label{eq:bc3extrap1}
\|F\|_{L^p_W(\R^d;\mc{K})}\leq\|f\|_{L^p_W(\R^d;\F^n)}+\varepsilon,
\end{equation}
and, by Lemma~\ref{lem:interpolationnorm}, the norm function 
\[
\sigma(x,u):=\sup_{v\in \mc{R}F(x)}|u\cdot v|
\]
satisfies $\sigma\in A_1$ with $[\sigma]_1\lesssim_n \|M^{\mc{K}}\|_{L^p_W(\R^d;\mc{K})\to L^p_W(\R^d;\mc{K})}$. Further defining the norm function $\tau(x,u):=|W(x)u|$, per assumption and per definition we have $\tau\in A_p$ with $[\tau]_p=[W]_p$. Thus, we may apply Lemma~\ref{lem:interpolationnormbc3} to obtain a norm function $\rho\in A_r$ satisfying \ref{it:normfunctiona12bc23}, \ref{it:normfunctiona13bc23}, and, by \eqref{eq:matrixbuckley},
\begin{align*}
[\rho]_r&\lesssim_n \|M^{\mc{K}}\|_{L^p_W(\R^d;\mc{K})\to L^p_W(\R^d;\mc{K})}^{1-\frac{p}{r}}[\tau]_p^{\frac{p}{r}}\lesssim_{n,d,r,p}[W]_p^{p'(1-\frac{p}{r})}[\tau]_p^{\frac{p}{r}}\\
&=[W]_p^{p'(1-\frac{p}{r})+\frac{p}{r}}=[W]_p^{\frac{p'}{r'}},
\end{align*}
as desired. Now, for the next assertion, since $f(x)\in\mc{R}F(x)$ a.e., we have $\sigma(x,u)\geq|u \cdot f(x)|$ a.e., and, hence, the a.e. estimate
\[
\sigma^\ast(x,f(x))\leq 1.
\]
Thus, we have
\[
\rho(x,f(x))\leq \sigma^\ast(x,f(x))^{1-\frac{p}{r}}\tau(x,f(x))^{\frac{p}{r}}\leq |W(x)f(x)|^{\frac{p}{r}}
\]
a.e., implying that $f\in L^r_{\rho}(\R^d;\F^n)$ with
\begin{equation}\label{eq:bc3extrap2}
\|f\|_{L^r_{\rho}(\R^d;\F^n)}\leq\|f\|_{L^p_W(\R^d;\F^n)}^{\frac{p}{r}}.
\end{equation}
Next, by applying Theorem~\ref{thm:filippov} with
\[
\phi(x,u)=|u\cdot g(x)|,\quad h(x)=\sup_{u\in\mc{R}F(x)}|u\cdot g(x)|,
\]
we can find a $k\in S^0(\R^d;\mc{R}F)$ for which
\[
|k(x)\cdot g(x)|=\sigma(x,g(x))
\]
a.e. Then, since $\tau^\ast(x,v)=|W(x)^{-1}v|$ by \cite[Proposition~4.12]{BC23} and $|k\cdot g|=|Wk\cdot W^{-1}g|\leq|Wk||W^{-1}g|$ by the Cauchy-Schwarz inequality, we have
\begin{align*}
\rho^\ast(x,g(x))&\leq\sigma(x,g(x))^{1-\frac{p}{r}}\tau^\ast(x,g(x))^{\frac{p}{r}}=|k(x)\cdot g(x)|^{1-\frac{p}{r}}|W(x)^{-1}g(x)|^{\frac{p}{r}}\\
&\leq |W(x)k(x)|^{1-\frac{p}{r}}|W(x)^{-1}g(x)|
\end{align*}
a.e. Hence, by H\"older's inequality, the properties of $\mc{R}$, and \eqref{eq:bc3extrap1}, we have
\begin{align*}
\|g\|_{L^{r'}_{\rho^\ast}(\R^d;\F^n)}
&\leq\|k\|_{L^p_W(\R^d;\F^n)}^{1-\frac{p}{r}}\|g\|_{L^{p'}_{W^{-1}}(\R^d;\F^n)}
\leq\|\mc{R}F\|_{L^p_W(\R^d;\mc{K})}^{1-\frac{p}{r}}\|g\|_{L^{p'}_{W^{-1}}(\R^d;\F^n)}\\
&\lesssim \|F\|_{L^p_W(\R^d;\mc{K})}^{1-\frac{p}{r}}\|g\|_{L^{p'}_{W^{-1}}(\R^d;\F^n)}
\leq(\|f\|_{L^p_W(\R^d;\F^n)}+\varepsilon)^{1-\frac{p}{r}}\|g\|_{L^{p'}_{W^{-1}}(\R^d;\F^n)}.
\end{align*}
Combining this estimate with \eqref{eq:bc3extrap2}, we conclude that
\[
\|f\|_{L^r_\rho(\R^d;\F^n)}\|g\|_{L^{r'}_{\rho^\ast}(\R^d;\F^n)}\lesssim (\|f\|_{L^p_W(\R^d;\F^n)}+\varepsilon)\|g\|_{L^{p'}_{W^{-1}}(\R^d;\F^n)},
\]
proving the assertion.
\end{proof}

\begin{proof}[Proof of Theorem~\ref{thm:bc23extrapolation}]
Let $f\in V$ for which $Sf\in L^p_W(\R^d;\F^n)$, and let $g\in L^{p'}_{W^{-1}}(\R^d;\F^n)$. By Theorem~\ref{thm:bc3extrapolationrdf} we can pick a $\mc{V}\in A_r$ for which
\[
[\mc{V}]_r\leq C_{n,d,r,p}[W]^{\max\{\frac{p}{r},\frac{p'}{r'}\}}_p
\]
and
\[
\|Sf\|_{L^r_\mc{V}(\R^d;\F^n)}\|g\|_{L^{r'}_{\mc{V}^{-1}}(\R^d;\F^n)}\lesssim_n(\|Sf\|_{L^p_W(\R^d;\F^n)}+\varepsilon)(\|g\|_{L^{p'}_{W^{-1}}(\R^d;\F^N)}+\varepsilon).
\]
Thus,
\begin{align*}
\int_{\R^d}\!|Tf\cdot g|\,\mathrm{d}x&\leq \|Tf\|_{L^r_\mc{V}(\R^d;\F^n)}\|g\|_{L^{r'}_{\mc{V}^{-1}}(\R^d;\F^n)}
\leq\phi([\mc{V}]_r)\|Sf\|_{L^r_\mc{V}(\R^d;\F^n)}\|g\|_{L^{r'}_{\mc{V}^{-1}}(\R^d;\F^n)}\\
&\lesssim_n\phi(C_{n,d,r,p}[W]^{\max\{\frac{p}{r},\frac{p'}{r'}\}}_p)(\|Sf\|_{L^p_W(\R^d;\F^n)}+\varepsilon)(\|g\|_{L^{p'}_{W^{-1}}(\R^d;\F^N)}+\varepsilon).
\end{align*}
Taking a supremum over all $g\in L^{p'}_{W^{-1}}(\R^d;\F^n)$ with $\|g\|_{L^{p'}_{W^{-1}}(\R^d;\F^n)}=1$ and letting $\varepsilon\downarrow 0$ proves the result.
\end{proof}

\section*{Acknowledgements}
I wish to thank Rosemarie Bongers for being better at linear algebra than me, for sharing her knowledge on the Hausdorff distance, and for their insights into the geometry of convex sets. Moreover, I would like to thank her for their feedback on parts of the text, resulting in an overall improvement of the presentation of this work.

I also wish to thank Spyridon Kakaroumpas who provided a counterexample to the non-degeneracy of the weak-type spaces as I have defined them.

Finally, I wish to thank an anonymous referee for their suggestions and corrections, greatly improving the quality and presentation of this work.

\subsection*{Conflict of interest} None.

\subsection*{Availability of data} There is no data associated to this work.

\bibliography{bieb}
\bibliographystyle{alpha}
\end{document}